\pgfplotsset{compat=1.18}
\numberwithin{equation}{section}
\def\HLO{\Upl\Upo}
\newcommand{\bbold}{\mathbb}
\renewcommand\Re{\operatorname{Re}}
\renewcommand\Im{\operatorname{Im}}
\def\R { {\bbold R} }
\def\Q { {\bbold Q} }
\def\Z { {\bbold Z} }
\def\N { {\bbold N} }
\def\T { {\bbold T} }
\def \I{\operatorname{I}}
\def \cf{\operatorname{cf}}
\def \ci{\operatorname{ci}}
\def \te{\operatorname{te}}
\def \cH{\mathcal{H}}
\def \order{\operatorname{order}}
\def \ex{\operatorname{e}}
\renewcommand\epsilon{\varepsilon}
\def \d{\operatorname{d}}
\def \ev{\operatorname{e}}
\def \<{\langle}
\def \>{\rangle}
\def \tilde {\widetilde}
\def \hat {\widehat}
\def \supp {\operatorname{supp}}
\def \((  {(\!(}
\def \)) {)\!)}
\def \Li{\operatorname{Li}}
\def \res{\operatorname{res}}
\DeclareMathSymbol{\precequ}{\mathrel}{symbols}{"16}
\DeclareMathSymbol{\succequ}{\mathrel}{symbols}{"17}
\def \nasymp{\not\asymp}
\newcommand{\claim}[2][\!\!]{\medskip\noindent {\it Claim #1}\/: {\it #2}\medskip}
\newcommand{\case}[2][\!\!]{\medskip\noindent {\it Case #1}\/: {\it #2}\/}
\newcommand{\subcase}[2][\!\!]{\medskip\noindent {\it Subcase #1}\/: {\it #2}\/}
\newtheorem{theorem}{Theorem}[section]
\newcounter{tmptheorem}
\newtheorem{lemma}[theorem]{Lemma}
\newtheorem{prop}[theorem]{Proposition}
\newtheorem{cor}[theorem]{Corollary}
\theoremstyle{definition}
\theoremstyle{remark}
\newtheorem*{remark}{Remark}
\newtheorem*{question}{Question}
\newcommand{\abs}[1]{\lvert#1\rvert}
\newcommand{\dabs}[1]{\lVert#1\rVert}
\def \fM {{\mathfrak M}}
\def \fm {{\mathfrak m}}
\def \fv {{\mathfrak v}}
\def \fw {{\mathfrak w}}
\def \No{\text{{\bf No}}}
\def \BM{\operatorname{BM}}
\let\oldi\i
\let\oldj\j
\renewcommand\i{\relax\ifmmode{\boldsymbol{i}}\else\oldi\fi}
\renewcommand\j{\relax\ifmmode{\boldsymbol{j}}\else\oldj\fi}
\renewcommand\leq{\leqslant}
\renewcommand\geq{\geqslant}
\renewcommand\preceq{\preccurlyeq}
\renewcommand\succeq{\succcurlyeq}
\renewcommand\le{\leq}
\renewcommand\ge{\geq}
\renewcommand\frak{\mathfrak}
\DeclareMathAlphabet{\mathbf}{OML}{cmm}{b}{it}
\DeclareFontFamily{U}{fsy}{}
\DeclareFontShape{U}{fsy}{m}{n}{<->s*[.9]psyr}{}
\DeclareSymbolFont{der@m}{U}{fsy}{m}{n}
\DeclareMathSymbol{\der}{\mathord}{der@m}{182}
\DeclareSymbolFont{der@m}{U}{fsy}{m}{n}
\DeclareMathSymbol{\derdelta}{\mathord}{der@m}{100}
\newcommand\ndeg{\operatorname{ndeg}}
\DeclareSymbolFont{imag@m}{OT1}{cmr}{m}{ui}
\DeclareMathSymbol{\imag}{\mathord}{imag@m}{105}
\DeclareFontFamily{OMS}{smallo}{}
\DeclareFontShape{OMS}{smallo}{m}{n}{<->s*[.65]cmsy10}{}
\DeclareSymbolFont{smallo@m}{OMS}{smallo}{m}{n}
\DeclareMathSymbol{\smallo}{\mathord}{smallo@m}{79}
\DeclareFontFamily{OMS}{largerdot}{}
\DeclareFontShape{OMS}{largerdot}{m}{n}{<->s*[.8]cmsy10}{}
\DeclareSymbolFont{largerdot@m}{OMS}{largerdot}{m}{n}
\DeclareMathSymbol{\largerdot}{\mathord}{largerdot@m}{15}
\DeclareMathSymbol{\llambda}{\mathord}{der@m}{108}
\DeclareMathSymbol{\rrho}{\mathord}{der@m}{114}
\def \upg{\upgamma}
\def \Upg{\Upgamma}
\def \upl{\uplambda}
\def \Upl{\Uplambda}
\def \upo{\upomega}
\def \Upo{\Upomega}
\def \Upd{\Updelta}
\newcommand{\equationqed}[1]{\[\pushQED{\qed}#1 \qedhere\popQED\]\let\qed\relax}
\newcommand{\alignqed}[1]{\begin{align*}\pushQED{\qed} #1 \qedhere\popQED\end{align*}\let\qed\relax}
\newcommand{\dminus}{\mathbin{\text{\@dminus}}}
\newcommand{\@dminus}{%
  \ooalign{\hidewidth\raise1ex\hbox{\bf.}\hidewidth\cr$\m@th-$\cr}%
}
\def\ddeg{\operatorname{ddeg}}
\def \Cc{\mathcal{C}}
\def \C{\mathcal{C}}
\def \inv{\operatorname{inv}}
\begin{document}

\title{Analytic Hardy Fields}

\author[Aschenbrenner]{Matthias Aschenbrenner}
\address{Kurt G\"odel Research Center for Mathematical Logic\\
Universit\"at Wien\\
1090 Wien\\ Austria}
\email{matthias.aschenbrenner@univie.ac.at}

\author[van den Dries]{Lou van den Dries}
\address{Department of Mathematics\\
University of Illinois at Urbana-Cham\-paign\\
Urbana, IL 61801\\
U.S.A.}
\email{vddries@illinois.edu}


\begin{abstract}  We show that maximal analytic Hardy fields are $\eta_1$ in the sense of Hausdorff.  We also prove  various embedding theorems about analytic Hardy fields. For example, the ordered differential field $\T$ of transseries is
shown to be isomorphic to an analytic Hardy field. 
\end{abstract}

\date{September 2025}

\keywords{Hardy fields, transseries, Whitney Approximation Theorem}
\subjclass[2020]{Primary 26A12, 41A28; Secondary 03E04, 34E05}

\maketitle

\tableofcontents

\begingroup
\setcounter{tmptheorem}{\value{theorem}}
\setcounter{theorem}{0} 
\renewcommand\thetheorem{\Alph{theorem}}

\section*{Introduction}\label{intro} 

\noindent
This is a follow-up on \cite{ADHfgh} where the main result is that for any Hardy field $H$ and countable subsets
$A< B$ of $H$ there exists $y$ in a Hardy field extension of $H$ such that $A < y < B$. Equivalently, (the underlying ordered set of) any maximal Hardy field is $\eta_1$ in the sense of Hausdorff. In this result we do not require $H\subseteq \Cc^{\omega}$, and the glueing constructions in \cite{ADHfgh} do not give $y\in \Cc^{\omega}$, even if $H\subseteq \Cc^{\omega}$; see {\em Notations and Conventions}\/ at the end of this introduction for the notation used here.  We call a Hardy field~$H$ {\em smooth\/} if $H\subseteq \Cc^\infty$ and {\em analytic\/} if $H\subseteq \Cc^{\omega}$. By \cite[Corollary~11.20]{ADH4} and [ADH, 16.0.3, 16.6.3], maximal Hardy fields, maximal smooth Hardy fields,  and maximal analytic Hardy fields are all elementarily equivalent to the ordered differential field $\T$ of transseries, and have no proper $\d$-algebraic $H$-field extension with constant field $\R$. We shall tacitly use these facts throughout.

Some may view non-analytic Hardy fields as artificial, since most Hardy fields that occur ``in nature'' are analytic. (But see
\cite{Gokhman,Grelowski} for Hardy fields $H\not\subseteq\Cc^\infty$, and \cite{RSW} for  Hardy fields $H\subseteq\Cc^\infty$, $H\not\subseteq\Cc^\omega$.) To conciliate this view and answer an obvious question we prove in Section~\ref{s4} the analytic version of \cite{ADHfgh}: 

\begin{theorem}\label{aneta} If $H$ is an analytic Hardy field with countable subsets $A<B$, then there exists $y\in \Cc^{\omega}$ in a Hardy field extension of $H$ such that $A < y < B$. 
\end{theorem}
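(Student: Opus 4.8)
The plan is to bootstrap from the non-analytic result of \cite{ADHfgh} and then upgrade to analyticity, after observing that the only genuine obstruction occurs in the differentially transcendental case, which I would attack with the Whitney Approximation Theorem. By \cite{ADHfgh} there is a germ $y_0$ in a Hardy field extension $H\langle y_0\rangle$ of $H$ with $A<y_0<B$; a priori $y_0\notin\Cc^{\omega}$. Note that, since $H\langle y_0\rangle$ is closed under differentiation, $y_0$ is a germ of a $\Cc^{\infty}$ function. I would split on whether $y_0$ is $\d$-algebraic over $H$.

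Suppose first that $y_0$ is $\d$-algebraic over $H$, with minimal annihilator $P\in H\{Y\}$ of order $r$. Since $H\subseteq\Cc^{\omega}$, the function $(x,u_0,\dots,u_r)\mapsto P(u)$, a polynomial in the $u_i$ with real-analytic germ coefficients, is real-analytic, and its separant $\partial P/\partial u_r$ evaluated along $(y_0,\dots,y_0^{(r-1)})$ is the element $S_P(y_0)$ of the field $H\langle y_0\rangle$, which is nonzero (as $P$ is a minimal annihilator) and hence eventually nonvanishing. The analytic implicit function theorem then solves $P=0$ for the top derivative as $y_0^{(r)}=F\bigl(x,y_0,\dots,y_0^{(r-1)}\bigr)$ with $F$ real-analytic on a neighborhood of the relevant curve, so $y_0$ is a $\Cc^r$ solution of an analytic differential equation and is therefore real-analytic by the regularity theorem for analytic ODEs. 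Thus $y_0\in\Cc^{\omega}$ already, and $y=y_0$ settles this case; in particular $\d$-algebraic extensions of analytic Hardy fields are again analytic.

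The substantial case is $y_0$ $\d$-transcendental over $H$, so that $H\langle y_0\rangle=H\bigl(y_0,y_0',y_0'',\dots\bigr)$ with $y_0,y_0',\dots$ algebraically independent over $H$. Representing $y_0$ by a smooth $f$ on some $[a,\infty)$, I would invoke a strong form of the Whitney Approximation Theorem to obtain a real-analytic $g$ on $[a,\infty)$ with $\bigl|g^{(k)}-f^{(k)}\bigr|\le\epsilon_k$ pointwise for all $k$, where the positive tolerances $\epsilon_k$ decay as rapidly as desired, and set $y$ to be the germ of $g$. The order condition $A<y<B$ is then easy to secure: as $A\cup B$ is countable and each germ $y_0-\alpha$ $(\alpha\in A)$ and $\beta-y_0$ $(\beta\in B)$ is eventually positive, a diagonal choice of $\epsilon_0$ keeps $y-\alpha$ and $\beta-y$ eventually positive, so $A<y<B$.

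The crux, and what I expect to be the main obstacle, is to guarantee that $H\langle y\rangle$ is again a Hardy field, equivalently that for every $P\in H[Y_0,\dots,Y_n]$ the germ $P(y,\dots,y^{(n)})$ is eventually of constant sign, agreeing with the sign of $P(y_0,\dots,y_0^{(n)})$. For a single such $P$ this transfers once the derivative errors $y^{(k)}-y_0^{(k)}$ are flat enough that the perturbation of $P$ is eventually dominated by $|P(y_0,\dots,y_0^{(n)})|$; the difficulty is uniformity, since $H$ may be uncountable while a single analytic $g$ offers only the countably many tolerances $\epsilon_k$ to spend, and germs $P(y_0,\dots,y_0^{(n)})$ can tend to $0$ arbitrarily fast (e.g.\ $y_0-h$ for $h\in H$ close to $y_0$), so no fixed error beats them all at once. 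I therefore do not expect a naive ``flat perturbation'' to suffice; the real work should lie in using the asymptotic structure of the analytic Hardy field $H$ and of the cut $(A,B)$ to reduce sign-preservation for all $P$ to countably many controllable conditions — after which the assignment $y_0^{(k)}\mapsto y^{(k)}$ becomes an isomorphism of ordered differential fields onto the Hardy field $H\langle y\rangle$, fixing $A\cup B$, and yields the desired analytic $y\in\Cc^{\omega}$ with $A<y<B$.
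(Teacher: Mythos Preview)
Your diagnosis is accurate: the $\d$-transcendental case is the entire content (one may pass to a maximal analytic $H$, which is a closed $H$-field, so any $y_0\notin H$ is $\d$-transcendental), and for unbounded $H$ no single family of tolerances $\epsilon_k$ can dominate every $|P(y_0,\dots,y_0^{(n)})|^{-1}$. But the gap you flag is not filled, and the paper does \emph{not} proceed by perturbing a black-box $y_0$ handed over by \cite{ADHfgh}.

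Instead the paper re-proves, in the analytic category, the two structural ingredients from \cite{ADHfgh} that together yield $\eta_1$: (i)~every pc-sequence of countable length in an analytic $H$ has an analytic pseudolimit (Corollary~\ref{smanpc}); (ii)~every Case~(b) value-group extension over a maximal analytic $H$ is realized by an analytic germ (Corollary~\ref{bcon}, via Lemma~\ref{lemth01an}). Theorem~\ref{aneta} then follows exactly as the main theorem of \cite{ADHfgh} is derived in \cite[Section~9]{ADHfgh}. The ``countably many controllable conditions'' you anticipate materialize differently in the two cases. For (i), Proposition~\ref{propimmy2} shows that, after a multiplicative conjugation arranging $0\notin v(f-H)$, it suffices that $(y-f)^{(i)}\prec t^k$ for all $i,k$---a fixed countable family \emph{independent of~$H$}---while the residual width-$\{\infty\}$ case forces $\cf(H)=\omega$ and hence $H$ bounded, whereupon Corollary~\ref{cor:analytify} applies. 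For (ii), Proposition~\ref{revb} shows that the order type of $H\langle y_0\rangle$ over $H$ is already determined by the \emph{countable} (hence bounded) subfield $K\langle y_0\rangle$ with $K=\R\langle f_0,f_1,\dots\rangle$, so one only needs $(y-y_0)^{(i)}$ to beat the nonzero elements of that bounded field. In both cases the analytic $y$ is constructed directly from the asymptotic data of the gap, not by a post-hoc perturbation of a pre-existing $y_0$; your proposed route of perturbing an arbitrary $\d$-transcendental $y_0$ over an unbounded closed $H$ appears genuinely blocked.

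A minor correction: closure of $H\langle y_0\rangle$ under differentiation gives $y_0\in\Cc^{<\infty}=\bigcap_n\Cc^n$, not $y_0\in\Cc^\infty$; these differ for germs. The paper bridges this with a separate glueing argument (Proposition~\ref{apinf}) before invoking Whitney, yielding Corollary~\ref{apomega}. This does not affect your $\d$-algebraic argument, which only needs $y_0\in\Cc^r$ on some half-line.
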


\noindent
Equivalently, all maximal analytic Hardy fields are $\eta_1$. The theorem goes through for smooth Hardy fields  with $y\in \Cc^{\infty}$ in the conclusion; this can be obtained by
refining the glueing constructions from~\cite{ADHfgh} (as was actually done in an early version of that paper at the cost of three extra pages). 
Here we take care of the smooth and analytic versions simultaneously. Compared to \cite{ADHfgh} the new tool we use is a powerful theorem due to Whitney on approximating any $\Cc^n$-function or $\Cc^{\infty}$-function by an analytic function, where the approximation also takes derivatives into account.  From that we obtain an analogue for germs, namely Corollary~\ref{apomega}, which in turn we use to derive Theorem~\ref{aneta} from various results in
the non-analytic setting of \cite{ADHfgh}. 

In the course of establishing Theorem~\ref{aneta} in Sections~\ref{sec:pc} and~\ref{s4} we revisit results on pc-sequences and on extensions of type (b) 
from \cite{ADHfgh}. In Section~\ref{s4} (see Theorem~\ref{thm:dense}) this  also leads to: 

\begin{theorem}\label{thdense} If $H$ is a maximal analytic Hardy field, then $H$ is dense in any Hardy field extension of $H$.
\end{theorem}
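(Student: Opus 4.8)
The plan is to prove the statement directly: fixing a Hardy field extension $K\supseteq H$, I would show that every $f\in K$ lies in the closure of $H$, i.e. that for each $f\in K$ and each $\epsilon\in H^{>0}$ there is $h\in H$ with $\abs{f-h}<\epsilon$ (equivalently, that $H$ is order-dense in $K$). Suppose toward a contradiction that this fails for some $f\in K\setminus H$; then there is $\epsilon\in H^{>0}$ with $\abs{f-h}\ge\epsilon$ for all $h\in H$, so $(f-\epsilon,f+\epsilon)\cap H=\emptyset$. Since the constant field of the Hardy field $K$ is $\R$ and $H$ is a maximal analytic Hardy field, $H$ has no proper $\d$-algebraic $H$-field extension with constant field $\R$; hence $f$ must be $\d$-transcendental over $H$ (otherwise $H\langle f\rangle$ would be such an extension). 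This places $f$ in the setting of the pc-sequence results revisited in Section~\ref{sec:pc}.

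Next I would describe how $f$ sits over $H$ using the valuation-theoretic analysis of those sections: the position of $f$ in the cut it determines in $H$ is captured by a pc-sequence in $H$ (an extension of type (b)), and, $f$ being $\d$-transcendental, this pc-sequence is of $\d$-transcendental type. The crucial move is then to realize this same position analytically. Applying Corollary~\ref{apomega} --- the germ-level consequence of Whitney's approximation theorem --- I would produce an analytic germ $\tilde f\in\Cc^{\omega}$ lying in a Hardy field extension of $H$ that approximates $f$ in the sense the corollary provides, closely enough that $\abs{\tilde f-f}<\epsilon$.

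With such $\tilde f$ in hand the contradiction is immediate. For every $h\in H$ we have $\tilde f-h=(f-h)+(\tilde f-f)$ with $\abs{f-h}\ge\epsilon>\abs{\tilde f-f}$, so $\tilde f-h$ is eventually of the sign of $f-h$ and in particular nonzero; hence $\tilde f\notin H$ and $\tilde f$ realizes precisely the cut of $f$ in $H$. Since $\tilde f$ lies in a Hardy field extension of $H$ and $\tilde f\in\Cc^{\omega}$ while $H\subseteq\Cc^{\omega}$, the field $H\langle\tilde f\rangle$ is a proper analytic Hardy field extension of $H$, contradicting maximality of $H$. Therefore no such $f$ exists and $H$ is dense in $K$. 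The same approximation also covers the case where $f$ has a growth rate (valuation) not already present in $H$: the approximant $\tilde f$ inherits that growth rate and hence again lies outside $H$.

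The main obstacle is the middle step: guaranteeing that the analytic approximant $\tilde f$ not merely approximates $f$ but generates a genuine Hardy field extension of $H$ --- that $H\langle\tilde f\rangle$ is closed under the derivation and totally ordered by eventual sign. This is exactly what forces the use of Whitney-type approximation controlling all derivatives simultaneously, packaged as Corollary~\ref{apomega}, together with the refined pc-sequence and type-(b) analysis of Sections~\ref{sec:pc}--\ref{s4}; the order-theoretic gap $(f-\epsilon,f+\epsilon)\cap H=\emptyset$ supplied by the failure of density is precisely what makes this realization land outside $H$ and so contradict maximality.
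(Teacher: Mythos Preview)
Your overall strategy matches the paper's: assume density fails, construct an analytic $H$-hardian germ outside $H$, contradict maximality. But the proposal has a genuine gap at the step you yourself flag as the main obstacle, and a confusion in the setup.

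First, the confusion: you write that the position of $f$ is ``captured by a pc-sequence in $H$ (an extension of type (b))''. These are two different phenomena. A pc-sequence in $H$ approximating $f$ is relevant when $H\langle f\rangle$ is an \emph{immediate} extension of $H$ (same value group). ``Type (b)'' refers to a specific case in the classification of extensions of the $H$-couple $(\Gamma,\psi)$ --- it concerns elements of the \emph{value group} $\Gamma_K$ not in $\Gamma$, and it is handled not by pc-sequences in $H$ but by Proposition~\ref{revb}. The paper's proof separates these cleanly: Claim~1 treats the immediate case via pc-sequences and Proposition~\ref{propimmy2}; Claim~2 shows that in fact $\Gamma_K=\Gamma$ always holds, by ruling out Cases (a), (b), $(\mathrm c)_n$ for any putative $\beta\in\Gamma_K\setminus\Gamma$. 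Each of those cases requires a different argument --- Case (b) uses Corollary~\ref{cor:revb}, while Cases (a) and $(\mathrm c)_n$ reduce to a pc-sequence of \emph{logarithmic derivatives} $g_\rho^\dagger$ in $H$, not of approximants to $f$ itself. Your throwaway sentence ``the approximant $\tilde f$ inherits that growth rate'' does not account for this structure.

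Second, the gap: Corollary~\ref{apomega} gives you an analytic $\tilde f$ with $\abs{(f-\tilde f)^{(n)}}<_{\ex}\varepsilon$ for all $n$, but it does \emph{not} by itself tell you that $\tilde f$ is $H$-hardian. That conclusion is the content of Propositions~\ref{propimmy1}/\ref{propimmy2} (immediate case) and Proposition~\ref{revb} (Case (b)), and those results have specific hypotheses you must arrange: for instance, \ref{propimmy2} needs $0\notin v(f-H)$, obtained by a preliminary multiplicative shift, and it needs $\varepsilon\prec t^k$ for all $k$ --- a quite different object from the order-theoretic gap $\epsilon\in H^{>0}$ you start with. Your sketch also omits the cauchy subcase of the immediate situation: if $\big(v(f-f_\rho)\big)$ is cofinal in $\Gamma$, then $H$ \emph{is} dense in $H\langle f\rangle$ directly, and no analytic approximant (hence no contradiction) is needed or available there.
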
 

\noindent
If all maximal analytic Hardy fields are maximal Hardy fields, which seems to us implausible, then of course the theorems above would be trivially true. Can a maximal analytic Hardy field ever be a maximal Hardy field? For all we know answering questions of this kind might involve set-theoretic assumptions like CH. In  \cite{ADHfgh} and the present paper we ran into other set-theoretic issues of this kind, and in Section 8 we state some problems that arose this way. 

\medskip\noindent
Sections 5--7 prove embedding theorems about (not necessarily maximal) analytic Hardy fields. A special case of a result in Section~7:  the ordered differential field  $\T$ is isomorphic over $\R$ to an analytic Hardy field extension of $\R$.

\subsection*{Notations and conventions}  We take these from \cite[end of introduction]{ADHfgh}, but for the convenience of the reader we list here what is most needed. 

We let $i$, $j$, $k$, $l$, $m$, $n$ range over $\N=\{0,1,2,\dots\}$. We let $\Cc$ be the ring of germs at $+\infty$ of continuous functions~${(a,+\infty)\to \R}$, ${a\in\R}$.
Let $f$, $g$ range over~$\Cc$, with representatives~$(a,+\infty)\to\R$ ($a\in\R$) of $f$, $g$ also denoted by $f$, $g$. 
Then on $\Cc$ we have   binary relations $\le$, $<_{\ex}$ given by $f\leq g :\Leftrightarrow f(t)\le g(t)$, eventually, and~$f<_{\ex} g :\Leftrightarrow f(t) < g(t)$, eventually, as well as~$\preceq$,~$\prec$,~$\asymp$,~$\sim$   defined as follows:   
\begin{align*} f\preceq g\quad &:\Longleftrightarrow\quad \text{$|f|\le c|g|$ for some  $c\in \R^{>}$,}\\
f\prec g\quad &:\Longleftrightarrow\quad   \text{$g\in \Cc^\times$ and $\abs{f}\leq c\abs{g}$ for all $c\in\R^>$},\\
f\asymp g \quad &:\Longleftrightarrow\quad  \text{$f\preceq g$ and $g\preceq f$,}\\
f\sim g\quad &:\Longleftrightarrow\quad   f-g\prec g.
\end{align*}
For $r\in \N\cup\{\infty\}$ we let
$\Cc^r$ be the subring of $\Cc$ consisting of the germs of $r$ times continuously differentiable functions $(a,+\infty)\to \R$, $a\in \R$. Thus $\Cc^{<\infty} :=  \bigcap_{n}\Cc^n$ is a differential ring with the obvious derivation, 
and has $\Cc^{\infty}$ as a differential subring. 
 We let $\Cc^{\omega}$ be the differential subring of $\Cc^{\infty}$ consisting of the germs of real analytic functions~$(a,+\infty)\to \R$, $a\in \R$. 
A {\it Hausdorff field}\/ is a subfield $H$ of $\Cc$; it is naturally also an ordered and valued field (see~\cite[Section~2]{ADH5}), with the relations~$\le$,~$\preceq$ on~$\Cc$ restricting to the ordering of $H$ and the dominance relation associated to the valuation of $H$, respectively.
 A {\em Hardy field\/}  is a differential subfield  of $\Cc^{<\infty}$. (So every Hardy field is a Hausdorff field.)

 The prefix~``$\d$'' abbreviates ``differentially''; for example, ``$\d$-algebraic'' means ``differentially algebraic''. 

\subsection*{Acknowledgements} We thank the anonymous referee for suggestions as to how to improve readability
of the paper.

\endgroup

\section{Whitney's Approximation Theorem}

\noindent
In this section we let $r\in\N\cup\{\infty\}$ and $a,b\in \R$. 
We shall use the one-variable case of an approximation theorem due to Whitney~\cite[Lem\-ma~6]{Whitney} to upgrade various constructions of smooth functions  to analytic functions. To formulate this theorem we introduce some notation. 
Let~${U\subseteq\R}$ be  open. Then $\Cc^m(U)$ denotes the $\R$-algebra of $\Cc^m$-functions $U\to\R$, 
with $\Cc(U):=\Cc^0(U)$ and
 $\C^\infty(U):=\bigcap_m \C^m(U)$,
and~$\C^\omega(U)$ denotes the $\R$-algebra of analytic functions $U\to\R$, so $\C^\omega(U)\subseteq\C^\infty(U)$.
Let~$S\subseteq U$ be nonempty.
For $f$ in $\Cc(U)$ we set
$$\dabs{f}_S\ :=\ \sup\big\{ \abs{f(s)}:\, s\in S\big\} \in [0,\infty], $$
so for $f,g\in\Cc(U)$ and $\lambda\in \R$ (and the convention $0\cdot\infty=\infty\cdot 0=0$) we have 
$$\dabs{f+g}_S\le \dabs{f}_S+\dabs{g}_S,\quad \dabs{\lambda f}_S=|\lambda|\cdot\dabs{f}_S,\quad\text{and}\quad
\dabs{fg}_S\leq\dabs{f}_S\dabs{g}_S.$$  
If $\emptyset\neq S'\subseteq S$ then 
$\dabs{f}_{S'}\leq\dabs{f}_S$.
Next,
let~$f\in\Cc^m(U)$. 
We then put
$$\dabs{f}_{S;\,m}\ :=\ \max\big\{ \dabs{f}_S,\dots,\dabs{f^{(m)}}_S\big\}\in [0,\infty].$$
Then again for  $f,g\in\Cc(U)$ and $\lambda\in \R$  we have 
$$\dabs{f+g}_{S;m}\le \dabs{f}_{S;m}+\dabs{g}_{S;m},\quad \dabs{\lambda f}_{S;m}=|\lambda|\cdot\dabs{f}_{S;m},$$
and 
\begin{equation}\label{eq:prod norm}
\dabs{fg}_{S;\,m}\  \leq\ 2^m \dabs{f}_{S;\,m}\dabs{g}_{S;\,m}
\end{equation}
Let $f\in \Cc(U)$. For $U=\R$ we set $\dabs{f}_{m}:=\dabs{f}_{\R;\,m}$.  
For $k\leq m$ and $\emptyset\neq S'\subseteq S\subseteq U$ we have
$\dabs{f}_{S';\,k} \leq \dabs{f}_{S;\,m}$. Moreover, $\dabs{f}_{S;\,m}$ does not change if
$S$ is replaced by its closure in $U$.
 
\begin{theorem}[Whitney]\label{thm:WAP dim 1, general} Let $(a_n)$, $(b_n)$, $(\varepsilon_n)$ be sequences in $\R$ and $(r_n)$
in $\N$ such that $a_0= b_0$, $(a_n)$ is strictly decreasing, $(b_n)$ is strictly increasing, and $\varepsilon_n>0$, $r_n\le r$ for all $n$. Set $I:=\bigcup_n K_n$, where $K_n:=[a_n, b_n]$.   Then, for any $f\in \Cc^r(I)$, there exists $g\in\Cc^\omega(I)$ such that for all $n$ we have
$\dabs{{f-g}}_{K_{n+1}\setminus K_n;\,r_n}<\varepsilon_n$.
\end{theorem}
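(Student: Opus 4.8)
\emph{Strategy.} The plan is to build $g$ as a series $g=\sum_{n\ge0}g_n$ of \emph{entire} functions (restricted to $I$), obtained by successive correction so that each partial sum $s_n:=\sum_{j\le n}g_j$ already matches $f$ on the fresh annulus $K_{n+1}\setminus K_n$ to within $\varepsilon_n$ in the seminorm $\dabs{\cdot}_{\,\cdot\,;\,r_n}$, while every later term $g_j$ ($j>n$) is negligible there. Two ingredients are required: a local device turning a prescribed $\Cc^r$-target into an entire function close to it in the relevant seminorm on compacta, and a mechanism forcing the \emph{infinite} sum to be real analytic rather than merely $\Cc^r$. The second is the crux, since a locally uniform limit of analytic functions need not be analytic and the usual smooth partition-of-unity glueing is unavailable: a nonzero analytic function cannot vanish on an interval.

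\emph{Local device.} I would mollify with the heat kernel $\phi_t(x):=(4\pi t)^{-1/2}\exp(-x^2/4t)$. If $\psi\in\Cc^r(\R)$ has compact support, then $\psi*\phi_t$ extends to an entire function of $z\in\Co$; since $(\psi*\phi_t)^{(k)}=\psi^{(k)}*\phi_t$ for $k\le r$ and $\phi_t$ is an approximate identity, $\dabs{\psi*\phi_t-\psi}_{L;k}\to0$ as $t\downarrow0$ for every compact $L\subseteq\R$ and $k\le r$. The analytic payoff comes from the complex bound $\abs{\phi_t(z-y)}=(4\pi t)^{-1/2}\exp\!\big(-[(\Re(z-y))^2-(\Im z)^2]/4t\big)$ for $y\in\R$: if $\psi$ vanishes on a neighbourhood of a compact interval $K$ with $\sigma:=\operatorname{dist}(K,\supp\psi)>0$, then on the cone $\Omega:=\{z\in\Co:\Re z\in\operatorname{int}K,\ \abs{\Im z}<\tfrac12\operatorname{dist}(\Re z,\supp\psi)\}$ one has $(\Re(z-y))^2-(\Im z)^2\ge\tfrac34\operatorname{dist}(\Re z,\supp\psi)^2\ge\tfrac34\sigma^2$ for $y\in\supp\psi$, whence $\sup_{\Omega}\abs{\psi*\phi_t}\le\big(\int_\R\abs{\psi}\big)(4\pi t)^{-1/2}\exp(-3\sigma^2/16t)$. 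Thus, over any region on whose neighbourhood $\psi$ is flat, the mollification is exponentially small in $1/t$; moreover the cone has half-width $\ge d/2$ over any point lying at distance $\ge d$ inside $K$.

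\emph{Induction.} As $(a_n)$ strictly decreases and $(b_n)$ strictly increases, $K_n\subseteq\operatorname{int}K_{n+1}$ with a positive gap, and $I=\bigcup_n\operatorname{int}K_n$ is open. Fix in advance smooth cut-offs $\chi_j$ (with $K_{-1}:=\emptyset$) such that $\chi_j=0$ on a neighbourhood of $K_{j-1}$, $\chi_j=1$ on a neighbourhood of $K_{j+1}\setminus K_j$, and $\supp\chi_j\subseteq K_{j+2}$; these exist by the gaps, and their seminorms are fixed numbers. Put $s_{-1}:=0$, and at stage $j$ set $e_j:=f-s_{j-1}\in\Cc^r(I)$, $\psi_j:=\chi_j e_j$ (compactly supported, extended by $0$ to $\R$), and $g_j:=\psi_j*\phi_{t_j}$, choosing $t_j>0$ so small that simultaneously: (I) $\dabs{g_j-\psi_j}_{K_{j+2};\,R_j}<\delta_j$, where $R_j:=\max(r_0,\dots,r_{j+2})$ and $\delta_j>0$ are summable tolerances fixed below with $\delta_j\le2^{-j}$; and (II) $\sup_{\Omega_{j-1}}\abs{g_j}<2^{-j}$, with $\Omega_{j-1}$ the cone of the previous paragraph for $K:=K_{j-1}$ (legitimate since $\psi_j$ is flat near $K_{j-1}$). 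The decisive point is that on $K_{j+1}\setminus K_j$ one has $\chi_j\equiv1$, hence $\psi_j=e_j$ and $f-s_j=\psi_j-g_j$ \emph{there}, so $\dabs{f-s_j}_{K_{j+1}\setminus K_j;\,r_j}\le\dabs{g_j-\psi_j}_{K_{j+2};\,R_j}<\delta_j$ is secured with no prior control of $e_j$ on the new annulus and no cut-off transition occurring on the annulus itself.

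\emph{Assembly.} For each $n$ we have $\dabs{f-g}_{K_{n+1}\setminus K_n;\,r_n}\le\dabs{\psi_n-g_n}_{K_{n+1}\setminus K_n;\,r_n}+\sum_{j>n}\dabs{g_j}_{K_{n+1}\setminus K_n;\,r_n}$. The first term is $<\delta_n$. For $j\ge n+2$ we have $\psi_j=0$ on $K_{n+1}\supseteq K_{n+1}\setminus K_n$ (as $K_{n+1}\subseteq K_{j-1}$), so $\dabs{g_j}_{K_{n+1}\setminus K_n;\,r_n}=\dabs{g_j-\psi_j}_{K_{n+1}\setminus K_n;\,r_n}\le\delta_j$ by (I). For $j=n+1$, using \eqref{eq:prod norm} and that $\dabs{e_{n+1}}_{K_{n+1}\setminus K_n;\,r_n}=\dabs{f-s_n}_{K_{n+1}\setminus K_n;\,r_n}<\delta_n$ was just obtained at stage $n$, the term is at most $2^{\,r_n}\dabs{\chi_{n+1}}_{K_{n+1}\setminus K_n;\,r_n}\delta_n+\delta_{n+1}$. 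Choosing the pre-fixed $\delta_n$ small enough (in terms of $\varepsilon_n$ and the fixed seminorms of the $\chi_j$) makes the whole sum $<\varepsilon_n$, as required. Finally $g:=\sum_j g_j$ is analytic: given $x_0\in I$, pick $N$ with $x_0\in\operatorname{int}K_N$ and $d:=\operatorname{dist}(x_0,\partial K_N)>0$; by monotonicity $\operatorname{dist}(x_0,\supp\psi_j)\ge d$ for all large $j$, so $\Omega_{j-1}$ contains a fixed disc $D(x_0,\rho)$, and by (II) $\sum_j\sup_{D(x_0,\rho)}\abs{g_j}<\infty$, whence the series converges uniformly on $D(x_0,\rho)$ to a holomorphic function and $g$ is analytic at $x_0$. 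Hence $g\in\Cc^\omega(I)$. The one genuine obstacle is exactly the clash resolved by the cone estimate: each $g_n$ must be an honest, possibly large, analytic correction on the new annulus yet summably small on complex neighbourhoods of the already-treated interior — something no cut-and-paste of analytic functions can provide.
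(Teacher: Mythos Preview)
Your proposal is correct and follows essentially the same route as the paper's proof in the appendix: Gaussian mollification of successively corrected, cut-off errors $\chi_j(f-s_{j-1})$ with the same support pattern on the cut-offs, the same Leibniz bookkeeping for the $j=n+1$ term, and analyticity via the complex bound $\abs{\phi_t(z-y)}=(4\pi t)^{-1/2}\exp\!\big(-[\Re(z-y)^2-(\Im z)^2]/4t\big)$. The only cosmetic differences are your local ``cone at each point'' presentation of the domain of holomorphy versus the paper's global regions $U_n$, and your handling of non-monotone $r_n$ via $R_j=\max(r_0,\dots,r_{j+2})$ rather than first arranging $r_n\le r_{n+1}$.
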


\noindent
For a self-contained proof of Theorem~\ref{thm:WAP dim 1, general}, see the appendix to this paper.

\medskip
\noindent
We let $\Cc^m_a$ be the $\R$-algebra of functions $f\colon[a,+\infty)\to\R$ which extend to a function in $\Cc^m(U)$
for some open neighborhood  $U\subseteq\R$ of $[a,+\infty)$. Likewise we define $\Cc^\infty_a$ and~$\Cc^\omega_a$, and $\Cc_a:=\Cc_a^0$;
see~\cite[Section~3]{ADH5}.
For $f\in\Cc_a^m$ and nonempty~${S\subseteq  [a,+\infty)}$   we   put
$\dabs{f}_{S;\,m}:=\dabs{g}_{S;\,m}$ where~$g\in\Cc^m(U)$ is any extension of~$f$ to an open neighborhood $U\subseteq\R$ of $[a,+\infty)$. 
We shall use the following special case of Theorem~\ref{thm:WAP dim 1, general}:

\begin{cor}\label{cor:WAP dim 1} Let $f\in \Cc^r_b$, and 
let   $(b_n)$ be a strictly increasing sequence
in $\R$ such that $b_0=b$ and~${b_n\to\infty}$ as $n\to\infty$, and let $(\varepsilon_n)$ be a sequence in $\R^>$  and
$(r_n)$ be a sequence in~$\N$ with~$r_n\leq r$ for all $n$.   Then there exists $g\in\Cc_b^\omega$ such that for all $n$ we have   $\dabs{{f-g}}_{[b_n,b_{n+1}];\,r_n}<\varepsilon_n$.
\end{cor}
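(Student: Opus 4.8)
The plan is to reduce Corollary~\ref{cor:WAP dim 1} to Theorem~\ref{thm:WAP dim 1, general} by manufacturing two-sided sequences $(a_n)$, $(b_n)$ from the given one-sided data. The corollary concerns functions on $[b,+\infty)$, while the theorem is stated for the symmetric exhaustion $I=\bigcup_n[a_n,b_n]$ with $a_0=b_0$ and $(a_n)$ strictly decreasing to the left. So first I would choose an arbitrary strictly decreasing sequence $(a_n)$ in $\R$ with $a_0=b$ (for instance $a_n:=b-n$), leaving the given $(b_n)$ as is, so that $K_n=[a_n,b_n]$ and $I=\bigcup_n K_n=(\,\lim_n a_n,\,+\infty)$ is an open interval containing $[b,+\infty)$ in its interior (or at least containing $[b,+\infty)$). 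Since $f\in\Cc_b^r$, by definition $f$ extends to a $\Cc^r$-function on some open neighborhood $U$ of $[b,+\infty)$; shrinking $(a_n)$ if necessary so that $\lim_n a_n$ lies in $U$, we may assume this extension is defined on all of $I$, giving $f\in\Cc^r(I)$ to which Theorem~\ref{thm:WAP dim 1, general} applies.

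\smallskip
The next point is to match up the norms. Theorem~\ref{thm:WAP dim 1, general} controls $\dabs{f-g}_{K_{n+1}\setminus K_n;\,r_n}$ on the half-open annular pieces $K_{n+1}\setminus K_n$, whereas the corollary wants control of $\dabs{f-g}_{[b_n,b_{n+1}];\,r_n}$ on the full closed intervals $[b_n,b_{n+1}]$. The key observation is that $[b_n,b_{n+1}]\subseteq K_{n+1}\setminus K_n$ fails in general, since $[b_n,b_{n+1}]$ is disjoint from $K_n$ on the right but $K_{n+1}\setminus K_n$ also picks up the left annulus $[a_{n+1},a_n)$. To handle this cleanly I would instead reindex or pass to closures: because $\dabs{\,\cdot\,}_{S;\,m}$ is unchanged when $S$ is replaced by its closure in $I$ (as noted just before the theorem) and is monotone in $S$, it suffices to arrange that $[b_n,b_{n+1}]$ is contained in $K_{n+1}\setminus K_n$ together with its closure. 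Since $b_n\in K_n$ and $b_{n+1}\in K_{n+1}$, we have $[b_n,b_{n+1}]\subseteq[a_{n+1},b_{n+1}]=K_{n+1}$, and the portion of $[b_n,b_{n+1}]$ lying outside $K_n$ is $(b_n,b_{n+1}]$; its closure is $[b_n,b_{n+1}]$. Thus by the closure-invariance of the norm,
\[
\dabs{f-g}_{[b_n,b_{n+1}];\,r_n}\ =\ \dabs{f-g}_{(b_n,b_{n+1}];\,r_n}\ \leq\ \dabs{f-g}_{K_{n+1}\setminus K_n;\,r_n}\ <\ \varepsilon_n,
\]
where the middle inequality uses monotonicity in the set together with $(b_n,b_{n+1}]\subseteq K_{n+1}\setminus K_n$.

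\smallskip
Finally, applying Theorem~\ref{thm:WAP dim 1, general} with the sequences $(a_n)$, $(b_n)$, $(\varepsilon_n)$, $(r_n)$ just assembled produces $g\in\Cc^\omega(I)$ satisfying the annular estimates, and by the display above this $g$ satisfies the desired estimates on the intervals $[b_n,b_{n+1}]$. It remains to promote $g$ from an element of $\Cc^\omega(I)$ to an element of $\Cc_b^\omega$: since $I$ is an open interval containing $[b,+\infty)$, the restriction of $g$ to $[b,+\infty)$ extends to an analytic function on the open neighborhood $I$ of $[b,+\infty)$, so $g\restrict[b,+\infty)\in\Cc_b^\omega$ by the very definition of $\Cc_b^\omega$.

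\smallskip
The step I expect to require the most care is the bookkeeping around the annular regions $K_{n+1}\setminus K_n$: one has to verify that $(b_n,b_{n+1}]$ really is contained in $K_{n+1}\setminus K_n$ (it is disjoint from $K_n=[a_n,b_n]$ precisely because these points exceed $b_n$) and then invoke closure-invariance to recover the closed interval $[b_n,b_{n+1}]$ in the norm. Everything else is a routine translation of the one-sided hypotheses into the two-sided format demanded by the theorem, plus an unwinding of the definition of $\Cc_b^\omega$.
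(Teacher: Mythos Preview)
Your proof is correct and follows essentially the same approach as the paper: extend $f$ to a $\Cc^r$-function on an open interval $(a,+\infty)$ with $a<b$, choose a strictly decreasing sequence $(a_n)$ with $a_0=b$ converging to $a$, and apply Theorem~\ref{thm:WAP dim 1, general}. The paper's proof is a terse three lines that leaves the norm bookkeeping implicit, whereas you spell out carefully why control on $K_{n+1}\setminus K_n$ yields control on $[b_n,b_{n+1}]$ via monotonicity and closure-invariance; this extra care is accurate and arguably clarifying, but not a different method.
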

\begin{proof} 
Extend $f$ to a function in~$\Cc^r(I)$, also denoted by $f$,  where $I:=(a,+\infty)$, ${a<b}$, and take
a strictly decreasing sequence $(a_n)$ in $\R$ with $a_0=b_0$ and~${a_n\to a}$ as $n\to+\infty$.
 Now apply Theorem~\ref{thm:WAP dim 1, general}.
\end{proof}

\noindent
Here is a useful  reformulation of Corollary~\ref{cor:WAP dim 1}: 

\begin{cor}\label{cor:WAP dim 1, 1}
Let     $f\in \Cc^r_b$ and $\varepsilon\in\Cc_b$ be such that $\varepsilon>0$ on $[b,+\infty)$. Then there exists $g\in\Cc_b^\omega$ such that  $\abs{(f-g)^{(k)}(t)} < \varepsilon(t)$  for all $t\ge b$ and
$k\leq \min\!\big\{r,1/\varepsilon(t)\big\}$.
\end{cor}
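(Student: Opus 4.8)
The plan is to derive this as a straightforward consequence of Corollary~\ref{cor:WAP dim 1}, the only substantive move being to use compactness to replace the single positive germ $\varepsilon$ by constant data on each piece of a suitable exhaustion of $[b,+\infty)$ by compact intervals. Concretely, I would fix once and for all a strictly increasing sequence $(b_n)$ in $\R$ with $b_0=b$ and $b_n\to\infty$ (say $b_n:=b+n$), so that the intervals $J_n:=[b_n,b_{n+1}]$ cover $[b,+\infty)$ and every $t\ge b$ lies in some $J_n$.

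On each compact $J_n$ the continuous function $\varepsilon$ attains a positive minimum, so I would set $\varepsilon_n:=\min_{t\in J_n}\varepsilon(t)\in\R^>$; then $\varepsilon_n\le\varepsilon(t)$, and hence $1/\varepsilon(t)\le 1/\varepsilon_n$, for all $t\in J_n$. To bound the admissible order of differentiation I would pick a natural number $M_n\ge 1/\varepsilon_n$ (for instance $M_n:=\lceil 1/\varepsilon_n\rceil$) and put $r_n:=\min\{r,M_n\}$, which lies in $\N$ and satisfies $r_n\le r$, as Corollary~\ref{cor:WAP dim 1} demands. Feeding the sequences $(b_n)$, $(\varepsilon_n)$, $(r_n)$ into that corollary yields $g\in\Cc_b^\omega$ with $\dabs{f-g}_{J_n;\,r_n}<\varepsilon_n$ for every $n$.

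It then remains to verify that this $g$ has the required property. Given $t\ge b$, choose $n$ with $t\in J_n$, and let $k\in\N$ satisfy $k\le\min\{r,1/\varepsilon(t)\}$. Then $k\le r$ and $k\le 1/\varepsilon(t)\le 1/\varepsilon_n\le M_n$, so $k\le\min\{r,M_n\}=r_n$; thus the bound $\dabs{f-g}_{J_n;\,r_n}<\varepsilon_n$ gives $\abs{(f-g)^{(k)}(t)}<\varepsilon_n\le\varepsilon(t)$, which is exactly the desired inequality. I do not expect a genuine obstacle: the argument is essentially bookkeeping. The one point deserving care is the case $r=\infty$, where one cannot take $r_n=r$; this is why $\varepsilon$ must be used twice over, once to fix the tolerance $\varepsilon_n$ on $J_n$ and once (through $1/\varepsilon_n$) to decide how many derivatives $r_n\le r$ one insists on controlling there, so that the number of controlled derivatives is permitted to grow with $n$ exactly as fast as $1/\varepsilon$ forces.
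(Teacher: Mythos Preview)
Your proposal is correct and follows essentially the same route as the paper's proof: choose a partition $(b_n)$ of $[b,+\infty)$, on each $J_n$ set $\varepsilon_n=\min_{J_n}\varepsilon$ and $r_n=\min\{r,\text{an integer}\ge 1/\varepsilon_n\}$, apply Corollary~\ref{cor:WAP dim 1}, and verify that any admissible $k$ at $t\in J_n$ satisfies $k\le r_n$. The only cosmetic difference is that the paper takes $r_n=\min\big\{r,\lfloor\dabs{1/\varepsilon}_{J_n}\rfloor\big\}$ (floor rather than your ceiling), which works for the same reason since $k$ is an integer.
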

\begin{proof}
Take a strictly increasing sequence $(b_n)$ in $\R$ with $b_0=b$ and $b_n\to \infty$ as~$n\to\infty$, and
for each $n$, set 
$$\varepsilon_n:=\min\big\{\varepsilon(t): t\in[b_n,b_{n+1}]\big\}\in\R^>,\quad
r_n:= \min\left\{ r, \big\lfloor  \dabs{1/\varepsilon}_{[b_n,b_{n+1}]}\big\rfloor\right\}\in\N.$$
Corollary~\ref{cor:WAP dim 1} yields  $g\in\Cc^\omega_b$ such that $\dabs{f-g}_{[b_n,b_{n+1}];\,r_n}<\varepsilon_n$ for all~$n$. Then for~$t\in [b_n,b_{n+1}]$ and $k\leq \min\!\big\{r,1/\varepsilon(t)\big\}$ we have
$k\leq r_n$ and so
\[\abs{(f-g)^{(k)}(t)} \leq \dabs{f-g}_{[b_n,b_{n+1}];\,r_n}<\varepsilon_n\leq \varepsilon(t).\qedhere\]
\end{proof}

\noindent
This leads to an improved version of \cite[Lemma~2.5]{ADHfgh}:

\begin{lemma}\label{smooth->analytic}
Let $f, g\in\Cc_b$ be such that $f < g$ on $[b,+\infty)$. Then there exists $y\in\Cc^\omega_b$   such that  $f < y < g$ on $[b,+\infty)$. 
\end{lemma}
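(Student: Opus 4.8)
The plan is to reduce the lemma to a single application of Corollary~\ref{cor:WAP dim 1, 1} with $r=0$, by approximating the \emph{midpoint} of $f$ and $g$ while keeping the error below \emph{half the gap} between them. Concretely, I would set
\[
h\ :=\ \tfrac12(f+g)\in\Cc_b,\qquad \varepsilon\ :=\ \tfrac12(g-f)\in\Cc_b,
\]
both of which are germs of continuous functions. The hypothesis $f<g$ on $[b,+\infty)$ is exactly the statement that $\varepsilon>0$ on $[b,+\infty)$, so $\varepsilon$ is an admissible tolerance for the corollary.

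Next I would apply Corollary~\ref{cor:WAP dim 1, 1} with exponent $r=0$ to the germ $h\in\Cc^0_b=\Cc_b$ and the positive germ $\varepsilon$. Since $r=0$, the bound $k\le\min\{r,1/\varepsilon(t)\}$ forces $k=0$ for every $t\ge b$, so the derivative conditions are vacuous and the corollary produces $y\in\Cc^\omega_b$ satisfying $\abs{h(t)-y(t)}<\varepsilon(t)$ for all $t\ge b$.

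Finally I would unwind the estimate. One checks immediately that $h-\varepsilon=f$ and $h+\varepsilon=g$, so $\abs{h-y}<\varepsilon$ on $[b,+\infty)$ is equivalent to
\[
f\ =\ h-\varepsilon\ <\ y\ <\ h+\varepsilon\ =\ g\qquad\text{on }[b,+\infty),
\]
which is the desired conclusion, and $y\in\Cc^\omega_b$ as required.

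There is no genuine obstacle remaining: all the analytic content has already been absorbed into Corollary~\ref{cor:WAP dim 1, 1}, and the present statement is essentially its $r=0$ specialization. The only point worth flagging is precisely that one must take $r=0$ here, because $f$ and $g$ are merely continuous; this collapses the variable-order approximation of the corollary to a pure sup-norm approximation with the everywhere-positive, possibly rapidly shrinking tolerance $\varepsilon=(g-f)/2$, which is exactly what squeezes the analytic germ $y$ strictly between $f$ and $g$.
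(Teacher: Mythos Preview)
Your proof is correct and is essentially identical to the paper's own proof: set the midpoint $z=\tfrac{1}{2}(f+g)$ and tolerance $\varepsilon=\tfrac{1}{2}(g-f)$, then apply Corollary~\ref{cor:WAP dim 1, 1} with $r=0$ to obtain $y\in\Cc^\omega_b$ with $\abs{y-z}<\varepsilon$, hence $f<y<g$. The only difference is notational ($h$ versus $z$) and that you spell out the reduction in more detail.
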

\begin{proof}
Let $z:=\frac{1}{2}(f+g)\in\Cc_b$ and $\varepsilon:=\frac{1}{2}(g-f)\in\Cc_b$.
Corollary~\ref{cor:WAP dim 1, 1} (with $r=0$) then yields
 $y\in\Cc_{b}^\omega$ such that $\abs{y-z}<\varepsilon$ on $[b,+\infty)$, so
  $f< y < g$ on $[b,+\infty)$. 
\end{proof}

\noindent
Thus we can replace ``$\phi\in \mathcal{C}^{\infty}$''   by  ``$\phi\in\Cc^\omega$'' in the statements of Lemma~2.7 and   Corollary~2.8 in \cite{ADHfgh}. 
Here is another consequence of Corollary~\ref{cor:WAP dim 1, 1}:

\begin{cor}\label{cor:WAP dim 1, 2}
Let   $f\in\Cc^r$  and $\varepsilon\in\Cc$, $\varepsilon>_{\ex} 0$. Then there exists $g\in\Cc^\omega$ such that for all $k\le r$ we have
$\abs{{(f-g)^{(k)}}} <_{\ex}  \varepsilon$.
\end{cor}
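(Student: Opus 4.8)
The plan is to reduce to Corollary~\ref{cor:WAP dim 1, 1} by passing from germs to representatives and then shrinking the error function. First I would fix $b\in\R$ large enough that $f$ has a representative belonging to $\Cc^r_b$ and $\varepsilon$ has a representative in $\Cc_b$ with $\varepsilon>0$ on $[b,+\infty)$. This is possible: a representative of $f$ defined on some $(a,+\infty)$ restricts, for $b>a$, to a function on $[b,+\infty)$ extending to the $\Cc^r$-function on the open neighborhood $(a,+\infty)$ of $[b,+\infty)$, so it lies in $\Cc^r_b$; and $\varepsilon>_{\ex}0$ supplies the representative of $\varepsilon$. I henceforth work with such representatives.

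The obstacle is that Corollary~\ref{cor:WAP dim 1, 1} only bounds $(f-g)^{(k)}$ for $k\le\min\{r,1/\varepsilon(t)\}$, so where $\varepsilon(t)$ fails to be small (say $\varepsilon(t)\ge 1$) it may control only $k=0$, whereas we want the bound for every $k\le r$. The remedy is to apply the corollary with a smaller error. So I would pick any $\eta\in\Cc_b$ with $\eta>0$ on $[b,+\infty)$ and $\eta(t)\to 0$ as $t\to\infty$ (e.g.\ $\eta(t)=1/(1+t^2)$) and set $\delta:=\min\{\varepsilon,\eta\}\in\Cc_b$. Then $\delta>0$ and $\delta\le\varepsilon$ on $[b,+\infty)$, while $\delta\le\eta$ forces $\delta(t)\to 0$, hence $1/\delta(t)\to\infty$.

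Applying Corollary~\ref{cor:WAP dim 1, 1} to $f$ and $\delta$ produces $g\in\Cc_b^\omega$ with $\abs{(f-g)^{(k)}(t)}<\delta(t)\le\varepsilon(t)$ for all $t\ge b$ and $k\le\min\{r,1/\delta(t)\}$; its germ lies in $\Cc^\omega$. Now fix $k\le r$. Since $1/\delta(t)\to\infty$, there is $t_k\ge b$ with $k\le 1/\delta(t)$, and therefore $k\le\min\{r,1/\delta(t)\}$, for all $t\ge t_k$; for such $t$ we get $\abs{(f-g)^{(k)}(t)}<\varepsilon(t)$. This says precisely $\abs{(f-g)^{(k)}}<_{\ex}\varepsilon$, and it holds for every $k\le r$, with the threshold $t_k$ allowed to depend on $k$ (harmless for germs, and in the case $r\in\N$ one may simply take the maximum of the finitely many $t_k$).

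The only real difficulty, as noted, is reconciling the desired range $k\le r$ with the order restriction $k\le 1/\varepsilon(t)$ built into Corollary~\ref{cor:WAP dim 1, 1}; this is resolved by the passage from $\varepsilon$ to an error $\delta$ tending to $0$, which makes the restriction eventually vacuous for each individual $k$. The remainder is routine bookkeeping relating germs to their representatives.
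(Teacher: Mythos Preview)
Your proof is correct and follows essentially the same approach as the paper: both pass to representatives, replace $\varepsilon$ by a smaller error function tending to $0$ (the paper calls it $\varepsilon^*$ with $\varepsilon^*\prec 1$, you call it $\delta=\min\{\varepsilon,\eta\}$), apply Corollary~\ref{cor:WAP dim 1, 1}, and then observe that for each fixed $k\le r$ the constraint $k\le 1/\delta(t)$ is eventually satisfied.
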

\begin{proof}
Pick $a$ and representatives of $f$ in $\Cc_a^r$ and of $\varepsilon$  in $\Cc_a$, also denoted by~$f$,~$\varepsilon$, with
$\varepsilon>0$ on $[a,+\infty)$.  
Take $\varepsilon^*\in\Cc_a$ with $0<\varepsilon^*\leq \varepsilon$ on $[a,+\infty)$ and $\varepsilon^*\prec 1$.
Corollary~\ref{cor:WAP dim 1, 1} applied to $\varepsilon^*$ in place of $\varepsilon$ yields $g\in \Cc_a^\omega$ such that~$\abs{(f-g)^{(k)}(t)} < \varepsilon^*(t)$ for all $t\ge a$ and $k\leq \min\!\big\{r,1/\varepsilon^*(t)\big\}$.
Given $k\leq r$, take $b\geq a$ such that  $k\le 1/\varepsilon^*(t)$ for all~$t\geq b$; then
$\abs{(f-g)^{(k)}(t)} < \varepsilon(t)$ for such $t$.
\end{proof}


\noindent
Our next goal is to prove a version of Corollary~\ref{cor:WAP dim 1, 2} for  approximating germs in~$\Cc^{<\infty}$ by germs in $\Cc^{\omega}$: see Corollary~\ref{apomega} below.
First a lemma about glueing two approximations $g_{-}$ and $g_{+}$ to a function $f$ to make a single approximation $g$ to $f$ that combines properties of $g_{-}$ and $g_{+}$:
 
\begin{lemma}
Let $f\in \Cc_{a_0}$ and $a_0 \le a < b$. Suppose $f$ is of class $\Cc^n$ on $[a,+\infty)$ and of class $\Cc^{n+1}$ on
$[b,+\infty)$. Let also functions $\varepsilon\in \Cc_{a_0}$    
and  $g_{-}, g_{+}\in \Cc^{\infty}_{a_0}$ be given such that 
\begin{itemize}
\item $\varepsilon >0$ on $[a_0,+\infty)$;
\item $|(f-g_{-})^{(j)}| < \varepsilon$   on $[a,+\infty)$ for $j=0,\dots,n$; and
\item $|(f-g_{+})^{(j)}| < \varepsilon$ on $[b,+\infty)$ for $j=0,\dots,n+1$.
\end{itemize}
 Then, for any $\delta\in \R^{>}$, there is a function $g\in \Cc^{\infty}_{a_0}$ and a $b'> b$ such that: \begin{enumerate}
 \item[(i)] $g=g_{-}$ on $[a_0,b]$ and $g=g_{+}$ on $[b',+\infty)$;
\item[(ii)]  $|(f-g)^{(j)}| < (1+\delta)\varepsilon$ on $[a,+\infty)$ for $j=0,\dots,n$; and
\item[(iii)]  $|(f-g)^{(j)}| < \varepsilon$ on $[b',+\infty)$ for $j=0,\dots,n+1$. 
\end{enumerate}
\end{lemma}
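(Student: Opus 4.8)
The plan is to glue $g_-$ and $g_+$ by a single smooth cutoff whose transition is spread over a long interval $[b,b']$, and to use the length of that interval to tame the derivatives of the cutoff. Concretely, I would fix once and for all a function $\psi\in\Cc^\infty(\R)$ with $0\le\psi\le 1$, $\psi=0$ on $(-\infty,0]$, and $\psi=1$ on $[1,+\infty)$. For a parameter $c>0$ to be chosen at the end, set $b':=b+c$ and $\theta(t):=\psi\big((t-b)/c\big)$, so that $\theta=0$ on $(-\infty,b]$, $\theta\equiv1$ on $[b',+\infty)$, $0\le\theta\le1$, and $\theta^{(i)}(t)=c^{-i}\psi^{(i)}\big((t-b)/c\big)$ for $i\ge1$; in particular $\dabs{\theta^{(i)}}_{\R}\le c^{-i}\dabs{\psi^{(i)}}_{\R}$ for $i\ge1$. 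I would then define $g:=g_-+\theta\,(g_+-g_-)\in\Cc^\infty_{a_0}$. Property (i) is immediate from $\theta=0$ on $[a_0,b]$ and $\theta\equiv1$ on $[b',+\infty)$, and (iii) follows since there $g=g_+$, where the hypothesis on $g_+$ applies (and $f$ is of class $\Cc^{n+1}$ on $[b',+\infty)\subseteq[b,+\infty)$, so the statement even makes sense).

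For (ii) the only interesting region is the transition interval $[b,b']$, on which $f$ is of class $\Cc^{n+1}$; on $[a,b]$ we have $g=g_-$ and on $[b',+\infty)$ we have $g=g_+$, so both cases reduce at once to the hypotheses. Writing $f-g=(1-\theta)(f-g_-)+\theta\,(f-g_+)$, differentiating by the Leibniz rule, and using $(1-\theta)^{(i)}=-\theta^{(i)}$ for $i\ge1$, I get
$$(f-g)^{(j)}=(1-\theta)(f-g_-)^{(j)}+\theta\,(f-g_+)^{(j)}+\sum_{i=1}^{j}\binom{j}{i}\theta^{(i)}\big((f-g_+)-(f-g_-)\big)^{(j-i)}.$$
For $j\le n$ and $t\in[b,b']$, the first two summands form a convex combination of quantities of absolute value $<\varepsilon(t)$, hence together have absolute value $<\varepsilon(t)$, while each factor $\big((f-g_+)-(f-g_-)\big)^{(j-i)}(t)$ is bounded by $2\varepsilon(t)$ (since $j-i\le n$). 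Thus
$$\abs{(f-g)^{(j)}(t)}\ <\ \varepsilon(t)\Big(1+2\sum_{i=1}^{j}\binom{j}{i}\abs{\theta^{(i)}(t)}\Big).$$
It therefore suffices to choose $c$ so large that $2\sum_{i=1}^{j}\binom{j}{i}c^{-i}\dabs{\psi^{(i)}}_{\R}\le\delta$ for every $j\in\{1,\dots,n\}$, which is possible because each of these finitely many sums tends to $0$ as $c\to\infty$. With this $c$ (and the corresponding $b'=b+c$), (ii) holds on $[b,b']$, and the verification is complete.

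The step I expect to be delicate is exactly this last estimate: forcing the error introduced by differentiating the cutoff below $\delta\varepsilon$. The two realizations that make it work are, first, that the derivatives $\theta^{(i)}$ with $i\ge1$ scale like $c^{-i}$ and can hence be made uniformly small by lengthening the transition interval — which is precisely the freedom afforded by allowing $b'$ to be large; and second, that since the bounds on $f-g_\pm$ are \emph{pointwise} multiples of $\varepsilon(t)$, this variable factor cancels in the displayed inequality, leaving a purely combinatorial, $\varepsilon$-independent smallness condition on the $\dabs{\theta^{(i)}}_{\R}$. In particular no lower bound on $\varepsilon$ is needed. The remaining bookkeeping is routine: one only has to note that all derivatives occurring are legitimate, which holds because $f$ is of class $\Cc^{n+1}$ on $[b,+\infty)\supseteq[b,b']$ and $g_-,g_+\in\Cc^\infty_{a_0}$.
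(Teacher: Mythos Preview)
Your proof is correct and is essentially the same as the paper's: both glue $g_-$ and $g_+$ via a smooth cutoff $\theta$ (the paper writes $\beta:=\alpha_{b,b'}$) supported on $[b,b']$, expand $(f-g)^{(j)}$ by Leibniz into a convex combination of $(f-g_\pm)^{(j)}$ plus cross terms involving $\theta^{(i)}(g_+-g_-)^{(j-i)}$, and then use the scaling $\|\theta^{(i)}\|\lesssim (b'-b)^{-i}$ to make the cross terms $\le \delta\varepsilon$ by taking $b'-b$ large. Your organization of the Leibniz expansion in terms of $(f-g_\pm)$ is slightly cleaner than the paper's, but the argument is the same.
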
 
\begin{proof} Let $b'>b$, set $\beta:= \alpha_{b,b'}$ as in \cite[(3.4)]{ADHfgh}, and $g:= (1-\beta) g_{-}+\beta g_{+}$ on~$[a_0,+\infty)$, so $g\in \Cc_{a_0}^{\infty}$. Let $\delta>0$; we show that if $b'-b$ is sufficiently large, then $g$ satisfies~(i), (ii), (iii). It is clear that (i) holds, and so (iii) as well. Then the inequality in (ii) holds on~$[a,b]$ and on $[b',+\infty)$, so it suffices to consider what happens  on $[b,b']$. There we have for $j=0,\dots,n$:
$$(f-g)^{(j)}\ =
\ f^{(j)}-\big((1-\beta) g_{-}^{(j)}+\beta g_{+}^{(j)}\big)-\sum_{i=0}^{j-1}\binom{j}{i}\beta^{(j-i)}\big(g_{+}^{(i)}-g_{-}^{(i)}\big),$$
and $$f^{(j)}-\big((1-\beta) g_{-}^{(j)}+\beta g_{+}^{(j)}\big)=(1-\beta)\big(f-g_{-})^{(j)} + \beta\big(f-g_{+})^{(j)},$$ so
$$\big|f^{(j)}-\big((1-\beta) g_{-}^{(j)}+\beta g_{+}^{(j)}\big)\big|\ \le\ \max\big\{\big|(f-g_{-})^{(j)}\big|, \big|(f-g_{+})^{(j)}\big|\big\}\ <\ \varepsilon  \text{ on $[b,b']$.}$$
By \cite[(3.5)]{ADHfgh} we have reals $C_m\geq 1$ (independent of $b'$) with $\abs{\beta^{(m)}}\leq C_m/(b'-b)^m$. 
Hence for $j=0,\dots,n$ we have on $[b,b']$: 
$$ \left|\sum_{i=0}^{j-1}\binom{j}{i}\beta^{(j-i)}\big(g_{+}^{(i)}-g_{-}^{(i)}\big)\right|\ \le\ 
\sum_{i=0}^{j-1}\binom{j}{i}\frac{C_{j-i}}{(b'-b)^{j-i}}\,\big|g_{+}^{(i)}-g_{-}^{(i)}\big|$$
and $\big|g_{+}^{(i)}-g_{-}^{(i)}\big|<2\varepsilon$ for $i=0,\dots, n$. So for $b'-b$  so large that 
$$\sum_{i=0}^{j-1}\binom{j}{i}\frac{C_{j-i}}{(b'-b)^{j-i}}\ <\  \delta/2,$$
condition (ii) is satisfied. (See also Figure~\ref{fig:gplusgminus}.)
\end{proof} 

\begin{figure}[h]
\def\betafn{10*(0.05*tanh(3+6*(x-1)) + 0.05)}
\def\gplusfn{0.28*8^(-x)+(x-0.2)^(4)+0.3}
\def\gminusfn{(\gplusfn)+0.25*x^3-0.4*x^2-0.3*x+0.3*sin(500*x)+0.3}
\begin{tikzpicture}
  \begin{axis} [axis lines=center, axis y line=none, xmin=-0.5, xmax=1.3, ymin = -0.02, ymax = 1.25, width=0.8\textwidth, height = 0.5\textwidth, xlabel={$t$}, xtick={-0.35,0,1}, xticklabels={\strut $a$, \strut $b$, \strut $b'$}, ytick=1, yticklabels={$1$},   legend style={font=\small},
 legend cell align=left, legend style={at={(1.1,0.75)},anchor=west}]
  
    \addplot [domain=-0.45:1.1, smooth, dotted, very thick] { \gplusfn};
    \addplot [domain=-0.45:1.1, smooth, dashed, very thick] {\gminusfn};
 \addlegendentry{$g_+$};
    \addlegendentry{$g_-$};

  \addplot [domain=-0.45:1.1, smooth, very thick] { (1-(\betafn))*(\gminusfn)+(\betafn)*(\gplusfn) };

    \addlegendentry{$g$};
\draw [dashed] (0,-0.02) -- (0,1.25);
\draw [dashed] (-0.35,-0.02) -- (-0.35,1.25);
\draw [dashed] (1,-0.02) -- (1,1.25);

  \end{axis}
\end{tikzpicture}
\caption{}\label{fig:gplusgminus}
\end{figure}

\begin{prop}\label{apinf} Suppose $f\in \Cc^{<\infty}$ and $\varepsilon\in\Cc$,  $\varepsilon>_{\ex} 0$. Then there exists $g\in \Cc^{\infty}$ such that $|(f-g)^{(n)}| <_{\ex} \varepsilon$ for all $n$.
\end{prop}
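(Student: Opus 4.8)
The plan is to build $g$ by induction, at each stage using Corollary~\ref{cor:WAP dim 1, 2} to produce a smooth (indeed analytic) approximation of $f$ of one higher order on a far-out interval and then splicing it onto what has been constructed so far via the glueing lemma above, and finally to pass to the limit. An iteration is forced by the very difference between $\Cc^{<\infty}$ and $\Cc^{\infty}$: a germ $f\in\Cc^{<\infty}$ has, for each $n$, a representative of class $\Cc^n$ on some $[a_n,+\infty)$, but the $a_n$ may tend to $+\infty$, so there is no single interval on which Corollary~\ref{cor:WAP dim 1, 2} yields one smooth germ approximating $f$ to all orders at once.

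First I would fix reals $a_0\le a_1\le\cdots$ with a representative of $f$ of class $\Cc^n$ on $[a_n,+\infty)$, a sequence $(\delta_n)$ in $\R^{>}$ with $\prod_n(1+\delta_n)<2$, and $\phi:=\tfrac13\varepsilon$, so that $\phi>_{\ex}0$ and $2\phi<_{\ex}\varepsilon$. Writing $B_n:=\prod_{k<n}(1+\delta_k)\in[1,2)$, I aim to construct germs $g_n\in\Cc^{\infty}$ (with representatives on $[a_0,+\infty)$), freezing points $s_n\to+\infty$, and thresholds $d_0<d_1<\cdots$ such that, for all $n$: (a) $g_{n+1}=g_n$ on $[a_0,s_n]$; and (b) $\abs{(f-g_n)^{(j)}}<B_n\phi$ on $[d_j,+\infty)$ for every $j\le n$. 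The role of (a) is to freeze the construction on ever-larger bounded initial pieces, and the role of (b) is to keep the location of the order-$j$ bound pinned at the fixed threshold $d_j$ while its constant degrades only slightly.

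The base case is Corollary~\ref{cor:WAP dim 1, 2} with $r=0$, giving $g_0\in\Cc^{\omega}\subseteq\Cc^{\infty}$ with $\abs{f-g_0}<\phi$ on some $[d_0,+\infty)$. For the inductive step, given $g_n$ I apply Corollary~\ref{cor:WAP dim 1, 2} with $r=n+1$ (legitimate since $f\in\Cc^{n+1}$) and target $B_n\phi$ to obtain $h\in\Cc^{\omega}$ with $\abs{(f-h)^{(j)}}<B_n\phi$ for $j\le n+1$ on some $[s_n,+\infty)$, choosing $s_n>\max\{d_n,n\}$ large enough that $f$ is of class $\Cc^{n+1}$ there. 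I then invoke the glueing lemma above with $\varepsilon$ replaced by $B_n\phi$, with $g_-:=g_n$, $g_+:=h$ (only the values of $h$ on $[s_n,+\infty)$ enter), $\delta:=\delta_n$, and the lemma's interval endpoints taken as $a=d_n$ and $b=s_n$; this returns $g_{n+1}\in\Cc^{\infty}$ and a point $d_{n+1}>s_n$ with $g_{n+1}=g_n$ on $[a_0,s_n]$, with the order-$\le n$ bounds degraded to $B_{n+1}\phi$ on $[d_n,+\infty)$, and with $\abs{(f-g_{n+1})^{(j)}}<B_n\phi\le B_{n+1}\phi$ on $[d_{n+1},+\infty)$ for $j\le n+1$. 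Invariant~(b) at stage $n+1$ then holds because on each $[d_j,s_n]$ with $j\le n$ the function is frozen, so it keeps the smaller bound $B_n\phi<B_{n+1}\phi$, while on $[s_n,+\infty)$ it carries the degraded bound $B_{n+1}\phi$; the new order-$(n+1)$ estimate supplies the threshold $d_{n+1}$.

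Finally I pass to the limit. By (a) and $s_n\to\infty$, for $k\ge m$ one has $g_k=g_m$ on $[a_0,s_m]$, so $g:=\lim_n g_n$ exists on $[a_0,+\infty)$, equals $g_m$ on $[a_0,s_m]$ for each $m$, and is therefore of class $\Cc^{\infty}$ on each $[a_0,s_m]$; as $s_m\to\infty$ this gives $g\in\Cc^{\infty}$. For fixed $n$ and $t\ge d_n$, choose $m\ge n$ with $s_m>t$; then $g^{(n)}(t)=g_m^{(n)}(t)$, and (b) gives $\abs{(f-g)^{(n)}(t)}=\abs{(f-g_m)^{(n)}(t)}<B_m\phi(t)<2\phi(t)<\varepsilon(t)$. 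Hence $\abs{(f-g)^{(n)}}<_{\ex}\varepsilon$ for all $n$, as required. I expect the main obstacle to lie in this bookkeeping rather than in any new analytic estimate: one must keep the accumulated factor $\prod(1+\delta_k)$ below the slack $2$ built into $\phi=\tfrac13\varepsilon$ while forcing the freezing points $s_n$—and with them the thresholds $d_n$—out to $+\infty$, since it is the latter that both makes the limit a genuine germ in $\Cc^{\infty}$ and turns the clean far-right bounds of the glueing lemma into the eventual bounds required. The genuinely delicate analytic point, the control of the Leibniz cross-terms $\beta^{(j-i)}(g_+^{(i)}-g_-^{(i)})$ through $\abs{\beta^{(m)}}\le C_m/(b'-b)^m$, is already packaged inside that lemma.
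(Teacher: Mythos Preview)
Your proposal is correct and follows essentially the same approach as the paper's proof: iteratively apply Whitney approximation to get a $\Cc^\infty$-approximant good to order $n+1$ far out, splice it onto the current approximant via the glueing lemma while controlling the cumulative $(1+\delta)$-factors, and pass to the limit using the freezing on bounded intervals. The paper is terser (it produces all the $g_n$ first via Corollary~\ref{cor:WAP dim 1, 1} with target $\varepsilon/2$ and then glues, whereas you alternate and use Corollary~\ref{cor:WAP dim 1, 2} with target $B_n\phi$), but the structure and all essential ideas coincide; your more explicit bookkeeping of the thresholds $d_j$ and freezing points $s_n$ simply spells out what the paper's ``and so on, and arrange the product of the $(1+\delta)$-factors to be $<2$'' leaves to the reader.
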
 
\begin{proof}  Represent $f$ and $\varepsilon$ by continuous functions $ [a_0,+\infty)\to \R$ ($a_0\in \R$), also denoted by
$f$ and $\varepsilon$, such that $\varepsilon>0$ on $[a_0,+\infty)$. Next, take a strictly increasing sequence $(a_n)$ of real numbers starting with the already given $a_0$, such that $a_n\to \infty$ as $n\to \infty$, and $f$  is of class $\Cc^n$ on
$[a_n,+\infty)$, for each $n$.  Then Corollary~\ref{cor:WAP dim 1, 1} gives for each $n$ a function $g_n\in \Cc_{a_0}^\infty$ such that
$|(f-g_n)^{(j)}|< \varepsilon/2$ on $[a_n,+\infty)$ for~$j=0,\dots,n$.  All this remains true when increasing each  $a_n$ while keeping $a_0$ fixed and maintaining that $(a_n)$ is strictly increasing. Now use the lemma above to construct $g$ as required:  first glue
 $g_0$ and $g_1$ and increase the $a_n$ for $n\ge 1$, then glue the resulting function with $g_2$ and increase the $a_n$ for $n\ge 2$, and so on, and arrange the product of the $(1+\delta)$-factors to be $<2$.
\end{proof} 

\noindent
Now  Corollary~\ref{cor:WAP dim 1, 2} (for $r=\infty$) and  Proposition~\ref{apinf}    yield:

\begin{cor}\label{apomega} For any germs $f\in \Cc^{<\infty}$ and  $\varepsilon\in \Cc$ with $\varepsilon>_{\ex} 0$, there exists a germ $g\in \Cc^{\omega}$ such that $|(f-g)^{(n)}| <_{\ex} \varepsilon$ for all $n$.
\end{cor}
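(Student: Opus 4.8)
The plan is to approximate $f$ in two stages---first by a smooth germ, then by an analytic one---and to glue the two approximations together with the triangle inequality. This works cleanly because each of the two cited tools controls \emph{all} available derivatives simultaneously, so chaining them loses nothing; the only quantitative bookkeeping is to split the target bound $\varepsilon$ as $\varepsilon/2+\varepsilon/2$.

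Concretely, I would first invoke Proposition~\ref{apinf} with $\varepsilon/2$ in place of $\varepsilon$ (note $\varepsilon/2\in\Cc$ and $\varepsilon/2>_{\ex}0$), obtaining a germ $h\in\Cc^{\infty}$ with $|(f-h)^{(n)}|<_{\ex}\varepsilon/2$ for every $n$. Since $h\in\Cc^{\infty}=\Cc^r$ for $r=\infty$, I would then apply Corollary~\ref{cor:WAP dim 1, 2} (for $r=\infty$) to $h$ and to $\varepsilon/2$, producing a germ $g\in\Cc^{\omega}$ with $|(h-g)^{(k)}|<_{\ex}\varepsilon/2$ for all $k$ (the constraint ``$k\le r$'' being vacuous when $r=\infty$). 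Finally, fixing any $n$ and using that both $|(f-h)^{(n)}|<\varepsilon/2$ and $|(h-g)^{(n)}|<\varepsilon/2$ hold eventually, the triangle inequality $|(f-g)^{(n)}|\le|(f-h)^{(n)}|+|(h-g)^{(n)}|$ yields $|(f-g)^{(n)}|<_{\ex}\varepsilon$, which is exactly the required conclusion.

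I do not expect a real obstacle, since Proposition~\ref{apinf} and Corollary~\ref{cor:WAP dim 1, 2} already do all the substantive work; this corollary is essentially their composition. The only points needing a little care are the two just mentioned: splitting the bound as $\varepsilon/2+\varepsilon/2$ so that the triangle inequality closes, and feeding the smooth intermediate germ $h$ into Corollary~\ref{cor:WAP dim 1, 2} with $r=\infty$ so that every derivative is controlled at once. One should also keep in mind that $<_{\ex}$ is an eventual relation, so for each fixed $n$ the two bounds may hold only on (possibly different) half-lines; this is harmless, since for a single $n$ we merely intersect two such half-lines.
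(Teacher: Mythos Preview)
Your proposal is correct and is exactly the approach the paper has in mind: the paper's proof of this corollary is simply the one-line remark that Corollary~\ref{cor:WAP dim 1, 2} (for $r=\infty$) together with Proposition~\ref{apinf} yield the result, and your two-stage argument with the $\varepsilon/2$ splitting and triangle inequality is precisely the intended unpacking of that remark.
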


\noindent
In the next section we apply Corollary~\ref{apomega} to bounded Hardy fields.

\section{Bounded Hardy Fields}

\noindent
As in   \cite[Section~5]{ADH5}  a set $H\subseteq \mathcal C$ is called
{\it bounded}\/ if for some $\phi\in\mathcal C$ we have~$h\leq \phi$ for all~${h\in H}$, and {\it unbounded}\/ otherwise. 
Every countable subset of $\mathcal C$ is bounded, cf.~\cite[remarks after Lemma~5.17]{ADH5}.
As a consequence, the union of  countably many bounded subsets of $\mathcal C$ is also bounded.

In this section we first establish a few general facts about the class of bounded Hardy fields, notably an ``analytification'' result (Corollary~\ref{cor:analytify}) 
needed for the proof of Proposition~\ref{prop:smanpc}. We then focus on the subclass of Hardy fields with
countable cofinality, and show   it to be closed under natural differential-algebraic Hardy field extensions (Theorem~\ref{thm:cf}). 
Some auxiliary results from this subsection (e.g., \ref{lem:cf(K)}, \ref{lem:cof 1}, \ref{lem:H(x)}) are also used later, notably in Section~\ref{sec:ctbl cf}, where we continue our study of   Hardy fields of countable cofinality.

\subsection*{Observations on bounded Hardy fields}
{\it In the rest of this section $H$ is a Hardy field.}\/
If~$H$ is bounded, then there is a~$\phi\in\mathcal C$ with~$\phi>_{\ex}0$ and $g\prec\phi$ for all~$g\in H$, so
$\varepsilon:=1/\phi\in\mathcal C^\times$ satisfies~$\varepsilon>_{\ex} 0$ and~$\varepsilon\prec h$ for all $h\in H^\times$. 
A germ~$y\in\Cc$ is said to be {\it $H$-hardian}\/ if it lies in a Hardy field extension  of $H$, and {\it hardian}\/ if
it lies in some Hardy field (equivalently, it is $\Q$-hardian). For  $r\in\{\infty,\omega\}$, if~$H\subseteq\Cc^r$ and $y\in\Cc^r$ is $H$-hardian, then $H\langle y\rangle\subseteq\Cc^r$; see~\cite[Section~4]{ADH5}.
By \cite[Lem\-mas~5.18, 5.19]{ADH5} we have:

\begin{lemma}\label{lem:5.4.19} If $H$ is bounded, then any $\d$-algebraic Hardy field extension of $H$ is bounded, and
for any $H$-hardian $f\in \Cc^{<\infty}$, the Hardy field $H\<f\>$ is bounded. 
\end{lemma}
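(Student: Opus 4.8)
The plan is to recast boundedness as a two-sided growth bound and then control the growth of the extension through its valuation. First I would record the reformulation implicit in the paragraph preceding the lemma: since a Hardy field is closed under inverses, a Hardy field $K\supseteq H$ is bounded precisely when there is a single germ $\Phi\in\Cc$ with $1/\Phi\preceq|g|\preceq\Phi$ for all $g\in K^\times$. Thus in each case it suffices to exhibit one ceiling germ dominating every element of the extension, and I may freely use the two facts quoted before the lemma, that countable subsets of $\Cc$ are bounded and that countable unions of bounded sets are bounded. I also note that when $f$ in the second assertion is $\d$-algebraic over $H$, the field $H\langle f\rangle$ is a $\d$-algebraic extension and so is covered by the first assertion; the genuine content of the second assertion is therefore the case where $f$ is $\d$-transcendental over $H$.

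For the first assertion the key input is a growth bound for $\d$-algebraic germs: if $\phi$ bounds $H$ and $g$ is $H$-hardian and $\d$-algebraic over $H$, then finitely many iterated logarithms tame $g$ relative to $\phi$, in the sense that $|g|\preceq\exp_k(\phi)$ for some $k=k(g)\in\N$, where $\exp_k(\phi):=\exp(\cdots\exp(\phi)\cdots)$ ($k$ times). This expresses the Hardy-field phenomenon that $\d$-algebraicity cannot manufacture transexponential growth beyond that already present in the coefficients, and I would extract it from the valuation theory of $H$-fields in \cite{ADH5}. Applying the same bound to $1/g$ gives $|g|\succeq 1/\exp_{k'}(\phi)$. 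Now the germs $\exp_k(\phi)$, $k\in\N$, form a countable family, hence are bounded by a single germ $\Phi$; then every $g$ in a $\d$-algebraic extension $K$ of $H$ satisfies $1/\Phi\preceq|g|\preceq\Phi$, so $K$ is bounded.

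For the $\d$-transcendental case of the second assertion I would argue through the valuation. Writing $g_n:=f^{(n)}$, the countable set of all finite monomials $\prod_n g_n^{e_n}$ (with $e_n\in\Z$ of finite support) is bounded by a single germ $\Psi$, which after enlargement I may take to satisfy $1/\Psi\preceq|\mu|\preceq\Psi$ for every such monomial $\mu$. Each element of $H\langle f\rangle$ is a quotient $P/Q$ of polynomials over $H$ in finitely many $g_n$; the ultrametric inequality yields $|P|\preceq\phi\Psi$ at once, so the only remaining issue is a lower bound $|Q|\succeq 1/\Phi$ for a single ceiling $\Phi$, uniformly in $Q$. Equivalently, I must show that the nonzero elements of $H\langle f\rangle$ are not cofinally small.

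The main obstacle, in both assertions, is precisely this: bounding from below the magnitude of an arbitrary nonzero element, uniformly. For the $\d$-algebraic case it is delivered by the iterated-logarithm growth bound applied to reciprocals; for the $\d$-transcendental case it comes down to analysing the value group of $H\langle f\rangle$ over $v(H)$ and checking that cancellations among equal-valued monomials cannot accumulate into cofinal smallness. Here boundedness of $H$ enters decisively: such cancellation is governed by $\R$-linear relations in the residue field (which is $\R$), whose lifts differ by elements of $H$, and these differences are either $0$ or $\succeq 1/\phi$. Making this control uniform over all degrees is the heart of the matter, and it is exactly the valuation-theoretic analysis of $\d$-algebraic and singly generated $H$-field extensions encapsulated in \cite[Lemmas~5.18,~5.19]{ADH5}, which I would ultimately invoke.
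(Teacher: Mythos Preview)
The paper gives no proof of this lemma: it simply records it as a direct consequence of \cite[Lemmas~5.18,~5.19]{ADH5}, with no further argument. Your proposal arrives at exactly the same citation, so the approaches coincide. The sketch you provide beforehand is heuristic motivation rather than an independent proof---you correctly flag that the uniform lower bound on $|Q|$ (equivalently, the upper bound on $1/|Q|$) is the real content, and you defer that content to the cited lemmas; the paper just skips the heuristics entirely.
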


\begin{cor}\label{lem:analytify, 1}
If $H$ is bounded  and $F$ is a Hardy field extension of $H$  and $\d$-algebraic over $H\<S\>$ for some countable $S\subseteq F$, 
then~$F$ is bounded.
\end{cor}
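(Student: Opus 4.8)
The plan is to reduce to the purely $\d$-algebraic case already settled by Lemma~\ref{lem:5.4.19}. Specifically, I would first prove that the intermediate field $H\<S\>$ is bounded, and then invoke the first assertion of Lemma~\ref{lem:5.4.19}: since $F$ is by hypothesis a $\d$-algebraic Hardy field extension of the (now bounded) Hardy field $H\<S\>$, that assertion—applied with $H\<S\>$ in the role of $H$—gives that $F$ is bounded. Thus everything comes down to showing that $H\<S\>$ is bounded.

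To do this I would enumerate $S=\{s_n:n\in\N\}$ (the cases $S$ finite or empty being easier and subsumed), set $H_n:=H\<s_0,\dots,s_n\>$ and $H_{-1}:=H$, and argue by induction on $n$ that each $H_n$ is bounded. The base case $H_{-1}=H$ is the hypothesis. For the inductive step, assume $H_{n-1}$ is bounded; since $s_n\in F\subseteq\Cc^{<\infty}$ and $F$ is a Hardy field extension of $H_{n-1}$, the germ $s_n$ is $H_{n-1}$-hardian and of class $\Cc^{<\infty}$, so the second assertion of Lemma~\ref{lem:5.4.19}, applied with $H_{n-1}$ in place of $H$, yields that $H_n=H_{n-1}\<s_n\>$ is bounded.

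Finally I would note that $H\<S\>=\bigcup_n H_n$, because every element of $H\<S\>$ is a ratio of differential polynomials over $H$ in finitely many of the $s_n$ and hence lies in some $H_n$. Therefore $H\<S\>$ is a union of countably many bounded subsets of $\Cc$, and so is itself bounded by the observation recalled at the start of this section (following \cite[Lemma~5.17]{ADH5}). This establishes the reduction and completes the argument.

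The one point needing care—though I do not expect it to be a genuine obstacle—is checking the hypotheses of Lemma~\ref{lem:5.4.19} at each stage, in particular that $s_n$ really is $H_{n-1}$-hardian. This holds precisely because a single ambient Hardy field $F$ contains all of $S$ and extends every $H_n$, so no separate construction of Hardy field extensions is required; the whole argument is then a routine induction once the ``countable union of bounded sets is bounded'' principle is in hand.
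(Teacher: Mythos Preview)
Your proposal is correct and is exactly the intended argument: the paper states this corollary without proof, and your reconstruction---showing $H\langle S\rangle=\bigcup_n H\langle s_0,\dots,s_n\rangle$ is bounded by induction via the second clause of Lemma~\ref{lem:5.4.19} and the countable-union remark, then applying the first clause to the $\d$-algebraic extension $F\supseteq H\langle S\rangle$---is precisely what the authors had in mind.
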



 
\begin{lemma}\label{lem:analytify, 2}
Let   $f,g\in\Cc^{<\infty}$  be such that $f$ is $H$-hardian, $\d$-transcendental over~$H$, and
$(f-g)^{(n)}\prec h$ for all $h\in H\langle f \rangle^{\times}$ and all~$n$.
Then $g$ is $H$-hardian, and there is a unique isomorphism $H\langle f\rangle\to H\langle g\rangle$
of Hardy fields   over~$H$   sending $f$ to $g$.
\end{lemma}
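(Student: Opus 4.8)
The plan is to reduce everything to a single estimate showing that $g$ behaves exactly like $f$ on differential polynomials over $H$, namely that $P(g)\sim P(f)$ for every nonzero $P\in H\{Y\}$, and then to transport the description $H\langle f\rangle=\Frac\big(H\{f\}\big)$ (valid since $f$ is $\d$-transcendental over $H$) across the substitution $f\mapsto g$. Write $\delta:=f-g\in\Cc^{<\infty}$, so the hypothesis reads $\delta^{(n)}\prec h$ for all $h\in H\langle f\rangle^{\times}$ and all $n$. First I would record two elementary facts about $\prec$ in the ring $\Cc$: if $u\prec v$ and $c\in\Cc^{\times}$ then $cu\prec cv$, and a finite sum of germs each $\prec v$ is again $\prec v$. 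Combining these with the hypothesis yields the key product estimate: for any finite product $w=\delta^{(j_1)}\cdots\delta^{(j_k)}$ with $k\geq 1$ and any $h\in H\langle f\rangle^{\times}$, one has $w\prec h$. Indeed $\delta^{(j_1)}\prec h$ while $\abs{\delta^{(j_i)}}\leq 1$ eventually for $i\geq 2$, so $\abs{w}\leq\abs{\delta^{(j_1)}}\leq c\abs{h}$ eventually, for every $c\in\R^{>}$.

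Next comes the central claim. Fix a nonzero $P\in H\{Y\}$, say of order $\leq r$, and regard it as an ordinary polynomial in indeterminates $Y_0,\dots,Y_r$ over $H$ (with $Y_i$ standing for $Y^{(i)}$); set $\delta_i:=\delta^{(i)}$. The exact (finite) Taylor expansion of $P$ about $f$ gives
\[
P(g)-P(f)\ =\ \sum_{\alpha\neq 0}\frac{(-1)^{|\alpha|}}{\alpha!}\,(\partial^{\alpha}P)(f)\,\delta^{\alpha},\qquad \delta^{\alpha}=\prod_{i=0}^{r}\delta_i^{\alpha_i}.
\]
For each $\alpha\neq 0$ with $(\partial^{\alpha}P)(f)\neq 0$ we have $(\partial^{\alpha}P)(f)\in H\langle f\rangle^{\times}$, since $f$ is $\d$-transcendental over $H$; as also $P(f)\neq 0$, the germ $h:=(\partial^{\alpha}P)(f)^{-1}P(f)$ lies in $H\langle f\rangle^{\times}$, so the product estimate gives $\delta^{\alpha}\prec h$ and hence $(\partial^{\alpha}P)(f)\,\delta^{\alpha}\prec P(f)$. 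Summing the finitely many terms yields $P(g)-P(f)\prec P(f)$, that is, $P(g)\sim P(f)$; in particular $P(g)\asymp P(f)$, so $P(g)\in\Cc^{\times}$.

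From this the conclusions follow. Since $P(g)\neq 0$ for every nonzero $P$, the germ $g$ is $\d$-transcendental over $H$, and $H\{g\}$ is an integral domain all of whose nonzero elements lie in $\Cc^{\times}$; as $g\in\Cc^{<\infty}$ and $\Cc^{<\infty}$ is a differential ring closed under inverting its units, the differential subfield of $\Cc$ generated by $H$ and $g$ is contained in $\Cc^{<\infty}$ and is a field, i.e.\ $g$ is $H$-hardian and $H\langle g\rangle=\Frac\big(H\{g\}\big)$. Because $f$ and $g$ are both $\d$-transcendental over $H$, the assignment $P(f)\mapsto P(g)$ is a differential $H$-algebra isomorphism $H\{f\}\to H\{g\}$, which extends uniquely to a differential field isomorphism $\Phi\colon H\langle f\rangle\to H\langle g\rangle$ over $H$ with $\Phi(f)=g$; uniqueness holds since $f$ generates $H\langle f\rangle$ as a differential field over $H$. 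Finally, writing $a=P(f)/Q(f)\in H\langle f\rangle^{\times}$, the estimates $P(g)\sim P(f)$ and $Q(g)\sim Q(f)$ give $\Phi(a)\sim a$, so $\Phi$ preserves sign and dominance and is thus an isomorphism of ordered (valued) Hardy fields. The main obstacle is the central claim: getting the Taylor expansion under control requires the product estimate together with careful bookkeeping of $\prec$ in $\Cc$, which is only a ring, not a field.
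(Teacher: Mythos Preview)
Your proof is correct and follows essentially the same approach as the paper's: both establish $P(g)\sim P(f)$ for every nonzero $P\in H\{Y\}$ via the finite Taylor expansion $P(g)-P(f)=\sum_{|\i|\geq 1}P_{(\i)}(f)(g-f)^{\i}$ and the observation that each $(g-f)^{\i}\prec h$ for all $h\in H\langle f\rangle^\times$. You spell out in more detail why this asymptotic estimate suffices (that $H\{g\}\subseteq\Cc^{<\infty}$ has all nonzero elements in $\Cc^\times$, hence $H\langle g\rangle$ is a Hardy field, and that $\Phi$ preserves sign and dominance), which the paper leaves implicit, but the argument is the same.
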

\begin{proof}
Let $P\in H\{Y\}^{\neq}$,  $r:=\order P$, so  $P(f)\in H\langle f\rangle^\times$. It suffices to show that then $P(f) \sim P(g)$. By Taylor expansion [ADH, p.~210], with $\i$ ranging over $\N^{1+r}$:
$$P(g) - P(f) =  \sum_{\abs{\i}\geq 1} P_{(\i)}(f)(g-f)^{\i}\quad\text{where $P_{(\i)}=\frac{P^{(\i)}}{\i!}\in H\{Y\}$.}$$
If $\abs{\i}\geq 1$, then $(g-f)^{\i}\prec h$ for all $h\in H\langle f\rangle^{\times}$, and hence
$P_{(\i)}(f)(g-f)^{\i} \prec P(f)$. Thus~$P(g)-P(f)\prec P(f)$ as required.
\end{proof}

\noindent
With Corollary~\ref{apomega} we now obtain analytic ``copies'' of certain $H$-hardian germs:

\begin{cor}\label{cor:analytify}
Suppose $H$ is bounded  and $f$ in a Hardy field extension of $H$ is $\d$-transcendental over $H$.
Then there is an $H$-hardian $g\in\Cc^\omega$
and an isomorphism~$H\langle f\rangle\to H\langle g\rangle$
of Hardy fields over $H$   sending $f$ to $g$.
\end{cor}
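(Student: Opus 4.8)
The plan is to manufacture $g$ as a fine analytic approximation to $f$ supplied by Corollary~\ref{apomega}, making the approximation so sharp that Lemma~\ref{lem:analytify, 2} applies and forces $H\langle f\rangle$ and $H\langle g\rangle$ to be isomorphic over $H$ via $f\mapsto g$. The substantive point is to choose the error bound $\varepsilon$ small enough that $f-g$ and all its derivatives are dominated by every nonzero element of $H\langle f\rangle$, and to make such an $\varepsilon$ available I would first record that $H\langle f\rangle$ is bounded.

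Concretely: since $f$ is $H$-hardian, $H\langle f\rangle$ is a Hardy field extension of $H$ with $f\in\Cc^{<\infty}$. Because $H$ is bounded, Lemma~\ref{lem:5.4.19} gives that $H\langle f\rangle$ is bounded as well. Applying the boundedness observation from the beginning of this section to $H\langle f\rangle$ in place of $H$, I obtain $\varepsilon\in\Cc^\times$ with $\varepsilon>_{\ex}0$ and $\varepsilon\prec h$ for all $h\in H\langle f\rangle^\times$. Next I apply Corollary~\ref{apomega} to this $f\in\Cc^{<\infty}$ and to $\varepsilon$, obtaining $g\in\Cc^\omega$ with $\abs{(f-g)^{(n)}}<_{\ex}\varepsilon$ for all $n$. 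Consequently $(f-g)^{(n)}\preceq\varepsilon\prec h$, and hence $(f-g)^{(n)}\prec h$, for every $h\in H\langle f\rangle^\times$ and every $n$.

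Now $f$ is $H$-hardian and $\d$-transcendental over $H$, and $g\in\Cc^{<\infty}$ satisfies exactly the domination hypothesis just verified, so Lemma~\ref{lem:analytify, 2} applies: it yields that $g$ is $H$-hardian and that there is a unique isomorphism $H\langle f\rangle\to H\langle g\rangle$ of Hardy fields over $H$ sending $f$ to $g$. Since $g\in\Cc^\omega$ by its construction via Corollary~\ref{apomega}, this is precisely the claimed conclusion.

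The step I expect to be the crux is securing a single $\varepsilon$ dominated by all of $H\langle f\rangle^\times$: this is exactly where the boundedness of $H\langle f\rangle$ (and thus Lemma~\ref{lem:5.4.19}) is indispensable, for without it no such $\varepsilon$ need exist and the approximation could not be made uniformly fine relative to the whole field. Once $\varepsilon$ is in hand, the argument is a direct composition of Corollary~\ref{apomega} and Lemma~\ref{lem:analytify, 2}, with no further calculation required.
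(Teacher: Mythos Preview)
Your proof is correct and follows essentially the same approach as the paper: use Lemma~\ref{lem:5.4.19} to get $H\langle f\rangle$ bounded, pick $\varepsilon\in\Cc^\times$ with $\varepsilon>_{\ex}0$ and $\varepsilon\prec h$ for all $h\in H\langle f\rangle^\times$, apply Corollary~\ref{apomega} to obtain $g\in\Cc^\omega$, and finish with Lemma~\ref{lem:analytify, 2}. The paper's proof is just a terser version of yours.
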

\begin{proof}
By Lemma~\ref{lem:5.4.19},   the Hardy field $H\langle f\rangle$ is bounded, so we can
take~$\varepsilon\in\Cc^\times$ with $\varepsilon>_{\ex} 0$ and~$\varepsilon \prec h$   for all $h\in H\langle f\rangle^\times$.
Corollary~\ref{apomega} yields a $g\in\Cc^\omega$ such that~$\abs{(f-g)^{(n)}}\leq \varepsilon$
for all $n$, and so it remains to appeal to  Lemma~\ref{lem:analytify, 2}.
\end{proof}

\noindent
Recall from [ADH, 10.6] that an $H$-field $L$ is said to be {\it Liouville closed}\/ if it is real closed and for all~$f,g\in L$ there exists $y\in L^\times$ with $y'+fy=g$.
If $H\supseteq\R$, then our Hardy field~$H$ is an $H$-field, and $H$ has  a smallest Liouville closed Hardy field extension~$\Li(H)$. (See \cite[Section~4]{ADH5}.)
We can now also strengthen  \cite[Theorem~5.1]{ADHfgh}:

\begin{cor}\label{cor:5.1ana}
Suppose $H\supseteq\R$ is Liouville closed, and $\phi\in\Cc$, $\phi>_{\ex}H$. Then there is
an $H$-hardian $z\in\Cc^\omega$ with $z>_{\ex}\phi$.
\end{cor}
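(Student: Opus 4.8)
The plan is to reduce to the non-analytic \cite[Theorem~5.1]{ADHfgh} and then ``analytify'' the resulting germ using the machinery of this section. First I would apply that theorem not to $\phi$ itself but to $\phi+1$: since $\phi>_{\ex}H$ we also have $\phi+1>_{\ex}H$, so the theorem yields an $H$-hardian germ $f$ with $f>_{\ex}\phi+1$. Building in this additive margin of $1$ is what will let the (small) approximation error be absorbed, so that the analytic substitute still satisfies the \emph{strict} inequality $z>_{\ex}\phi$ rather than only an asymptotic one.

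The crucial requirement for the analytification is that $f$ be $\d$-transcendental over $H$, since this is exactly the hypothesis of Corollary~\ref{cor:analytify}. I would therefore arrange $f$ to be $\d$-transcendental over $H$, and this is the point I expect to be the main obstacle. I would try to read it off the construction in \cite{ADHfgh}: an element exceeding a Liouville closed $H$ must grow transexponentially relative to $H$, which forces $\d$-transcendence over $H$. Failing a clean such argument, the fallback is to first produce \emph{any} $H$-hardian $f_0>_{\ex}\phi+1$ via \cite[Theorem~5.1]{ADHfgh}, and then (over the bounded Hardy field $H\langle f_0\rangle$) select a $\d$-transcendental $H\langle f_0\rangle$-hardian germ $f>_{\ex}f_0$; such an $f$ is then $\d$-transcendental over $H$ as well and still satisfies $f>_{\ex}\phi+1$. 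In either case note that $H$ is bounded, because $\phi$ bounds it, so $H\langle f\rangle$ is bounded by Lemma~\ref{lem:5.4.19}.

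Next I would run the analytification as in the proof of Corollary~\ref{cor:analytify}, but keeping the approximation quality in hand. Pick $\varepsilon\in\Cc^{\times}$ with $\varepsilon>_{\ex}0$ and $\varepsilon\prec h$ for all $h\in H\langle f\rangle^{\times}$ (possible since $H\langle f\rangle$ is bounded); in particular $\varepsilon\prec 1$, as $1\in H\langle f\rangle^{\times}$. Corollary~\ref{apomega} then supplies a germ $g\in\Cc^{\omega}$ with $\abs{(f-g)^{(n)}}<_{\ex}\varepsilon$ for all $n$, and Lemma~\ref{lem:analytify, 2} shows that $g$ is $H$-hardian, with an isomorphism $H\langle f\rangle\to H\langle g\rangle$ of Hardy fields over $H$ sending $f$ to $g$.

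Finally I would verify the growth condition. Since $\abs{f-g}<_{\ex}\varepsilon\prec 1$, we have $\abs{f-g}<1$ eventually, and hence eventually
$$g\ =\ f-(f-g)\ \geq\ f-\abs{f-g}\ >\ f-1\ >\ \phi,$$
the last step using $f>_{\ex}\phi+1$. Thus $z:=g\in\Cc^{\omega}$ is $H$-hardian with $z>_{\ex}\phi$, as required. The only genuinely delicate ingredient is securing $\d$-transcendence of $f$ over $H$; once that is in place, the result follows from a single application of the analytification machinery combined with the additive-margin trick.
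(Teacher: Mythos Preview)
Your proposal is correct and follows essentially the same approach as the paper. The $\d$-transcendence of $f$ over $H$ is not an obstacle: since $f>_{\ex}\phi+1>_{\ex}H$ and $H$ is Liouville closed, it follows directly from \cite[Lemma~5.1]{ADH5}, which is exactly the citation the paper uses at this step.
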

\begin{proof}
By  \cite[Theorem~5.1]{ADHfgh} we have an $H$-hardian $y\in\Cc^\infty$ with~$y>_{\ex}\phi +1$.
Then~$y$ is $\d$-transcendental over $H$ and $H\langle y \rangle$ is bounded, by~\cite[Lemma~5.1]{ADH5} and
Lemma~\ref{lem:5.4.19}. 
This yields $\varepsilon\in\Cc$ such that $\varepsilon>_{\ex} 0$ and~$\varepsilon \prec h$   for all $h\in H\langle y\rangle^\times$.
Now Corollary~\ref{apomega}  gives~$z\in\Cc^\omega$ with~$\abs{y^{(n)}-z^{(n)}}<_{\ex}\varepsilon$ for all $n$.
Then $z$ is  $H$-hardian by Lemma~\ref{lem:analytify, 2}, and~$z=y+(z-y)>_{\ex}\phi$.
\end{proof}

\noindent
Thus maximal Hardy fields,  maximal $\Cc^\infty$-Hardy fields, and maximal $\Cc^{\omega}$-Hardy fields are unbounded; see~also~\cite[Corollary~5.23 and succeeding remarks]{ADH5}.) The {\it cofinality}\/ of a totally ordered set $S$ (that is,
the smallest ordinal isomorphic to a  cofinal subset of~$S$) is denoted by~$\cf(S)$; likewise~$\ci(S)$ denotes the {\it coinitiality}\/ of~$S$; cf.~[ADH, 2.1].
As~\cite[Theorem~5.1]{ADHfgh} gave rise to~\cite[Co\-rol\-la\-ry~5.2]{ADHfgh}, so Corollary~\ref{cor:5.1ana} yields:

\begin{cor}\label{corsjoan} If $H$ is a maximal analytic Hardy field, then $\cf(H) > \omega$, and thus
$$\ci(H)\ =\ \cf(H^{<a})\ =\ \ci(H^{>a})\ >\ \omega\  \text{ for all $a\in H$.}$$
Likewise with ``smooth'' in place of ``analytic''.
\end{cor}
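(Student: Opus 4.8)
Corollary \ref{corsjoan} asserts that for a maximal analytic (or smooth) Hardy field $H$, we have $\cf(H) > \omega$, and consequently the displayed chain of equalities/inequalities holds for all $a \in H$. Let me sketch a proof.

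The plan is to prove the two assertions separately: first that $\cf(H)>\omega$, by a maximality argument resting on Corollary~\ref{cor:5.1ana} and paralleling exactly the way \cite[Corollary~5.2]{ADHfgh} is deduced from \cite[Theorem~5.1]{ADHfgh}; and then to read off the displayed chain of (in)equalities from $\cf(H)>\omega$ using only elementary symmetries of an ordered field, of the kind recorded in [ADH, 2.1]. The genuine content is in the first part.

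For the first part, I would begin by recording that a maximal analytic Hardy field $H$ contains its constant field $\R$ and, being elementarily equivalent to $\T$, is real closed and Liouville closed (the Liouville property $\forall f\,\forall g\,\exists y\,(y\neq 0\wedge y'+fy=g)$ being first-order in the language of ordered differential fields). Since $H$ is unbounded it has no largest element, so $\cf(H)\geq\omega$; suppose toward a contradiction that $\cf(H)=\omega$. Then $H$ has a countable cofinal subset, which is bounded because every countable subset of $\Cc$ is bounded \cite[remarks after Lemma~5.17]{ADH5}. Hence there is $\phi\in\Cc$ with $h\leq_{\ex}\phi$ for all $h\in H$, and replacing $\phi$ by $\phi+1$ gives $\phi>_{\ex}H$. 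Corollary~\ref{cor:5.1ana} then produces an $H$-hardian germ $z\in\Cc^\omega$ with $z>_{\ex}\phi>_{\ex}H$. As $z>_{\ex}H$ we have $z\notin H$, while $H\langle z\rangle\subseteq\Cc^\omega$ because $H\subseteq\Cc^\omega$ and $z\in\Cc^\omega$ is $H$-hardian. Thus $H\langle z\rangle$ is an analytic Hardy field properly extending $H$, contradicting maximality; so $\cf(H)>\omega$.

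For the chain of equalities I would fix $a\in H$ and exploit the order automorphisms and anti-automorphisms of the field $H$. Negation $x\mapsto -x$ reverses the order of $H$, giving $\ci(H)=\cf(H)$. The translation $x\mapsto x-a$ is an order-isomorphism carrying $H^{<a}$ to $H^{<0}$, and negation then carries $H^{<0}$ onto $H^{>0}$ order-reversingly, so $\cf(H^{<a})=\ci(H^{>0})$; inversion $x\mapsto 1/x$ reverses the order of $H^{>0}$, whence $\ci(H^{>0})=\cf(H^{>0})=\cf(H)$, the last equality because $H^{>0}$ is cofinal in $H$. Finally the reflection $x\mapsto 2a-x$ carries $H^{<a}$ onto $H^{>a}$ order-reversingly, giving $\cf(H^{<a})=\ci(H^{>a})$. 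Combining these identities yields $\ci(H)=\cf(H^{<a})=\ci(H^{>a})=\cf(H)>\omega$, as claimed.

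The smooth case goes through verbatim, using the smooth analogue \cite[Theorem~5.1]{ADHfgh} (together with the fact that $H\langle z\rangle\subseteq\Cc^\infty$ whenever $H\subseteq\Cc^\infty$ and $z\in\Cc^\infty$ is $H$-hardian) in place of Corollary~\ref{cor:5.1ana}. I expect the only delicate point to be the very first step of the contradiction argument: converting the countable-cofinality hypothesis into a \emph{single} germ $\phi\in\Cc$ with $\phi>_{\ex}H$, since this is precisely the input Corollary~\ref{cor:5.1ana} requires in order to manufacture the proper analytic (or smooth) extension. Once $\phi$ is in hand, everything else is either a direct citation or routine ordered-field symmetry.
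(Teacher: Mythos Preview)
Your proof is correct and follows exactly the approach the paper indicates: deduce $\cf(H)>\omega$ from Corollary~\ref{cor:5.1ana} by the same maximality argument that gives \cite[Corollary~5.2]{ADHfgh} from \cite[Theorem~5.1]{ADHfgh}, then read off the displayed equalities from the order symmetries of an ordered field. One tiny remark: you need not invoke unboundedness to get that $H$ has no largest element (and hence $\cf(H)\ge\omega$); this holds in any ordered field since $h+1>h$.
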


\noindent
Call a subset $F$ of $\Cc$ {\bf cofinal} if for each $\phi\in\mathcal C$ there exists
$f\in F$ with~$\phi\leq f$. 
If~$F_1,F_2\subseteq\Cc$ and for all $f_1\in F_1$ there is an $f_2\in F_2$ with~$f_1\leq f_2$,
and $F_1$ is cofinal, then $F_2$ is cofinal.
Clearly each cofinal subset of $\Cc$ is unbounded.
The following strengthens \cite[Theorem~7]{S}:
 
\begin{cor}\label{CHcof}
Assume the Continuum Hypothesis {\em CH}: $2^{\aleph_0}=\aleph_1$. Then
there is a cofinal analytic Hardy field.
\end{cor}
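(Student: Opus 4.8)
The plan is to build, by transfinite recursion of length $\omega_1$, an increasing chain of bounded Liouville closed analytic Hardy fields and to take $H$ to be its union; CH is used both to enumerate $\Cc$ in order type $\omega_1$ and to make the boundedness bookkeeping close off at $\omega_1$. Under CH we have $\card(\Cc)=2^{\aleph_0}=\aleph_1$, so I would fix an enumeration $\Cc=\{\phi_\alpha:\alpha<\omega_1\}$. Throughout the recursion I would maintain as invariants that $H_\alpha$ is an analytic Hardy field with $\R\subseteq H_\alpha$, that $H_\alpha$ is Liouville closed, and---crucially---that $H_\alpha$ is bounded, so that Corollary~\ref{cor:5.1ana} stays applicable.

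For the base step set $H_0:=\Li(\R)$: here $\R\subseteq\Cc$ is bounded (the germ of $t\mapsto t$ dominates every constant), and $\Li(\R)$ is $\d$-algebraic over $\R$, hence bounded by Lemma~\ref{lem:5.4.19}; it lies in $\Cc^\omega$ because the Liouville closure of an analytic field stays analytic, via the $\Cc^\omega$-analogues of the extension results in \cite[Section~4]{ADH5} and the preservation fact ``$H\subseteq\Cc^\omega$, $y\in\Cc^\omega$ $H$-hardian $\Rightarrow H\langle y\rangle\subseteq\Cc^\omega$''. At a successor stage, given $H_\alpha$ I would exploit boundedness to pick $\psi\in\Cc$ with $h\leq\psi$ eventually for all $h\in H_\alpha$, and put $\chi:=\max\{\psi,\phi_\alpha\}+1\in\Cc$, so that $\chi>_{\ex}H_\alpha$ (since $\chi\geq\psi+1\geq h+1>h$ eventually) and $\chi>\phi_\alpha$ eventually. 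Corollary~\ref{cor:5.1ana}, applied to the Liouville closed field $H_\alpha\supseteq\R$ with $\phi:=\chi$, then yields an $H_\alpha$-hardian $z_\alpha\in\Cc^\omega$ with $z_\alpha>_{\ex}\chi\geq\phi_\alpha$, and I would set $H_{\alpha+1}:=\Li\big(H_\alpha\langle z_\alpha\rangle\big)$. All invariants persist: $H_\alpha\langle z_\alpha\rangle\subseteq\Cc^\omega$ because $z_\alpha\in\Cc^\omega$ is $H_\alpha$-hardian; it is bounded by Lemma~\ref{lem:5.4.19} (as $z_\alpha\in\Cc^{<\infty}$ is $H_\alpha$-hardian and $H_\alpha$ is bounded); and passing to $\Li$ keeps it bounded (the closure is $\d$-algebraic over $H_\alpha\langle z_\alpha\rangle$, so Lemma~\ref{lem:5.4.19} applies again) and analytic. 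At a limit stage $\lambda<\omega_1$ I would set $H_\lambda:=\bigcup_{\alpha<\lambda}H_\alpha$; as a union of a chain it is again an analytic Liouville closed Hardy field containing $\R$, and since $\lambda$ is countable it is a union of countably many bounded sets, hence bounded by the remark recalled at the start of this section.

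Finally set $H:=\bigcup_{\alpha<\omega_1}H_\alpha$. It is an analytic Hardy field, and for every $\phi\in\Cc$, writing $\phi=\phi_\alpha$, we have $\phi_\alpha\leq z_\alpha\in H_{\alpha+1}\subseteq H$ eventually; hence $H$ is cofinal. (That $H$ itself is unbounded is consistent with the construction: only the intermediate $H_\alpha$ with $\alpha<\omega_1$ need be bounded.) The main obstacle is exactly the propagation of boundedness, which is what lets Corollary~\ref{cor:5.1ana} fire at every stage; this rests on two facts recorded earlier---that a $\d$-algebraic Hardy field extension of a bounded field is bounded (so the repeated passage to $\Li$ is harmless) and that a countable union of bounded sets is bounded (so the limit stages below $\omega_1$ are harmless)---and it is the latter that pins the recursion length at $\omega_1$, i.e. where CH enters. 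A secondary point requiring care is the preservation of analyticity under Liouville closure, for which I would appeal to the $\Cc^\omega$-versions of \cite[Section~4]{ADH5}.
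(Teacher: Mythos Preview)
Your proof is correct and follows essentially the same approach as the paper's: build a transfinite chain of bounded analytic Hardy fields of length $\omega_1$, using Corollary~\ref{cor:5.1ana} to outgrow each $\phi_\alpha$ and Lemma~\ref{lem:5.4.19} together with the countable-union property of boundedness to keep the invariant alive. The only cosmetic difference is that the paper treats all stages uniformly by taking $H^*:=\Li\!\big(\big(\bigcup_{\alpha<\beta}H_\alpha\big)(\R)\big)$ at stage~$\beta$, whereas you maintain Liouville closedness throughout by closing up at successor stages and observing it persists under directed unions at limits.
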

\begin{proof}
Put $\mathfrak c:=2^{\aleph_0}$, 
and let $\alpha$, $\alpha'$, $\beta$  range over ordinals~$<\mathfrak c$.
Choose an enumeration~$(\phi_\alpha)_{\alpha<\mathfrak c}$ of $\Cc$.
Suppose   $\big((H_\alpha,h_\alpha)\big)_{\alpha<\beta}$ is a family of bounded analytic Hardy fields  $H_\alpha$, each with an element $h_\alpha\in H_\alpha$,   such that
\begin{equation}\label{eq:cof}
\alpha<\alpha'<\beta\ \Rightarrow\ H_{\alpha}\subseteq H_{\alpha'}\qquad\text{and}\qquad
\alpha<\beta\ \Rightarrow\ \phi_\alpha <_{\ex} h_\alpha.
\end{equation}
Then $H:=\bigcup_{\alpha<\beta}H_\alpha$ is an analytic Hardy field, and $H$ is bounded, as the
union of countably many bounded subsets of $\mathcal C$.
By Lemma~\ref{lem:5.4.19},  $H^*:=\Li\!\big(H(\R)\big)$ is also bounded.
Take $\phi\in\Cc$ with $\phi>_{\ex} H^*$ and $\phi\geq\phi_\beta$.
  Corollary~\ref{cor:5.1ana} yields an $H^*$-hardian   $h_\beta\in\Cc^\omega$ with  
$h_\beta>_{\ex}\phi$. Then the analytic Hardy field~$H_\beta:=H^*\langle h_\beta\rangle$ is bounded by Lemma~\ref{lem:5.4.19}, contains $H_\alpha$ for all~$\alpha<\beta$,  
and $\phi_\beta<_{\ex}h_\beta$. 

Now  transfinite recursion yields a family $\big((H_\alpha,h_\alpha)\big)_{\alpha<\mathfrak c}$
where   $H_\alpha$ is a bounded analytic   Hardy field and $h_\alpha\in H_\alpha$   such that
\eqref{eq:cof} holds with $\mathfrak c$ in place of $\beta$.
Then~$\bigcup_{\alpha<\mathfrak c} H_\alpha$ is a cofinal analytic Hardy field.
\end{proof}

\noindent
See Corollary~\ref{CHcof strengthened} below for a strengthening of  Corollary~\ref{CHcof}.

\begin{remark} Vera Fischer suggested replacing
CH  in Corollary~\ref{CHcof} by $\mathfrak{b}=\mathfrak{d}$, which is strictly weaker than CH (provided of course that our base theory ZFC is consistent). 
 Here 
$\mathfrak{b}$ and $\mathfrak{d}$ are so-called {\em cardinal characteristics of the continuum}.  See~\cite[2.1, 2.2]{Blass} for their definitions, and~\cite[2.4]{Blass} for the inequalities $\aleph_1\leq\mathfrak{b}\leq\mathfrak{d}\leq\mathfrak{c}$. 
\textit{Martin's Axiom}\/ (MA) implies $\mathfrak{b}=\mathfrak{d}=\mathfrak{c}$, see \cite[6.8, 6.9]{Blass} and~\cite[Corollary~8]{Rudin}.  If ZFC is consistent, then MA is strictly weaker than CH by~\cite{ST}. It is easy to check from their definitions that~$\mathfrak{b}$ is the least cardinality of an unbounded subset of $\C$, and $\mathfrak{d}$ is the least cardinality of a cofinal subset of~$\C$.
Replacing in the proof above~$\mathfrak{c}$ by~$\mathfrak{d}$ and taking
$(\phi_{\alpha})_{\alpha< \mathfrak{d}}$ to enumerate a cofinal subset of $\C$, the proof does indeed go through with
$\mathfrak{b}=\mathfrak{d}$ instead of CH. 
\end{remark}

\subsection*{Hardy fields of countable cofinality}
Hardy fields of countable cofinality are bounded.
For later use we study here such Hardy fields in more detail. Given a valued differential field $K$, let  $C$ denote  its constant field 
and $\Gamma$  its value group. 

\begin{lemma}\label{lem:cf(K)}
Let $K$ be a pre-$H$-field with $\Gamma\neq\{0\}$. Then $\operatorname{cf}(K)=\operatorname{cf}(\Gamma)$.
\end{lemma}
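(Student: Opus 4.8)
The plan is to reduce the statement to the equality $\cf(K)=\ci(\Gamma)$ and then to exploit the symmetry of the ordered abelian group $\Gamma$. Write $v\colon K^\times\to\Gamma$ for the valuation, so that for nonzero $a,b$ we have $a\preceq b\Leftrightarrow v(a)\ge v(b)$. The one fact I would isolate at the outset, using only that the valuation ring $\mathcal O$ of the pre-$H$-field $K$ is convex, is a dictionary between the ordering and the valuation on positive elements: if $0<a\le b$ then $v(a)\ge v(b)$ (apply convexity to $0<a/b\le 1$ to get $a/b\in\mathcal O$), and conversely if $a,b>0$ and $v(a)<v(b)$ then $a>b$ (since then $b/a\in\mathfrak m$ is a positive infinitesimal, hence $b/a<1$; note $0<u$, $v(u)>0$ forces $u<1$, as otherwise $0<1/u\le 1$ would give $v(u)\le 0$). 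I would also record that $\Gamma\neq\{0\}$, being a nontrivial ordered abelian group, has neither a least nor a greatest element.

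For $\cf(K)\le\ci(\Gamma)$ I would take a coinitial family $(\gamma_i)_{i<\kappa}$ in $\Gamma$ with $\kappa:=\ci(\Gamma)$, choose $a_i>0$ in $K$ with $v(a_i)=\gamma_i$ for each $i$, and verify that $(a_i)_{i<\kappa}$ is cofinal in $K$: given $x>0$, since $\Gamma$ has no least element pick $\gamma'\in\Gamma$ with $\gamma'<v(x)$, and then coinitiality yields some $i$ with $\gamma_i\le\gamma'<v(x)$, whence $a_i>x$ by the dictionary. This produces a cofinal subset of cardinality at most $\kappa$. For the reverse inequality $\ci(\Gamma)\le\cf(K)$ I would fix a cofinal family $(x_j)_{j<\lambda}$ in the positive part of $K$ with $\lambda:=\cf(K)$ and show $\{v(x_j):j<\lambda\}$ is coinitial in $\Gamma$: given $\gamma\in\Gamma$, choose $b>0$ with $v(b)=\gamma$, find $j$ with $x_j\ge b$, and conclude $v(x_j)\le\gamma$ from the dictionary.

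Combining the two inequalities gives $\cf(K)=\ci(\Gamma)$, and the order-reversing bijection $\gamma\mapsto-\gamma$ of $\Gamma$ gives $\ci(\Gamma)=\cf(\Gamma)$, completing the proof. The hypothesis $\Gamma\neq\{0\}$ is used precisely where I need $\Gamma$ to have no least element (to pass strictly below each $v(x)$); the statement indeed fails for trivially valued $K$, where $\cf(K)=\omega$ while $\cf(\{0\})=1$. I expect the only real care to lie in the bookkeeping of coinitiality versus cofinality and in getting the strict inequalities in the dictionary right; neither the differential structure nor the full $H$-field axioms are needed, only convexity of $\mathcal O$ and nontriviality of $\Gamma$.
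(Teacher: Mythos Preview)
Your proof is correct and follows essentially the same approach as the paper's. The paper compresses the argument to one line by citing [ADH,~2.1.4] (increasing surjections preserve cofinality) applied to $f\mapsto -vf\colon K^{>}\to\Gamma$; your ``dictionary'' is exactly the verification that this map is increasing, and your two inequalities unpack the cited lemma by hand. The only cosmetic difference is that the paper folds the sign flip into the map (using $-v$), whereas you use $v$ and apply the order-reversing bijection $\gamma\mapsto-\gamma$ of $\Gamma$ at the end.
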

\begin{proof}
Apply [ADH,  2.1.4]   to the increasing surjection~$f\mapsto -vf\colon K^>\to\Gamma$. 
\end{proof}

 
\noindent
In the next two lemmas $\Gamma$ is an ordered abelian group. Recall from [ADH, 2.4]  that  the archimedean class of $\alpha\in\Gamma$ is
$$[\alpha]\ :=\ \big\{ \beta\in\Gamma:\  \text{$\abs{\alpha}\le n\abs{\beta}$ and  $\abs{\beta}\le n\abs{\alpha}$ for some $n\ge 1$} \big\}.$$
We write $[\alpha]_\Gamma$ instead of $[\alpha]$ if we want to stress the dependence on $\Gamma$. We equip~$[\Gamma]=\big\{[\alpha]:\alpha\in\Gamma\big\}$ with the ordering satisfying
$[\alpha] \le [\beta]$ iff $\abs{\alpha}\leq n\abs{\beta}$ for some $n \ge 1$.
 If~$\Delta$ is an ordered subgroup of $\Gamma$, then for each $\delta\in\Delta$ we have $[\delta]_\Delta=[\delta]_\Gamma\cap\Delta$, 
 and we have an embedding $[\delta]_\Delta\mapsto[\delta]_\Gamma\colon [\Delta] \to [\Gamma]$ of ordered sets 
via which we identify~$[\Delta]$ with an ordered subset of $[\Gamma]$.

\begin{lemma}\label{lem:cof oags, 1}
Suppose $\Gamma\neq\{0\}$. If
$[\Gamma]$ has no largest element, then $\operatorname{cf}(\Gamma)=\operatorname{cf}([\Gamma])$;
otherwise  $\operatorname{cf}(\Gamma)=\omega$. 
\end{lemma}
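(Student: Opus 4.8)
The plan is to transfer the computation of $\cf(\Gamma)$ to the set of archimedean classes via the map $\pi\colon \alpha\mapsto[\alpha]$, after first cutting everything down to the positive part. I would begin with two harmless reductions. Since $\Gamma\neq\{0\}$ has no largest element, its positive cone $\Gamma^{>0}$ is cofinal in $\Gamma$, whence $\cf(\Gamma)=\cf(\Gamma^{>0})$. On the other side, $[0]=\{0\}$ is the least element of $[\Gamma]$, and deleting a least element does not change cofinality, so $\cf([\Gamma])=\cf\big([\Gamma]\setminus\{[0]\}\big)$; moreover, using $[\beta]=[-\beta]$, the set $[\Gamma]\setminus\{[0]\}$ is precisely the image $\pi(\Gamma^{>0})$.

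Next I would check that $\pi$ restricts to an \emph{increasing} surjection $\Gamma^{>0}\to[\Gamma]\setminus\{[0]\}$: if $0<\alpha\le\beta$ then $\alpha=\abs{\alpha}\le\abs{\beta}$, so $[\alpha]\le[\beta]$ by the definition of the ordering on $[\Gamma]$. With this in hand the two cases are quick. If $[\Gamma]$ has no largest element, then neither does $[\Gamma]\setminus\{[0]\}$, and applying the increasing-surjection result [ADH, 2.1.4] (as in Lemma~\ref{lem:cf(K)}) to $\pi$ gives $\cf(\Gamma^{>0})=\cf\big([\Gamma]\setminus\{[0]\}\big)$, i.e. $\cf(\Gamma)=\cf([\Gamma])$. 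If instead $[\Gamma]$ has a largest element $[\alpha]$, I may assume $\alpha>0$ (replace $\alpha$ by $\abs{\alpha}$; the case $\alpha=0$ would force $\Gamma=\{0\}$). Maximality then says $[\beta]\le[\alpha]$ for all $\beta$, i.e. every $\beta\in\Gamma^{>0}$ satisfies $\beta\le n\alpha$ for some $n\ge1$; hence $(n\alpha)_{n\ge1}$ is a strictly increasing, countable, cofinal sequence in $\Gamma^{>0}$, and since $\Gamma^{>0}$ has no largest element this yields $\cf(\Gamma)=\cf(\Gamma^{>0})=\omega$.

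The only point requiring care---and the main obstacle, such as it is---is that $\pi$ is \emph{not} order-preserving on all of $\Gamma$ (a very negative element has a large archimedean class), which is exactly why the reduction to $\Gamma^{>0}$ is made before invoking [ADH, 2.1.4]. Once that reduction is in place, Case~1 is a direct citation and Case~2 is the explicit cofinal sequence $(n\alpha)_{n\ge1}$, so no further work is needed.
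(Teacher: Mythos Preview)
Your proof is correct and follows essentially the same approach as the paper's: both apply [ADH, 2.1.4] to the increasing surjection from the nonnegative (or positive) part of $\Gamma$ onto $[\Gamma]$ (or $[\Gamma]\setminus\{[0]\}$) in the first case, and exhibit the explicit cofinal sequence $(n\alpha)_{n\ge 1}$ in the second. The only cosmetic difference is that the paper uses $\Gamma^{\ge}\to[\Gamma]$ while you use $\Gamma^{>0}\to[\Gamma]\setminus\{[0]\}$, which amounts to the same thing after your harmless reductions.
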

\begin{proof}
If $[\Gamma]$ has no largest element, then 
[ADH,  2.1.4] applied to the increasing surjection $\gamma\mapsto[\gamma]\colon\Gamma^{\geq}\to[\Gamma]$
yields  $\operatorname{cf}(\Gamma)=\operatorname{cf}([\Gamma])$.
If $\gamma\in\Gamma^{>}$ is such that $[\gamma]$ is the largest element of~$[\Gamma]$,  then
$\N\gamma$ is cofinal in $\Gamma$.
\end{proof}

\noindent
Let $G$ be an abelian group, with divisible hull $\Q G=\Q\otimes_{\Z}G$. Then $\operatorname{rank}_{\Q} G:=\dim_{\Q}\Q G$ (a cardinal) is the {\it rational rank}\/ of $G$.
(NB: in [ADH, 1.7] we defined the rational rank of $G$ to be~$\infty$ if the $\Q$-linear space $\Q G$ is not finitely generated.)

\begin{lemma}\label{lem:cof oags, 2}
Let $\Delta\ne\{0\}$ be an ordered subgroup of $\Gamma$ with~${\operatorname{rank}_{\Q}(\Gamma/\Delta)\le \aleph_0}$. Then $\operatorname{cf}(\Gamma)\leq\operatorname{cf}(\Delta)$.
\end{lemma}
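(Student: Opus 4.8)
The plan is to turn the hypothesis $\operatorname{rank}_{\Q}(\Gamma/\Delta)\le\aleph_0$ into a \emph{countable} subgroup $E\le\Gamma$ for which $\Delta+E$ is cofinal in $\Gamma$, and then to cofinally cover $\Delta+E$ using a cofinal subset of $\Delta$ of least cardinality. First I would exploit the rank bound: since $\Q\otimes_{\Z}(\Gamma/\Delta)$ has countable $\Q$-dimension, I can choose a countable family $(g_i)_{i\in I}$ in $\Gamma$ whose images span $\Q\otimes_{\Z}(\Gamma/\Delta)$. Unwinding this, and using that the kernel of $\Gamma/\Delta\to\Q\otimes_{\Z}(\Gamma/\Delta)$ is exactly the torsion subgroup, gives the key consequence: for every $g\in\Gamma$ there are $n\ge 1$ and integers $m_i$ (almost all $0$) with $ng-\sum_i m_i g_i\in\Delta$. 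I then set $E:=\langle g_i:i\in I\rangle$, a countable subgroup of $\Gamma$ (being generated by countably many elements).

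Next I set $\kappa:=\operatorname{cf}(\Delta)$, noting that $\Delta\ne\{0\}$ forces $\Delta$ to have no largest element, so $\kappa\ge\aleph_0$; fix a cofinal $C\subseteq\Delta$ with $|C|=\kappa$. The claim is that $S:=\{c+e:c\in C,\ e\in E\}$ is cofinal in $\Gamma$. Since $\Gamma^{>}$ is cofinal in $\Gamma$, it suffices to dominate each $g\in\Gamma^{>}$: writing $ng=\delta+e$ with $\delta\in\Delta$, $e\in E$, $n\ge 1$ as above, and choosing $c\in C$ with $c\ge\delta$, I get $c+e\ge\delta+e=ng\ge g$, where the last inequality holds because $n\ge 1$ and $g>0$. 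Consequently $\operatorname{cf}(\Gamma)\le|S|\le|C|\cdot|E|\le\kappa\cdot\aleph_0=\kappa=\operatorname{cf}(\Delta)$, the cardinal arithmetic $\kappa\cdot\aleph_0=\kappa$ being exactly where $\kappa\ge\aleph_0$ (hence $\Delta\ne\{0\}$) is used.

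I expect the main obstacle to be the middle step rather than either bookkeeping step: correctly extracting from the rational-rank hypothesis the cofinal-covering statement, in particular handling the torsion of $\Gamma/\Delta$ (which the rank ignores) via the multiplier $n$, and then noticing the simple but decisive fact that $ng\ge g$ for $g>0$ and $n\ge1$. This observation is what renders the ambiguity "$g$ versus a multiple $ng\in\Delta+E$" harmless for cofinality, since cofinality only cares about domination from above. Everything else—countability of $E$, the cardinal bound, and the reduction to $\Gamma^{>}$—is routine once that point is in place.
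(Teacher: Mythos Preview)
Your argument is correct and self-contained, but it takes a different route from the paper's proof. The paper works with archimedean classes: it first reduces (via Lemma~\ref{lem:cof oags, 1}) to the case where $[\Gamma]$ has no maximum, so that $\operatorname{cf}(\Gamma)=\operatorname{cf}([\Gamma])$, then observes that $[\Gamma]\setminus[\Delta]$ is countable by [ADH, 2.3.9], and concludes by taking a cofinal subset of $[\Delta]$ and adjoining these countably many extra archimedean classes. Your approach bypasses archimedean classes altogether: you pull back a countable $\Q$-basis of $\Q\otimes_{\Z}(\Gamma/\Delta)$ to a countable subgroup $E\le\Gamma$, use the torsion-handling multiplier $n$ together with the inequality $ng\ge g$ for $g>0$ to make $C+E$ cofinal, and finish with the cardinal bound $\kappa\cdot\aleph_0=\kappa$. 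Your argument is more elementary and avoids both Lemma~\ref{lem:cof oags, 1} and the external citation; the paper's argument, on the other hand, fits naturally into the surrounding framework (which already needs~\ref{lem:cof oags, 1}) and makes the role of the archimedean-class set $[\Gamma]\setminus[\Delta]$ explicit, a fact that is reused later (e.g., in Corollary~\ref{cor:short oag}).
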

\begin{proof}
 By Lemma~\ref{lem:cof oags, 1} we may assume that $[\Gamma]$
has no maximum. Let $S$ be a well-ordered cofinal subset of $[\Delta]$ of order type $\operatorname{cf}([\Delta])$, so $\abs{S}=\operatorname{cf}([\Delta])$. Then~$\tilde{S}:=S\cup\big( [\Gamma]\setminus[\Delta] \big)$ is cofinal in
$[\Gamma]$, so
$\operatorname{cf}(\Gamma)=\operatorname{cf}([\Gamma])=\operatorname{cf}(\tilde S)\leq\abs{\tilde{S}}$
by Lemma~\ref{lem:cof oags, 1} and [ADH,  2.1.2].
Since~$[\Gamma]\setminus [\Delta]$ is countable  by [ADH, 2.3.9], we have~$\abs{\tilde{S}}
\leq\max\!\big\{\abs{S},\omega\big\}=\max\!\big\{\!\operatorname{cf}([\Delta]),\omega\big\}$. Now apply Lemma~\ref{lem:cof oags, 1}   to $\Delta$ in place of~$\Gamma$.
\end{proof}

\noindent
A valued differential field $K$ has {\it small derivation}\/ if for all $f\in K$: $f\prec 1\Rightarrow f'\prec 1$, and
{\it very small derivation}\/ if for all $f\in K$: $f\preceq 1\Rightarrow f'\prec 1$.
For  more on this, see~[ADH, 4.4] and~\cite[Section~13]{ADH4}, respectively. 

\begin{lemma}\label{cofoaglem}
Let $K\subseteq L$ be an extension of pre-$H$-fields where 
$\operatorname{rank}_{\Q}(\Gamma_L/\Gamma)$ is countable, and suppose
$\Gamma\neq\{0\}$ or $L$ has very small derivation and archimedean residue field.
Then~$\operatorname{cf}(L)\leq\operatorname{cf}(K)$.
\end{lemma}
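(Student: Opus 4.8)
The plan is to reduce to the case $\Gamma\neq\{0\}$ and then to apply Lemma~\ref{lem:cof oags, 2} at the level of value groups, using Lemma~\ref{lem:cf(K)} to transfer from value groups back to the pre-$H$-fields $K$ and $L$. First I would dispose of the degenerate branch. If $\Gamma=\{0\}$, then by hypothesis $L$ has very small derivation and archimedean residue field; I want to argue that in this situation $K$ is archimedean (so $\operatorname{cf}(K)\le\omega$, indeed $\operatorname{cf}(K)$ is $1$ or $\omega$) and $\Gamma_L$ is countable. Indeed $\Gamma=\{0\}$ means $K$ itself has archimedean (hence trivial) valuation, and since $\operatorname{rank}_\Q(\Gamma_L/\Gamma)=\operatorname{rank}_\Q(\Gamma_L)$ is countable, the value group $\Gamma_L$ has countable rational rank; together with $L$ having very small derivation and archimedean residue field this should force $\operatorname{cf}(L)\le\omega$, matching $\operatorname{cf}(K)\le\omega$. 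The point is that an ordered abelian group of countable rational rank has cofinality at most $\omega$ when its chain of archimedean classes is countable, which [ADH, 2.3.9] supplies. I would then invoke Lemma~\ref{lem:cf(K)} in reverse where applicable, or argue directly that $\operatorname{cf}(L)\le\operatorname{cf}(K)$ holds because both sides are $\le\omega$.

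For the main case, assume $\Gamma\neq\{0\}$. Then $\Gamma_L\supseteq\Gamma\neq\{0\}$, so both $K$ and $L$ are non-archimedean pre-$H$-fields with nontrivial value groups, and Lemma~\ref{lem:cf(K)} applies to each: $\operatorname{cf}(K)=\operatorname{cf}(\Gamma)$ and $\operatorname{cf}(L)=\operatorname{cf}(\Gamma_L)$. Now $\Gamma$ is a nonzero ordered subgroup of $\Gamma_L$ with $\operatorname{rank}_\Q(\Gamma_L/\Gamma)$ countable, which is exactly the hypothesis of Lemma~\ref{lem:cof oags, 2} (with $\Gamma$ in the role of $\Delta$ and $\Gamma_L$ in the role of $\Gamma$). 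That lemma yields $\operatorname{cf}(\Gamma_L)\le\operatorname{cf}(\Gamma)$, and combining the three identities gives $\operatorname{cf}(L)=\operatorname{cf}(\Gamma_L)\le\operatorname{cf}(\Gamma)=\operatorname{cf}(K)$, as desired.

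The main obstacle I anticipate is the degenerate branch $\Gamma=\{0\}$, where Lemma~\ref{lem:cf(K)} is unavailable (it requires $\Gamma\neq\{0\}$) and one must exploit the extra hypotheses on $L$ directly. The delicate point is showing that very small derivation plus archimedean residue field, together with countable rational rank of $\Gamma_L$, bounds $\operatorname{cf}(L)$ by $\omega$; this is presumably where ``very small derivation'' is genuinely needed (it controls the interaction between the derivation and the nontrivial valuation of $L$ even though $K$'s valuation is trivial), and where one checks that the residue field being archimedean prevents $\operatorname{cf}(L)$ from exceeding $\omega$. I expect this to reduce, via Lemma~\ref{lem:cof oags, 1} and [ADH, 2.3.9], to the observation that $[\Gamma_L]$ is countable, so $\operatorname{cf}(\Gamma_L)\le\omega$, and then to a short argument that $\operatorname{cf}(L)=\operatorname{cf}(\Gamma_L)$ or directly $\operatorname{cf}(L)\le\omega$ even when the valuation could be trivial on a cofinal part. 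The rest of the proof is a clean assembly of the three cofinality identities.
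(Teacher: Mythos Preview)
Your approach is essentially the paper's: split on $\Gamma\neq\{0\}$ versus $\Gamma=\{0\}$, and in the former case combine Lemma~\ref{lem:cf(K)} with Lemma~\ref{lem:cof oags, 2} exactly as you describe. That part is complete.

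For the degenerate branch you are overcomplicating things slightly and worrying unnecessarily. First, $K$ is an ordered field, so $\operatorname{cf}(K)\geq\omega$; hence $\operatorname{cf}(K)=\omega$ once $K$ is archimedean (not ``$1$ or $\omega$''). Second, you need not pass through $[\Gamma_L]$ via [ADH,~2.3.9] and Lemma~\ref{lem:cof oags, 1}: a torsion-free abelian group of countable rational rank embeds in a countable $\Q$-vector space and is therefore itself countable, so $\Gamma_L$ is outright countable and $\operatorname{cf}(\Gamma_L)\leq\omega$ follows immediately. Third, ``very small derivation'' plays no role in the argument; only ``archimedean residue field'' is used, and in two places: since $\Gamma=\{0\}$ the ordered field $K$ coincides with its residue field, which sits inside the archimedean residue field of $L$, so $K$ is archimedean; and if in addition $\Gamma_L=\{0\}$, then $L$ coincides with its own archimedean residue field, so $\operatorname{cf}(L)=\omega$. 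If $\Gamma_L\neq\{0\}$ you simply apply Lemma~\ref{lem:cf(K)} to $L$ to get $\operatorname{cf}(L)=\operatorname{cf}(\Gamma_L)\leq\omega=\operatorname{cf}(K)$. That is the whole degenerate case.
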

\begin{proof}
If $\Gamma\neq\{0\}$,   then by Lemmas~\ref{lem:cf(K)} and~\ref{lem:cof oags, 2}
we have $\operatorname{cf}(L)=\operatorname{cf}(\Gamma_L)\leq \operatorname{cf}(\Gamma)=\operatorname{cf}(K)$.
Suppose $\Gamma=\{0\}$, so $L$ has very small derivation and archimedean residue field. Then  $K$ is archimedean, so
$\operatorname{cf}(K)=\omega$,
and $\Gamma_L$ is countable, hence~$\operatorname{cf}(\Gamma_L)\leq\omega$.
Therefore, if $\Gamma_L\neq\{0\}$, then~$\operatorname{cf}(L)=\operatorname{cf}(\Gamma_L)\leq\omega=\operatorname{cf}(K)$ by Lemma~\ref{lem:cf(K)} applied to~$L$ in place of $K$, and if $\Gamma_L=\{0\}$, then $\operatorname{cf}(L)=\omega=\operatorname{cf}(K)$.
\end{proof}

\noindent
By [ADH, 3.1.10] the hypothesis on $\operatorname{rank}_{\Q}(\Gamma_L/\Gamma)$ in Lemma~\ref{cofoaglem} is satisfied if 
$\operatorname{trdeg}(L|K)$ is countable. Hence this lemma yields:
  
\begin{cor}
If   $F$ is a Hardy field extension of $H$ such that
 $\operatorname{trdeg}(F|H)$ is countable, then $\operatorname{cf}(F)\leq\operatorname{cf}(H)$.
 Hence if $\operatorname{trdeg}(H|C_H)$ is countable, then~$\operatorname{cf}(H)=\omega$ \textup{(}and so $H$ is bounded\textup{)}.
\end{cor}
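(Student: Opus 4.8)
The plan is to derive both assertions directly from Lemma~\ref{cofoaglem} together with the remark preceding the corollary.

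For the first claim I would apply Lemma~\ref{cofoaglem} to the pair $K:=H\subseteq L:=F$. As Hardy fields, $H$ and $F$ are $H$-fields, hence pre-$H$-fields, so the lemma is applicable. Since $\operatorname{trdeg}(F|H)$ is countable, [ADH, 3.1.10] shows that $\operatorname{rank}_{\Q}(\Gamma_F/\Gamma)$ is countable, which is the first hypothesis of the lemma. It remains to check the disjunctive hypothesis ``$\Gamma\neq\{0\}$ or $F$ has very small derivation and archimedean residue field''. The key observation is that \emph{every} Hardy field satisfies the second alternative, so the lemma applies regardless of $\Gamma$. Indeed, if $f\in F$ with $f\preceq 1$, then $f$ is bounded and, being an element of a Hardy field, eventually monotonic, hence tends to a finite limit $L\in\R$; the map $f\mapsto L$ is a ring homomorphism from the valuation ring of $F$ onto a subfield of $\R$ whose kernel is the maximal ideal, so the residue field of $F$ is archimedean. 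Moreover $f'\in F$ is itself eventually monotonic and thus has a limit in $\R\cup\{\pm\infty\}$; were this limit nonzero, $f$ would tend to $\pm\infty$, contradicting $f\preceq 1$, so $f'\to 0$, i.e.\ $f'\prec 1$. Hence $F$ has very small derivation and archimedean residue field, and Lemma~\ref{cofoaglem} yields $\operatorname{cf}(F)\leq\operatorname{cf}(H)$.

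For the second claim I would specialize the first part to the extension $C_H\subseteq H$. The constant field $C_H$ is a subfield of $\R$ with trivial derivation, hence a Hardy field, and $\operatorname{trdeg}(H|C_H)$ is countable by assumption, so the first part gives $\operatorname{cf}(H)\leq\operatorname{cf}(C_H)$. As $C_H\subseteq\R$ is an archimedean ordered field containing $\Q$ (so without largest element), $\operatorname{cf}(C_H)=\omega$; and $\operatorname{cf}(H)\geq\omega$ because the ordered field $H$ has no largest element. Therefore $\operatorname{cf}(H)=\omega$, and then $H$ is bounded since Hardy fields of countable cofinality are bounded, as recorded at the start of this subsection.

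The only step requiring genuine (if routine) argument rather than a bare citation is the verification — needed precisely in the degenerate case $\Gamma=\{0\}$ — that a Hardy field has very small derivation and archimedean residue field; this is the one place I expect any obstacle, and it rests on the elementary facts that elements of a Hardy field are eventually monotonic and that a bounded such germ converges to a finite real limit. Everything else is a mechanical application of Lemma~\ref{cofoaglem} and [ADH, 3.1.10].
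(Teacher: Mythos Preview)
Your proof is correct and takes the same route as the paper: deduce the corollary from Lemma~\ref{cofoaglem} via [ADH, 3.1.10], with the paper leaving implicit the standard facts about Hardy fields (very small derivation, archimedean residue field) that you spell out. One minor slip worth fixing: Hardy fields are pre-$H$-fields but not $H$-fields in general (the latter requires $\R\subseteq H$), though this is harmless since Lemma~\ref{cofoaglem} only asks for pre-$H$-fields.
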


\noindent
In \cite[Corollary~3.13]{ADHfgh} we showed that if~$H\supseteq\R$ and~$H^{>\R}$ has countable coinitiality, 
and $H^{\operatorname{da}}$ is the
$\d$-closure of~$H$ in  a maximal Hardy field extension  of $H$, 
then   $(H^{\operatorname{da}})^{>\R}$ also has countable coinitiality. The 
property of~$H$ having countable cofinality  is equally robust:

\begin{theorem}\label{thm:cf}
Let $E$ be a differentially algebraic Hardy field extension of $H$ such that~$\exp\!\big(E(x)\big)\subseteq E(x)$. Then~$\operatorname{cf}(E) \leq \operatorname{cf}(H)$,
with equality if $\exp\!\big(H(x)\big)\subseteq H(x)$. \end{theorem}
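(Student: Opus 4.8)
The plan is to reduce the comparison of cofinalities to the value groups and their archimedean class sets, using Lemmas~\ref{lem:cf(K)} and~\ref{lem:cof oags, 1}, and then to exhibit an explicit cofinal family built from iterated exponentials of elements of $H$, the existence of these exponentials being guaranteed by the hypothesis $\exp\!\big(E(x)\big)\subseteq E(x)$. Throughout I would work in the Hardy fields $E(x)$ and $H(x)$ rather than $E$ and $H$: since $x\succ 1$ we have $vx\neq 0$, so $\Gamma_{E(x)}\neq\{0\}$ and $\Gamma_{H(x)}\neq\{0\}$, and Lemma~\ref{lem:cf(K)} gives $\cf(E(x))=\cf(\Gamma_{E(x)})$ and $\cf(H(x))=\cf(\Gamma_{H(x)})$. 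As $E(x)$ contains the positive infinite germ $x$ together with all its iterated exponentials, $[\Gamma_{E(x)}]$ has no largest element, so Lemma~\ref{lem:cof oags, 1} further reduces $\cf(E(x))$ to $\cf([\Gamma_{E(x)}])$. Also note $E(x)$ is $\d$-algebraic over $H$ (since $E$ is, and $x$ satisfies $x''=0$). Passing to $E(x)$, $H(x)$ simultaneously disposes of the degenerate cases where a value group is trivial.

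The first real step is the \emph{transfer} from $E(x)$ back to $E$. Here the hypothesis does genuine work: since $e^x\in E(x)$, writing $e^x=P(x)/Q(x)$ with $P,Q\in E[x]$ and comparing growth (the comparability class of $P(x)/Q(x)$ is determined by its $E$-coefficients and by $[vx]$, all of which would otherwise be flatter than $e^x$) forces $E$ to contain a germ whose comparability class is at least that of $e^x$. Iterating with $e^{e^x},\dots$, one sees that each class $[v\,\exp_n(x)]$ (where $\exp_0(h):=h$, $\exp_{n+1}(h):=\exp(\exp_n(h))$) is $\leq$ some class in $[\Gamma_E]$; since by the growth bound below these classes are cofinal in $[\Gamma_{E(x)}]$, the subset $[\Gamma_E]$ is cofinal in $[\Gamma_{E(x)}]$, and therefore $\cf(E)=\cf(E(x))$. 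The same computation applied to $H$ gives $\cf(H)=\cf(H(x))$ in the equality case.

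The crux, which I expect to be the main obstacle, is the following growth bound: \emph{if $g$ lies in a Hardy field extension of $H$ and is $\d$-algebraic over $H$, then $\abs{g}\preceq\exp_n(h)$ for some positive infinite $h\in H(x)$ and some $n\in\N$.} This says that $\d$-algebraic germs over $H$ display no transexponential growth relative to $H(x)$, and it is exactly here that the serious Hardy-field input is needed — the theory of levels for $\d$-algebraic extensions, equivalently the behaviour of the $\Psi$-set of the associated asymptotic couple; I would cite this from the level theory (Rosenlicht) and the asymptotic-couple analysis in [ADH]. Granting it, fix a cofinal subset $D$ of $H(x)^{\succ 1}$ with $\abs{D}=\cf(H(x))$. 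By exp-closure each $\exp_n(d)$ with $d\in D$ lies in $E(x)$, and the family $\big\{[v\,\exp_n(d)]:d\in D,\ n\in\N\big\}$ is cofinal in $[\Gamma_{E(x)}]$: any class is $[vg]$ with $g\in E(x)^{\succ 1}$ $\d$-algebraic over $H$, and the bound gives $g\preceq\exp_n(h)$, while choosing $d\in D$ with $h\leq_{\ex} d$ yields $\exp_n(h)\leq_{\ex}\exp_n(d)$. Since $\cf(H(x))\geq\omega$, this family has cofinality at most $\abs{D\times\N}=\cf(H(x))$, so $\cf([\Gamma_{E(x)}])\leq\cf(H(x))$.

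Assembling the pieces yields the main inequality
\[
\cf(E)\ =\ \cf(E(x))\ =\ \cf([\Gamma_{E(x)}])\ \leq\ \cf(H(x))\ \leq\ \cf(H),
\]
the last step by the corollary following Lemma~\ref{cofoaglem} applied to the extension $H(x)\supseteq H$ of countable transcendence degree. For the equality statement, assume $\exp\!\big(H(x)\big)\subseteq H(x)$; then each $\exp_n(d)$ with $d\in D$ already lies in $H(x)$, so the cofinal family above witnesses that $[\Gamma_{H(x)}]$ is cofinal in $[\Gamma_{E(x)}]$. A cofinal subset has the same cofinality, so $\cf([\Gamma_{E(x)}])=\cf([\Gamma_{H(x)}])=\cf(H(x))$, and combining with the transfers $\cf(E)=\cf(E(x))$ and $\cf(H)=\cf(H(x))$ gives $\cf(E)=\cf(H)$.
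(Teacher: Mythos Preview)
Your overall strategy matches the paper's: exhibit $\{\exp_n(d):d\in D,\ n\in\N\}$, with $D$ cofinal in $H(x)$, as a cofinal family in $E(x)$, using the growth bound that every $\d$-algebraic germ over $H$ is $\preceq\exp_n(h)$ for some positive infinite $h\in H(x)$ and some $n$ (the paper cites this as \cite[Lemma~5.1]{ADH5}). The detour through archimedean classes is harmless.

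There is, however, a genuine gap in your ``transfer'' paragraph. You assert that the classes $\{[v\exp_n(x)]:n\in\N\}$ are cofinal in $[\Gamma_{E(x)}]$, invoking the growth bound. But that bound only gives $g\preceq\exp_n(h)$ for some $h\in H(x)$, not $g\preceq\exp_n(x)$; if $H$ happens to contain a transexponential element (one larger than every $\exp_n(x)$), its archimedean class in $\Gamma_{E(x)}$ lies strictly above every $[v\exp_n(x)]$, and your family is not cofinal. The conclusion $\cf(E)=\cf\!\big(E(x)\big)$ is nonetheless correct: the paper proves it cleanly via Lemmas~\ref{lem:cf(K)} and~\ref{lem:H(x)}, which show that $E$ (respectively $x^{\N}$, if $E\subseteq\R$) is already cofinal in $E(x)$, and then simply replaces $H,E$ by $H(x),E(x)$ at the outset. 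Alternatively, staying within your framework: since $\operatorname{rank}_{\Q}\big(\Gamma_{E(x)}/\Gamma_E\big)\leq 1$, every $\gamma\in\Gamma_{E(x)}$ satisfies $[\gamma]\leq\max\!\big([\alpha],[vx]\big)$ for some $\alpha\in\Gamma_E$, and your step~(i) already provides $a\in E$ with $[va]\geq[v\ex^x]>[vx]$; hence $[\Gamma_E]$ is cofinal in $[\Gamma_{E(x)}]$ without any appeal to cofinality of $\{[v\exp_n(x)]\}$. With this patch the remainder of your argument goes through.
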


\noindent
Here is an immediate consequence:

\begin{cor}\label{cor:cf lc}
If $H$ has countable cofinality, then so does $\Li\!\big(H(\R)\big)$ as well as the $\d$-closure of $H$ in any  maximal Hardy field extension  of $H$. 
\end{cor}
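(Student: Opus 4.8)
The plan is to derive both assertions from Theorem~\ref{thm:cf}, applied to two different choices of the extension $E$ of $H$. In each case I will check that $E$ is a $\d$-algebraic Hardy field extension of $H$ with $\exp\!\big(E(x)\big)\subseteq E(x)$; the theorem then gives $\operatorname{cf}(E)\leq\operatorname{cf}(H)$. Note that only the inequality part of Theorem~\ref{thm:cf} is needed, not the equality, so I never have to verify $\exp\!\big(H(x)\big)\subseteq H(x)$. Since $\operatorname{cf}(H)=\omega$ and $E$, being an ordered field, has no largest element (so $\operatorname{cf}(E)\geq\omega$), this forces $\operatorname{cf}(E)=\omega$, which is the desired conclusion.

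First I would take $E:=\Li\!\big(H(\R)\big)$. Adjoining the constants $\R$ is a $\d$-algebraic extension, since each $c\in\R$ satisfies $c'=0$; and passing to the Liouville closure is again $\d$-algebraic, because every adjoined solution $y$ of an equation $y'+fy=g$ is $\d$-algebraic over the field generated so far. By transitivity of $\d$-algebraicity, $E$ is $\d$-algebraic over $H$. The key observation is that $x\in E$: being Liouville closed, $E$ contains a $y$ with $y'=1$, and since $y'=x'$ the germ $y-x$ is a constant, hence lies in the constant field $\R\subseteq E$, so $x=y-(y-x)\in E$. Therefore $E(x)=E$, and as $E$ is closed under $\exp$ (Liouville closedness yields $\exp f\in E$ for all $f\in E$), we get $\exp\!\big(E(x)\big)=\exp(E)\subseteq E=E(x)$. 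Theorem~\ref{thm:cf} now applies.

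Next I would take $E:=H^{\operatorname{da}}$, the $\d$-closure of $H$ in a maximal Hardy field extension $M$ of $H$; by construction $E$ is a $\d$-algebraic Hardy field extension of $H$. Again $x\in E$, because $x\in M$ by maximality of $M$ and $x$ is $\d$-algebraic over $H$ (as $x'=1$); thus $E(x)=E$. For the exponential closure, let $f\in E$. Maximal Hardy fields are Liouville closed (they are elementarily equivalent to $\T$), so $\exp f\in M$; and $\exp f$ satisfies $y'=f'y$, hence is $\d$-algebraic over $H\langle f\rangle$ and so over $H$. Consequently $\exp f\in E$, giving $\exp\!\big(E(x)\big)=\exp(E)\subseteq E=E(x)$, and Theorem~\ref{thm:cf} applies once more.

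I do not expect a serious obstacle: all the real content is in Theorem~\ref{thm:cf}, and the task is only to verify its hypotheses. The two points that require a little care are (a) recognizing that $x$ lies in each $E$, which collapses $E(x)$ to $E$ and thereby reduces the closure condition to plain closure of $E$ under $\exp$; and (b) confirming that both ``adjoin $\R$ and then Liouville-close'' and ``pass to the $\d$-closure inside a maximal extension'' produce extensions that are $\d$-algebraic over $H$, despite possibly large transcendence degree (which is irrelevant for $\d$-algebraicity).
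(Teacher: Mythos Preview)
Your proposal is correct and follows exactly the approach the paper has in mind: the paper states this corollary as ``an immediate consequence'' of Theorem~\ref{thm:cf} without further proof, and you have accurately supplied the verification that both $E=\Li\!\big(H(\R)\big)$ and $E=H^{\operatorname{da}}$ satisfy the hypotheses of that theorem.
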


\noindent
We precede the proof of Theorem~\ref{thm:cf} by a few lemmas. In Lemmas~\ref{lem:cof 1} and~\ref{lem:H(x)} we let $K$ be a
pre-$\d$-valued field of $H$-type with asymptotic couple $(\Gamma,\psi)$ where~${\Gamma\ne\{0\}}$.
By [ADH, 10.3.1], $K$ has a $\d$-valued extension
$\operatorname{dv}(K)$ of $H$-type, the {\it $\d$-valued hull}\/ of $K$, such that any embedding of $K$ into any $\d$-valued field $L$ of
$H$-type extends uniquely to an embedding   $\operatorname{dv}(K) \to L$.
  
\begin{lemma}\label{lem:cof 1}
$\Gamma$ is cofinal in  $\Gamma_{\operatorname{dv}(K)}$.
\end{lemma}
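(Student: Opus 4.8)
The plan is to translate the statement into one about archimedean classes and then to bound the values introduced in passing to $\operatorname{dv}(K)$ by exploiting that the relevant derivatives already lie in $K$. First I would record the order-theoretic principle underlying the proof of Lemma~\ref{lem:cof oags, 1}: if $\gamma\in\Gamma_{\operatorname{dv}(K)}^{>}$ lies in the same archimedean class as some $\delta\in\Gamma$, then $\gamma\le n\delta$ for some $n\ge 1$, and $n\delta\in\Gamma$; since $\Gamma\ne\{0\}$ also bounds every nonpositive element of $\Gamma_{\operatorname{dv}(K)}$ from above, it follows that $\Gamma$ is cofinal in $\Gamma_{\operatorname{dv}(K)}$ as soon as every element of $\Gamma_{\operatorname{dv}(K)}$ is archimedean-equivalent to an element of $\Gamma$. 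Thus it suffices to show that $\operatorname{dv}(K)$ introduces no value lying strictly above all of $\Gamma$; equivalently, that there is no $a\in\operatorname{dv}(K)^{\times}$ with $a\prec h$ for all $h\in K^{\times}$.

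For this I would appeal to the construction of the $\d$-valued hull in [ADH, 10.3]: the new values over $\Gamma$ come from differences $\epsilon:=c-f$, where $c$ is one of the adjoined constants and $f\in\mathcal O$ is $\asymp 1$ with $\bar c=\bar f$, so that $\epsilon\prec 1$ and hence $v\epsilon>0$. The crucial observation is that $\epsilon'=-f'\in K$, so $v\epsilon'\in\Gamma$; combining this with the asymptotic-couple identity $v\epsilon'=v\epsilon+\psi_{\operatorname{dv}(K)}(v\epsilon)$ and the fact that $\operatorname{dv}(K)$ has the same $\Psi$-set as $K$, so that $\psi_{\operatorname{dv}(K)}(v\epsilon)\in\Gamma$, yields $v\epsilon=v\epsilon'-\psi_{\operatorname{dv}(K)}(v\epsilon)\in\Gamma$. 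Hence no such $\epsilon$ has value above $\Gamma$, and the reduction of the first paragraph then closes the argument.

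The main obstacle is this middle step: extracting from [ADH, 10.3] a description of the generators of $\operatorname{dv}(K)$ over $K$ precise enough to reduce an arbitrary element of $\Gamma_{\operatorname{dv}(K)}$ to the differences $\epsilon=c-f$, and confirming that the $\Psi$-set is preserved so that $\psi_{\operatorname{dv}(K)}(v\epsilon)$ remains in $\Gamma$. Once the hull is seen to produce no archimedean class beyond those of $\Gamma$ (indeed, one expects $\operatorname{dv}(K)$ to be an immediate extension of $K$, giving outright $\Gamma_{\operatorname{dv}(K)}=\Gamma$), the cofinality assertion follows at once.
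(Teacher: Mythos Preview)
Your argument in the second paragraph is circular, and the expectation in the third paragraph is wrong. You want to deduce $v\epsilon\in\Gamma$ from $v\epsilon'=v\epsilon+\psi_{\operatorname{dv}(K)}(v\epsilon)$, $v\epsilon'\in\Gamma$, and the claim that $\psi_{\operatorname{dv}(K)}(v\epsilon)\in\Gamma$. But $\psi_{\operatorname{dv}(K)}(v\epsilon)=v\epsilon'-v\epsilon$, so given $v\epsilon'\in\Gamma$ we have $\psi_{\operatorname{dv}(K)}(v\epsilon)\in\Gamma$ if and only if $v\epsilon\in\Gamma$: you are assuming what you want to prove. The assertion that ``$\operatorname{dv}(K)$ has the same $\Psi$-set as $K$'' does not help here, since $\psi_{\operatorname{dv}(K)}(v\epsilon)$ is the $\psi$-value at the \emph{new} element $v\epsilon$, and when $v\epsilon\notin\Gamma$ this can (and does) produce a new $\Psi$-value not in $\Psi_K$. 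In fact the extension $K\subseteq\operatorname{dv}(K)$ is \emph{not} in general immediate: [ADH, 10.3.2] says precisely that either $\Gamma_{\operatorname{dv}(K)}=\Gamma$, or $\Gamma_{\operatorname{dv}(K)}=\Gamma+\Z\alpha$ with $0<n\alpha<\Gamma^{>}$ for all $n\ge 1$. So new values $v\epsilon\notin\Gamma$ genuinely occur, and your conclusion $v\epsilon\in\Gamma$ is simply false in that case.

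The paper's proof bypasses all of this by citing [ADH, 10.3.2] directly: in the nontrivial case every element of $\Gamma_{\operatorname{dv}(K)}$ has the form $\gamma+m\alpha$ with $\gamma\in\Gamma$, $m\in\Z$, and since $|m\alpha|<\Gamma^{>}$ we get $\gamma+m\alpha<\gamma+\delta\in\Gamma$ for any $\delta\in\Gamma^{>}$. Your detour through archimedean classes in the first paragraph is also unnecessary (and the ``Thus'' connecting it to ``no value lying strictly above all of $\Gamma$'' is a non sequitur: the latter is just a restatement of cofinality, not a consequence of the archimedean-class criterion).
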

\begin{proof}
This is clear if $\Gamma=\Gamma_{\operatorname{dv}(K)}$.
Otherwise
$\Gamma_{\operatorname{dv}(K)} = \Gamma+\Z\alpha$ where~$0<n\alpha<\Gamma^>$ for all $n\geq 1$, by [ADH, 10.3.2], so
$\Gamma$ is cofinal in $\Gamma_{\operatorname{dv}(K)}$.
\end{proof}


\noindent
For the proof of the next lemma we recall that  ${\abs{\Gamma\setminus (\Gamma^{\ne})'}\le 1}$, and 
for $\beta\in\Gamma$ we have~$\beta\in \Gamma\setminus (\Gamma^{\ne})'$ iff  $\beta=\max\Psi$  or $\Psi<\beta<(\Gamma^>)'$, by [ADH, 9.2.1, 9.2.16]. We say that $K$ has {\it asymptotic integration}\/ if $\Gamma=(\Gamma^{\ne})'$ and
$K$ is {\it grounded}\/ if $\Psi$ has a largest element.
A {\it gap}\/ in~$K$ is a~$\beta\in\Gamma$ such that~$\Psi<\beta<(\Gamma^>)'$. 
(So there is at most one gap in~$K$, and   $K$ has asymptotic integration or is grounded  iff  it has no gap.) For all this, see [ADH, 9.1, 9.2].

\begin{lemma}\label{lem:H(x)}
Let $s\in K$ and~$y'=s$, $y$ in a pre-$\d$-valued extension of $K$ of $H$-type. Then $\Gamma$ is cofinal~in~$\Gamma_{K(y)}$.
\end{lemma}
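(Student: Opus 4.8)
My plan is to prove the equivalent statement that every element of $\Gamma_{K(y)}$ lies in the convex hull of $\Gamma$; since $\Gamma$ is a subgroup of $\Gamma_{K(y)}$, this is the same as $\Gamma$ being cofinal (equivalently coinitial) in $\Gamma_{K(y)}$. Because $K(y)$ is a differential field ($y'=s\in K$) carrying the $H$-type asymptotic structure induced from the ambient extension, and $\operatorname{trdeg}\big(K(y)\,|\,K\big)\le 1$, I would invoke [ADH, 3.1.10] to get $\operatorname{rank}_{\Q}\big(\Gamma_{K(y)}/\Gamma\big)\le 1$. If this rank is $0$, then $\Gamma_{K(y)}\subseteq\Q\Gamma$, and since $\Gamma$ is cofinal in its divisible hull $\Q\Gamma$ we are done at once. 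If the rank is $1$, then by Abhyankar the residue field extension is algebraic, so $K(y)\,|\,K$ is value-transcendental: the sup of $\{v(y-a):a\in K\}$ is attained, say at $a_0$, with $v(y-a_0)\notin\Q\Gamma$. Replacing $y$ by $y-a_0$ leaves $K(y)$ unchanged and keeps $y'=s-a_0'\in K$, so after renaming $s$ I may assume $vy\notin\Q\Gamma$ and $\Gamma_{K(y)}=\Gamma\oplus\Z\,vy$. It then suffices to show that $vy$ lies in the convex hull of $\Gamma$.

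For this last step I would use that $vy\neq 0$ gives $vy+\psi(vy)=v(y')=vs$, with $vs\in\Gamma$ since $s\in K$, and split into two cases. Suppose first $vy<\Gamma$ (i.e. $vy<\mu$ for all $\mu\in\Gamma$). Then $\psi(vy)=vs-vy>\Gamma$, because $\psi(vy)>\mu\iff vy<vs-\mu\in\Gamma$. But applying the axiom $(\Gamma^{>})'>\Psi$ of the $H$-type asymptotic couple of $K(y)$ [ADH, 9.2] to any $\gamma\in\Gamma^{>}$ and to $vy\in\Gamma_{K(y)}^{\ne}$ yields $\gamma+\psi(\gamma)>\psi(vy)$, where $\gamma+\psi(\gamma)\in\Gamma<\psi(vy)$ — a contradiction. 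Suppose instead $vy>\Gamma$, so $y\prec 1$ and in fact $y\prec a$ for every $a\in K^{\times}$. A computation in the asymptotic couple of $K$ shows that $\{v(a'):a\in K^{\times},\,a\prec 1\}=(\Gamma^{>})'$ is cofinal in $\Gamma$ (for each $\gamma\in\Gamma^{>}$ the values $(n\gamma)'=n\gamma+\psi(\gamma)$, $n\ge 1$, are cofinal in the archimedean class of $\gamma$). Hence, given any $b\in K^{\times}$, there is $a\in K^{\times}$ with $a\prec 1$ and $v(a')>vb$; since $y\prec a\prec 1$, the defining property of asymptotic fields [ADH, 9.1] gives $y'\prec a'$, so $y'\prec b$. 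As $b$ was arbitrary and $y'=s\in K^{\times}$ (note $s\neq 0$ because $vy\neq 0$), this forces $s\prec s$, a contradiction. Therefore $vy$ lies in the convex hull of $\Gamma$, as required.

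I expect the case $vy>\Gamma$ to be the main obstacle. Unlike $vy<\Gamma$, it cannot be excluded by the asymptotic-couple axioms alone: the inequality $(\Gamma^{>})'>\Psi$ constrains only the top of $\Psi$, whereas $vy>\Gamma$ pushes $\psi(vy)$ \emph{below} all of $\Gamma$, which the couple axioms permit. Ruling it out genuinely exploits the differential relation $y'=s\in K$ together with monotonicity of the derivation on infinitesimals — morally, an integral of a $K$-element cannot be flatter (or steeper) than every element of $K$, since its derivative would inherit that property but lies in $K$. A secondary technical point is the reduction in the first paragraph: separating the immediate, residue-transcendental, and value-transcendental subcases and arranging $vy\notin\Q\Gamma$ while preserving $y'\in K$ is routine valuation theory but must be carried out with care.
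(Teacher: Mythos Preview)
Your argument is correct and takes a genuinely different route from the paper. The paper first massages $K$ (passing to $\operatorname{dv}(K)$, adjoining constants, taking algebraic closure) and then does a case analysis on whether $S=\{v(s-a'):a\in K\}$ has a maximum, invoking several structure results [ADH, 10.2.1--10.2.6] to identify $\Gamma_{K(y)}$ in each case. You instead use Abhyankar to reduce at once to the single case $\Gamma_{K(y)}=\Gamma\oplus\Z\,vy$ with $vy\notin\Q\Gamma$, and then exclude $vy<\Gamma$ and $vy>\Gamma$ directly from the asymptotic axioms and the relation $y'\in K$. This is cleaner and avoids the preliminary reductions; the paper's approach, on the other hand, yields more detailed information about the shape of $\Gamma_{K(y)}$ in the various subcases.

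One point deserves a better justification: your parenthetical reason for ``$(\Gamma^{>})'$ is cofinal in $\Gamma$'' does not work as stated. The sequence $(n\gamma)'=n\gamma+\psi(\gamma)$ need not be cofinal even among elements of archimedean class $\le[\gamma]$ when $[\psi(\gamma)]>[\gamma]$ and $\psi(\gamma)<0$; for instance, with $\Gamma=\Z\gamma_1\oplus\Z\gamma_0$, $\gamma_1\gg\gamma_0>0$, $\psi(\gamma_0)=-\gamma_1$, $\psi(\gamma_1)=-\gamma_1-\gamma_0$, one gets $(n\gamma_0)'=n\gamma_0-\gamma_1<0$ for all $n$. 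The conclusion is nonetheless true: by [ADH, 9.2.1, 9.2.16] at most one element of $\Gamma$ lies outside $(\Gamma^{\ne})'$, and since $(\Gamma^{<})'<(\Gamma^{>})'$ (each $\gamma_1'<\psi(\gamma_1)<\gamma_2'$ for $\gamma_1<0<\gamma_2$), the set $(\Gamma^{>})'$ is cofinal in $(\Gamma^{\ne})'$ and hence in $\Gamma$. With this correction your proof goes through.
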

\begin{proof} 
Set $L:=K(y)$ and $M:=\operatorname{dv}(L)$.
Lemma~\ref{lem:cof 1} allows us to replace $K$ by its $\d$-valued hull inside 
$M$
to arrange that~$K$ is $\d$-valued.
Using [ADH, 10.5.15 and remark preceding 4.6.16] we replace $K$ by $K(C_M)$ to arrange also $L$ to be $\d$-valued with $C=C_L$.
Finally, replacing $K$ by its algebraic closure inside an algebraic closure of $L$
we arrange $K$ to be algebraically closed. 
We may assume~$y\notin K$, so $y$ is transcendental over $K$. Then
$$S\ :=\ \big\{ v(s-a') : a\in K\big\}\ \subseteq\ \Gamma.$$
Assume for now that $S$ has  a maximum $\beta$. Then $\beta\notin (\Gamma^{\neq})'$ by [ADH, 10.2.5(i)], so~$\beta=\max\Psi$ or $\beta$ is a gap in $K$. If $\beta=\max\Psi$, then $\Gamma_{L}=\Gamma+\Z\alpha$ with~$\Gamma^{<} < n\alpha < 0$ for all $n\geq 1$, so~$\Gamma$ is cofinal in $\Gamma_{L}$. 
 Suppose $\beta$ is a gap in $K$. Take~$a\in K$ with~$\beta=v(s-a')$ and set $z:= y-a$, so $z'=s-a'$. We arrange $z\nasymp 1$
 by replacing~$a$ with $a+c$ for suitable $c\in C_L=C$.
If~$z\prec 1$, then~[ADH, 10.2.1 and its proof] gives $\Gamma_{L}=\Gamma+\Z\alpha$ with $0 < n\alpha < \Gamma^{>}$ for all $n\ge 1$, so  $\Gamma$ is cofinal in~$\Gamma_{L}$. 
 If~$z\succ 1$, then [ADH, 10.2.2 and its proof] gives likewise that
 $\Gamma$ is cofinal in~$\Gamma_{L}$.

 If $S$ does not have a largest element, 
then $L$ is an immediate extension of $K$: this holds by [ADH, 10.2.6]
if~$S<(\Gamma^>)'$; otherwise take~$a\in K$ with $v(s-a')\in(\Gamma^>)'$ and $y-a\nasymp 1$, and apply [ADH, 10.2.4 and 10.2.5(iii)] to $s-a'$, $y-a$ in place of~$s$,~$y$, respectively.
\end{proof}

\noindent
Lemmas~\ref{lem:cf(K)},~\ref{lem:H(x)}, and  \cite[Proposition 4.2(iv)]{ADH5} yield:

\begin{lemma}
If $H\subseteq\R$, then $x^\N$ is cofinal in $H(x)$, and if $H\not\subseteq\R$, then $H$ is cofinal in $H(x)$.
Hence $\operatorname{cf}(H)=\operatorname{cf}(H(x))$.
\end{lemma}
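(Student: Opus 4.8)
The plan is to treat separately the cases $H\subseteq\R$ and $H\not\subseteq\R$, in each case first locating a cofinal set inside the value group and then transferring this to the ordered field $H(x)$ by means of the increasing surjection $g\mapsto -vg\colon H(x)^{>}\to\Gamma_{H(x)}$ underlying the proof of Lemma~\ref{lem:cf(K)}. I first note that $H(x)$ is a Hardy field (since $x'=1\in H$), hence a pre-$H$-field and in particular pre-$\d$-valued of $H$-type, and that $x$ is an integral of the element $1\in H$; these are exactly the data needed to apply Lemma~\ref{lem:H(x)} with $K=H$, $s=1$, $y=x$.

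Suppose first $H\not\subseteq\R$, so that $\Gamma:=\Gamma_H\neq\{0\}$ and Lemma~\ref{lem:H(x)} applies and gives that $\Gamma$ is cofinal in $\Gamma_{H(x)}$. To pass to the ordered field, I take $f\in H(x)$ and, discarding the trivial case $f\leq 0$, assume $f>0$. Then $-vf\in\Gamma_{H(x)}$, so cofinality of $\Gamma$ together with surjectivity of $g\mapsto -vg\colon H^{>}\to\Gamma$ produces $h\in H^{>}$ with $-vf\leq -vh$, i.e.\ $f\preceq h$; absorbing the implied real constant into a natural number $n$, this gives $f\leq nh\in H$. Thus $H$ is cofinal in $H(x)$, and since a totally ordered set has the same cofinality as any cofinal subset (cf.~[ADH, 2.1.2]), I conclude $\operatorname{cf}(H)=\operatorname{cf}(H(x))$.

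Now suppose $H\subseteq\R$, so $\Gamma_H=\{0\}$ and Lemma~\ref{lem:H(x)} no longer applies to $K=H$. Here I would invoke \cite[Proposition 4.2(iv)]{ADH5} to identify $\Gamma_{H(x)}$ as the archimedean (rank one) group generated by $vx$, so that $\big(-v(x^{n})\big)_{n}$ is cofinal in $\Gamma_{H(x)}$. The same transfer as before — each $f\in H(x)^{>}$ satisfies $f\preceq x^{n}$ for some $n$, and $x\succ\R$ gives $f\leq x^{n+1}$ eventually — then shows that $x^{\N}$ is cofinal in $H(x)$, whence $\operatorname{cf}(H(x))=\omega$. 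Since $H\subseteq\R$ is archimedean, $\N$ is cofinal in $H$ and $\operatorname{cf}(H)=\omega$, so again $\operatorname{cf}(H)=\operatorname{cf}(H(x))$.

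I expect the only delicate points to be the bookkeeping needed to apply Lemma~\ref{lem:H(x)} (checking that $H(x)$ is of $H$-type, that $x$ is an integral of $1\in H$, and that $\Gamma_H\neq\{0\}$ in the first case) and the extraction from \cite[Proposition 4.2(iv)]{ADH5} of precisely the statement that $\Gamma_{H(x)}$ is generated by $vx$ when $H\subseteq\R$. Once these structural inputs are secured, the passage from cofinality in the value group to cofinality in the ordered field is routine, relying only on the increasing surjection $g\mapsto -vg$ and on replacing the real constant in a relation $f\preceq h$ by a natural number.
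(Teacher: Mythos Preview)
Your proof is correct and follows essentially the same approach as the paper, which simply cites Lemmas~\ref{lem:cf(K)}, \ref{lem:H(x)}, and \cite[Proposition~4.2(iv)]{ADH5} without further comment. You have unpacked precisely these three ingredients: Lemma~\ref{lem:H(x)} for the case $H\not\subseteq\R$, the cited proposition from \cite{ADH5} for the case $H\subseteq\R$, and the passage from value-group cofinality to ordered-field cofinality underlying Lemma~\ref{lem:cf(K)}.
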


\noindent
We can now give the proof of Theorem~\ref{thm:cf}.
First, replacing $E$, $H$ by $E(x)$, $H(x)$, respectively, and using the last lemma, we arrange $x\in H$.
Let $S$ be a well-ordered cofinal subset of $H$ of order type $\operatorname{cf}(H)$. For $\phi\in \Cc$, define   $\exp_n(\phi)\in\Cc$ by recursion: $\exp_0(\phi):=\phi$ and
$\exp_{n+1}(\phi):=\ex^{\exp_n(\phi)}$. 
Then $\abs{S}=\operatorname{cf}(H)$,
and $\tilde S:=\bigcup_n \exp_n(S)$ is a cofinal subset of $E$, by~\cite[Lem\-ma~5.1]{ADH5}.
Thus $\operatorname{cf}(E)=\operatorname{cf}(\tilde S)\leq\abs{\tilde S}=\abs{S}=\operatorname{cf}(H)$
 as claimed.
If $\exp(H)\subseteq H$, then $\tilde S\subseteq H$, hence~$\operatorname{cf}(E)=\operatorname{cf}(H)$.
 \qed
 
\medskip
\noindent
The following corollary of Lemma~\ref{lem:cof 1} is not used later.
If $K$ is a pre-$H$-field, then by~[ADH, 10.5.13] there is a unique field ordering on  $\operatorname{dv}(K)$ making it a pre-$H$-field
extension of $K$. Equipped with this ordering,  $\operatorname{dv}(K)$  is an
$H$-field,    the {\it $H$-field hull}\/ of $K$, which embeds uniquely over $K$ into any $H$-field extension of~$K$;
notation:~$H(K)$ (not to be confused with the Hardy field $H(\R)$ generated over the Hardy field $H$ by $\R$).

\begin{cor}
$H$ is cofinal in $H(\R)$.
\end{cor}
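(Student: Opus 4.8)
The plan is to reduce the claim to a statement about value groups and then invoke Lemma~\ref{lem:cof 1}. Write $\Gamma=\Gamma_H$ for the value group of $H$ and $v$ for its valuation. First I would dispose of the case $\Gamma=\{0\}$: a Hardy field with trivial value group has every nonzero element $\asymp 1$, and it can then contain no nonconstant germ. Indeed, if $f\in H$ were nonconstant, then $f$ is eventually monotone with a finite limit (being $\asymp 1$ and in a Hardy field), so $f'\neq 0$ while $\int^{\infty} f'$ converges, forcing $f'\to 0$, i.e.\ $f'\prec 1$ and $vf'>0$, contradicting $\Gamma=\{0\}$. Hence $H\subseteq\R$, so $H(\R)=\R$, and $\Q\subseteq H$ is already cofinal in $\R$. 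So I may assume $\Gamma\neq\{0\}$, which is precisely the hypothesis under which Lemma~\ref{lem:cof 1} applies to $K=H$ (a Hardy field being a pre-$\d$-valued field of $H$-type).

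Next I would identify the value group of $H(\R)$ with that of the $\d$-valued hull $\operatorname{dv}(H)$. Since $H(\R)$ is a Hardy field with constant field $\R$, it is an $H$-field, hence a $\d$-valued $H$-field extension of $H$; by the universal property recalled above, $\operatorname{dv}(H)$ embeds over $H$ into $H(\R)$, and I identify it with its image. Then $H(\R)=\operatorname{dv}(H)(\R)$ is obtained from $\operatorname{dv}(H)$ by adjoining the remaining real constants. The constant field of $\operatorname{dv}(H)$ is (isomorphic to) the residue field of $H$, i.e.\ the field of limits of the bounded elements of $H$; each real not of this form has residue outside the residue field of $\operatorname{dv}(H)$, so adjoining it is a residue field extension and leaves the value group unchanged. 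This gives $\Gamma_{H(\R)}=\Gamma_{\operatorname{dv}(H)}$, and Lemma~\ref{lem:cof 1} then yields that $\Gamma$ is cofinal in $\Gamma_{\operatorname{dv}(H)}=\Gamma_{H(\R)}$.

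Finally I would transfer cofinality of value groups to cofinality in the ordered field, using that $\Q\subseteq H$ is cofinal in $\R$. Given $f\in H(\R)$ with $f>0$: if $f\preceq 1$, then $f\leq c$ for some $c\in\R^{>}$, and any rational $q>c$ lies in $H$ with $f\leq q$; if $f\succ 1$, then $-vf\in\Gamma_{H(\R)}^{>}$, so by cofinality of $\Gamma$ in $\Gamma_{H(\R)}$ there is $h\in H^{>}$ with $-vf\leq -vh$, i.e.\ $f\preceq h$, whence $f\leq ch$ for some $c\in\R^{>}$ and $f\leq qh\in H$ for any rational $q>c$. Thus $H$ is cofinal in $H(\R)$.

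The main obstacle is the middle step: rigorously establishing $\Gamma_{H(\R)}=\Gamma_{\operatorname{dv}(H)}$, that is, that passing from $\operatorname{dv}(H)$ to $H(\R)$ only enlarges the residue field and not the value group. The point to pin down is that the extra real constants have residues lying outside the residue field of $\operatorname{dv}(H)$, so that the corresponding constant field extension is value-group preserving; the archimedean reduction of the first paragraph and the transfer in the last paragraph are then routine applications of Lemmas~\ref{lem:cof 1} and~\ref{lem:cf(K)} together with the Archimedean property of $\R$.
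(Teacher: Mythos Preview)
Your approach is essentially the same as the paper's: reduce to the case $\Gamma_H\neq\{0\}$, invoke Lemma~\ref{lem:cof 1} to get $\Gamma_H$ cofinal in $\Gamma_{\operatorname{dv}(H)}$, show $\Gamma_{H(\R)}=\Gamma_{\operatorname{dv}(H)}$, and transfer back to the ordered field. The ``main obstacle'' you correctly identify---that adjoining the real constants to $\operatorname{dv}(H)$ does not enlarge the value group---is exactly what the paper dispatches by citing [ADH, 10.5.15 and the remark preceding 4.6.16], which says that constant field extensions of $H$-fields preserve the value group; your sketch via residue-transcendence is on the right track but would need a little more (the algebraic case and a limit argument) to be airtight without that reference.
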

\begin{proof}
This is clear if   $H\subseteq\R$; assume $H\not\subseteq\R$.
Let $E$ be the $H$-field hull of~$H$, taken as an $H$-subfield of the Hardy field extension $H(\R)$ of~$H$.
Then $H$ is cofinal in $E$ (Lemma~\ref{lem:cof 1}), so replacing $H$ by $E$ we arrange that $H$ is an $H$-field. Now use that
$\Gamma_{H(\R)}=\Gamma_H\neq\{0\}$ by 
[ADH, 10.5.15 and remark preceding 4.6.16].
\end{proof}

\noindent
For use in Section~\ref{sec:ctbl cf} we   include the following cofinality result, which is immediate from Lemma~\ref{lem:cof oags, 1} and [ADH, 10.4.5(i)]:

\begin{lemma}\label{lem:10.4.5}
Let $K$ be a $\d$-valued field of $H$-type with divisible asymptotic couple~$(\Gamma,\psi)$, $\Gamma\neq\{0\}$,
and let $s\in K$ be such that 
$$S:=\big\{ v(s-a^\dagger): a\in K^\times\big\} < (\Gamma^>)'$$
and $S$ has no largest element. Let $f$ be an element of an $H$-asymptotic field extension of $K$,
transcendental over $K$, with $f^\dagger = s$. Then $[\Gamma]=\big[\Gamma_{K(f)}\big]$, so $\Gamma$ is cofinal in $\Gamma_{K(f)}$.
\end{lemma}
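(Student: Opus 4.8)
The plan is to take the cited result [ADH, 10.4.5(i)] as the substantive input and treat the passage to cofinality as routine. First I would verify that the standing hypotheses match exactly what that result requires for the extension $K(f)$: the field $K$ is $\d$-valued of $H$-type with divisible asymptotic couple $(\Gamma,\psi)$ and $\Gamma\neq\{0\}$, the element $f$ is transcendental over $K$ with $f^\dagger=s$, and the set $S=\big\{v(s-a^\dagger):a\in K^{\times}\big\}$ has no largest element while $S<(\Gamma^>)'$. Under precisely these conditions [ADH, 10.4.5(i)] computes the value group of $K(f)$ and gives $[\Gamma]=[\Gamma_{K(f)}]$, which is the first assertion of the lemma and where essentially all of the work lives.

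Granting $[\Gamma]=[\Gamma_{K(f)}]$, I would then deduce cofinality of $\Gamma$ in $\Gamma_{K(f)}$. Recall that $\Gamma$ is an ordered subgroup of $\Gamma_{K(f)}$, so the archimedean-class map identifies $[\Gamma]$ with an ordered subset of $[\Gamma_{K(f)}]$ (the identification described just before Lemma~\ref{lem:cof oags, 2}), and by the displayed equality this inclusion is onto. Hence, given any $\gamma\in\Gamma_{K(f)}^{>}$, its archimedean class $[\gamma]$ is already of the form $[\delta]$ for some $\delta\in\Gamma$, which may be taken positive; the definition of archimedean class then yields $\gamma\leq n\delta$ for some $n\geq 1$, and $n\delta\in\Gamma$ since $\Gamma$ is a group. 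Thus every positive element of $\Gamma_{K(f)}$ is dominated by an element of $\Gamma$, so $\Gamma$ is cofinal in $\Gamma_{K(f)}$. This is exactly the mechanism of Lemma~\ref{lem:cof oags, 1}, where cofinality is governed by the archimedean-class map $\gamma\mapsto[\gamma]$; here the two groups share the common class set $[\Gamma]=[\Gamma_{K(f)}]$, so their cofinal behavior coincides and is realized inside $\Gamma$.

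I expect no real obstacle beyond correctly pinning down which clause of [ADH, 10.4.5] applies: the three side conditions---$f$ transcendental over $K$, $S$ without a maximum, and $S<(\Gamma^>)'$---must be checked against the case distinction in that theorem so that the relevant conclusion is the archimedean-class-preserving one rather than one producing a new class. Once $[\Gamma]=[\Gamma_{K(f)}]$ is secured, the cofinality statement is immediate, so I would keep the second step to the single archimedean-class estimate above.
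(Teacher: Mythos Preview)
Your proposal is correct and matches the paper's own justification: the paper states that the lemma is immediate from Lemma~\ref{lem:cof oags, 1} and [ADH, 10.4.5(i)], which is exactly your two-step argument of invoking [ADH, 10.4.5(i)] for $[\Gamma]=[\Gamma_{K(f)}]$ and then deducing cofinality via archimedean classes. Your direct archimedean estimate in the second step is precisely the mechanism behind Lemma~\ref{lem:cof oags, 1}.
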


\section{Pseudoconvergence in Analytic Hardy Fields}\label{sec:pc}

\noindent
We   complement the material on pc-sequences
 from \cite[Sections~3,~4]{ADHfgh} by criteria for germs in $\Cc^{<\infty}$ to be pseudolimits of pc-sequences in Hardy fields, and then use this to show that each pc-sequence of countable length in an analytic Hardy field has an analytic pseudolimit. The main results to this effect are Propositions~\ref{propimmy1},~\ref{propimmy2}, and~\ref{prop:smanpc}.  
{\it In this section $H$ is a Hardy field.}\/ 

\subsection*{Revisiting pseudoconvergence in Hardy fields} 
Let $H\supseteq \R(x)$ be real closed with asymptotic integration and $(f_\rho)$ a pc-sequence in $H$ of $\d$-transcendental type over $H$ (cf.~[ADH, 4.4]) with pseudolimit $f$ in a Hardy field extension of $H$. 
Then
the valued field extension $H\langle f\rangle\supseteq H$ is immediate by [ADH, 11.4.7, 11.4.13].  In \cite{ADHfgh} we only considered pc-sequences of countable length, but here we do not assume~$(f_{\rho})$ has countable length (to be exploited in the proof of Theorem~\ref{thdense}). 
We begin by deriving a sufficient condition on $y\in \Cc^{<\infty}$ to be $H$-hardian with~${f_\rho\leadsto  y}$.
This will enable us to find such $y$ in $\mathcal{C}^{\omega}$. (Another possible use is to find such~$y$ with oscillating~$y-f$, so that $H\<y\>$ and~$H\<f\>$ are ``incompatible'' Hardy field extensions of $H$.)
To simplify notation, set $t:= x^{-1}$.  Let $\phi\in H^\times$. Recall from [ADH, 11.1] that   $\phi$ is said to be {\it active}\/ in~$H$ if $\phi\succeq h^\dagger$ for some $h\in H^\times$, $h\nasymp 1$.  Denoting by~$\der$ the derivation of the differential ring $\Cc^{<\infty}$, we let
  $(\Cc^{<\infty})^\phi$ be the   ring $\Cc^{<\infty}$ equipped with the derivation~$\derdelta:=\phi^{-1}\der$
  and $H^\phi$ be the ordered valued field~$H$ equipped with the restriction of $\derdelta$ to $H$; we
then have a ring isomorphism  $P\mapsto P^\phi\colon \Cc^{<\infty}\{Y\}\to(\Cc^{<\infty})^\phi\{Y\}$ with $P(y)=P^\phi(y)$ for
each $y\in\Cc^{<\infty}$.
We first observe:

\begin{lemma}\label{lem:immy}
Let $\phi$ be active in $H$, $0< \phi\prec 1$, let $\derdelta:=\phi^{-1}\der$ be the derivation of~$(\Cc^{<\infty})^\phi$, and
let  $z\in\Cc^{<\infty}$ satisfy $z^{(i)}\prec t^j$ for all $i$, $j$. Then  
$\derdelta^k(z)\prec 1$ for all $k$.
\end{lemma}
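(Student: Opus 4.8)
The plan is to expand $\derdelta^k(z)$ as an $H$-linear combination of the derivatives $z^{(i)}$ and then play the two sizes against each other: the hypothesis forces each $z^{(i)}$ to be smaller than every power of $t$, while I will show that the coefficients grow at most polynomially, i.e.\ are bounded by powers of $t^{-1}=x$. First I would record, by induction on $k$ via the Leibniz rule, the expansion
$$\derdelta^k(z)\ =\ \sum_{i=1}^{k} c_{k,i}\,z^{(i)},\qquad c_{k,i}\in H,$$
where $c_{1,1}=\phi^{-1}$ and, writing $c_{k,i}':=\der(c_{k,i})$, the recursion $c_{k+1,i}=\phi^{-1}\big(c_{k,i}'+c_{k,i-1}\big)$ holds (with the convention $c_{k,0}=c_{k,k+1}=0$). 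This is routine, using only that $\phi^{-1}\in H$ and that $H$ is a differential field.

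The crux is to show that every coefficient lies in
$$P\ :=\ \{g\in H:\ g\preceq t^{-N}\text{ for some }N\}.$$
I would check that $P$ is a subring of $H$, that it is closed under $\der$, and that $\phi^{-1}\in P$; granting this, the recursion above at once gives $c_{k,i}\in P$ for all $k,i$. Closure of $P$ under addition and multiplication is clear. For $\phi^{-1}\in P$ I would use activeness: write $\phi\succeq h^\dagger$ with $h\in H^\times$, $h\nasymp 1$ (so $h$ is nonconstant and, after possibly replacing $h$ by $1/h$, $h\succ 1$); if one had $h^\dagger\preceq t^2$, then $\log\abs{h}=\int h^\dagger$ would be bounded, forcing $h\asymp 1$, a contradiction. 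Hence $h^\dagger\succ t^2$, so $\phi\succ t^2$ and $\phi^{-1}\prec t^{-2}\in P$. For closure under $\der$: given $g\in H$ with $g\preceq t^{-N}$, the germ $g'$ lies in $H$ and so is of eventually constant sign; were $g'\succ t^{-M}$ for every $M$, integration would force $\abs{g}$ to exceed every power of $t^{-1}$, contradicting $g\preceq t^{-N}$. Thus $g'\preceq t^{-M}$ for some $M$, i.e.\ $g'\in P$.

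Finally I would combine the two sizes. Fix $k,i$ and choose $N$ with $c_{k,i}\preceq t^{-N}$. Since by hypothesis $z^{(i)}\prec t^{N+1}$, multiplying by the positive germ $t^{-N}$ gives
$$c_{k,i}\,z^{(i)}\ \preceq\ t^{-N}z^{(i)}\ \prec\ t^{-N}t^{N+1}\ =\ t\ \prec\ 1.$$
As $\derdelta^k(z)$ is a finite sum of such terms, it follows that $\derdelta^k(z)\prec 1$, as required.

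The main obstacle is the pair of asymptotic estimates bundled into the statement ``$P$ is a $\der$-closed subring containing $\phi^{-1}$'': the lower bound $\phi\succ t^2$, which is exactly where activeness of $\phi$ is used (through the integrability of the logarithmic derivative $h^\dagger$ of an element $h\nasymp 1$), and the stability of polynomial boundedness under differentiation, which rests on the eventual monotonicity of germs in a Hardy field. Once these two facts are in place, everything else is bookkeeping.
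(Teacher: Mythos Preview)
Your proof is correct. It follows the same overall shape as the paper's---expand $\derdelta^k(z)$ as an $H$-linear combination of the $z^{(i)}$ and bound each term---but the bookkeeping on the coefficients is organized differently. The paper invokes the explicit formula \cite[(3.1)]{ADHfgh},
\[
\derdelta^k(z)\ =\ \phi^{-k}\sum_{j=1}^k R^k_j(-\phi^\dagger)\,z^{(j)}\qquad\big(R^k_j\in\Q\{Z\}\big),
\]
and then reads off the estimate in one stroke: $\phi^\dagger\preceq 1$ forces $R^k_j(-\phi^\dagger)\preceq 1$, while $z^{(j)}\prec t^{2k}\prec\phi^k$ kills the factor $\phi^{-k}$. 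You instead avoid the external formula by showing that the coefficients lie in the $\der$-stable subring $P=\{g\in H:g\preceq t^{-N}\text{ for some }N\}$, establishing $\phi^{-1}\in P$ and $\der(P)\subseteq P$ directly via integration arguments. Your integrability argument for $h^\dagger\succ t^2$ is exactly what underlies the paper's implicit inequality $t^2\prec\phi$, so the two routes share their key asymptotic input; the paper's version is shorter once \cite[(3.1)]{ADHfgh} and the standard fact $\phi^\dagger\preceq 1$ are granted, whereas yours is self-contained.
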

\begin{proof}
This  is clear for $k=0$.  Suppose $k\geq 1$.
The identity (3.1) in \cite{ADHfgh} gives~$\derdelta^k(z)=\phi^{-k}\sum_{j=1}^kR^k_j(-\phi^\dagger)z^{(j)}$
where  $R^k_j(Z)\in \Q\{Z\}$ for $j=1,\dots,k$.
This yields $\derdelta^k(z)\prec 1$  in view of $\phi^\dagger\preceq 1$ and $z^{(j)}\prec t^{2k}\prec \phi^k$
for $j=1,\dots,k$.
\end{proof}

\noindent
The proof of the next result uses various items from [ADH]: for $P_{+h}$, $P_{\times h}$ see 4.3,  for~$\ddeg_{\prec \fv}P$, see 6.6, for
$\ndeg_{\prec \fv}P$, see 11.1, and for $Z(H,f)$, see 11.4.  

\begin{lemma}\label{lem:6.7.12}
Let $y\in\Cc^{<\infty}$ be such that for all~$h\in H$, $\fm\in H^\times$ with $f-h\preceq\fm$
and all $n$ there is an active $\phi_0$ in $H$ such that for all active $\phi>0$ in $H$ with $\phi\preceq \phi_0$ we have 
$\derdelta^n\!\big(\frac{y-h}{\fm}\big)\preceq 1$ for~${\derdelta=\phi^{-1}\der}$.
Then~$y$ is $H$-hardian  and there is a Hardy field
isomorphism ${H\langle y\rangle \to H\langle f\rangle}$  over $H$
sending~$y$ to $f$.
\end{lemma}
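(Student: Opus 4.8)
The plan is to reduce the whole statement to a single asymptotic assertion, the \emph{main claim}: for every $P\in H\{Y\}^{\neq}$ we have $P(y)\sim P(f)$. Granting this, the conclusion is quick. Since $(f_\rho)$ has $\d$-transcendental type, $f$ is $\d$-transcendental over $H$ (by [ADH, 11.4.7]), so $P\mapsto P(f)$ is injective on $H\{Y\}$ and $H\langle f\rangle=\Frac\big(H\{f\}\big)$ is a Hardy field. The main claim gives $P(y)\sim P(f)\neq 0$, so each $P(y)$ is eventually nonzero with $\sign P(y)=\sign P(f)$; hence $y$ is $\d$-transcendental over $H$, every nonzero element of $H\{y\}$ has eventually constant sign, and $H\langle y\rangle=\Frac\big(H\{y\}\big)$ is a Hausdorff field. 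Being a differential subfield of $\Cc^{<\infty}$, it is a Hardy field, so $y$ is $H$-hardian. The evaluation maps then yield a differential $H$-algebra isomorphism $H\{y\}\to H\{f\}$, $P(y)\mapsto P(f)$, extending to a field isomorphism $H\langle y\rangle\to H\langle f\rangle$ over $H$ with $y\mapsto f$; the equality $\sign P(y)=\sign P(f)$ makes it order preserving, hence a Hardy field isomorphism. (Applying the $n=0$ case of the hypothesis with $h=f_\rho$ and $\fm=f-f_\rho$ also gives $v(y-f_\rho)=v(f-f_\rho)$ for all $\rho$, so $f_\rho\leadsto y$.)

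To prove the main claim I would first localise $P$. Because $(f_\rho)$ is of $\d$-transcendental type and $f_\rho\leadsto f$, for the given $P$ there are $h\in H$ and $\fm\in H^\times$ with $f-h\preceq\fm$ and $\ndeg_{\prec\fm}P_{+h}=0$: were the Newton degree in the region $Y\prec\fm$ to stay positive under all admissible refinements, a zero of $P$ would realise part of the pc-sequence, contradicting $\d$-transcendental type (here one uses the machinery of $P_{+h}$, $P_{\times\fm}$, $\ddeg_{\prec\fm}$, $\ndeg_{\prec\fm}$ and $Z(H,f)$ from [ADH, 4.3, 6.6, 11.1, 11.4]). In particular $P(f)\sim P(h)$. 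Now set $u:=(y-h)/\fm$, so that $y=h+\fm u$; the hypothesis applied to this $h$, $\fm$ furnishes, for each $n$, an active $\phi_0$ such that $\derdelta^{j}(u)\preceq 1$ for all $j\le n$ and all active $\phi>0$ in $H$ with $\phi\preceq\phi_0$, where $\derdelta=\phi^{-1}\der$.

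The heart of the matter---and the step I expect to be the main obstacle---is to pass from these derivative bounds on $u$ to $P(y)\sim P(h)$. I would take $n$ to be the order of $P$, conjugate by a sufficiently small active $\phi$, and expand $P^\phi_{+h}(\fm u)$ by Taylor's formula [ADH, p.~210]; each nonconstant term is a coefficient of $P^\phi_{+h}$ times a monomial in the $\derdelta$-derivatives of $\fm u$. Since $\fm\in H$ and $\phi$ is active, the $\derdelta$-derivatives of $\fm$ are under control, while the hypothesis bounds those of $u$ by $1$ (this is the role of estimates in the style of Lemma~\ref{lem:immy}); and $\ndeg_{\prec\fm}P_{+h}=0$ means $\ddeg_{\prec\fm}P^\phi_{+h}=0$ for all small enough active $\phi$, so the dominant part of $P^\phi_{+h}$ over $Y\prec\fm$ is its degree-zero term $P(h)$. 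Thus every nonconstant term is $\prec P(h)$, giving $P(y)=P^\phi(h+\fm u)\sim P(h)\sim P(f)$ as required. The delicate point is to select $\phi$ uniformly enough that the Newton-degree-zero estimate and the hypothesis's derivative bounds hold together---precisely why the hypothesis quantifies over all active $\phi\preceq\phi_0$.
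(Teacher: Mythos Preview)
Your overall strategy---reduce to $P(y)\sim P(f)$ for all nonzero $P$, invoke $Z(H,f)=\emptyset$, then Taylor-expand after compositional conjugation---is the paper's. The gap is that you use a single $\fm$ for two incompatible purposes. You take $\fm$ with $\ndeg_{\prec\fm}P_{+h}=0$ for the Taylor estimate, and you apply the hypothesis to that same $\fm$ to obtain $u:=(y-h)/\fm$ with $\derdelta^j u\preceq 1$. But then you only know $y-h\preceq\fm$, not $y-h\prec\fm$, while $\ddeg_{\prec\fm}P^\phi_{+h}=0$ controls only inputs strictly $\prec\fm$: it gives $\ddeg P^\phi_{+h,\times\fn}=0$ for $\fn\prec\fm$, saying nothing about $\ddeg P^\phi_{+h,\times\fm}$. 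Hence you cannot conclude that the nonconstant terms of $P^\phi_{+h}(\fm u)=P^\phi_{+h,\times\fm}(u)$ are $\prec P(h)$. (The same defect already hits your claim ``$P(f)\sim P(h)$'' if you allow $f-h\asymp\fm$.)

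The fix---and what the paper does---is to separate the two scales. From $Z(H,f)=\emptyset$ one obtains $h$ and $\fv$ with $f-h\prec\fv$ (strictly) and $\ndeg_{\prec\fv}Q_{+h}=0$. Since $H\langle f\rangle\supseteq H$ is immediate, take $\fm\in H^\times$ with $f-h\asymp\fm$, so $\fm\prec\fv$; apply the hypothesis to \emph{this} $\fm$ to get the derivative bounds on $u$. Then pick any $\fw$ with $\fm\prec\fw\prec\fv$: since $\fw\prec\fv$ one has $\ndeg Q_{+h,\times\fw}=0$, and one can choose a single active $\phi\preceq\phi_0$ achieving both $\ddeg Q^\phi_{+h,\times\fw}=0$ and the derivative bounds. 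Now $y-h\preceq\fm\prec\fw$, and this strict gap supplies exactly the room your Taylor argument lacked; after passing to the $(\,\cdot\,)^\circ$-side the paper invokes a lemma from a companion paper to conclude $Q(y)\sim Q(f)$ directly, without routing through $Q(h)$.
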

\begin{proof} First, $Z(H,f)=\emptyset$ by [ADH, 11.4.13], since $(f_{\rho})$ is of $\d$-transcendental type over $H$ and $f_{\rho}\leadsto f$. 
It is enough to show that $Q(y) \sim Q(f)$ for all~${Q\in H\{Y\}\setminus H}$. Let $Q\in H\{Y\}\setminus H$. 
Then $Q\notin Z(H,f)$, so we have $h\in H$ and $\fv\in H^\times$ such that~${h-f\prec \fv}$ and
$\ndeg_{\prec \fv}Q_{+h}=0$. 
Since $H\<f\>\supseteq H$ is immediate, we have~${\fm\in H^\times}$ with~$f-h\asymp \fm$.
Let $r:=\order Q$ and choose active
$\phi_0$ in $H$ such that for all active $\phi>0$ in $H$ with $\phi\preceq \phi_0$ we have 
$\derdelta^j\!\big(\frac{y-h}{\fm}\big)\preceq 1$ for~${\derdelta=\phi^{-1}\der}$ and~$j=0,\dots,r$.
Now take
any $\fw\in H^\times$ with $\fm\prec\fw\prec\fv$. Then $\ndeg Q_{+h,\times\fw}=0$, so
we can choose an active $\phi>0$ in $H$ with~$\phi\preceq\phi_0$
and $\ddeg Q^\phi_{+h,\times\fw}=0$. Then~$\ddeg_{\prec\fw} Q^\phi_{+h}=0$, so
renaming $\fw$ as $\fv$ we arrange $\ddeg_{\prec\fv} Q^\phi_{+h}=0$.

Using~$(\phantom{-})^\circ$ as explained in \cite[Section~8]{ADH4}, we have the  Hardy field~$H\langle f\rangle^\circ$ and the  $H$-field isomorphism $h\mapsto h^\circ\colon H\langle f\rangle^\phi\to H\langle f\rangle^\circ$. 
Put~$u:=(y-h)/\fm\in \Cc^{<\infty}$. Then~$\ddeg_{\prec\fv^\circ} Q^{\phi\circ}_{+h^\circ}=0$ and  $(u^\circ)^{(j)}\preceq 1$ for~${j=0,\dots,r}$,
hence~$Q^{\phi\circ}(y^\circ)\sim  Q^{\phi\circ}(f^\circ)$ by \cite[Lemma~11.7]{ADH4} with 
$H^\circ$, $H\langle f\rangle^\circ$, $h^\circ$, $f^\circ$, $\fm^\circ$, $Q^{\phi\circ}$, $\fv^\circ$, $y^\circ$ in place of 
$H$, $\hat H$, $h$,  $\hat h$, $\fm$, $Q$,  $\fv$, $y$, respectively. 
This yields $Q(y) \sim Q(f)$, since~$Q^{\phi\circ}(g^\circ)=Q(g)^\circ$ for~$g\in\Cc^{<\infty}$.
\end{proof}
 

\begin{prop} \label{propimmy1} Suppose $0\in v(f-H)$ and  $y\in \Cc^{<\infty}$ is such that for all $\fm\in H^\times$ with $v\fm\in v(f-H)$ and all $i$, $j$, $k$ we have
$$y^{(i)}-f^{(i)}\ \prec\  \fm^j t^k\ \text{ in $\Cc^{<\infty}$.}$$
Then $y$ is $H$-hardian and there is an isomorphism $H\langle y\rangle \to H\langle f\rangle$ of Hardy fields over~$H$ sending $y$ to $f$ \textup{(}and thus $f_\rho\leadsto y$\textup{)}.
\end{prop}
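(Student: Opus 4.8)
The plan is to deduce everything from Lemma~\ref{lem:6.7.12}: once we know that $y$ is $H$-hardian with a Hardy field isomorphism $H\langle y\rangle\to H\langle f\rangle$ over $H$ sending $y$ to $f$, the final clause $f_\rho\leadsto y$ is immediate, since this isomorphism fixes each $f_\rho\in H$ and preserves the valuation, so $v(y-f_\rho)=v(f-f_\rho)$ is eventually strictly increasing because $f_\rho\leadsto f$. Thus the entire task is to verify the hypothesis of Lemma~\ref{lem:6.7.12}: for every $h\in H$ and $\fm\in H^\times$ with $f-h\preceq\fm$ and every $n$, to produce an active $\phi_0$ in $H$, which we may take with $\phi_0\prec 1$, so that $\derdelta^n\big(\frac{y-h}{\fm}\big)\preceq 1$ for $\derdelta=\phi^{-1}\der$ and all active $\phi>0$ in $H$ with $\phi\preceq\phi_0$.

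First I would split, using that $\derdelta$ is a derivation,
\[
\frac{y-h}{\fm}\ =\ \frac{f-h}{\fm}+\frac{y-f}{\fm},
\]
and treat the two summands separately. The first summand lies in the Hardy field $H\langle f\rangle$ and is $\preceq 1$ (as $f-h\preceq\fm$); for it I would invoke the standard flattening fact that, in an $H$-field with asymptotic integration, a bounded element has all its iterated $\derdelta$-derivatives $\preceq 1$ once $\phi$ is taken sufficiently active (flat enough that the flattened derivation is very small). This yields an active $\phi_0\prec 1$ with $\derdelta^j\big(\frac{f-h}{\fm}\big)\preceq 1$ for $j\le n$ and all active $0<\phi\preceq\phi_0$, and it is the only place where the flatness of $\phi$ is needed.

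For the second summand I would apply Lemma~\ref{lem:immy} to $z:=\frac{y-f}{\fm}\in\Cc^{<\infty}$, which gives $\derdelta^k(z)\prec 1$ for \emph{every} active $\phi$ with $0<\phi\prec 1$ (no flatness required), provided I can check the hypothesis $z^{(i)}\prec t^j$ for all $i,j$. This verification is the computational heart of the proof. By the Leibniz rule $z^{(i)}=\sum_{l\le i}\binom{i}{l}(y-f)^{(l)}(1/\fm)^{(i-l)}$, so it suffices to bound each product $(y-f)^{(l)}(1/\fm)^{(m)}$. Writing $(1/\fm)^{(m)}=Q_m/\fm$ with $Q_m\in H$ a weight-$m$ differential-polynomial expression in $\fm^\dagger$, the Hardy field estimate that logarithmic derivatives are subpolynomial---$\fm^\dagger\prec\fm^\epsilon$ when $\fm\succ 1$ and $\fm^\dagger\prec\fm^{-\epsilon}$ when $\fm\prec 1$, for every $\epsilon>0$---gives $Q_m\prec\fm^{\epsilon}$ respectively $Q_m\prec\fm^{-\epsilon}$. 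I would then feed in the hypothesis $(y-f)^{(l)}\prec\fm'^{\,j'}t^{k'}$ with $\fm'\in H^\times$ chosen so that $v\fm'=v(f-h)\in v(f-H)$, which exists by immediacy of $H\langle f\rangle\supseteq H$ and satisfies $v\fm'\ge v\fm$: in the case $\fm\succ 1$ the choice $j'=0$ (i.e.\ the bound $(y-f)^{(l)}\prec t^{k'}$, available since $0\in v(f-H)$) already yields $(y-f)^{(l)}(1/\fm)^{(m)}\prec\fm^{-1+\epsilon}t^{k'}\prec t^j$, while in the case $\fm\prec 1$ (so $v\fm'\ge v\fm>0$) taking $j'$ large makes the valuation of $\fm'^{\,j'}\fm^{-1-\epsilon}$ tend to $+\infty$, again giving $\prec t^j$.

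Combining the two summands produces the required $\phi_0$, so Lemma~\ref{lem:6.7.12} applies and the proof is complete. The main obstacles are exactly the two asymptotic inputs flagged above: the flattening fact controlling the $\derdelta$-derivatives of the bounded germ $\frac{f-h}{\fm}\in H\langle f\rangle$, and the subpolynomial bound on $(1/\fm)^{(m)}$ that lets the powers $\fm^j$ appearing in the hypothesis absorb the derivatives of $1/\fm$; the $\fm\succ 1$ versus $\fm\prec 1$ dichotomy in the latter is the one point demanding care.
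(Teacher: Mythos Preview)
Your overall plan matches the paper's: reduce to Lemma~\ref{lem:6.7.12} via the splitting $\frac{y-h}{\fm}=\frac{f-h}{\fm}+z$ with $z:=\frac{y-f}{\fm}$, and feed $z$ into Lemma~\ref{lem:immy}. The executions diverge at the verification of $z^{(i)}\prec t^j$. You apply Leibniz to $z^{(i)}$ directly and are then forced to bound $(1/\fm)^{(m)}$; the paper instead applies Leibniz to $(\fm z)^{(n)}=(y-f)^{(n)}$ in the case $\fm\preceq 1$, so that only derivatives $\fm^{(i)}\preceq 1$ of $\fm$ (not of $1/\fm$) appear, and an easy induction on $n$ gives $z^{(n)}\prec\fm^j t^k$ for all $n,j,k$. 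The general case is then reduced to $\fm\preceq 1$ by choosing $h_1\in H$ with $f-h_1\preceq 1$ (possible since $0\in v(f-H)$), taking $\fm_1\asymp f-h_1$, and writing $z=\big(\frac{y-f}{\fm_1}\big)\cdot\frac{\fm_1}{\fm}$ with $\frac{\fm_1}{\fm}\prec 1$. This sidesteps any estimate on $(1/\fm)^{(m)}$ altogether.

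Your route can be made to work, but it has a gap at ``$\fm^\dagger\prec\fm^{\pm\epsilon}$ gives $Q_m\prec\fm^{\pm\epsilon}$'': $Q_m$ is a polynomial in $\fm^\dagger,(\fm^\dagger)',\dots,(\fm^\dagger)^{(m-1)}$, so you must also bound the higher derivatives $(\fm^\dagger)^{(k)}$, and the subpolynomial bound on $\fm^\dagger$ alone does not do this. It can be filled by induction---from $g\prec\fm^{\epsilon_1}$ for all $\epsilon_1>0$ and small derivation one gets $g'=(g/\fm^{\epsilon_1})'\fm^{\epsilon_1}+\epsilon_1 g^2\prec\fm^{2\epsilon_1}$---but that argument is missing from your write-up. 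A minor point: the first summand needs no flattening beyond $0<\phi\prec 1$; smallness of the derivation of $H\langle f\rangle^\phi$ already yields $\derdelta^k\big(\frac{f-h}{\fm}\big)\preceq 1$ for all $k$, uniformly in such $\phi$.
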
 

 \begin{proof}  
Let $\phi$ be active in $H$, $0<\phi\prec 1$, and $\derdelta=\phi^{-1}\der$ the derivation of $(\Cc^{<\infty})^\phi$.
Let also~$h\in H$ and $\fm\in H^\times$ with $f-h\preceq \fm$, and put $z:=\frac{y-f}{\fm}$.
By  Lemma~\ref{lem:6.7.12} 
it suffices to show that then
$\derdelta^k\big( \frac{y-h}{\fm} \big)\preceq 1$ for all $k$; equivalently,
 $\derdelta^k (z)\preceq  1$ for all $k$ (thanks to
$\frac{y-h}{\fm}-z = \frac{f-h}{\fm}\preceq 1$ and smallness of the derivation of $H\langle f\rangle^\phi$).

\claim[1]{Suppose $\fm\preceq 1$. Then $z^{(n)} \prec  \fm^jt^k$  for all $n$, $j$, $k$.}

\noindent
This holds for $n=0$ because $\fm z\prec \fm^{j+1}t^k$ for all $j$, $k$. 
Let $n\ge 1$ and assume inductively that $z^{(i)}\prec \fm^jt^k$ for $i=0,\dots, n-1$ and all $j$, $k$. 
Now $(\fm z)^{(n)}= y^{(n)}-f^{(n)}\prec \fm^jt^k$ for all $j$, $k$, and
$$(\fm z)^{(n)}\  =\ \ \fm^{(n)}z + \cdots + \fm z^{(n)}.$$ 
Since $\fm\preceq 1$, the smallness of the derivation of $H$ and the inductive assumption gives $\fm^{(n-i)}z^{(i)}\preceq z^{(i)}\prec\fm^jt^k$ for $i=0,\dots,n-1$ and all $j$, $k$, so
$\fm z^{(n)} \prec \fm^jt^k$  for all $j$, $k$, and thus $z^{(n)}\prec \fm^jt^k$  for all $j$, $k$.

\claim[2]{$\derdelta^k (z)\prec  1$ for all $k$.}

\noindent
If $\fm\preceq 1$, then this holds by Claim~1 and Lemma~\ref{lem:immy}. 
In general, take $h_1\in H$ with~$f-h_1\prec f-h$ and $f-h_1\preceq 1$, and then $\fm_1\in H^\times$ with $f-h_1\asymp\fm_1$. 
By the special case just proved with $h_1$, $\fm_1$ in place of $h$, $\fm$ we have~$\derdelta^k \big( \frac{y-f}{\fm_1} \big)\prec  1$ for all~$k$.
Now
$z = \big(\frac{y-f}{\fm_1}\big)\big(\frac{\fm_1}{\fm}\big)$ and $\frac{\fm_1}{\fm}  \prec 1$ (in $H$), so the claim follows using
the  Product Rule for the derivation   of $(\Cc^{<\infty})^\phi$ and smallness of the derivation of~$H^\phi$.
\end{proof} 

\noindent
Here is a more useful variant for the case $0\notin v(f-H)$:

\begin{prop} \label{propimmy2} Suppose $0\notin v(f-H)$, and $y\in  \Cc^{<\infty}$ is such that
$$y^{(i)}-f^{(i)}\ \prec\   t^k \text{ for all $i$, $k$.}$$
Then $y$ is $H$-hardian and there is an isomorphism $H\langle y\rangle \to H\langle f\rangle$ of Hardy fields over~$H$ sending $y$ to $f$.
\end{prop}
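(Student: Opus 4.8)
The plan is to deduce the statement from Lemma~\ref{lem:6.7.12} in the same way as Proposition~\ref{propimmy1}, the simplification being that the hypothesis $0\notin v(f-H)$ forces every relevant monomial $\fm$ to satisfy $\fm\succ 1$, so that the factors $\fm^j$ occurring in Proposition~\ref{propimmy1} become unnecessary. First I would record the consequence of $0\notin v(f-H)$ that drives everything. Since $H\langle f\rangle\supseteq H$ is immediate, every value $v(f-h)$ (with $h\in H$) lies in $\Gamma$, and the set $v(f-H)$ is downward closed in $\Gamma$: given $h\in H$ and $\gamma'<v(f-h)$ in $\Gamma$, choosing $h''\in H$ with $v(h'')=\gamma'$ yields $v\big(f-(h-h'')\big)=\gamma'$. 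As $0\notin v(f-H)$, downward closure rules out $v(f-h)\ge 0$, so $f-h\succ 1$ for all $h\in H$.

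Next I would fix an active $\phi$ in $H$ with $0<\phi\prec 1$ and let $\derdelta=\phi^{-1}\der$ be the derivation of $(\Cc^{<\infty})^\phi$. Let $h\in H$ and $\fm\in H^\times$ with $f-h\preceq\fm$; by the previous paragraph $\fm\succeq f-h\succ 1$, hence $\fm^{-1}\prec 1$. Set $z:=\frac{y-f}{\fm}\in\Cc^{<\infty}$. Since $\frac{y-h}{\fm}-z=\frac{f-h}{\fm}\preceq 1$ lies in $H\langle f\rangle$ and the derivation of $H\langle f\rangle^\phi$ is small, it suffices, by Lemma~\ref{lem:6.7.12}, to show that $\derdelta^k(z)\preceq 1$ for all $k$.

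To obtain this I would first check that $z^{(n)}\prec t^k$ for all $n$, $k$. Expanding $z=(y-f)\fm^{-1}$ by the Leibniz rule gives $z^{(n)}=\sum_{l=0}^n\binom{n}{l}(y-f)^{(l)}(\fm^{-1})^{(n-l)}$; here $(\fm^{-1})^{(n-l)}\prec 1$ by the smallness of the derivation of $H$ applied repeatedly to $\fm^{-1}\prec 1$, whereas the hypothesis gives $(y-f)^{(l)}=y^{(l)}-f^{(l)}\prec t^k$ for every $k$, so each summand, and hence $z^{(n)}$, is $\prec t^k$ for all $k$. Now Lemma~\ref{lem:immy} yields $\derdelta^k(z)\prec 1$ for all $k$, completing the verification of the hypothesis of Lemma~\ref{lem:6.7.12} and so the proof. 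I expect the only genuine point to be the first paragraph: it is exactly the observation $f-h\succ 1$, a consequence of $0\notin v(f-H)$ together with immediacy, that replaces the Claim~1/Claim~2 reduction of Proposition~\ref{propimmy1} and permits dropping the powers $\fm^j$ from the hypothesis.
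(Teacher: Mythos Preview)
Your proof is correct and follows essentially the same route as the paper's own proof: both reduce to Lemma~\ref{lem:6.7.12} via the observation that $v(f-H)$ is downward closed (hence $\fm\succ 1$), expand $z=(y-f)\fm^{-1}$ by Leibniz using $\fm^{-1}\prec 1$ and smallness of the derivation of $H$ to get $z^{(n)}\prec t^k$ for all $n,k$, and then invoke Lemma~\ref{lem:immy}. Your write-up is slightly more detailed (you justify downward closure explicitly), but the argument is the same.
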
 
\begin{proof} 
Let $\phi$, $h$, $\fm$, $z$ be as in the proof of Proposition~\ref{propimmy1}; as in that proof
it suffices to show that $\derdelta^k (z)\preceq  1$ for all $k$. Now $v(f-H)$ is downward closed, so~$v(f-H)<0$, which gives $1\prec f-h\preceq\fm$. Thus
$\fm\succ 1$, 
hence with $\frak{n}:= \fm^{-1}\in H^\times$  we have~$z=\frak{n}(y-f)$ and $\frak{n}\prec 1$, so in
view of  $z^{(i)}=\frak{n}^{(i)}(y-f)+\cdots + \frak{n}(y-f)^{(i)}$ we obtain
 $z^{(i)}\prec t^j$ for all $i$, $j$, and Lemma~\ref{lem:immy} then yields
   $\derdelta^k (z)\prec  1$ for each $k$.
\end{proof} 

\noindent
Multiplicative conjugation gives a reduction to Proposition~\ref{propimmy2}, except 
when $(f_{\rho})$ is a cauchy sequence, not just a pc-sequence. We shall exploit this several times.

\subsection*{Constructing analytic pseudolimits in Hardy field extensions}  
Let   $(f_{\rho})$ be a pc-sequence in $H$. Corollary~3.2  from \cite{ADHfgh}  says:
if $(f_\rho)$ has countable length, then~$(f_{\rho})$ pseudoconverges in some Hardy field extension of $H$. Using  Corollary~\ref{apomega} and Proposition~\ref{propimmy2}  we now deduce smooth and analytic versions of this key fact. 
We say that an $H$-field with  real closed constant field is {\it closed}\/ if  it has no proper $\d$-algebraic $H$-field extension with the same
constant field. (This is not how ``closed'' was introduced in \cite{ADHfgh}, but it is equivalent to it in view of [ADH, 16.0.3 and proof of 16.4.8].)
By \cite[Corollary~11.20]{ADH4}, every maximal Hardy field is a closed $H$-field; likewise with ``maximal smooth'' or ``maximal analytic'' in place of ``maximal''.
Every closed $H$-field is Liouville closed, by [ADH, 10.6.13, 10.6.14], and  every divergent pc-sequence in a closed $H$-field is of $\d$-transcendental type over it,  by [ADH, 11.4.8, 11.4.13].
 
\begin{prop}\label{prop:smanpc}
Suppose $H$ is an analytic Hardy field and
$(f_\rho)$   pseudoconverges in some Hardy
field extension of $H$. Suppose also that $H$ is bounded or $(f_\rho)$ does not have width~$\{\infty\}$ in the valued field $H$. Then $(f_\rho)$ pseudoconverges in an analytic Hardy field extension of $H$. 
Likewise with ``smooth'' in place of ``analytic''.
\end{prop}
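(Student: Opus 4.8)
The plan is to distinguish cases by the differential type of $(f_\rho)$ over $H$: dispose of the $\d$-algebraic case by embedding $H$ in a maximal analytic Hardy field, and treat the $\d$-transcendental case by replacing a pseudolimit with a nearby analytic (resp.\ smooth) germ---via Corollary~\ref{cor:analytify} when $H$ is bounded and via Proposition~\ref{propimmy2} when $H$ is unbounded.

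Fix a pseudolimit $f\in\Cc^{<\infty}$ of $(f_\rho)$ in some Hardy field extension $F$ of $H$. If $(f_\rho)$ is of $\d$-algebraic type over $H$, embed $H$ into a maximal analytic Hardy field $M$; a nonzero $P\in H\{Y\}$ with $P(f_\rho)\leadsto 0$ still witnesses $\d$-algebraic type over $M$, and since divergent pc-sequences in the closed $H$-field $M$ are of $\d$-transcendental type, $(f_\rho)$ must pseudoconverge in $M$, an analytic Hardy field extension of $H$. So I may assume $(f_\rho)$ is of $\d$-transcendental type over $H$; then $f$ is $\d$-transcendental over $H$ by [ADH, 11.4.7].

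Suppose first that $H$ is bounded. Then Corollary~\ref{cor:analytify}, applied to the $\d$-transcendental germ $f$, yields an $H$-hardian $g\in\Cc^\omega$ and an isomorphism $H\langle f\rangle\to H\langle g\rangle$ over $H$ sending $f$ to $g$; this isomorphism fixes each $f_\rho\in H$, so $f_\rho\leadsto g$, while $H\langle g\rangle\subseteq\Cc^\omega$, and hence $(f_\rho)$ pseudoconverges in the analytic Hardy field extension $H\langle g\rangle$. Now suppose $H$ is unbounded; then by hypothesis $(f_\rho)$ does not have width~$\{\infty\}$, i.e.\ is not cauchy in $H$, so its gaps $v(f_{\rho+1}-f_\rho)$ are bounded above in $\Gamma_H$. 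To apply Proposition~\ref{propimmy2} I first pass to a suitable base: with $L$ a maximal Hardy field extension of $F$, let $K$ be the $\d$-closure of $H$ in $L$, so $K\supseteq\R(x)$ is real closed with asymptotic integration, $f\in L$ is $K$-hardian, and $(f_\rho)$ is still of $\d$-transcendental type over $K$: otherwise a pseudolimit would be $\d$-algebraic over $K$, hence over $H$ (as $K$ is $\d$-algebraic over $H$), contradicting that $f$ is $\d$-transcendental over $H$. Choose $\fm\in H^\times$ with $v\fm$ above all gaps; as $v(f-K)$ is the downward closure of the gaps in $\Gamma_K$ (because $(f_\rho)$ does not pseudoconverge in $K$), this gives $v\fm\notin v(f-K)$, hence $0\notin v(f/\fm-K)$. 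Taking $\varepsilon\in\Cc$ with $\varepsilon>_{\ex}0$ and $\varepsilon\prec t^k$ for all $k$ (say $\varepsilon=\ex^{-x}$), Corollary~\ref{apomega} produces $y\in\Cc^\omega$ with $\abs{(f/\fm-y)^{(n)}}<_{\ex}\varepsilon$ for all $n$, so $y^{(i)}-(f/\fm)^{(i)}\prec t^k$ for all $i,k$; Proposition~\ref{propimmy2} then makes $y$ $K$-hardian with $f_\rho/\fm\leadsto y$. Since $\fm\in H^\times\subseteq\Cc^\omega$, the germ $\fm y\in\Cc^\omega$ is $H$-hardian with $H\langle\fm y\rangle\subseteq\Cc^\omega$ and $f_\rho\leadsto\fm y$, so $(f_\rho)$ pseudoconverges in the analytic Hardy field extension $H\langle\fm y\rangle$. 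The smooth version is identical, using a maximal smooth Hardy field, $\Cc^\infty$ in place of $\Cc^\omega$, and Proposition~\ref{apinf} in place of Corollary~\ref{apomega}.

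The step I expect to be delicate is the unbounded case, specifically arranging the multiplicative conjugator $\fm$ to lie in $H$ rather than merely in the auxiliary base $K$: this is exactly what the non-cauchy hypothesis permits (it bounds the gaps in $\Gamma_H$), and it is what keeps the final germ $\fm y$ analytic and hardian over the analytic field $H$, even though $K$ itself need not be analytic. With that in hand the logic is transparent: boundedness feeds Corollary~\ref{cor:analytify} directly, whereas for non-cauchy sequences the conjugation reduces the needed approximation to the fixed $\varepsilon\prec t^k$ of Proposition~\ref{propimmy2}, requiring no boundedness.
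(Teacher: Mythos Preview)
Your argument is essentially correct but more roundabout than the paper's, and one step is argued too loosely. The paper first replaces $H$ by its $\d$-closure inside a maximal Hardy field $L\supseteq F$, using that this closure is itself analytic (germs $\d$-algebraic over an analytic Hardy field are analytic); with a closed analytic base in hand, your initial split by $\d$-type over $H$ and your care to take $\fm\in H$ rather than $\fm\in K$ become unnecessary, and both the bounded and non-cauchy cases run directly with that base. Your route avoids needing to know that $K$ is analytic --- that is exactly what the choice $\fm\in H$ buys --- but the price is the step ``$(f_\rho)$ is of $\d$-transcendental type over $K$'', which you justify by ``otherwise a pseudolimit would be $\d$-algebraic over $H$, contradicting that $f$ is $\d$-transcendental''. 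This points at the wrong element: the $\d$-algebraic pseudolimit you produce (the limit in $K$, since $K$ is closed) need not be $f$. What you actually need, both here and when you first invoke [ADH,~11.4.7] over the bare $H$, is that \emph{every} pseudolimit of a pc-sequence of $\d$-transcendental type over $H$ is $\d$-transcendental over $H$; that gives the contradiction, but you should state it as such rather than gesturing at $f$.
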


\begin{proof}
 Assume $H$ is analytic; the smooth case goes the same way. As in \cite{ADHfgh} we can pass from $H$ to an extension of $H$ and reduce to the case that $H\supseteq \R$, $H$ is closed, and $(f_{\rho})$ has no pseudolimit in $H$. Take $f$ in a Hardy field extension of $H$ such that $f_{\rho}\leadsto f$. 
Then $f\notin H$, so $f$ is $\d$-transcendental over $H$. If $H$ is bounded, then
 Corollary~\ref{cor:analytify} yields an $H$-hardian $y\in \Cc^{\omega}$ with $f_{\rho}\leadsto y$.

Suppose $(f_\rho)$ does not have width $\{\infty\}$. Then take $h\in H^\times$ with ${v(hf-hf_{\rho})} < 0$ for all $\rho$. Take $\varepsilon\in \Cc$ such that $\varepsilon>_{\ex} 0$ and $\varepsilon \prec t^k$
for all $k$, for example, $\varepsilon= \ex^{-x}$. Now Corollary~\ref{apomega} gives $y\in \Cc^{\omega}$ such that $(hf)^{(i)}-y^{(i)}\prec t^k$ for all $k$. Then
$y$ is $H$-hardian and $hf_{\rho}\leadsto y$ by Proposition~\ref{propimmy2}, hence
$h^{-1}y\in \Cc^{\omega}$ is $H$-hardian and~$f_{\rho}\leadsto h^{-1}y$. 
\end{proof}

\begin{cor}\label{smanpc} If $H$ is an analytic Hardy field, then 
every pc-sequence in $H$ of countable length
pseudoconverges in an analytic Hardy field extension of $H$.
Likewise with ``smooth'' in place of ``analytic''.
\end{cor}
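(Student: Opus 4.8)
The plan is to derive this from Proposition~\ref{prop:smanpc} together with the existence statement \cite[Corollary~3.2]{ADHfgh}; the only real work is checking the disjunctive hypothesis of the former. Let $(f_\rho)$ be a pc-sequence in $H$ of countable length. First I would dispose of the trivial case: if $(f_\rho)$ already pseudoconverges in $H$, then $H$ itself serves as the required analytic Hardy field extension. So I may assume $(f_\rho)$ is divergent in $H$, and, after passing to a cofinal subsequence, that it is indexed by $\N$, say $(f_n)$. By \cite[Corollary~3.2]{ADHfgh}, $(f_n)$ pseudoconverges in some Hardy field extension of $H$, so the first two hypotheses of Proposition~\ref{prop:smanpc} are in place.

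It then remains to verify that $H$ is bounded or $(f_n)$ does not have width $\{\infty\}$, which I would do by a dichotomy on the width. If $(f_n)$ does not have width $\{\infty\}$, there is nothing to prove. Otherwise $(f_n)$ has width $\{\infty\}$, which, as a divergent pc-sequence, means that its strictly increasing sequence of values $\gamma_n:=v(f_{n+1}-f_n)$ is cofinal in the value group $\Gamma$ of $H$; in particular $\Gamma\neq\{0\}$. Then $\cf(\Gamma)=\omega$, so $\cf(H)=\omega$ by Lemma~\ref{lem:cf(K)}, and hence $H$ is bounded, since Hardy fields of countable cofinality are bounded. In either case the hypotheses of Proposition~\ref{prop:smanpc} are satisfied, and it produces an analytic Hardy field extension of $H$ in which $(f_n)$ pseudoconverges. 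The smooth case runs identically, invoking the smooth version of Proposition~\ref{prop:smanpc}.

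The step I expect to be the crux is exactly the width-$\{\infty\}$ branch, i.e.\ the case where $(f_n)$ is a Cauchy sequence. This is precisely the situation that the multiplicative-conjugation reduction preceding Proposition~\ref{prop:smanpc} cannot handle, so one cannot reach it through the ``width'' clause. The resolving observation is that countable length forces the increasing value sequence to be cofinal in $\Gamma$ exactly when the sequence is Cauchy; countable cofinality of $\Gamma$ then yields countable cofinality of $H$ and hence boundedness of $H$, which supplies the complementary clause of the hypothesis. Thus the two clauses of Proposition~\ref{prop:smanpc} together cover all cases.
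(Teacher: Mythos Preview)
Your proof is correct and follows essentially the same approach as the paper's: invoke \cite[Corollary~3.2]{ADHfgh} for existence of a pseudolimit, then verify the disjunctive hypothesis of Proposition~\ref{prop:smanpc} by noting that width~$\{\infty\}$ forces the value sequence to be cofinal in $\Gamma$, whence $\cf(H)=\omega$ via Lemma~\ref{lem:cf(K)} and $H$ is bounded. The paper's version is slightly terser (it does not pass to a subsequence or separate off the trivially-convergent case), but the argument is the same.
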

\begin{proof}
Suppose   $(f_\rho)$ has countable length. Then $(f_\rho)$ pseudoconverges
in a Hardy field extension of $H$, by  \cite[Corollary~3.2]{ADHfgh}. Moreover, if $(f_\rho)$
has width $\{\infty\}$, then~$\big(v(f-f_\rho)\big)$ is cofinal in $\Gamma_H$, so $\operatorname{cf}(H)=\operatorname{cf}(\Gamma_H)=\omega$ by Lemma~\ref{lem:cf(K)}, hence $H$ is bounded. Now use  Proposition~\ref{prop:smanpc}.
\end{proof}

\noindent
Arguing as in the proof of \cite[Corollary~4.8]{ADHfgh}, using Corollary~\ref{smanpc}
instead of \cite[Corol\-lary~3.2]{ADHfgh}, yields:

\begin{cor}\label{cormhcian}
If  $H$ is a maximal analytic or maximal smooth Hardy field, then $\ci(H^{>\R})>\omega$. 
\end{cor}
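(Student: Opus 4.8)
The plan is to follow the template of \cite[Corollary~4.8]{ADHfgh}, the sole change being that \cite[Corollary~3.2]{ADHfgh} (which produces a pseudolimit in \emph{some} Hardy field extension) is replaced by Corollary~\ref{smanpc} (which produces one in an \emph{analytic}, resp.\ \emph{smooth}, Hardy field extension). The point of this replacement is that if $H$ is maximal \emph{analytic}, then any analytic Hardy field extension of $H$ equals $H$; thus the pseudolimit supplied by Corollary~\ref{smanpc} already lies in $H$. Throughout I use that maximal analytic and maximal smooth Hardy fields are closed $H$-fields (as recalled in this section), so that every divergent pc-sequence of countable length in $H$ is automatically of $\d$-transcendental type over $H$, which is exactly the hypothesis under which Corollary~\ref{smanpc} applies.

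Assume, toward a contradiction, that $\ci(H^{>\R})=\omega$. Since maximal Hardy fields are unbounded, $H^{>\R}$ has no least element, so we may fix a strictly decreasing sequence $(f_n)$ in $H^{>\R}$ that is coinitial in $H^{>\R}$. Arguing as in \cite[Corollary~4.8]{ADHfgh}, and using that $H$ is a closed $H$-field, I would first realize in a Hardy field extension of $H$ a pseudolimit $f$ of the pc-sequence $(f_n)$ that is \emph{positive infinite} and satisfies $\R<f<f_n$ for all $n$; concretely this amounts to slipping an infinite hardian germ below the countable coinitial family $(f_n)$, which the extension theory of \cite{ADHfgh} makes possible. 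I would then invoke Proposition~\ref{propimmy2} (falling back on Proposition~\ref{propimmy1} via multiplicative conjugation in the exceptional cauchy case), together with Corollary~\ref{apomega}, to replace $f$ by an analytic $H$-hardian germ $y\in\Cc^\omega$ with $f_n\leadsto y$ and an isomorphism $H\langle y\rangle\to H\langle f\rangle$ of Hardy fields over $H$ sending $y$ to $f$. Because this isomorphism fixes $H$ pointwise, it preserves all order relations of $y$ with elements of $H$; since $\R$ and each $f_n$ lie in $H$, we obtain $\R<y<f_n$ for all $n$, so $y\in\Cc^\omega$ is positive infinite.

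To finish, I would use maximality of $H$: the analytic $H$-hardian germ $y$ lies in $H$, so $y\in H^{>\R}$ lies strictly below every term of the coinitial sequence $(f_n)$, a contradiction. Hence $\ci(H^{>\R})>\omega$, and the smooth case is identical, using the smooth clause of Corollary~\ref{smanpc} in place of the analytic one. The main obstacle is the first step of the middle paragraph, namely ensuring that the pseudolimit $f$ can be taken \emph{infinite} rather than finite: a pc-sequence such as $(f_n)$ has both finite and positive-infinite pseudolimits, and Corollary~\ref{smanpc} on its own only guarantees \emph{some} analytic pseudolimit in $H$ (which could be a harmless finite element). The content imported from \cite[Corollary~4.8]{ADHfgh} is precisely the construction of an infinite pseudolimit in a Hardy field extension, resting on the closed-$H$-field structure to control the pc-sequence; once $f$ is infinite, the analytic upgrade via Proposition~\ref{propimmy2} and the passage to $H$ via maximality are routine.
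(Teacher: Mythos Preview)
Your opening plan---follow \cite[Corollary~4.8]{ADHfgh} and swap \cite[Corollary~3.2]{ADHfgh} for Corollary~\ref{smanpc}---is exactly the paper's one-line proof. The trouble is in your elaboration. The coinitial sequence $(f_n)$ in $H^{>\R}$ is \emph{not} a divergent pc-sequence: once you arrange $f_{n+1}\prec f_n$, every $h\in H$ with $h\preceq 1$ (in particular $h=0$) is a pseudolimit, since $v(h-f_n)=vf_n$ is strictly increasing. So the ``$\d$-transcendental type'' hypothesis you invoke fails (for closed $H$ that hypothesis forces divergence), and Corollary~\ref{smanpc} applied to $(f_n)$ only hands you a finite element of $H$.

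Your workaround---first produce an infinite $f$ with $\R<f<f_n$ in some Hardy field extension, then analytify it via Proposition~\ref{propimmy2}---does not work either. Such an $f$ satisfies $vf_n<vf<0$ for all $n$, and since the $vf_n$ are cofinal in $\Gamma_H^{<0}$ this forces $vf\notin\Gamma_H$. Hence $H\langle f\rangle$ is not an immediate extension of $H$, which is the standing assumption in the paragraph preceding Propositions~\ref{propimmy1} and~\ref{propimmy2} (those propositions are tools for producing analytic pseudolimits of divergent pc-sequences of $\d$-transcendental type, and this $(f_n)$ is not one). The actual proof of \cite[Corollary~4.8]{ADHfgh} manufactures a \emph{different}, genuinely divergent pc-sequence from the assumption $\ci(H^{>\R})=\omega$; the paper's substitution of Corollary~\ref{smanpc} for \cite[Corollary~3.2]{ADHfgh} works because \emph{that} pc-sequence is divergent, not because $(f_n)$ is.
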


 \noindent
Recall from \cite[Section~6]{ADHfgh} that the $H$-couple $(\Gamma,\psi)$ of $H$ is said to be {\it countably spherically complete}\/ if 
in the valued abelian group $(\Gamma,\psi)$,
every 
pc-sequence   of
length $\omega$ in it pseudoconverges in it.
In view of \cite[Remark preceding Corollary~8.1]{ADHfgh}, Corollaries~\ref{corsjoan},~\ref{smanpc},~\ref{cormhcian} yield  a version of \cite[Corollary~8.1]{ADHfgh} for maximal analytic Hardy fields:

\begin{cor}\label{hcoan} If $H$ is a maximal analytic or maximal smooth Hardy field, then  its $H$-couple~$(\Gamma,\psi)$   is countably spherically complete and 
$$\cf(\Gamma^{<})\ =\ \ci(\Gamma^{>})\ >\ \omega, \qquad \ci(\Gamma)\ =\ \cf(\Gamma)\ >\ \omega.$$
\end{cor}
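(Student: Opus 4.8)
The plan is to reduce everything to the cofinality and coinitiality facts already established for the Hardy field $H$ itself and transport them along the order-preserving surjection $K^{>}\to\Gamma$. First I would invoke Corollary~\ref{corsjoan}, which gives $\cf(H)>\omega$, together with the equalities $\ci(H)=\cf(H^{<a})=\ci(H^{>a})>\omega$ for all $a\in H$. Since a maximal analytic (or smooth) Hardy field $H\supseteq\R(x)$ is an $H$-field with $\Gamma\neq\{0\}$, Lemma~\ref{lem:cf(K)} yields $\cf(\Gamma)=\cf(H)>\omega$; the coinitiality statement $\ci(\Gamma)=\cf(\Gamma)$ I would obtain from the involution $\gamma\mapsto-\gamma$ on the divisible ordered abelian group $\Gamma$, which reverses the ordering and hence exchanges cofinal and coinitial subsets, so that $\ci(\Gamma)=\cf(\Gamma^{\geq})=\cf(\Gamma)$. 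The equality $\cf(\Gamma^{<})=\ci(\Gamma^{>})$ is again the same involution applied to the negative and positive cones, and both are $>\omega$ because they equal $\cf(\Gamma)$ up to the sign flip.

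Next I would handle the countable spherical completeness of the $H$-couple $(\Gamma,\psi)$, which is the genuinely new content and the part that actually uses this section's pseudoconvergence machinery. The strategy, following the remark cited before \cite[Corollary~8.1]{ADHfgh}, is to translate a pc-sequence of length $\omega$ in the valued abelian group $(\Gamma,\psi)$ into a pc-sequence in $H$ and then apply Corollary~\ref{smanpc}. Concretely, a length-$\omega$ pc-sequence in $(\Gamma,\psi)$ corresponds, via the logarithmic derivative and the structure of the $H$-couple, to a pc-sequence $(f_\rho)$ of countable length in $H$; Corollary~\ref{smanpc} guarantees that $(f_\rho)$ pseudoconverges in an analytic (resp.\ smooth) Hardy field extension of $H$. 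Maximality of $H$ forces the pseudolimit to lie already in $H$ (otherwise $H$ would have a proper $\d$-algebraic $H$-field extension with constant field $\R$, contradicting closedness), and pushing this pseudolimit back through the correspondence shows the original pc-sequence in $(\Gamma,\psi)$ pseudoconverges within $(\Gamma,\psi)$ itself.

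The remaining strict inequalities $\cf(\Gamma)>\omega$ and $\ci(\Gamma)>\omega$ then follow, but I would also double-check consistency with Corollary~\ref{cormhcian}, which gives $\ci(H^{>\R})>\omega$; this should match the claimed coinitiality of $\Gamma^{>}$ once one passes through $v\colon H^{>\R}\to\Gamma^{<}$ (the valuation reverses order), confirming $\ci(\Gamma^{>})=\cf(\Gamma^{<})=\ci(H^{>\R})>\omega$. The main obstacle I anticipate is the precise dictionary between pc-sequences in the $H$-couple $(\Gamma,\psi)$ and pc-sequences in $H$: one must verify that length-$\omega$ pseudoconvergence in the valued abelian group genuinely reflects pseudoconvergence of a derived sequence in the field, and that pseudolimits correspond correctly in both directions. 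This is exactly where the cited remark preceding \cite[Corollary~8.1]{ADHfgh} does the bookkeeping, so I would lean on that correspondence rather than rederive it, and the rest is assembling Corollaries~\ref{corsjoan}, \ref{smanpc}, and \ref{cormhcian} together with the order-reversing symmetry of $\Gamma$.
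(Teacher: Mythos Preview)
Your approach matches the paper's: the corollary is obtained by combining Corollaries~\ref{corsjoan}, \ref{smanpc}, and \ref{cormhcian} via the remark preceding \cite[Corollary~8.1]{ADHfgh}, exactly as you outline. Two slips in your elaboration, though.

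First, in the spherical completeness step your parenthetical justification is wrong. The pseudolimit produced by Corollary~\ref{smanpc} lies in an \emph{analytic} Hardy field extension of $H$; since $H$ is maximal among analytic Hardy fields, that extension is $H$ itself. Closedness and $\d$-algebraicity play no role here, and indeed the pseudolimit is typically $\d$-transcendental over $H$ (divergent pc-sequences in a closed $H$-field are of $\d$-transcendental type), so ``$H$ would have a proper $\d$-algebraic extension'' is not the contradiction you want.

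Second, the claim that $\cf(\Gamma^{<})$ and $\ci(\Gamma^{>})$ ``equal $\cf(\Gamma)$ up to the sign flip'' is unjustified: the sign flip gives $\cf(\Gamma^{<})=\ci(\Gamma^{>})$ and $\cf(\Gamma)=\ci(\Gamma)$, but not that the two pairs coincide. You recover the correct inequality $\cf(\Gamma^{<})>\omega$ later via Corollary~\ref{cormhcian} and the order-reversing map $v\colon H^{>\R}\to\Gamma^{<}$; just drop the earlier false shortcut.
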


\section{Proofs of Theorems A and B} \label{s4}

\noindent
We begin with revisiting Case (\rm{b}) extensions, then prove Theorem A and use it to characterize the possible gaps in maximal analytic Hardy fields. We also determine the number of maximal analytic Hardy fields. Next we prove Theorem~B, and finish this section with two subsections on dense pairs
of closed $H$-fields. 

\subsection*{Case {(\rm{b})} extensions}
Let $H\supseteq \R$ be a Liouville closed Hardy field with $H$-couple~$(\Gamma,\psi)$ over $\R$. 
Suppose~$\beta$ in an $H$-couple $(\Gamma^*,\psi^*)$ over $\R$ extending $(\Gamma,\psi)$ falls under Case~(b), that is, with~$\big(\Gamma\<\beta\>, \psi_{\beta}\big)$   the $H$-couple over
$\R$ generated by~$\beta$ over $(\Gamma, \psi)$ in~$(\Gamma^*, \psi^*)$:

\begin{itemize}\item[(b)] We have a sequence $(\alpha_i)$ in 
$\Gamma$ and a sequence $(\beta_i)$ in $\Gamma^*$ that is $\R$-linearly independent
over $\Gamma$, such that $\beta_0=\beta-\alpha_0$ and $\beta_{i+1}=\beta_i^\dagger-\alpha_{i+1}$ for all $i$, and such that
$\Gamma\<\beta\>=\Gamma \oplus \bigoplus_{i=0}^\infty \R\beta_i$. 
\end{itemize}

\noindent
Unlike in key parts of \cite[Section~9]{ADHfgh}
we do not assume~$\beta$ is of countable type over~$\Gamma$, and this will be exploited in the proof of Theorem~\ref{thdense}. 
(Recall from \cite[Section~8]{ADHfgh}  that an element $\gamma$ of  an ordered vector space over $\R$ extending $\Gamma$
has {\it countable type over $\Gamma$}\/ if $\gamma\notin\Gamma$ and $\operatorname{cf}(\Gamma^{<\gamma}),\operatorname{ci}(\Gamma^{>\gamma})\le\omega$.)
By~\cite[Corollary~8.15]{ADHfgh}, $\beta$ also falls under Case~(b) with the same sequences $(\alpha_i)$, $(\beta_i)$ when
 $(\Gamma,\psi)$ and~$(\Gamma^*,\psi^*)$ are viewed as $H$-couples over $\Q$; see \cite[Section~8]{ADHfgh} for the relevant definitions. 

In the next proposition and its corollary we assume $y\in\Cc^{<\infty}$ is $H$-hardian,  $y>0$, and $vy$ realizes the same cut in~$\Gamma$ as~$\beta$. Then by~\cite[Remark~8.21]{ADHfgh} we have a unique isomorphism over $\Gamma$ of 
the $H$-couple over $\Q$ generated by $\Gamma\cup\{\beta\}$  in~$(\Gamma^*, \psi^*)$ with 
the $H$-couple of the Hardy field~$H\<y\>^{\operatorname{rc}}$ over $\Q$ sending $\beta$ to $vy$.
Moreover, if~$z\in\Cc^{<\infty}$ is also $H$-hardian with~$z>0$ and~$vz$ realizes the same cut in $\Gamma$ as $\beta$, then we have a unique Hardy field isomorphism $H\<y\>\to H\<z\>$ over $H$ sending $y$ to~$z$, by \cite[Proposition~8.20]{ADHfgh}. The problem here is to find
 such~$z$ in $\Cc^{\omega}$ (in which case~$H\<z\>$ is smooth, respectively analytic, if $H$ is). This can always be done, in view of Corollary~\ref{apomega} and the following:
 
 \begin{prop}\label{revb} There exists $\varepsilon\in \C$ such that $\varepsilon >_{\ex} 0$ and for all
$z\in \Cc^{<\infty}$, if~$(z-y)^{(i)}\prec \varepsilon$ for all $i$, then $z$ is $H$-hardian and
  $vz$ realizes the same cut in~$\Gamma$~as~$\beta$. 
 \end{prop}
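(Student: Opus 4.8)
The plan is to separate the two conclusions. That $vz$ realizes the cut of $\beta$ is cheap: if we include $\varepsilon\prec y$ among our requirements on $\varepsilon$, then any $z$ with $z-y\prec\varepsilon\prec y$ satisfies $z\sim y$, whence $vz=vy$; since $vy$ realizes the cut of $\beta$ by hypothesis, so does $vz$. The substance is therefore to choose $\varepsilon$ so small that every $z$ in the tube $T_\varepsilon:=\{z\in\Cc^{<\infty}: (z-y)^{(i)}\prec\varepsilon\ \text{for all }i\}$ is $H$-hardian. Note that in Case~(b) the value group grows by the infinite-rank $\R$-span of the $\beta_i$, so $y$ is $\d$-transcendental over $H$ and the extension $H\langle y\rangle$ is determined by its $H$-couple; thus it should suffice to force $z$ to reproduce the same Case~(b) data as $y$.

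To make that precise I would attach to $y$ its logarithmic sequence: fix $\fm_i\in H^\times$ with $v\fm_i=\alpha_i$, set $w_0:=y/\fm_0$ and $w_{i+1}:=w_i^\dagger/\fm_{i+1}$, so that $vw_i=\beta_i$ for all $i$ and each $w_i$ realizes the cut of $\beta_i$ in $\Gamma$. Forming the same sequence $w_i^z$ from a germ $z\in T_\varepsilon$, the point is that $w_i^z$ depends only on $(z-y)^{(j)}$ for $j\le i$ (each step costs one differentiation and one division by $w_i^z$, which is legitimate since $w_i\in\Cc^\times$), so a bound $(z-y)^{(j)}\prec\varepsilon$ propagates to $w_i^z-w_i\prec g_i^{-1}\varepsilon$ for a positive germ $g_i\in\Cc$ built from $w_0,\dots,w_i$ and their derivatives. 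Hence, once $\varepsilon\prec g_i w_i$, we get $w_i^z\sim w_i$ and therefore $vw_i^z=\beta_i$: the germ $z$ carries the same logarithmic data as $y$. The realization results of \cite[Section~9]{ADHfgh}—which construct $H$-hardian germs from exactly this type-(b) data, by arguments in the spirit of Lemma~\ref{lem:6.7.12} and Propositions~\ref{propimmy1},~\ref{propimmy2}—then yield that such a $z$ is $H$-hardian with $vz$ realizing the cut of $\beta$.

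It remains to secure a single $\varepsilon$ meeting all the requirements $\varepsilon\prec y$ and $\varepsilon\prec g_i w_i$ ($i\in\N$) at once. These are countably many conditions of the form $\varepsilon\prec h_i$ with $h_i\in\Cc$, $h_i>_{\ex}0$; since every countable subset of $\Cc$ is bounded, the set $\{1/h_i\}$ has an upper bound $\psi\in\Cc$, and then e.g.\ $\varepsilon:=1/(x\psi)$ satisfies $\varepsilon>_{\ex}0$ and $\varepsilon\prec h_i$ for all $i$. This $\varepsilon$ is the one we output. The step I expect to be the real obstacle is the middle one: controlling the nonlinear, iterated logarithmic-derivative passage $z\mapsto(w_i^z)$ uniformly across all infinitely many levels with one $\varepsilon$—so that the compounding errors never overwhelm the (possibly shrinking) room at level $i$—and, just as importantly, verifying that matching these countably many cuts is genuinely \emph{sufficient} for $H$-hardianness, i.e.\ that the Section~9 construction of a type-(b) realization is robust along the whole tube rather than merely at $y$ itself.
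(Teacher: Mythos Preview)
Your overall plan---form the logarithmic sequence $w_i$ from $y$, force $z$ to reproduce it, and take $\varepsilon$ below countably many positive germs---matches the paper's, but the paper resolves both of the obstacles you flag with one device that you are missing: an auxiliary \emph{bounded} Hardy field. Set $K:=\R\langle \fm_0,\fm_1,\dots\rangle$; this is countably generated, hence bounded, and so is $K\langle y\rangle$ by Lemma~\ref{lem:5.4.19}. Now take $\varepsilon\in\Cc$ with $\varepsilon>_{\ex}0$ and $\varepsilon\prec f$ for all $f\in K\langle y\rangle^\times$. This single blanket condition replaces your infinite family of hand-crafted bounds $\varepsilon\prec g_i w_i$: since every $w_i$ and every intermediate quantity you would meet in the iterated logarithmic-derivative passage lies in $K\langle y\rangle$, no error-tracking is needed. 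More to the point, Lemma~\ref{lem:analytify, 2} applies directly and gives that $z$ is $K$-hardian, $\d$-transcendental over $K$, and $P(y)\sim P(z)$ for every $P\in K\{Y\}^{\ne}$; the last relation immediately yields $w_i\sim w_i^z$ for all $i$, with no inductive control of compounding errors.

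Your appeal to \cite[Section~9]{ADHfgh} for the $H$-hardianness of $z$ is the genuine gap. Those are \emph{construction} results, producing some $H$-hardian germ realizing a cut; they are not verification criteria for a given $z$, and invoking them here risks circularity (the present proposition is what powers the analytic analogue of those results). The paper instead argues directly at the Hausdorff-field level. From the proof of \cite[Proposition~8.20]{ADHfgh} one knows $v\big(H(w_0,\dots,w_n)^\times\big)=\Gamma\oplus\Z vw_0\oplus\cdots\oplus\Z vw_n$, so for $h\in H^\times$ and exponents $i_0,\dots,i_n$ one has $hw_0^{i_0}\cdots w_n^{i_n}\sim h(w_0^z)^{i_0}\cdots (w_n^z)^{i_n}$. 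This shows $H(w_0^z,\dots,w_n^z)$ is a Hausdorff field with an isomorphism to $H(w_0,\dots,w_n)$ over $H$ sending $w_i$ to $w_i^z$; pasting over $n$ gives an isomorphism $H\langle y\rangle\to H\langle z\rangle$ of Hardy fields, hence $z$ is $H$-hardian with $vz$ in the cut of $\beta$. Your level-by-level error estimates could perhaps be made to work, but the bounded-$K$ trick both eliminates that bookkeeping and supplies, via Lemma~\ref{lem:analytify, 2}, the $\sim$-relations on which the Hausdorff-field argument rests.
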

\begin{proof}
Let the sequences $(\alpha_i)$, $(\beta_i)$ be as in (b). Take $f_i\in H^{>}$ with~$vf_i=\alpha_i$, and recursively we set $y_0:= y/f_0$, $y_{i+1}:=y_i^\dagger/f_{i+1}$. Then $vy_i$ realizes the same cut in $\Gamma$ as $\beta_i$, and
 $H\<y\>=H(y_0, y_1, y_2,\dots)$, 
 by \cite[proof of Proposition~8.20]{ADHfgh}.
 Next set~$K:=\R\< f_0, f_1, f_2,\dots\>$ and note that
 $K\<y\>=K(y_0, y_1, y_2,\dots)$, using that the right hand side contains all $y_n'$. Suppose~$z\in \Cc^{<\infty}$ is such that $(z-y)^{(i)}\prec f$ for all $i$  and all~$f\in K\langle y\rangle^{\times}$. Then~$z$ is $K$-hardian  and $\d$-transcendental over~$K$ by Lemma~\ref{lem:analytify, 2}, and the proof of that lemma also shows that $P(y)\sim P(z)$ for all
 $P\in K\{Y\}^{\ne}$. Thus we have elements~$z_i\in K\<z\>$ defined recursively by  $z_0:= z/f_0$, $z_{i+1}:=z_i^\dagger/f_{i+1}$, and then~${y_i\sim z_i}$ for all $i$. 
 For the Hausdorff field~$H_n:=H(y_0,\dots,y_n)$ we have~$v(H_n^\times)=\Gamma\oplus\Z vy_0\oplus\cdots\oplus\Z vy_n$
 by \cite[proof of Proposition~8.20]{ADHfgh},
 and for~$h\in H^\times$ and~$i_0,\dots,i_n\in\N$ we have~$hy_0^{i_0}\cdots y_n^{i_n} \sim h  z_0^{i_0}\cdots z_n^{i_n}$~(in~$\Cc$).
 Hence $z_0,\dots,z_n$ generate a Hausdorff field over $H$ with an isomorphism~$H_n\to H(z_0,\dots,z_n)$ over $H$ sending $y_i$ to $z_i$ for $i=0,\dots,n$.  These isomorphisms have therefore a common extension to an isomorphism
 $H\<y\>\to H\<z\>$ of Hardy fields over $H$. In particular, $z$ is $H$-hardian, and $vz$ realizes the same cut in $\Gamma$ as $\beta$.  Now by Lemma~\ref{lem:analytify, 1} and the remarks preceding it there exists $\varepsilon\in \C$ such that $\varepsilon >_{\ex} 0$ and
 $\varepsilon \prec f$ for all~$f\in K\<y\>^\times$, so any such $\varepsilon$ has the desired property. 
 \end{proof}

\noindent
Combining Corollary~\ref{apomega} with Proposition~\ref{revb} yields:

\begin{cor}\label{cor:revb}
There exists an $H$-hardian $z\in\Cc^\omega$ such that $z>0$ and $vz$ realizes the same cut in $\Gamma$ as $\beta$. 
\end{cor}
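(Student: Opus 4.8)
The plan is to obtain $z$ by approximating the given germ $y$ closely enough by an analytic germ, combining Proposition~\ref{revb} (which converts closeness of $z$ to $y$ into the desired Hardy-field and valuation properties) with the analytic approximation supplied by Corollary~\ref{apomega}.

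First I would apply Proposition~\ref{revb} to fix a germ $\varepsilon\in\Cc$ with $\varepsilon>_{\ex}0$ such that every $z\in\Cc^{<\infty}$ with $(z-y)^{(i)}\prec\varepsilon$ for all $i$ is automatically $H$-hardian with $vz$ in the same cut in $\Gamma$ as $\beta$. The one mismatch to watch is that Corollary~\ref{apomega} controls approximations in the relation $<_{\ex}$, whereas Proposition~\ref{revb} asks for control in the strictly finer relation $\prec$. I would bridge this by first shrinking the gauge: choose $\varepsilon^*\in\Cc$ with $0<_{\ex}\varepsilon^*\prec\varepsilon$ and also $\varepsilon^*\prec y$, for instance $\varepsilon^*:=\ex^{-x}\min\{\varepsilon,y\}$, which is $>_{\ex}0$ and, since $\ex^{-x}\prec 1$ and $y\in\Cc^\times$, satisfies $\varepsilon^*\prec\varepsilon$ and $\varepsilon^*\prec y$.

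Next I would apply Corollary~\ref{apomega} with $f:=y\in\Cc^{<\infty}$ and gauge $\varepsilon^*$, producing $z\in\Cc^\omega$ with $\abs{(y-z)^{(n)}}<_{\ex}\varepsilon^*$ for all $n$. Then $(z-y)^{(i)}\preceq\varepsilon^*\prec\varepsilon$ for all $i$, so the hypothesis of Proposition~\ref{revb} is met and $z$ is $H$-hardian with $vz$ realizing the same cut in $\Gamma$ as $\beta$. For the sign condition, the case $n=0$ gives $z-y\preceq\varepsilon^*\prec y$, hence $z\sim y$; since $y>0$ this forces $z>0$, and as $z\in\Cc^\omega$ the construction is complete.

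The only real obstacle is the mild discrepancy between the $<_{\ex}$-approximation furnished by Corollary~\ref{apomega} and the $\prec$-hypothesis demanded by Proposition~\ref{revb}; passing to the strictly smaller gauge $\varepsilon^*$ resolves it, and arranging $\varepsilon^*\prec y$ at the same time delivers the positivity $z>0$ for free rather than as a separate argument.
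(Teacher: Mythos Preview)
Your proof is correct and follows the same approach as the paper, which simply says the result follows by combining Corollary~\ref{apomega} with Proposition~\ref{revb}. You have carefully filled in the details the paper leaves implicit: the gauge-shrinking from $<_{\ex}$ to $\prec$ via $\varepsilon^*$, and the positivity $z>0$ via $z\sim y$ (which also follows from the proof of Proposition~\ref{revb}, since the $\varepsilon$ produced there satisfies $\varepsilon\prec y$, or alternatively one could just replace $z$ by $-z$ if needed).
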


\noindent
We can now use  \cite[Theorem~9.2]{ADHfgh} to obtain an analytic strengthening of it:

\begin{cor} \label{bcon} 
Suppose $\beta$ is of countable type over $\Gamma$
and $\beta_i^\dagger < 0$ for all $i$.
Then for some $H$-hardian $z\in \Cc^{\omega}$:  $z > 0$ and $vz$ realizes the same cut in $\Gamma$ as $\beta$.
\end{cor}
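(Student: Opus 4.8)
The plan is to reduce Corollary~\ref{bcon} to Corollary~\ref{cor:revb}, whose conclusion is word-for-word the one we want; the only gap is that Corollary~\ref{cor:revb} rests on the blanket assumption imposed just before Proposition~\ref{revb}, namely that a suitable \emph{non-analytic} witness $y$ already exists. So the real task is to manufacture such a $y$ out of the two hypotheses on $\beta$, and then feed it into the analytification machinery already in place.

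First I would invoke \cite[Theorem~9.2]{ADHfgh}, which is precisely the non-analytic version of the present statement: under the hypotheses that $\beta$ is of countable type over $\Gamma$ and $\beta_i^\dagger<0$ for all $i$, it produces an $H$-hardian germ $y>0$ whose valuation $vy$ realizes the same cut in $\Gamma$ as $\beta$. Since $y$ is $H$-hardian it lies in a Hardy field extension of $H$, hence automatically $y\in\Cc^{<\infty}$; thus $y$ fulfils the standing assumption ``$y\in\Cc^{<\infty}$ is $H$-hardian, $y>0$, and $vy$ realizes the same cut in $\Gamma$ as $\beta$'' under which Proposition~\ref{revb} and Corollary~\ref{cor:revb} were established.

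With $y$ in hand, Corollary~\ref{cor:revb} immediately yields an $H$-hardian $z\in\Cc^\omega$ with $z>0$ and $vz$ realizing the same cut in $\Gamma$ as $\beta$, which is the assertion; the proof is thus the two-step combination ``\cite[Theorem~9.2]{ADHfgh} for $y$, then Corollary~\ref{cor:revb} for the analytic upgrade $z$.'' I do not expect a genuine obstacle here, as all the substantive work has been localised elsewhere: the existence of $y$ is carried by \cite[Theorem~9.2]{ADHfgh}, and the passage from $\Cc^{<\infty}$ to $\Cc^\omega$ runs through the chain Corollary~\ref{apomega} $\to$ Proposition~\ref{revb} $\to$ Corollary~\ref{cor:revb}, where the Whitney-type estimate of Corollary~\ref{apomega} lets one approximate $y$ closely enough, below the threshold $\varepsilon$ furnished by Proposition~\ref{revb}, by an analytic germ without disturbing the cut. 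The only point requiring a moment's care is the trivial compatibility check that the $y$ delivered by \cite[Theorem~9.2]{ADHfgh} meets the hypotheses of Corollary~\ref{cor:revb}, and this is immediate from the meaning of ``$H$-hardian'' together with the positivity and cut conditions in that theorem.
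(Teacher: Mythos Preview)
Your proposal is correct and matches the paper's proof exactly: invoke \cite[Theorem~9.2]{ADHfgh} to obtain the $H$-hardian $y>0$ with $vy$ in the right cut, then apply Corollary~\ref{cor:revb} to upgrade $y$ to an analytic $z$. The paper's proof is just these two sentences.
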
 
\begin{proof} \cite[Theorem~9.2]{ADHfgh} gives $H$-hardian $y>0$ such that $vy$ realizes the same cut in $\Gamma$ as $\beta$. Then Corollary~\ref{cor:revb}  gives a $z$ as required.
\end{proof} 

\subsection*{Proof of Theorem A} First  an analytic/smooth version of \cite[Lemma~9.1]{ADHfgh}:

\begin{lemma}\label{lemth01an}
 Let $H$ be a maximal analytic or maximal smooth Hardy field with $H$-couple $(\Gamma,\psi)$ over $\R$. Then no element in any
 $H$-couple over $\R$ extending $(\Gamma,\psi)$ has countable type over~$\Gamma$. 
\end{lemma}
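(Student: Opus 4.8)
The plan is to argue by contradiction: assuming some $\beta$ in an $H$-couple over $\R$ extending $(\Gamma,\psi)$ has countable type over $\Gamma$, I would realize the cut it determines by an $H$-hardian germ in $\Cc^\omega$ (respectively $\Cc^\infty$), thereby producing a \emph{proper} analytic (smooth) Hardy field extension of $H$ and contradicting the maximality of $H$. First I would pass to the $H$-couple $(\Gamma\langle\beta\rangle,\psi_\beta)$ generated by $\beta$ over $(\Gamma,\psi)$ and invoke the Case~(a)/Case~(b) dichotomy for such one-generator extensions from \cite[Section~8]{ADHfgh}.

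The key structural point is that Case~(a) cannot occur. Since $H$ is a maximal analytic (smooth) Hardy field, its $H$-couple $(\Gamma,\psi)$ is countably spherically complete by Corollary~\ref{hcoan}; moreover $\Gamma$ is a densely ordered $\R$-vector space, so a countable-type cut has cofinality and coinitiality exactly $\omega$. A Case~(a) extension of countable type would exhibit $\beta$ as the pseudolimit of a pc-sequence of length $\omega$ in the valued abelian group $(\Gamma,\psi)$; countable spherical completeness then forces this pc-sequence to pseudoconverge already in $\Gamma$, so that the cut determined by $\beta$ is not new, contradicting $\beta\notin\Gamma$. Hence $\beta$ falls under Case~(b), with sequences $(\alpha_i)$, $(\beta_i)$ as in item~(b), and, by the analysis of Case~(b) for $H$-couples over $\R$, with $\beta_i^\dagger<0$ for all $i$. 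These are precisely the hypotheses of Corollary~\ref{bcon}, which now yields an $H$-hardian $z\in\Cc^\omega$ with $z>0$ whose valuation $vz$ realizes the same cut in $\Gamma$ as $\beta$. Then $vz\notin\Gamma$, so $z\notin H$ and $H\langle z\rangle$ properly extends $H$; and $H\langle z\rangle\subseteq\Cc^\omega$ because $z\in\Cc^\omega$ is $H$-hardian and $H\subseteq\Cc^\omega$. Thus $H\langle z\rangle$ is an analytic Hardy field strictly containing $H$, contradicting maximality. The smooth case is identical, using the $\Cc^\infty$-versions of Corollaries~\ref{bcon} and~\ref{hcoan} throughout.

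The main obstacle I anticipate is the bookkeeping that brings a general countable-type element into the clean Case~(b) situation required by Corollary~\ref{bcon}. One must justify, from the Case~(a)/(b) analysis of \cite[Section~8]{ADHfgh} together with countable spherical completeness, both that Case~(a) is genuinely excluded (i.e.\ that a countable-type Case~(a) extension really does manifest as a length-$\omega$ pc-sequence in $(\Gamma,\psi)$) and that the sign condition $\beta_i^\dagger<0$ holds for the sequence produced in Case~(b). Once these two structural facts are secured, the realization step via Corollary~\ref{bcon} and the resulting contradiction with maximality are immediate.
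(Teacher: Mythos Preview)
Your approach is essentially the same as the paper's: assume a countable-type $\beta$ exists, reduce to Case~(b), apply Corollary~\ref{bcon} to realize the cut by an analytic $H$-hardian germ, and contradict maximality. The paper resolves exactly the ``bookkeeping'' you flag at the end by citing \cite[Lemma~8.11]{ADHfgh} directly: that lemma already guarantees that any countable-type element over $(\Gamma,\psi)$ falls under Case~(b) with the needed sign condition $\beta_i^\dagger<0$, but its hypotheses require not only countable spherical completeness but also that $\Gamma$ and $\Gamma^<$ have uncountable cofinality---both of which come from Corollary~\ref{hcoan}, though you only invoked the first. Once you feed all three conclusions of Corollary~\ref{hcoan} into \cite[Lemma~8.11]{ADHfgh}, your anticipated obstacles vanish and the rest of your argument matches the paper's proof (which in turn mirrors \cite[Lemma~9.1]{ADHfgh} with Corollary~\ref{bcon} replacing \cite[Theorem~9.2]{ADHfgh}).
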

\begin{proof}
By Corollary~\ref{hcoan}, $(\Gamma,\psi)$ is countably spherically complete, and both $\Gamma$ and~$\Gamma^<$   have
uncountably cofinality. By \cite[Lemma~8.11]{ADHfgh}, 
any element of any $H$-couple over $\R$ extending $(\Gamma,\psi)$ and of countable type over $\Gamma$
falls under Case~(b).  
Now argue as in the proof of \cite[Lemma~9.1]{ADHfgh}, using
 Corollary~\ref{bcon} in place of  \cite[Theorem~9.2]{ADHfgh}, that there are no such elements. 
\end{proof}

\noindent
Theorem~\ref{aneta} from the introduction  and its smooth version follow from 
Corollary~\ref{smanpc} and Lemma~\ref{lemth01an}, just as the main theorem in \cite{ADHfgh}  is derived in the beginning of \cite[Section~9]{ADHfgh} from the non-smooth analogues of that corollary and lemma. 
We now use this to characterize gaps in maximal analytic Hardy fields: Corollary~\ref{gapsmax} below. 

\subsection*{Characters of gaps in maximal Hardy fields}
Let~$S$ be an ordered set (as in~\cite{ADHfgh} this means {\em linearly ordered set}) and $C$  a cut in $S$, that is a downward closed subset of $S$. We define the {\bf character} of~$C$ (in $S$) to be the pair $(\alpha,\beta^*)$ where~$\alpha:=\cf(C)$ and $\beta^*$ is the set $\beta:=\ci(S\setminus C)$ equipped with the reversed ordering. We then also call
$C$ an {\bf $(\alpha,\beta^*)$-cut} (in $S$); see \cite[\S{}3.2]{Harzheim}.
The characters of the cuts~$\emptyset$ and $S$ in~$S$ are~$\big(0,\ci(S)^*\big)$ and $\big(\!\cf(S),0\big)$, respectively. Note that $S$ is $\eta_1$ iff no cut in~$S$ has character $(\alpha,\beta^*)$  with~${\alpha,\beta\leq\omega}$. 
A {\bf gap $A<B$ in $S$} is a pair $(A,B)$ of subsets of~$S$ such that $A<B$ and there is no $s\in S$ with $A<s<B$.
The character of such a gap~$A<B$ is defined to be the character~$(\alpha,\beta^*)$ of the cut $A^{\downarrow}=S\setminus B^\uparrow$ in $S$, and
then~$A<B$ is also called an $(\alpha,\beta^*)$-gap in $S$.

Let $G$ be an ordered abelian group.
If $v\colon G\to S_\infty$ is a surjective convex valuation on $G$ ([ADH, p.~99]) and $A<B$ is an $(\alpha,\beta^*)$-gap in $S$ where $\alpha,\beta\geq\omega$, then~$\big(v^{-1}(A)\cap G^<, v^{-1}(B)\cap G^{<}\big)$ is a  $(\alpha,\beta^*)$-gap in $G$.
If $H$ is an ordered field, then $\cf(H^<)=\ci(H^>)=\cf(H)$, and the cuts~$H^{<h}$ and $H^{\leq h}$ ($h\in H$) in~$H$ have character~$\big(\!\cf(H),1\big)$ and $\big(1,\cf(H)^*\big)$, respectively.

\begin{cor}\label{cor:gaps 1}
Let $H$ be a maximal Hardy field, or a maximal analytic Hardy field, or a maximal smooth Hardy field. Set $\kappa:=\ci(H^{>\R})$. Then $\omega<\kappa\leq\mathfrak c$, and~$H$ has gaps of character~$(\omega,\kappa^*)$, $(\kappa,\omega^*)$, and $(\kappa,\kappa^*)$.
\end{cor}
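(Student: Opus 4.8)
The plan is to prove the cardinality bound first and then exhibit a gap of each prescribed character. For $\omega<\kappa$ I would invoke Corollary~\ref{cormhcian} (together with its analogue for maximal Hardy fields from \cite{ADHfgh}), which gives $\ci(H^{>\R})>\omega$ in all three cases; and for $\kappa\le\mathfrak c$ I would note that every germ in $\Cc$ is determined by its restriction to $\Q\cap(a,+\infty)$, so $\card\Cc\le\mathfrak c$ and hence $\kappa=\ci(H^{>\R})\le\card H\le\mathfrak c$.

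For the gap of character $(\omega,\kappa^*)$ I would take the cut separating the finite germs from the positive infinite ones: put $U:=H^{>\R}$ and $L:=H\setminus U$. Since every element of $H$ is finite or infinite, $(L,U)$ is a cut, and it is a genuine gap as $\cf(L),\ci(U)\ge\omega$. Here $\ci(U)=\ci(H^{>\R})=\kappa$ by definition, while $\N\subseteq\R$ is cofinal in the finite germs and hence in $L$, so $\cf(L)=\omega$. For the gap of character $(\kappa,\omega^*)$ I would cut instead at the infinitesimal/appreciable boundary: put $U:=\{h\in H:\ h>0,\ h\succeq 1\}$ and $L:=H\setminus U$, the non-positive and positive-infinitesimal germs. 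The sequence $(1/n)$ is coinitial in $U$, so $\ci(U)=\omega$; and the order-reversing bijection $h\mapsto 1/h$ carries $H^{>\R}$ onto the positive infinitesimals, so these have cofinality $\ci(H^{>\R})=\kappa$, giving $\cf(L)=\kappa$. Both computations are robust and use only $\kappa=\ci(H^{>\R})$.

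The remaining gap, of character $(\kappa,\kappa^*)$, is the crux, since at any realized value of $\Gamma$ (in particular over the appreciable fibre at $0$) the adjacent real pencil forces cofinality $\omega$ on one side; a genuinely two-sided $\kappa$-accumulation can therefore only occur at an \emph{unrealized} point of the value group. I would locate such a point via the asymptotic couple $(\Gamma,\psi)$ of $H$. Being a closed, hence Liouville closed, $H$-field, $H$ has asymptotic integration, so $\Gamma=(\Gamma^{\ne})'$, and by [ADH, 9.2] there is then no $\beta\in\Gamma$ with $\Psi<\beta<(\Gamma^>)'$; consequently the cut $\big(\Psi^{\downarrow},\,\Gamma\setminus\Psi^{\downarrow}\big)$ is a genuine (unfilled) gap of $\Gamma$. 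To compute its character I would use that $\psi$ is constant on archimedean classes and increases as the class shrinks to $0$, so $\Psi$ approaches its supremum exactly as $\gamma\to 0$; hence $\cf(\Psi)=\ci(\Gamma^{>})=\kappa$, where $\kappa=\cf(\Gamma^{<})=\ci(\Gamma^{>})$ comes from the pencil analysis of $H^{>\R}$ (a coinitial family in $H^{>\R}$ has valuations cofinal in $\Gamma^{<}$, and conversely) together with Corollary~\ref{hcoan}. Symmetrically, as $\gamma\to 0^{+}$ the values $\gamma+\psi(\gamma)\in(\Gamma^>)'$ descend to $\sup\Psi$ with coinitiality $\ci(\Gamma^{>})=\kappa$, so $\Gamma\setminus\Psi^{\downarrow}$ has coinitiality $\kappa$, and $\Gamma$ has a $(\kappa,\kappa^*)$-gap. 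Finally I would transport it to $H$: identifying the value set of the convex archimedean valuation on the ordered group $(H,+)$ with $\Gamma$ (up to order reversal, under which the symmetric character $(\kappa,\kappa^*)$ is preserved), the gap-transport principle stated just before the corollary—applicable since $\kappa\ge\omega$—yields a $(\kappa,\kappa^*)$-gap in $H$.

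The main obstacle is precisely this last gap: recognizing that a $(\kappa,\kappa^*)$-gap can only sit over an unrealized value, identifying the asymptotic-integration gap of $\Gamma$ as the right one, and checking that both of its one-sided characters equal $\kappa$. By contrast, the gaps of character $(\omega,\kappa^*)$ and $(\kappa,\omega^*)$ are essentially formal once $\kappa=\ci(H^{>\R})$ is in hand.
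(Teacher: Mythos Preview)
Your proof is correct and follows essentially the same route as the paper's: the paper likewise takes the cut at $\R$ for the $(\omega,\kappa^*)$-gap, a symmetric cut for $(\kappa,\omega^*)$ (they use $H^{<\R}<\R$ rather than your infinitesimal/appreciable cut, but either works), and locates the $(\kappa,\kappa^*)$-gap at the $\Psi\,/\,(\Gamma^>)'$ cut of the asymptotic couple. The only cosmetic difference is that the paper writes down explicit witnesses $A=\{\upg_\rho/\ell_\rho\}$, $B=\{\upg_\rho\}$ with $\upg_\rho=\ell_\rho^\dagger$ for a coinitial sequence $(\ell_\rho)_{\rho<\kappa}$ in $H^{>\R}$, whereas you invoke the gap-transport principle abstractly; also note that the Hardy field valuation $v\colon H\to\Gamma_\infty$ is itself a convex valuation on $(H,+,\le)$, so no separate ``archimedean valuation'' or order reversal is needed.
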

\begin{proof}  Corollary~\ref{cormhcian} and \cite[Corollary~4.8]{ADHfgh} give $\omega<\kappa\leq\mathfrak c$.
The gaps~${\R < H^{>\R}}$ and~${H^{<\R} < \R}$ in $H$ have character $(\omega,\kappa^*)$ and $(\kappa,\omega^*)$, respectively.
To obtain a~$(\kappa,\kappa^*)$-gap in $H$, take a coinitial sequence $(\ell_\rho)_{\rho<\kappa}$   in $H^{>\R}$ with $\ell_{\rho}\succ \ell_{\rho'}$
for all~${\rho < \rho'<\kappa}$. 
Put~$\upg_\rho:=\ell_\rho^\dagger > 0$, so $(1/\ell_\rho)' = (1/\ell_\rho)^\dagger/\ell_\rho = -\upg_\rho/\ell_\rho < 0$. 
Set~$A:=\{\upg_\rho/\ell_\rho:\rho<\kappa\}$, $B:=\{\upg_\rho:\rho<\kappa\}$. With $(\Gamma,\psi)$ the asymptotic couple of $H$, $v(A)$ is coinitial in $(\Gamma^>)'$ and has no smallest element, and
$v(B)$ is cofinal in $\Psi=(\Gamma^{\ne})^\dagger$ and has no largest element.
Now $H$ has asymptotic integration, so there is no $\gamma\in\Gamma$ with~$\Psi<\gamma<(\Gamma^>)'$.
Hence $A<B$  is a $(\kappa,\kappa^*)$-gap in $H$.
\end{proof}

\noindent
Let now $G$ be an ordered abelian group. Assume $G\ne \{0\}$ and $G^>$ has no smallest element, so~$\ci(G^>)\geq\omega$.
A gap $A<B$ in $G$ is said to be {\bf cauchy} if $A,B\neq\emptyset$, $A$ has no largest element, $B$ has no smallest element,
and for each~$\varepsilon\in G^>$ there are~$a\in A$, $b\in B$ with $b-a<\varepsilon$.
If $A<B$ is a cauchy gap in $G$, then so is $-B<-A$.

\begin{lemma}\label{cau}
Let $A<B$ be a cauchy gap in $G$ and  $(a_\rho)$ be an   increasing cofinal well-indexed sequence in $A$.
Then $(a_\rho)$ is a divergent c-sequence in $G$.
\end{lemma}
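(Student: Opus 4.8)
The plan is to verify directly the two clauses in the definition of a divergent c-sequence: that $(a_\rho)$ is a c-sequence, and that it has no limit in $G$. Both will come from combining the cofinality and monotonicity of $(a_\rho)$ with the three features of a cauchy gap, namely the defining closeness condition together with the fact that $A$ has no largest element and $B$ no smallest element.

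First I would show that $(a_\rho)$ is a c-sequence, i.e.\ that for each $\varepsilon\in G^>$ there is an index $\rho_0$ with $\abs{a_\rho-a_\sigma}<\varepsilon$ for all $\rho,\sigma\geq\rho_0$. Fix $\varepsilon\in G^>$. Since $A<B$ is cauchy, choose $a\in A$ and $b\in B$ with $b-a<\varepsilon$; since $(a_\rho)$ is increasing and cofinal in $A$, choose $\rho_0$ with $a_{\rho_0}\geq a$. Then for all indices $\sigma\geq\rho\geq\rho_0$ I would use $a\leq a_{\rho_0}\leq a_\rho\leq a_\sigma$ together with $a_\sigma<b$ (as $a_\sigma\in A$ and $b\in B$) to get $0\leq a_\sigma-a_\rho<b-a<\varepsilon$; by symmetry this gives $\abs{a_\rho-a_\sigma}<\varepsilon$ for all $\rho,\sigma\geq\rho_0$, as required.

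Next I would establish divergence by assuming a limit $g=\lim_\rho a_\rho$ in $G$ and deriving a contradiction. Monotonicity first forces $g\geq a_\rho$ for every $\rho$: if instead $a_{\rho_1}>g$ for some $\rho_1$, then $a_\rho-g\geq a_{\rho_1}-g>0$ for all $\rho\geq\rho_1$, contradicting $a_\rho\to g$. Hence $g$ is an upper bound of the sequence, and by cofinality an upper bound of $A$. Now $g\in A$ is impossible, since $A$ has no largest element. If $g\notin A$ but $b_1<g$ for some $b_1\in B$, then $a_\rho<b_1<g$ for all $\rho$ yields $g-a_\rho>g-b_1>0$, again contradicting convergence. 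In the only remaining case $g$ is a lower bound of $B$, so $A<g\leq B$; this either places $g$ strictly between $A$ and $B$ (forbidden in a gap) or makes $g$ the smallest element of $B$ (forbidden since $B$ has none). As all cases are impossible, $(a_\rho)$ has no limit in $G$ and is therefore divergent.

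The arithmetic here is entirely routine; the one step deserving care is the divergence argument, where it is essential to invoke the gap hypothesis in full—no maximum of $A$, no minimum of $B$, and no intermediate element—rather than the closeness condition alone, which by itself does not preclude a limit.
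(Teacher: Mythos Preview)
Your proof is correct and follows the same approach as the paper: the c-sequence argument is essentially identical (pick $a\in A$, $b\in B$ with $b-a<\varepsilon$, then use cofinality and $A<B$ to bound differences), and you supply the routine case analysis for divergence that the paper simply asserts without detail.
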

\begin{proof}
Let $\varepsilon\in G^>$ and take  $a\in A$, $b\in B$ with $b-a<\varepsilon$. Take $\rho_0$ such that $a\leq a_\rho$ for all $\rho>\rho_0$. Then $0< a_{\rho'}-a_\rho<b-a<\varepsilon$  for 
$\rho_0<\rho<\rho'$. Hence $(a_\rho)$ is a c-sequence in $G$,
and  there is no $a\in G$ with $a_\rho\to a$.
\end{proof}

\begin{lemma}\label{lem:cauchy gap}
Every cauchy gap in $G$ has character $\big(\!\cf(G^<),\cf(G^<)^*\big)$. Moreover,  $G$ is complete iff $G$ has no cauchy gap.
\end{lemma}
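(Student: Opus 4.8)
The plan is to handle the two assertions separately, reducing the character computation to two matched cofinality equalities. First I would replace $A$ and $B$ by the two halves $A^\downarrow$ and $B^\uparrow=G\setminus A^\downarrow$ of the associated cut; since $A<B$ is a gap, no element lies strictly between the two sides, so $A$ is cofinal in $A^\downarrow$ and $B$ is coinitial in $B^\uparrow$, whence the character of the gap equals $\big(\cf(A),\ci(B)^*\big)$. Writing $\kappa:=\cf(G^<)$, the order-reversing bijection $x\mapsto -x$ identifies $G^<$ with $G^>$ and so gives $\kappa=\ci(G^>)$. The goal then becomes $\cf(A)=\ci(B)=\kappa$.

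For the lower bounds I would use Lemma~\ref{cau}. Take a strictly increasing cofinal sequence $(a_\rho)_{\rho<\lambda}$ in $A$ with $\lambda=\cf(A)$; by Lemma~\ref{cau} it is a divergent c-sequence. The point is that the consecutive differences $a_{\rho+1}-a_\rho\in G^>$ are coinitial in $G^>$: for each $\varepsilon\in G^>$ the c-sequence property forces $a_{\rho+1}-a_\rho<\varepsilon$ for cofinally many $\rho$. Hence $\{a_{\rho+1}-a_\rho:\rho<\lambda\}$ is a coinitial subset of $G^>$ of cardinality at most $\lambda$, so $\kappa=\ci(G^>)\le\cf(A)$. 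Applying the same argument to the cauchy gap $-B<-A$ (an increasing cofinal sequence in $-B$ being a decreasing coinitial sequence in $B$) yields $\kappa\le\ci(B)$. For the matching upper bounds I would argue via the Cauchy modulus. Fix a coinitial sequence $(\varepsilon_i)_{i<\kappa}$ in $G^>$ and, for each $i$, let $\rho_i<\lambda$ be least such that $|a_\rho-a_{\rho'}|<\varepsilon_i$ for all $\rho',\rho\ge\rho_i$. If $\sup_i\rho_i=\rho^*<\lambda$, then for $\rho,\rho'\ge\rho^*$ the difference $a_\rho-a_{\rho'}$ is in absolute value below every positive element, hence $0$, making $(a_\rho)$ eventually constant and so convergent, contradicting divergence; thus $\{\rho_i:i<\kappa\}$ is cofinal in $\lambda$, and since $\lambda=\cf(A)$ is regular this gives $\cf(A)=\lambda\le\kappa$. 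Symmetrically $\ci(B)\le\kappa$. Combining, $\cf(A)=\ci(B)=\kappa$, so the gap has character $\big(\cf(G^<),\cf(G^<)^*\big)$.

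For the second statement, one direction is immediate from Lemma~\ref{cau}: if $A<B$ is a cauchy gap, then an increasing cofinal sequence in $A$ is a divergent c-sequence, so $G$ is not complete. Conversely, given a divergent c-sequence $(a_\rho)$ in $G$, I would form the cut $A:=\{x:x<a_\rho\text{ eventually}\}$, $B:=\{x:x>a_\rho\text{ eventually}\}$; divergence forces every element of $G$ into $A$ or $B$ (once $x$ is not a limit, the c-sequence property makes $a_\rho-x$ eventually of constant sign), so $A<B$ is a gap, and divergence together with the c-sequence property yields that $A$ has no largest element, $B$ no smallest, and the cauchy approximation condition. The main obstacle is the first part: pinning both $\cf(A)$ and $\ci(B)$ to $\kappa$ at once. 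The naive attempt of measuring $B$ by the differences $b-a_0$ from a fixed $a_0\in A$ fails, since these approach the (positive) gap value rather than $0$; the device that makes it work is to pass instead to the consecutive differences of the divergent c-sequence on each side, which are automatically coinitial in $G^>$, and to supply the reverse inequality through the Cauchy-modulus argument.
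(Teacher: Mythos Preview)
Your proposal is correct, and it differs from the paper's proof in both parts in instructive ways.

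For the character statement, the paper simply invokes Lemma~\ref{cau} together with [ADH, 2.4.11] (which records that any divergent c-sequence in such an ordered abelian group has length of cofinality $\cf(G^<)$); you instead reprove that fact from scratch via the consecutive-difference / Cauchy-modulus argument. Your argument is sound: the differences $a_{\rho+1}-a_\rho$ are coinitial in $G^>$ because the sequence is Cauchy, giving $\kappa\le\lambda$; and the indices $\rho_i$ must be unbounded in $\lambda$ (else the strictly increasing sequence would be eventually constant), giving $\lambda\le\kappa$ since $\lambda$ is regular. Your route is self-contained; the paper's is a one-line citation.

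For the converse in the completeness statement, the constructions diverge. The paper takes
\[
A:=\{a_{\rho_\varepsilon}-\varepsilon:\varepsilon\in G^>\},\qquad B:=\{a_{\rho_\delta}+\delta:\delta\in G^>\},
\]
checks only that $A<B$ and that the approximation condition holds, and then observes that since $A<B$ is not a cauchy gap, some defining clause must fail; each possible failure (not a gap, $A$ has a max, $B$ has a min) immediately yields a $g$ with $A\le g\le B$, which is then a limit of $(a_\rho)$. You instead take the ``eventually below / eventually above'' partition and verify it is a genuine cauchy gap. This works---the key point, which you state but should perhaps justify more explicitly, is that for a divergent c-sequence $(a_\rho)$ every $x\in G$ lies on one side eventually (otherwise one shows $a_\rho\to x$), and that $A$ having a maximum would again force convergence to that maximum. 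The paper's construction is slicker in that it does not need to verify the full cauchy-gap package; yours is arguably more conceptual in that it directly exhibits the Dedekind cut that the divergent sequence ``should'' fill.
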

\begin{proof}
The first claim follows from Lemma~\ref{cau} and [ADH, 2.4.11].
It also follows from this lemma that if $G$ is complete, then $G$ has no cauchy gap.
Conversely, suppose $G$ has no cauchy gap. Let $(a_\rho)$ be a c-sequence in $G$. For each $\varepsilon\in G^>$, take
$\rho_\varepsilon$ such that $\abs{a_\rho-a_{\rho'}}<\varepsilon$ for all $\rho,\rho'\geq\rho_\varepsilon$, and set
$$A\ :=\ \{ a_{\rho_\varepsilon}-\varepsilon:\  \varepsilon\in G^>\}, \qquad 
B\ :=\ \{ a_{\rho_\delta}+\delta:\  \delta\in G^>\}.$$
Then $A<B$ and for all $\varepsilon\in G^{>}$ there are
$a\in A$ and $b\in B$ with $b-a<\varepsilon$. But~$A< B$ is no cauchy gap, so we have $g\in G$ with
$A\leq g\leq B$.  Then $a_{\rho} \to g$. 
\end{proof}

\begin{cor}\label{cor:gaps 2}
Let $H$ be a maximal, or maximal analytic,  or maximal smooth Hardy field, and $\lambda:=\cf(H)$. Then $\omega<\lambda\leq\mathfrak c$, and $H$ has gaps of character~$(0,\lambda^*)$, $(\lambda,0)$, $(1,\lambda^*)$, $(\lambda,1)$, and if $H$ is not complete, then
$H$ has a $(\lambda,\lambda^*)$-gap.
\end{cor}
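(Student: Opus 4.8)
The plan is to read off all five gaps directly from the definitions of cut and character and the order-theoretic facts assembled just before the statement, once $\lambda$ has been located. First I would establish $\omega < \lambda \le \mathfrak c$: the lower bound $\cf(H) > \omega$ holds by Corollary~\ref{corsjoan} in the maximal analytic and maximal smooth cases and by \cite[Corollary~5.2]{ADHfgh} in the maximal case, while $\lambda \le \abs{H} \le \mathfrak c$ follows since $H \subseteq \Cc$. I would also record that the order-reversing bijection $h \mapsto -h$ of $H$ gives $\ci(H) = \cf(H) = \lambda$; this is what makes the boundary cuts come out with the right characters in all three cases uniformly.

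For the first four gaps I would use the observation that, for any downward closed $C \subseteq H$, the pair $(C, H \setminus C)$ is a gap in $H$: no $s \in H$ can be strictly above every element of $C$ and strictly below every element of $H \setminus C$, since $s$ lies in one of the two pieces. Its character is, by definition, that of the cut $A^{\downarrow} = C$. Applying the characters already computed in the text then yields the gaps at once: the cut $\emptyset$ gives $\emptyset < H$ of character $\big(0,\ci(H)^*\big) = (0,\lambda^*)$; the cut $H$ gives $H < \emptyset$ of character $\big(\!\cf(H),0\big) = (\lambda,0)$; and, fixing $h \in H$, the cuts $H^{<h}$ and $H^{\le h}$ give the gaps $H^{<h} < H^{\ge h}$ and $H^{\le h} < H^{>h}$ of characters $\big(\!\cf(H),1\big) = (\lambda,1)$ and $\big(1,\cf(H)^*\big) = (1,\lambda^*)$, respectively.

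It then remains to produce a $(\lambda,\lambda^*)$-gap when $H$ is not complete. Here I would view $H$ as an ordered abelian group under addition, noting $H \ne \{0\}$ and that $H^>$ has no least element, so that Lemmas~\ref{cau} and~\ref{lem:cauchy gap} apply. Incompleteness of $H$ yields, by Lemma~\ref{lem:cauchy gap}, a cauchy gap in $H$, and every cauchy gap has character $\big(\!\cf(H^<),\cf(H^<)^*\big)$; since $\cf(H^<) = \cf(H) = \lambda$ by the field facts recorded above, this is a $(\lambda,\lambda^*)$-gap.

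The argument is essentially bookkeeping against the definitions, as the substantive content has been front-loaded into Lemmas~\ref{cau} and~\ref{lem:cauchy gap} and into the cofinality inputs. I expect the only points needing care to be the verification that the four complementary cuts are genuine (unrealized) gaps and that $h \mapsto -h$ forces $\ci(H) = \cf(H)$; I foresee no real obstacle.
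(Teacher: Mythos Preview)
Your proof is correct and follows essentially the same approach as the paper's: establish the bounds on $\lambda$, then read off the four boundary/point gaps from the general remarks on cuts in ordered fields, and finally invoke Lemma~\ref{lem:cauchy gap} for the $(\lambda,\lambda^*)$-gap. The only minor difference is that the paper obtains $\omega<\lambda\leq\mathfrak c$ by first passing to $\cf(\Gamma)$ via Lemma~\ref{lem:cf(K)} and then citing \cite[Corollary~8.1]{ADHfgh} and Corollary~\ref{hcoan}, whereas you cite Corollary~\ref{corsjoan} and \cite[Corollary~5.2]{ADHfgh} directly for the lower bound and use the elementary cardinality estimate $\lambda\leq\abs{H}\leq\abs{\Cc}=\mathfrak c$ for the upper bound; both routes are valid.
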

\begin{proof}
Lemma~\ref{lem:cf(K)} yields $\lambda=\cf(\Gamma)$, so $\omega<\lambda\leq\mathfrak c$ by
\cite[Corollary~8.1]{ADHfgh} and Corollary~\ref{hcoan}. For the rest use  the remarks before Corollary~\ref{cor:gaps 1} and Lemma~\ref{lem:cauchy gap}.
\end{proof}

\noindent
The main result of \cite{ADHfgh}, Theorem~\ref{aneta}, and Corollaries~\ref{cor:gaps 1},~\ref{cor:gaps 2} now give:

\begin{cor}\label{gapsmax} 
Assume  \textup{CH}. If $H$ is a maximal Hardy field, or a maximal analytic Hardy field, or a maximal smooth Hardy field, then the characters of gaps  in $H$ are $$\text{$(0,\omega_1^*)$, $(\omega_1,0)$,  $(1,\omega_1^*)$, $(\omega_1,1)$, $(\omega,\omega_1^*)$, $(\omega_1,\omega^*)$, and $(\omega_1,\omega_1^*)$.}$$
\end{cor}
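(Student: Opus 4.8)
The plan is to collapse the two cardinal parameters $\kappa:=\ci(H^{>\R})$ and $\lambda:=\cf(H)$ to $\omega_1$ using CH, read off existence of the seven characters from Corollaries~\ref{cor:gaps 1} and~\ref{cor:gaps 2}, and then prove exhaustiveness by classifying the cut attached to an arbitrary gap. Under CH we have $\mathfrak c=\omega_1$, so the bounds $\omega<\kappa\le\mathfrak c$ and $\omega<\lambda\le\mathfrak c$ from Corollaries~\ref{cor:gaps 1} and~\ref{cor:gaps 2} force $\kappa=\lambda=\omega_1$, since $\omega_1$ is the least cardinal exceeding $\omega$.

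For the existence direction I would simply substitute these values. Corollary~\ref{cor:gaps 1} then yields gaps of character $(\omega,\omega_1^*)$, $(\omega_1,\omega^*)$, $(\omega_1,\omega_1^*)$, while Corollary~\ref{cor:gaps 2} yields gaps of character $(0,\omega_1^*)$, $(\omega_1,0)$, $(1,\omega_1^*)$, $(\omega_1,1)$. Note that the $(\omega_1,\omega_1^*)$-gap is already supplied by Corollary~\ref{cor:gaps 1}, so no appeal to incompleteness of $H$ is needed. Together these realize all seven characters in the list.

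For the converse, let $A<B$ be any gap in $H$ and consider the cut $C:=A^{\downarrow}=H\setminus B^{\uparrow}$, whose character is by definition that of the gap. I would split into cases according to $C$. If $C=\emptyset$ or $C=H$, then since $\ci(H)=\cf(H)=\lambda=\omega_1$ (the order-reversing map $x\mapsto-x$ gives $\ci(H)=\cf(H)$) the character is $(0,\omega_1^*)$ or $(\omega_1,0)$. If $C$ is realized by an element $h\in H$, that is $C=H^{\le h}$ or $C=H^{<h}$, then by the remarks preceding Corollary~\ref{cor:gaps 1} the character is $(1,\cf(H)^*)=(1,\omega_1^*)$ or $(\cf(H),1)=(\omega_1,1)$. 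In the remaining case $C$ and $H\setminus C$ are both nonempty, $C$ has no maximum and $H\setminus C$ no minimum (in the densely ordered field $H$ a cut cannot be closed on both sides), so the character is $(\alpha,\beta^*)$ with $\alpha:=\cf(C)$ and $\beta:=\ci(H\setminus C)$ both infinite.

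The crux is this last case. Here $\alpha$ and $\beta$ are infinite regular cardinals by the definitions of cofinality and coinitiality, and since $H\subseteq\Cc$ gives $\abs{H}\le\abs{\Cc}=\mathfrak c=\omega_1$, both are $\le\omega_1$; as the only infinite regular cardinals $\le\omega_1$ are $\omega$ and $\omega_1$, we conclude $\alpha,\beta\in\{\omega,\omega_1\}$. Finally, the $\eta_1$ property of $H$ — the main result of \cite{ADHfgh} in the maximal case and Theorem~\ref{aneta} together with its smooth version in the analytic and smooth cases — excludes, by the characterization recalled before Corollary~\ref{cor:gaps 1}, any cut of character $(\alpha,\beta^*)$ with $\alpha,\beta\le\omega$, and in particular forbids $\alpha=\beta=\omega$. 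Hence $(\alpha,\beta)\in\{(\omega,\omega_1),(\omega_1,\omega),(\omega_1,\omega_1)\}$, contributing exactly the remaining characters $(\omega,\omega_1^*)$, $(\omega_1,\omega^*)$, $(\omega_1,\omega_1^*)$. The only step requiring genuine care is this non-realized case, where one must combine the cardinality bound $\abs{H}\le\mathfrak c$, the regularity of cofinalities, and the $\eta_1$ property; the rest is bookkeeping with the already-established corollaries.
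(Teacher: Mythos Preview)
Your proof is correct and follows exactly the approach the paper indicates: the paper derives the corollary from the $\eta_1$ results (the main theorem of \cite{ADHfgh} and Theorem~\ref{aneta}) together with Corollaries~\ref{cor:gaps 1} and~\ref{cor:gaps 2}, and you supply precisely the details this entails. Your use of CH to pin down $\kappa=\lambda=\omega_1$, the existence argument via substitution, and the exhaustiveness argument via the case split on the cut (combined with the cardinality bound $|H|\le\mathfrak c=\omega_1$ and the $\eta_1$ exclusion of $(\alpha,\beta^*)$ with $\alpha,\beta\le\omega$) are all sound and constitute the intended proof.
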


\subsection*{The number of maximal analytic Hardy fields} 
We recall some definitions from \cite{ADHfgh}. A germ $\phi\in\Cc$ is said to be {\it overhardian}\/ if $\phi$ is hardian and $\phi>_{\ex} \exp_n(x)$ for all $n$;
see \cite[Corollary~5.11]{ADHfgh}. Let $H\supseteq\R$ be a Hardy field. Then $$H^{\te}\ :=\ \big\{f\in H:\text{$f>\exp_n(x)$ for each $n$}\big\}$$ denotes the set of overhardian (or transexponential) elements of $H$. We let $*H^{\te}$ be the set of equivalence classes of the equivalence relation $\sim_{\exp}$ on $H^{\te}$ given by 
$$f\sim_{\exp} g\quad:\Longleftrightarrow\quad
\text{$f\leq\exp_n(g)$ and $g\leq\exp_n(f)$ for some $n$}\qquad (f,g\in H^{\te}).$$
Denoting the equivalence class of $f\in H^{\te}$ by $*f$,
 we linearly order $H^{\te}$ by
$$*f< *g\quad:\Longleftrightarrow\quad \text{$\exp_n(f)<g$ for all $n$} \qquad (f,g\in H^{\te}).$$
We now establish analytic  and smooth versions of Theorem~7.1 from \cite{ADHfgh}:

\begin{cor}\label{cor:nomaxan}
The number of maximal analytic Hardy fields is $2^{\mathfrak c}$ where $\mathfrak c=2^{\aleph_0}$. Likewise with ``smooth'' in place of ``analytic''.
\end{cor}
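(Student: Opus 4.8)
The plan is to adapt the proof of \cite[Theorem~7.1]{ADHfgh} (which establishes that there are $2^{\mathfrak c}$ maximal Hardy fields) to the analytic and smooth settings, replacing the tools used there with their analytic/smooth counterparts already developed in this paper. The upper bound is immediate: every maximal analytic Hardy field is a subset of $\Cc^\omega\subseteq\Cc$, and $\card(\Cc)=\mathfrak c$, so there are at most $2^{\mathfrak c}$ subsets of $\Cc$ and hence at most $2^{\mathfrak c}$ maximal analytic Hardy fields. The real content is the lower bound, i.e.\ constructing $2^{\mathfrak c}$ \emph{distinct} maximal analytic Hardy fields.

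The strategy from \cite{ADHfgh} is to exploit the transexponential elements: one builds a family of analytic Hardy fields indexed by subsets of some index set of size $\mathfrak c$, arranged so that the linearly ordered set $*H^{\te}$ (or some invariant of it, such as which cuts in it are realized) distinguishes the members of the family up to isomorphism. First I would use Corollary~\ref{cor:5.1ana} to produce, over any given Liouville closed analytic Hardy field, an $H$-hardian $z\in\Cc^\omega$ that is transexponential and exceeds any prescribed $\phi\in\Cc$; this is the analytic analogue of the $\Cc^\infty$-construction in \cite[Theorem~5.1]{ADHfgh} and is precisely what makes the whole argument work inside $\Cc^\omega$. Iterating such extensions and taking unions (as in the proof of Corollary~\ref{CHcof}, but now without requiring boundedness since we want unboundedness), one assembles analytic Hardy fields whose sets of transexponential germs can be prescribed to have combinatorially different order types. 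Each such analytic Hardy field then extends to a maximal analytic Hardy field, and one checks that the invariant persists, or is at least recoverable, in the maximal extension, so that distinct choices yield non-isomorphic—hence distinct—maximal analytic Hardy fields.

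The main obstacle I expect is the counting/combinatorial heart of the argument: arranging that $2^{\mathfrak c}$ of the constructed maximal analytic Hardy fields are pairwise distinct rather than merely that there exist at least two. In \cite[Theorem~7.1]{ADHfgh} this is handled by encoding subsets of a set of size $\mathfrak c$ into the order structure on $*H^{\te}$ and showing that different codes give different (as subsets of $\Cc$, or non-isomorphic as ordered differential fields) maximal Hardy fields; the delicate point is that there are only $\mathfrak c$-many germs but we need $2^{\mathfrak c}$-many final objects, which forces us to distinguish fields by a $2^{\mathfrak c}$-valued invariant such as the family of subsets of $\Cc$ that actually occur as underlying sets. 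The analytic case introduces no genuinely new difficulty here because the germs involved already live in $\Cc^\omega$ by our construction; the only thing to verify is that every step of the original combinatorial scheme can be carried out with analytic (resp.\ smooth) witnesses, which is guaranteed by Corollary~\ref{cor:5.1ana} together with the closure of analytic Hardy fields under the relevant $\d$-algebraic extensions.

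In short, I would cite the upper bound $\card(\Cc)=\mathfrak c$ for the ``$\le 2^{\mathfrak c}$'' direction, and for the ``$\ge 2^{\mathfrak c}$'' direction reproduce the transexponential coding argument of \cite[Theorem~7.1]{ADHfgh} verbatim in structure, substituting Corollary~\ref{cor:5.1ana} for \cite[Theorem~5.1]{ADHfgh} at each place where a transexponential $\Cc^\infty$-germ is produced, so that all constructed germs lie in $\Cc^\omega$ (resp.\ $\Cc^\infty$), and invoking Lemma~\ref{lem:5.4.19} and the closure properties of analytic Hardy fields to keep the construction inside the analytic category.
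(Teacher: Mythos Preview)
Your overall plan---adapt the proof of \cite[Theorem~7.1]{ADHfgh} by replacing the germ-production steps with analytic versions---is correct, and this is what the paper does. But you have misidentified the key tool to substitute. The heart of the tree construction in \cite{ADHfgh} is Proposition~7.4: given an analytic Hardy field $H$ and a countable gap $P<Q$ in $*H^{\te}$, produce $H$-hardian $f_0,f_1$ with $P<*f_j<Q$ such that $H\langle f_0\rangle$ and $H\langle f_1\rangle$ have no common Hardy field extension. Here $Q$ need not be empty, so one must place an analytic germ \emph{between} two prescribed countable sets of transexponential classes. Corollary~\ref{cor:5.1ana} only yields analytic germs \emph{above} all of $H$; it cannot do this.

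The correct substitute is Theorem~\ref{aneta}. The paper's proof proceeds exactly as you outline, except that the crucial step is to observe that in \cite[Lemma~7.7]{ADHfgh} (which underlies Proposition~7.4) the germ $y$ can be taken analytic by appealing to Theorem~\ref{aneta} in place of \cite[Section~5 and Corollary~6.7]{ADHfgh}. With that one change, the binary tree of height $\mathfrak c$ of pairwise incompatible analytic Hardy fields goes through verbatim, and their maximal analytic extensions are then pairwise distinct. Your invocation of Corollary~\ref{cor:5.1ana} would suffice only for the initial step of producing \emph{some} overhardian analytic germ, not for the branching.
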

\begin{proof}
We treat the number of maximal analytic Hardy fields; the smooth case is similar, using the smooth version
of Theorem~\ref{aneta}.
In the argument following the statement of \cite[Proposition~7.4]{ADHfgh} we replace
$\mathcal H$ by the set of all analytic Hardy fields~$H\supseteq\R$ with~$\abs{*H^{\te}} < \frak{c}$.
Thus modified, this argument shows that it is enough to prove that in \cite[Proposition~7.4]{ADHfgh} we can choose $f_0$, $f_1$ to be analytic whenever the Hardy field $H$ is analytic. For this we first note that
if $H$ in \cite[Lem\-ma~7.7]{ADHfgh} is analytic, then we can take $y$ there to be analytic, by appealing to Theorem~\ref{aneta} instead of \cite[Section~5 and Corollary~6.7]{ADHfgh}.
Now argue as in the remarks following~\cite[Lem\-ma~7.10]{ADHfgh} 
using this analytic version of \cite[Lemma~7.7]{ADHfgh}.
\end{proof}

\noindent
Corollary~7.8 of \cite{ADHfgh} has an analytic version with a similar proof:

\begin{cor}\label{anteta} If $H$ is a maximal analytic Hardy field, then  the ordered set $*H^{\te}$ is $\eta_1$, and $\abs{*H^{\te}}=\frak{c}$.
\end{cor}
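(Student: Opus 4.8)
The goal is to prove that for a maximal analytic Hardy field $H$, the ordered set $*H^{\te}$ is $\eta_1$ and has cardinality $\frak{c}$. The plan is to mirror the structure of the proof of \cite[Corollary~7.8]{ADHfgh}, substituting the analytic inputs established in this paper for their non-analytic counterparts. The two assertions are somewhat separate: the $\eta_1$ claim is a density/extension statement, while the cardinality claim is a counting bound, and I would handle them in that order.

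For the $\eta_1$ property, I would argue as follows. By definition, $*H^{\te}$ is $\eta_1$ iff no cut in it has character $(\alpha,\beta^*)$ with $\alpha,\beta\le\omega$; equivalently, given countable subsets $P<Q$ of $*H^{\te}$, one must produce an element of $*H^{\te}$ strictly between them. Lifting representatives, this amounts to finding $f\in H^{\te}$ with $\exp_n(p)<f$ for all $n$ and all representatives $p$ of elements of $P$, and $\exp_n(f)<q$ for all $n$ and all representatives $q$ of $Q$. The key point is that $H$ is a maximal analytic Hardy field, so by Theorem~\ref{aneta} it is $\eta_1$: given the two countable bounding sets (suitably closed under finitely iterated exponentials), Theorem~\ref{aneta} furnishes a germ $y\in\Cc^{\omega}$ in a Hardy field extension of $H$ strictly between them, and by maximality $y\in H$. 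One then checks that the relevant $y$ is transexponential and lands strictly between the given classes in the ordering of $*H^{\te}$; this is the analytic analogue of the density argument used for \cite[Corollary~7.8]{ADHfgh}, now legitimate because Theorem~\ref{aneta} supplies an analytic interpolant and maximality absorbs it.

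For the cardinality, the upper bound $\abs{*H^{\te}}\le\frak{c}$ is immediate since $H\subseteq\Cc$ and $\abs{\Cc}=\frak{c}$, hence $\abs{H}\le\frak{c}$ and a fortiori $\abs{*H^{\te}}\le\frak{c}$. The lower bound $\abs{*H^{\te}}\ge\frak{c}$ should follow from the $\eta_1$ property together with nonemptiness: a nonempty $\eta_1$ ordered set with no endpoints embeds every countable ordered set and, by the standard Hausdorff argument (every $\eta_1$ set has cardinality at least $\aleph_1$, and here the set is definable within a structure of size $\frak{c}$), has size at least $\frak{c}$ in this setting; this is exactly the reasoning in \cite[Corollary~7.8]{ADHfgh}, which transfers verbatim once the $\eta_1$ property is known. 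I would therefore cite that corollary's argument for the counting step rather than reprove it.

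The main obstacle is verifying cleanly that Theorem~\ref{aneta} can be applied to the \emph{transexponential} cut, i.e.\ that the interpolating germ $y$ produced strictly between the exponentially-closed bounding sets is genuinely overhardian and that its class $*y$ sits strictly between $P$ and $Q$ in $*H^{\te}$. This requires arranging the bounding countable sets to be closed under the $\exp_n$ so that the strict inequalities $\exp_n(p)<y$ and $\exp_n(y)<q$ hold simultaneously for all $n$, which is precisely the bookkeeping carried out in the proof of \cite[Corollary~7.8]{ADHfgh}; here it goes through unchanged because the only new ingredient needed—that the interpolant may be taken analytic and hence absorbed by maximality—is supplied by Theorem~\ref{aneta} in place of the main theorem of \cite{ADHfgh}.
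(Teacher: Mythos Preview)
Your proposal is correct and follows the same approach the paper indicates (it only says the result ``has a similar proof'' to \cite[Corollary~7.8]{ADHfgh}, replacing the main theorem of \cite{ADHfgh} by Theorem~\ref{aneta}). One small correction: every $\eta_1$-ordered set already has cardinality $\ge\mathfrak{c}$ (see Corollary~\ref{lem:eta1 size} in this paper), not merely $\ge\aleph_1$, so no extra definability reasoning is needed for the lower bound.
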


\noindent 
We now improve Corollary~\ref{CHcof}:  assuming CH,
there are as many cofinal maximal analytic Hardy fields as there are maximal analytic Hardy fields,
by Corollary~\ref{CHcof strengthened}.


\begin{lemma}\label{lem:CHcof strengthened, 1}
Let $\phi\in\Cc^\omega$ be overhardian. Then there 
is a set $\mathcal H_\phi$ of  analytic Hardy field extensions of~$\R\langle\phi\rangle$ with 
$\abs{\mathcal H_\phi}=2^{\mathfrak c}$ such that  for each $H\in\mathcal H_\phi$, 
$*\phi$ is the largest element of  $*H^{\te}$, and each Hardy field 
contains at most one $H\in\mathcal H_\phi$.
\end{lemma}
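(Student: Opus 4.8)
The plan is to realize $\mathcal H_\phi$ as the set of branches of a binary splitting tree of analytic Hardy fields of height $\mathfrak c$, all extending $\R\langle\phi\rangle$ and all capped at $\phi$. By transfinite recursion I attach to each string $s\in 2^{<\mathfrak c}$ an analytic Hardy field $H_s\supseteq\R\langle\phi\rangle$, monotone in $s$ (so $s\subseteq s'$ implies $H_s\subseteq H_{s'}$), with $*\phi$ the largest element of $*H_s^{\te}$ at every node, starting from $H_\emptyset:=\R\langle\phi\rangle$, taken real and Liouville closed as the splitting step below will require. Closing off is harmless for the ceiling, since applying $\exp$, integration, and algebraic operations to germs $f\le\exp_n(\phi)$ again yields germs $\le\exp_m(\phi)$ for some $m$, so $*\phi$ stays maximal. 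At a limit level I take the union along the branch built so far: a directed union of analytic Hardy fields is again one, and any transexponential germ in it already lies in some $H_s$ on the chain, so both analyticity and the maximality of $*\phi$ pass to the union. (No instance of CH is used.)

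The splitting step is the heart. Given $H_s$ as above, I would produce two $H_s$-hardian germs $f_s^0,f_s^1\in\Cc^\omega$ realizing a common cut situated at a sub-$\phi$ scale---one may even take them bounded, hence not transexponential---and such that $f_s^0-f_s^1$ changes sign infinitely often; I then set $H_{s^\frown i}:=H_s\langle f_s^i\rangle$, again real and Liouville closed. Because $f_s^0-f_s^1$ oscillates while every element of a Hardy field is eventually of one sign, no Hardy field can contain both $H_{s^\frown 0}$ and $H_{s^\frown 1}$; and since the realizing germs sit below $\phi$, one has $*f_s^i\le*\phi$, so $*\phi$ remains the largest element of each $*H_s\langle f_s^i\rangle^{\te}$. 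Such an incompatible analytic pair is the $\phi$-capped, analytic form of the construction of \cite[Proposition~7.4]{ADHfgh}: exactly as in the proof of Corollary~\ref{cor:nomaxan}, the germs there may be taken in $\Cc^\omega$ by invoking Theorem~\ref{aneta} (the analytic form of \cite[Lemma~7.7]{ADHfgh}) in place of the smooth gluing.

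For each branch $b\in 2^{\mathfrak c}$ I set $H_b:=\bigcup_{\alpha<\mathfrak c}H_{b\restriction\alpha}$, an analytic Hardy field extending $\R\langle\phi\rangle$ with $*\phi=\max *H_b^{\te}$, and put $\mathcal H_\phi:=\{H_b:b\in 2^{\mathfrak c}\}$. If $b\ne b'$ first differ at a coordinate $\alpha$---say $b\restriction\alpha=b'\restriction\alpha=s$ with $b(\alpha)=0$ and $b'(\alpha)=1$---then $f_s^0\in H_b$ and $f_s^1\in H_{b'}$ have oscillating difference, so no Hardy field contains both $H_b$ and $H_{b'}$. In particular the $H_b$ are pairwise distinct, $\abs{\mathcal H_\phi}=2^{\mathfrak c}$, and each Hardy field contains at most one member of $\mathcal H_\phi$, as required.

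The main obstacle is precisely this splitting step: securing, in the analytic category and beneath the fixed ceiling $\phi$, a pair of $H_s$-hardian germs realizing one cut with oscillating difference; everything else is routine bookkeeping about chains and unions. The oscillation---which is what makes $H_{s^\frown 0}$ and $H_{s^\frown 1}$ incompatible---is imported from the incompatible-pair construction of \cite[Section~7]{ADHfgh}, and its upgrade to $\Cc^\omega$ is furnished by Theorem~\ref{aneta} just as in Corollary~\ref{cor:nomaxan}. The one point genuinely special to this relativized statement is to keep the realizing germs $\le\exp_n(\phi)$ throughout, so that $*\phi$ stays the maximum of $*H^{\te}$ at every stage.
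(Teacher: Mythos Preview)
Your overall architecture---a binary tree of height $\mathfrak c$ of analytic Hardy fields extending $\R\langle\phi\rangle$, with branches forming $\mathcal H_\phi$---is exactly the paper's approach. The gap is in the splitting step, and it stems from a misreading of \cite[Proposition~7.4]{ADHfgh}.

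That proposition does not produce bounded germs: it takes as input a countable gap $P<Q$ in $*H^{\te}$ and outputs \emph{transexponential} $f_0,f_1$ with $P<*f_j<Q$. In the paper's proof the gap is chosen inside $*H_s^{\te}\setminus\{*\phi\}$, so the new class $*f_s^i$ lands strictly below $*\phi$ (this is what keeps $*\phi$ maximal), but each successor step genuinely enlarges $*H_s^{\te}$ by one. The recursion therefore needs the invariant $\abs{*H_s^{\te}}\le\abs{\lambda+1}<\mathfrak c$ for $s\in 2^\lambda$, which is what allows \cite[Lemma~7.2]{ADHfgh} to supply a fresh countable gap at every successor stage. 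You omit this entirely; your sentence ``Given $H_s$ as above, I would produce\dots'' presupposes the input to Proposition~7.4 is always available, which is exactly what has to be argued.

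Your parenthetical escape route---take the $f_s^i$ bounded, so that $*H_s^{\te}$ never grows---would indeed make the cardinality bookkeeping trivial, but then you can no longer cite Proposition~7.4, and you have not supplied any alternative mechanism for producing two bounded $H_s$-hardian analytic germs with oscillating difference at an arbitrary stage. So the proof as written is caught between two strategies, completing neither.
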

\begin{proof} By \cite[Lemma~7.7]{ADHfgh} we have $*\R\langle\phi\rangle^{\te}=\{*\phi\}$. 
Let $H\supseteq\R\langle\phi\rangle$ be an analytic Hardy field
with $*\phi=\max *H^{\te}$, and
let~$P<Q$ be a countable gap in~$*H^{\te}$ with~$Q<*\phi$. 
Then   \cite[Proposition~7.4]{ADHfgh} and the argument in the proof of Corollary~\ref{cor:nomaxan} yields analytic Hardy fields~$H_0=H\langle f_0\rangle$ and~$H_1=H\langle f_1\rangle$ without a common Hardy field extension 
such that for $j=0,1$, we have $f_j\in H_j^{\te}$, $P<*f_j<Q\cup\{*\phi\}$, and~$*H_j^{\te}=*H^{\te}\cup\{*f_j\}$ (thus~$*\phi=\max *H_j^{\te}$).

We now follow the argument after the statement of \cite[Proposition~7.4]{ADHfgh}, with~$\mathcal H$ now the set of all analytic Hardy fields~$H\supseteq\R\langle\phi\rangle$ such that~$\abs{*H^{\te}} < \frak{c}$
and~$*\phi=\max*H^{\te}$. For an ordinal $\lambda$ we let $2^{\lambda}$ be the set of functions~${\lambda\to\{0,1\}}$.
With $s$ ranging over $\bigcup_{\lambda<\frak c} 2^\lambda$, we construct
a tree~$(H_s)$ in $\mathcal H$ with~$\abs{*H^{\te}}\leq\abs{{\lambda+1}}$ for~$s\in 2^\lambda$, as follows.
For $\lambda=0$ the function~$s$ has empty domain and we take~$H_s=\R\langle\phi\rangle$.
If~$s\in 2^\lambda$ ($\lambda<\frak c$) and~$H_s\in \cH$ are given with~$\abs{*H_s^{\te}}\le \abs{\lambda+1}$, then \cite[Lemma~7.2]{ADHfgh}  provides a countable gap~$P$,~$Q$ in~${*H_s^{\te}\setminus\{*\phi\}}$, and we let~$H_{s0}, H_{s1}\in \cH$ be obtained from $H_s$ as~$H_0$,~$H_1$ are obtained from $H$ in the remark above.
Suppose~$\lambda< \frak{c}$ is an infinite limit ordinal,
$s\in 2^\lambda$, and that for every $\alpha< \lambda$ we are given~$H_{s|\alpha}\in \cH$ with~$H_{s|\alpha}\subseteq H_{s|\beta}$ whenever
$\alpha\le \beta < \lambda$.
Then we set~$H_s:= \bigcup_{\alpha<\lambda} H_{s|\alpha}\in\cH$. Assuming also inductively that~$\abs{*H_{s|\alpha}^{\te}}\le  \abs{\alpha+1}$ for all~$\alpha<\lambda$,
we have~$\abs{*H_s^{\te}}\le \abs{\lambda}\cdot \abs{\lambda+1}=\abs{\lambda+1}$, as desired.
This finishes the construction of our tree. Then for each~$s\in 2^{\frak c}$   we have 
an analytic Hardy field~$H_s:=\bigcup_{\lambda<\frak{c}} H_{s|\lambda}$ such that if~$s, s'\in 2^{\frak c}$ are different, then~$H_s$,~$H_{s'}$ have no common Hardy field extension.
Hence $\mathcal H_\phi:=\{H_s:s\in 2^{\mathfrak c}\}$ has the required properties.
\end{proof}

\begin{lemma}\label{lem:CHcof strengthened, 2}
Assume \textup{CH}. 
Let $H$ be a bounded analytic Hardy field. Then 
$H$ extends to a cofinal analytic Hardy field.  
\end{lemma}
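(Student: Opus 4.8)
The plan is to relativize the proof of Corollary~\ref{CHcof}: instead of building a cofinal analytic Hardy field from scratch, I seed the transfinite recursion with the given bounded analytic Hardy field $H$. Assuming \textup{CH} we have $\mathfrak c=\aleph_1$, so fix an enumeration $(\phi_\alpha)_{\alpha<\mathfrak c}$ of $\Cc$. I aim to construct an increasing chain $(H_\alpha)_{\alpha<\mathfrak c}$ of bounded analytic Hardy fields, each containing $H$, together with germs $h_\alpha\in H_\alpha\cap\Cc^\omega$ such that $\phi_\alpha<_{\ex}h_\alpha$ for every $\alpha<\mathfrak c$.

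For the recursion step, suppose $(H_\alpha)_{\alpha<\beta}$ and $(h_\alpha)_{\alpha<\beta}$ have been obtained for some $\beta<\mathfrak c$. Let $G$ be the Hardy field generated by $H\cup\bigcup_{\alpha<\beta}H_\alpha$ (so $G=H$ when $\beta=0$). Since $\beta<\aleph_1$ is a countable ordinal, $G$ is a union of countably many bounded subsets of $\Cc$, hence bounded, and it is analytic as a union of analytic Hardy fields. Then $H^*:=\Li\!\big(G(\R)\big)$ is again a bounded analytic Hardy field, bounded by Lemma~\ref{lem:5.4.19} and analytic because forming $G(\R)$ and the Liouville closure preserves analyticity of germs. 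As $H^*$ is bounded I may pick $\phi\in\Cc$ with $\phi>_{\ex}H^*$ and $\phi\geq\phi_\beta$, and Corollary~\ref{cor:5.1ana} then yields an $H^*$-hardian $h_\beta\in\Cc^\omega$ with $h_\beta>_{\ex}\phi\geq\phi_\beta$. Setting $H_\beta:=H^*\langle h_\beta\rangle$ gives, by Lemma~\ref{lem:5.4.19} and the fact that adjoining an analytic hardian germ stays within $\Cc^\omega$, a bounded analytic Hardy field with $H\subseteq H_\beta$, $H_\alpha\subseteq H_\beta$ for all $\alpha<\beta$, and $\phi_\beta<_{\ex}h_\beta$.

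Finally I set $\hat H:=\bigcup_{\alpha<\mathfrak c}H_\alpha$, a Hardy field extension of $H$ that is analytic as a union of analytic Hardy fields. It is cofinal: given $\phi\in\Cc$, write $\phi=\phi_\alpha$; then $h_\alpha\in\hat H$ satisfies $\phi=\phi_\alpha<_{\ex}h_\alpha$, so $\phi\leq h_\alpha$. Thus $\hat H$ is the desired cofinal analytic Hardy field containing $H$. The construction is essentially that of Corollary~\ref{CHcof}, so I expect no genuinely new obstacle; the one point that requires care is that seeding with $H$ does not disturb the preservation of boundedness, and this is exactly where I use that $H$ is bounded together with $\mathfrak c=\aleph_1$ (so that every proper initial segment of the recursion is countable and the ``union of countably many bounded sets is bounded'' principle applies at limit stages).
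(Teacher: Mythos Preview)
Your proof is correct and follows essentially the same approach as the paper: relativize the transfinite recursion from Corollary~\ref{CHcof} by seeding it with $H$, using at each stage that a countable union of bounded sets is bounded (via CH), Lemma~\ref{lem:5.4.19} for boundedness of the Liouville closure, and Corollary~\ref{cor:5.1ana} to produce the analytic $h_\beta$. The paper streamlines slightly by replacing $H$ once by $\Li\!\big(H(\R)\big)$ at the outset and then literally invoking the recursion of Corollary~\ref{CHcof} starting from $(H_0,h_0)$, rather than taking a Liouville closure at every step.
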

\begin{proof} By Lemma~\ref{lem:5.4.19} we can
replace~$H$ by $\Li\!\big(H(\R)\big)$
to arrange that $H\supseteq\R$  and~$H$ is Liouville closed. Next, 
take  an enumeration    $(\phi_\alpha)_{\alpha<\mathfrak c}$   of~$\Cc$ 
with $\phi_0>_{\ev} H$.   
 Corollary~\ref{cor:5.1ana} yields an $H$-hardian~$h_0\in\Cc^\omega$ with $h_0>_{\ev} \phi_0$, and 
 then the analytic Hardy field $H_0:=H\langle h_0\rangle$ is bounded by Lemma~\ref{lem:5.4.19}.
 Now a transfinite recursion as in the proof of  Corollary~\ref{CHcof}, beginning with $(H_0,h_0)$,  yields a 
 cofinal analytic Hardy field extension of~$H_0$ and thus of $H$.
\end{proof}

\noindent
Corollary~\ref{cor:5.1ana} gives an overhardian $\phi\in\Cc^\omega$. For such $\phi$ and $\mathcal H_\phi$ as in Lemma~\ref{lem:CHcof strengthened, 1}, all $H\in\mathcal H_\phi$ are bounded.
With  Lemma~\ref{lem:CHcof strengthened, 2} we can now improve Corollary~\ref{CHcof}:

\begin{cor}\label{CHcof strengthened}
Assuming \textup{CH}, there are $2^{\mathfrak c}$  cofinal maximal analytic Hardy fields.
\end{cor}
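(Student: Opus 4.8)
The plan is to assemble the corollary from the two preceding lemmas, with essentially no new work: Lemma~\ref{lem:CHcof strengthened, 1} supplies a family of $2^{\mathfrak c}$ pairwise ``incompatible'' bounded analytic Hardy fields, and Lemma~\ref{lem:CHcof strengthened, 2} (which is where the hypothesis \textup{CH} enters) upgrades each of them to a cofinal one.

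First I would fix the data. By the remark immediately preceding the corollary, Corollary~\ref{cor:5.1ana} yields an overhardian $\phi\in\Cc^\omega$, and for this $\phi$ every member of the family $\mathcal H_\phi$ furnished by Lemma~\ref{lem:CHcof strengthened, 1} is bounded. So I fix such a $\phi$ together with the corresponding $\mathcal H_\phi$, recording the three properties I shall use: $\abs{\mathcal H_\phi}=2^{\mathfrak c}$; each $H\in\mathcal H_\phi$ is a bounded analytic Hardy field; and no Hardy field contains two distinct members of $\mathcal H_\phi$.

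Next, for each $H\in\mathcal H_\phi$ I would apply Lemma~\ref{lem:CHcof strengthened, 2} to extend $H$ to a cofinal analytic Hardy field, and then extend the latter by Zorn's Lemma to a maximal analytic Hardy field $M_H$ (the union of a chain of analytic Hardy fields is again one, so Zorn's Lemma applies in the class of analytic Hardy fields containing the given cofinal field). Since $M_H$ contains a cofinal subset of $\Cc$, it is itself cofinal; thus each $M_H$ is a cofinal maximal analytic Hardy field.

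Finally comes the counting. The assignment $H\mapsto M_H$ is injective: if $M_H=M_{H'}$ then this single Hardy field contains both $H$ and $H'$, forcing $H=H'$ by the incompatibility property of $\mathcal H_\phi$. This produces at least $\abs{\mathcal H_\phi}=2^{\mathfrak c}$ distinct cofinal maximal analytic Hardy fields, and Corollary~\ref{cor:nomaxan} supplies the matching upper bound $2^{\mathfrak c}$ on the total number of maximal analytic Hardy fields, giving exactly $2^{\mathfrak c}$. I do not expect a genuine obstacle here: the substance is already carried by Lemmas~\ref{lem:CHcof strengthened, 1} and~\ref{lem:CHcof strengthened, 2}, and the only points needing a moment's care are that passing to a maximal extension preserves cofinality (immediate from the definition of a cofinal subset of $\Cc$, since the smaller field is already cofinal) and that the ``at most one member'' clause of Lemma~\ref{lem:CHcof strengthened, 1} is precisely what makes $H\mapsto M_H$ injective.
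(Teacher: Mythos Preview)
Your proposal is correct and follows exactly the route the paper intends: the paper states the corollary immediately after the remark that all $H\in\mathcal H_\phi$ are bounded, leaving the details (extend each $H$ via Lemma~\ref{lem:CHcof strengthened, 2}, pass to a maximal analytic extension, and use the incompatibility clause of Lemma~\ref{lem:CHcof strengthened, 1} for injectivity) implicit. You have simply spelled out what the paper takes as evident.
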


\subsection*{Maximal analytic Hardy fields approximate maximal Hardy fields}  A maximal analytic Hardy field is
an $\infty\omega$-elementary substructure of any maximal Hardy field extension, by Corollary~\ref{cor:inftyomegaelemsub} below. Maximal analytic Hardy fields are also very close to maximal Hardy fields in another way:

\begin{theorem}\label{thm:dense} Let $H$ be a maximal analytic Hardy field or a maximal smooth Hardy field. Then $H$ is dense in any Hardy field extension of $H$.
\end{theorem}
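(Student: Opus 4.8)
The plan is to reduce density to a single valuation-theoretic assertion and then exploit maximality through the pc-sequence machinery of Section~\ref{sec:pc}, especially Proposition~\ref{prop:smanpc}, whose strength is that it carries \emph{no} countable-length hypothesis. Let $F$ be a Hardy field extension of the maximal analytic (or smooth) Hardy field $H$. Since $H\supseteq\R$, also $F\supseteq\R$, so both are $\d$-valued $H$-fields with constant field, and hence residue field, equal to $\R$: any $g\in F$ with $g\asymp 1$ is eventually monotone and bounded, so tends to a nonzero real $c$, giving $g\sim c$. I would prove that $H$ is dense in $F$ by establishing the single claim that $v(f-H)$ is cofinal in $\Gamma_H$ for every $f\in F\setminus H$.

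First I would note that each $f\in F\setminus H$ is $\d$-transcendental over $H$: otherwise $H\langle f\rangle\subseteq F$ is a proper $\d$-algebraic $H$-field extension of $H$ with constant field $\R$, contradicting that $H$ is closed. Next, $S:=v(f-H)$ has no largest element. For if $\gamma_0:=\max S=v(f-h_0)$ with $h_0\in H$, pick $\fm\in H^\times$ with $v\fm=\gamma_0$; then $(f-h_0)/\fm\asymp 1$, so $\operatorname{res}F=\R$ yields $c\in\R^\times$ with $(f-h_0)/\fm\sim c$, whence $v\big(f-(h_0+c\fm)\big)>\gamma_0$, contradicting maximality of $\gamma_0$.

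The heart of the matter is that $S$ is cofinal in $\Gamma_H$. Suppose not. Using that $S$ has no largest element, I would choose a well-indexed sequence $(h_\rho)$ in $H$ with $\gamma_\rho:=v(f-h_\rho)$ strictly increasing and cofinal in $S$; then $(h_\rho)$ is a pc-sequence with $h_\rho\leadsto f$ in $F$, and since $S$ is not cofinal in $\Gamma_H$ its width is not $\{\infty\}$. Moreover $(h_\rho)$ has no pseudolimit in $H$: a pseudolimit $g\in H$ would satisfy $v(f-g)>\gamma_\rho$ for all $\rho$, whereas $v(f-g)\in S$ forces $v(f-g)\leq\gamma_\rho$ for some $\rho$. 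Thus Proposition~\ref{prop:smanpc} applies (the width is $\neq\{\infty\}$ and $(h_\rho)$ pseudoconverges in the Hardy field extension $F$) and produces a pseudolimit in an analytic Hardy field extension of $H$; by maximality that extension is $H$ itself, so $(h_\rho)$ would pseudoconverge in $H$, a contradiction. Hence $S$ is cofinal in $\Gamma_H$.

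Finally I would upgrade this to genuine density. Cofinality of $v(f-H)$ in $\Gamma_H$ supplies, for each $f\in F^\times$, an $h\in H$ with $v(f-h)>v(f)$, whence $v(f)=v(h)\in\Gamma_H$; therefore $\Gamma_F=\Gamma_H$. Consequently $v(f-H)$ is cofinal in $\Gamma_F=\Gamma_H$ for every $f$, which is exactly density of $H$ in $F$ (equivalently in the order topology, as the valuation and order of these $H$-fields are compatible). The smooth case runs identically, invoking the smooth form of Proposition~\ref{prop:smanpc}. The only genuine obstacle is the possibly \emph{uncountable} length of $(h_\rho)$: this is precisely why the width-based Proposition~\ref{prop:smanpc}, rather than the countable-length Corollary~\ref{smanpc}, is required, and it is the reason the results of Section~\ref{sec:pc} were developed without any countable-length restriction.
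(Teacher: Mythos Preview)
There is a genuine gap. In your step showing that $S=v(f-H)$ has no largest element, you write ``pick $\fm\in H^\times$ with $v\fm=\gamma_0$'', tacitly assuming $\gamma_0\in\Gamma_H$. But $\gamma_0=v(f-h_0)$ lives in $\Gamma_F$, and at this point you have not established $\Gamma_F=\Gamma_H$. Indeed, if there were $\beta\in\Gamma_F\setminus\Gamma_H$ realized by some $g\in F^>$, then for every $h\in H$ either $vh<\beta$ (so $v(g-h)=vh$) or $vh>\beta$ (so $v(g-h)=\beta$), whence $v(g-H)=\Gamma_H^{<\beta}\cup\{\beta\}$ has maximum $\beta\notin\Gamma_H$; the sequence $(h_\rho)$ you would extract then pseudoconverges to $0\in H$ and is not divergent, so the contradiction via Proposition~\ref{prop:smanpc} never materializes for this $g$. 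Your ``Finally'' paragraph then deduces $\Gamma_F=\Gamma_H$ from the cofinality claim, so the argument is circular.

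The paper isolates exactly this issue as its Claim~2, that $\Gamma_K=\Gamma$ for every Hardy field extension $K$ of $H$, and this step cannot be handled by pc-sequences alone: it requires the structure theory of $H$-couples over $\R$. Given $\beta\in\Gamma_K\setminus\Gamma$, the paper runs through the trichotomy of \cite[Proposition~4.1]{ADH3}. When $\big(\Gamma\langle\beta\rangle,\psi_\beta\big)$ is immediate over $(\Gamma,\psi)$ one can manufacture a divergent pc-sequence in $H$ (via the $g_\rho^\dagger$) whose width is not $\{\infty\}$, and then Corollary~\ref{apomega} with Proposition~\ref{propimmy2} yields the contradiction; Cases~(a) and~$(\mathrm c)_n$ reduce to this. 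But Case~(b) is fundamentally different and is disposed of by Corollary~\ref{cor:revb}, which rests on Proposition~\ref{revb} and the Case~(b) analysis of the present section, not on any pc-sequence argument. Once $\Gamma_F=\Gamma_H$ is secured, your Steps~2--3 are essentially the paper's Claim~1, and there Proposition~\ref{prop:smanpc} (or equivalently Corollary~\ref{apomega} together with Proposition~\ref{propimmy2}) is indeed the right tool.
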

 
\begin{proof}  We establish two claims:

\claim[1]{If $f\in \Cc^{<\infty}$ is $H$-hardian and $H\<f\>$ is an immediate extension of $H$, then~$H$ is dense in $H\<f\>$.} 

\noindent
To prove this, assume $f\in  \Cc^{<\infty}$ is $H$-hardian, $(f_{\rho})$ is a divergent pc-sequence in~$H$, and $f_{\rho}\leadsto f$. By  [ADH, 16.0.3, Section 11.4],  $(f_{\rho})$ is of $\d$-transcendental type over $H$. 
If the sequence $\big(v(f-f_{\rho})\big)$ is cofinal in $\Gamma:=v(H^\times)$, then~$(f_{\rho})$ 
is a cauchy sequence, and so $H$ is indeed dense in $H\<f\>$, by \cite[Corollary 4.1.6]{ADH6}. 
Suppose~$\big(v(f-f_{\rho})\big)$ is not cofinal in $\Gamma$. Then we have $h\in H^\times$ such that $0\notin v({hf-H})$, so 
Corollary~\ref{apomega} and Proposition~\ref{propimmy2}   yield an
$H$-hardian pseudolimit of $(hf_{\rho})$ in~$\Cc^{\omega}$, contradicting the maximality of $H$. This proves Claim~1.

\claim[2]{For any Hardy field extension $K$ of $H$ we have $\Gamma_K=\Gamma$.} 

\noindent
Towards a contradiction, suppose
$K$ is a Hardy field extension of $H$ and~${\beta\in \Gamma_K\setminus\Gamma}$. We arrange that $K$ is Liouville closed.
Let $(\Gamma,\psi)$ and $(\Gamma_K, \psi_K)$ be the $H$-couples of~$H$ and
$K$ over $\R$, respectively, and let~$\big(\Gamma\<\beta\>,\psi_{\beta}\big)$ be the $H$-couple over
$\R$ generated by $\beta$ over $(\Gamma,\psi)$ in $(\Gamma_K, \psi_K)$. There are several cases to consider, and we show that each is impossible. For {\it closed $H$-couples}\/ and {\it $H$-couples of Hahn type}\/ mentioned below, see~\cite[p.~536]{ADH3}.

First the case that $\big(\Gamma\<\beta\>,\psi_{\beta}\big)$ is an immediate extension of $(\Gamma,\psi)$. Then we have a 
divergent pc-sequence $(\gamma_{\rho})$ in $(\Gamma,\psi)$ with $\gamma_{\rho}\leadsto \beta$. As in the beginning of~\cite[Section~8]{ADHfgh} we take $g_{\rho}\in H$ with $vg_{\rho}=\gamma_{\rho}$ so that $(g_{\rho}^\dagger)$ is a pc-sequence in~$H$, and arguing as in~loc.~cit.~(using $H^\dagger=H$) we see that $(g_{\rho}^\dagger)$ has no pseudolimit in $H$ (because then
$(\gamma_{\rho})$ would have one in $\Gamma$). Now take $g\in K$ with $vg=\beta$. Then~$v(g^\dagger-g_{\rho}^\dagger)=(\beta-\gamma_{\rho})^\dagger$, and the latter is eventually strictly increasing as a function of 
$\rho$, and so $g_{\rho}^\dagger\leadsto g^\dagger$. Moreover, $\big(v(g^\dagger-g_{\rho}^\dagger)\big)$ is not cofinal in
$\Gamma$.  As at the end of the proof of Claim 1, with $g^\dagger$ and $(g_{\rho}^\dagger)$ in the role of $f$ and  $(f_{\rho})$, this contradicts the maximality assumption on $H$. 

Since the $H$-field $H$ is   Liouville closed with constant field $\R$, its $H$-couple $(\Gamma,\psi)$ over $\R$ is closed. 
Hence by \cite[Proposition~4.1]{ADH3} and the remark following its proof, the vector $\beta$ falls under Case (a), or Case (b), or Case
${\rm(c)}_n$ for a certain $n$. In Case~(a) we have~$(\Gamma+\R\beta)^\dagger=\Gamma^\dagger$ and so $\Gamma\<\beta\>=\Gamma+\R\beta$; but $(\Gamma_K, \psi_K)$ is of Hahn type, hence~$\big(\Gamma\<\beta\>,\psi_{\beta}\big)$ is an immediate extension of $(\Gamma,\psi)$, and we have just excluded that possibility. Case ${\rm(c)}_n$ gives an element $\beta_n\in \Gamma\<\beta\>$ with
$\beta_n^\dagger\notin \Gamma$ and~$\beta_n^\dagger$ falling under Case~(a), and so this is also impossible.  

Finally, suppose $\beta$ falls under Case (b). Take $y\in K^{>}$ with $vy=\beta$. Then  Corollary~\ref{cor:revb}  gives an $H$-hardian $z\in \Cc^{\omega}$ with $vz$ realizing the same cut in $\Gamma$ as $\beta$,  contradicting the maximality assumption on $H$. This finishes the proof of Claim 2. 

\medskip\noindent
To finish the proof of the theorem, let $f\in \Cc^{<\infty}$ be $H$-hardian; it suffices to show that then $H$ is dense in $H\<f\>$. 
Now by Claim 2, $H\<f\>$ is an immediate extension of $H$, and hence $H$ is indeed dense in $H\<f\>$ by Claim 1. 
\end{proof}

\begin{question}
Is every maximal Hardy field dense in every Hausdorff field extension?
\end{question}

\subsection*{Dense pairs of closed $H$-fields} Let~$\mathcal L=\{0,1,{-},{+},{\,\cdot\,},{\der},{\leq},{\preceq}\}$ be the language of ordered valued differential rings; cf.~[ADH, p.~678]. We view 
each ordered valued differential field as an $\mathcal L$-structure in the natural way.  
We let
$\mathcal L^2$ extend $\mathcal L$ by a new unary predicate symbol~$U$. 
The $\mathcal L^2$-structures
 are presented as pairs~$(K,F)$ where~$K$ is an $\mathcal L$-structure and $U$ names the subset $F$ of $K$. 
 Let $T$ be the $\mathcal L$-theory of closed $H$-fields with small derivation. 
 Recall from [ADH]  that $T$ is complete and model-complete. Here we announce: 
 
\begin{theorem}\label{densepairs} 
The following requirements on  $\mathcal L^2$-structures $(K,F)$ 
axiomatize a complete $\mathcal L^2$-theory $T^{\operatorname{d}}$: 
\begin{enumerate}
\item[\textup{(1)}] $K\models T$, that is, $K$ is a closed $H$-field with small derivation; 
\item[\textup{(2)}] $F$ is the underlying set of a closed $H$-subfield of $K$; and
\item[\textup{(3)}] $F\neq K$ and $F$ is dense in the ordered field $K$.
\end{enumerate}
Moreover, each $\mathcal L^2$-formula $\varphi(x)$ where $x=(x_1,\dots,x_m)$ is $T^{\operatorname{d}}$-equivalent
to a boolean combination of formulas of the form
\begin{equation}\label{eq:special fm}
\exists y_1\cdots\exists y_n\big( U(y_1)\ \&\ \cdots\ \&\ U(y_n)\ \&\ \psi(x,y)\big)
\end{equation}
where $\psi(x,y)$ with $y=(y_1,\dots,y_n)$ is an $\mathcal L$-formula.
\end{theorem}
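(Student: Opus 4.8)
The plan is to prove the displayed normal-form statement (relative quantifier elimination down to special formulas) first, and to read off completeness from it. Write $\Delta$ for the set of special formulas \eqref{eq:special fm}; it contains every $\mathcal L$-formula (take $n=0$), is closed under conjunction, and is closed under prefixing further $U$-existential quantifiers. Two preliminary facts will be used throughout. Since $T$ is model complete and, by (2), $F$ is a model of $T$ (small derivation is a universal property, hence inherited) contained in $K\models T$, we have $F\preceq K$ as $\mathcal L$-structures; in particular $\operatorname{dcl}^K(F)=F$, so every $a\in K\setminus F$ lies outside $\operatorname{dcl}(F)$, and since $F$ is dense and proper, $K\setminus F=a+F$ is dense as well, i.e. $F$ is both dense and codense. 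Completeness is then cheap: a special sentence $\exists\bar y\,\big(U(\bar y)\,\&\,\psi(\bar y)\big)$ holds in $(K,F)$ iff $F\models\exists\bar y\,\psi$, which is decided by the complete theory $T$; so all models of $T^{\operatorname d}$ agree on $\Delta$-sentences, and the normal-form statement upgrades this to agreement on all sentences.

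Before the main argument I would extract the real content of axiom (3). Because all models have $\Gamma\neq\{0\}$ (a consequence of completeness, as $\T$ witnesses it), $K$ has nonzero infinitesimals, and density lets one approximate any $a\preceq 1$ by an $f\in F$ with $a-f\prec 1$ and any $a\succ 0$ by an $f\in F$ with $f\asymp a$; hence $\res(F)=\res(K)$ and $\Gamma_F=\Gamma_K$. Since in an $H$-field the constants map isomorphically onto the residue field, this forces $C_F=C_K$. Thus a dense pair shares value group, residue field and constant field; the small field $F$ differs from $K$ only by failing to realize certain cuts, and every element of $K\setminus F$ realizes such a cut that is approached both by $F$ and by $K\setminus F$.

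I would then prove the normal form by the standard embedding test against a saturated model. It suffices to show: if $(K_1,F_1),(K_2,F_2)\models T^{\operatorname d}$ with $(K_2,F_2)$ being $|K_1|^+$-saturated, and $\iota\colon E\to K_2$ is an $\mathcal L$-embedding of an $\mathcal L$-substructure $E\subseteq K_1$ that respects $U$ (so $e\in F_1\Leftrightarrow\iota(e)\in F_2$) and preserves all special formulas with parameters from $E$, then $\iota$ extends to such a map on the substructure generated by $E\cup\{c\}$, for each $c\in K_1$. Running this together with the symmetric direction produces a back-and-forth system, whence $\iota$ is $\mathcal L^2$-elementary and $\Delta$ separates types. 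The extension step divides into cases. If $c\in F_1$, saturation of $F_2$ (inherited from that of the pair, using $F_2\preceq K_2$) realizes the $\mathcal L$-type of $c$ over $\iota(E\cap F_1)$ by some $c'\in F_2$. If $c\in\operatorname{dcl}^{K_1}(E\cup F_1)\setminus F_1$, one first adjoins the finitely many $F_1$-parameters defining it. The essential case is $c\notin\operatorname{dcl}^{K_1}(E\cup F_1)$: here $c$ is not a constant (the constants lie in $C_{K_1}=C_{F_1}\subseteq F_1$), it fills a cut, and by the previous paragraph that cut is approached from within $K_2\setminus F_2$; so the $\mathcal L$-type of $c$ over $\iota(E)\cup F_2$ together with $\neg U(x)$ is finitely satisfiable, and saturation yields a realization $c'\in K_2\setminus F_2$.

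The crux, and the step I expect to resist routine treatment, is checking that such a generic extension preserves special formulas, equivalently that it introduces no new points of the small field: every element of $F_1$ lying in $\operatorname{dcl}^{K_1}(E\cup F_1\cup\{c\})$ must already lie in $\operatorname{dcl}^{K_1}(E\cup F_1)=F_1$. This is a smallness lemma, and it is exactly where the dimension and independence theory of $T$ developed in [ADH] is needed: the genericity of $c$ together with the exchange property of $\d$-transcendence prevents a fresh $F$-point from being defined over $c$, so the ranges $F_2$ of the $U$-existential quantifiers are unchanged and the special formulas satisfied by the extended tuple are governed entirely by the $\mathcal L$-data over $F_2$ that $\iota$ and $F_2\preceq K_2$ already preserve. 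The analogue of this lemma is the heart of van den Dries's treatment of dense pairs of o-minimal structures; the added difficulty here is that $T$ is not o-minimal---infinite definable sets such as $C$ need not have interior---so the realization of generic types outside $F_2$ must be argued from the shared value group, residue field and constant field rather than from o-minimality, and the smallness lemma must be grounded in the valued-differential dimension theory rather than in a cell decomposition.
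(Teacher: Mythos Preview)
The paper does not actually prove this theorem: immediately after the statement it says ``This follows from Fornasiero's [Theorems~8.3 and~8.5], with details of how it follows to appear in~[ADHdim+].'' So there is nothing to compare against beyond the citation. Your sketch is essentially a direct, by-hand instantiation of the general dense-pairs machinery that Fornasiero abstracts; the paper invokes that machinery as a black box and defers the verification of its hypotheses (the existence of a well-behaved dimension/matroid on models of~$T$) to a companion paper. In that sense your route and the paper's route are the same argument viewed at two levels of abstraction, and both leave the same piece unproved here: the ``smallness lemma'' you isolate, i.e., that a point generic over~$F$ cannot define new $F$-points. That is precisely the content packaged by Fornasiero's hypotheses, and it is what [ADHdim+] is announced to supply.

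Two small corrections to your sketch. First, $K\setminus F$ is not equal to $a+F$ for $a\notin F$; you only have $a+F\subseteq K\setminus F$, which is already enough for codensity. Second, your back-and-forth step for generic~$c$ is phrased as realizing ``the $\mathcal L$-type of~$c$ over $\iota(E)\cup F_2$'', but $c$ lives in~$K_1$ and has no type over~$F_2$; what you need is to realize in~$K_2\setminus F_2$ the image under~$\iota$ of $\operatorname{tp}^{K_1}(c/E)$, and then argue separately that special-formula witnesses in~$F_1$ can be transported (which in practice means first enlarging~$E$ by the relevant $F_1$-parameters before adjoining~$c$). This reordering is standard in dense-pairs arguments, but as written your single-step extension does not yet preserve~$\Delta$.
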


\noindent
This follows from Fornasiero's \cite[Theorems~8.3 and 8.5]{Forn}, with  details of how it follows to appear in~\cite{ADHdim+}.
Note that by this theorem the $\mathcal L^2$-theory~$T^{\operatorname{d}}$ is decidable. Moreover,  no pair $(K,F)\models T^{\operatorname{d}}$ induces ``new   structure'' on~$F$:

\begin{cor}\label{cor:forn}
Let $(K,F)\models T^{\operatorname{d}}$, and
let $S\subseteq K^m$ be $A$-definable in $(K,F)$, where~$A\subseteq F$.
Then $S\cap F^m$ is $A$-definable in the $\mathcal{L}$-substructure $F$ of $K$.
\end{cor}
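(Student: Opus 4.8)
The plan is to reduce everything to the quantifier-simplification statement in Theorem~\ref{densepairs} together with the model-completeness of $T$. Write $S=\{x\in K^m:\ (K,F)\models\varphi(x,a)\}$ for some $\mathcal L^2$-formula $\varphi(x,z)$ and a parameter tuple $a$ from $A\subseteq F$. Applying the second part of Theorem~\ref{densepairs} to $\varphi(x,z)$ (treating the parameter slots $z$ as additional object variables), I would replace $\varphi$, modulo $T^{\operatorname{d}}$, by a boolean combination of formulas of the special form
$$\exists y_1\cdots\exists y_n\big(U(y_1)\ \&\ \cdots\ \&\ U(y_n)\ \&\ \psi(x,z,y)\big),$$
where each $\psi$ is an $\mathcal L$-formula and $y=(y_1,\dots,y_n)$.

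The key observation is that $F\preceq K$ as $\mathcal L$-structures. Indeed, by~(1) and~(2) both $F$ and $K$ are models of $T$, and $F$ is an $\mathcal L$-substructure of $K$; since $T$ is model-complete, this gives $F\preceq K$. I would then evaluate each special formula at a point $x\in F^m$ with the parameter tuple $a\in F^{\abs{z}}$. Because the existential quantifiers are relativized to $U$, that is, to $F$, the pair $(K,F)$ satisfies such a formula at $(x,a)$ iff there are $y\in F^n$ with $K\models\psi(x,a,y)$; and since $x$, $a$, $y$ all lie in $F$ and $F\preceq K$, this holds iff $F\models\psi(x,a,y)$ for some $y\in F^n$, i.e.\ iff $F\models\exists y\,\psi(x,a,y)$.

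Thus, upon restricting to $F^m$, each special formula defines in $F$ the same set as the genuine $\mathcal L$-formula $\exists y\,\psi(x,a,y)$ (the predicates $U$ being automatically true on $F$). Since $S\cap F^m$ is cut out by a boolean combination of these special formulas, it is cut out over $F$ by the corresponding boolean combination of the $\mathcal L$-formulas $\exists y\,\psi(x,a,y)$; as the parameters $a$ lie in $A$, this exhibits $S\cap F^m$ as $A$-definable in the $\mathcal L$-substructure $F$ of $K$, as desired.

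I expect the only delicate point to be the bookkeeping around parameters and the relativization of the quantifiers: confirming that ``$\exists y_i$ with $U(y_i)$'' in the pair means exactly ``$\exists y_i\in F$'', and that model-completeness of $T$ upgrades the substructure $F\subseteq K$ to an elementary substructure $F\preceq K$. Everything else is formal manipulation of the normal form furnished by Theorem~\ref{densepairs}, so the genuine content of the corollary is already contained in that theorem.
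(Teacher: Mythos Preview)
Your argument is correct and follows essentially the same route as the paper's proof: reduce via Theorem~\ref{densepairs} to the special formulas \eqref{eq:special fm}, and observe that for $x\in F^m$ such a formula is equivalent to $\exists y\,\psi(x,a,y)$ evaluated in $F$. You make explicit the step $F\preceq K$ (via model-completeness of $T$) that the paper's terse proof leaves implicit; this is exactly what is needed to pass from $K\models\psi(x,a,y)$ to $F\models\psi(x,a,y)$ when all arguments lie in $F$.
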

\begin{proof}
By the theorem this reduces to the case where $S$ is defined in $(K,F)$
by a formula 
as in \eqref{eq:special fm} where however $\psi(x,y)$ is now an $\mathcal L_A$-formula.
Then $S\cap F^m$ is defined in $F$ by the $\mathcal L_A$-formula $\exists y\psi(x,y)$.
\end{proof}

\noindent
Note that if $M$ is a maximal analytic or maximal smooth Hardy field and $N$ a maximal Hardy field with $M\subseteq N$, $M\neq N$, then $(N,M)\models T^{\operatorname{d}}$ by Theorem~\ref{thm:dense}. But strictly speaking, we do not know whether there exist such $M$, $N$.

To secure a model of the complete theory $T^{\operatorname{d}}$ we proceed as follows. Let $F$ be an $H$-field. Then the completion $F^{\operatorname{c}}$  of the ordered valued differential field~$F$ 
 is an $H$-field extension of $F$, and
$F$ is dense in~$F^{\operatorname{c}}$; see [ADH, 10.5.9]. 
If $F$ is  closed and of countable cofinality, then  $F^{\operatorname{c}}$ is closed, by~[ADH, 14.1.6], so
 if in addition~$F$  has small derivation and~$F\neq F^{\operatorname{c}}$, then~$(F^{\operatorname{c}},F)\models T^{\operatorname{d}}$.
Now $\T$ is not complete: set~$\ex_0=x$ and $\ex_{i+1}=\exp\ex_i$ for all~$i$; then $\big(\sum_{i=0}^n 1/{\ex_i}\big)_{n=0}^\infty$ is a cauchy sequence in~$\T$ but has no limit in $\T$. 
Therefore $(\T^{\operatorname{c}},\T)\models T^{\operatorname{d}}$.

\section{Analytic Hardy Fields of Countable Cofinality}\label{sec:ctbl cf}

\noindent
Generalizing terminology introduced in \cite[Section~8]{ADHfgh},  call a valued abelian group
{\bf countably spherically complete} if every pc-sequence in it of length~$\omega$ pseudoconverges in it. 
Any $\eta_1$-ordered abelian group with a convex valuation is countably spherically complete, by [ADH, 2.4.2].
Thus maximal analytic  and maximal smooth Hardy fields are countably spherically complete.
In the  first subsection we use this fact to realize the completion of an analytic
Hardy field of countable cofinality as an analytic Hardy field: Corollary~\ref{cor:completion Hardy field}.
Another main result of this section is a realization of the $H$-field $\T_{\log}$ of {\it logarithmic transseries}\/
from [ADH, Appendix~A] as an analytic Hardy field. This is obtained in Corollary~\ref{cor:Tlog}, preceded by some
observations on short ordered sets.

\subsection*{Completing analytic Hardy fields of countable cofinality} 
Lemma~\ref{lem:Kc->M} below concerns $H$-asymptotic fields, and we recall from 
  [ADH, Ch 9]  the definition: an {\it asymptotic field}\/ is a valued differential field $K$ such that for all $f,g\in K^\times$ with~$f,g \prec~1$ we have: ${f\prec g \Leftrightarrow f'\prec g'}$;
 an {\it $H$-asymptotic field}\/ is an asymptotic field $K$ such that for all $f,g\in K^\times$ with $f,g \prec~1$ we have: $f\prec g \Rightarrow f^\dagger\succeq g^\dagger$.
Every pre-$H$-field is an $H$-asymptotic field, by [ADH, 9.1, 10.5]. We shall also mention certain properties
an $H$-asymptotic field may have: being, respectively, {\it $\upl$-free}\/,
{\it $\upo$-free}\/, {\it newtonian}\/,   {\it asymptotically $\d$-algebraically maximal.}\/  For these, see Sections~11.6--11.7  and Chapter~14 of~[ADH], or
the summary in the introduction of~\cite{ADH6}.  For  $H$-fields, being Liouville closed, $\upo$-free, and newtonian is equivalent to being closed. 

Let now $K$ be an asymptotic field. Equip  the completion $K^{\operatorname{c}}$ of the valued field~$K$ with the unique extension of the derivation of $K$
to a continuous derivation on~$K^{\operatorname{c}}$; cf.~[ADH, 4.4.11, 9.1.5].
Then~$K^{\operatorname{c}}$ is asymptotic by [ADH,  9.1.6], and if $K$ is a pre-$H$-field ($H$-field, respectively), then so is~$K^{\operatorname{c}}$ by [ADH, 10.5.9].
Let $L$ be an asymptotic field  extension of $K$  such that
$\Gamma$ is cofinal in $\Gamma_L$. By [ADH, 3.2.20], the natural inclusion~${K\to L}$ extends uniquely to an embedding $K^{\operatorname{c}}\to L^{\operatorname{c}}$
of valued fields, and it is easily checked that this  is an embedding of valued {\it differential}\/ fields.
If $K$ is dense in $L$, then there is a unique valued field embedding $L\to K^{\operatorname{c}}$
over $K$, by [ADH, 3.2.13], and this is also an embedding of  valued  differential  fields.

Whenever in sections 5,6,7 we are given valued differential fields $K$ and $L$ (for example, asymptotic fields), an
{\em embedding $K\to L$} means: an embedding of valued differential fields. If in addition $K$ and $L$ are given as pre-$H$-fields (for example, Hardy fields) such an embedding should also preserve the ordering, that is, be an embedding of ordered valued differential fields.

\begin{lemma}\label{lem:Kc->M}
Let $K$ be an $\upo$-free  $H$-asymptotic field whose value group  $\Gamma_K$
has countable cofinality. Let $M$ be a newtonian  $H$-asymptotic field with asymptotic integration, and suppose $M$ is countably spherically complete. Then  any embedding~$K\to M$ extends to an embedding $K^{\operatorname{c}}\to M$.
\end{lemma}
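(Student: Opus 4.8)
The plan is to extend $\iota\colon K\to M$ one Cauchy limit at a time and invoke Zorn's Lemma. First I would record the basic features of the completion: $K^{\operatorname{c}}$ is an immediate $H$-asymptotic extension of $K$ (so $\Gamma_{K^{\operatorname{c}}}=\Gamma_K$), $K$ is dense in $K^{\operatorname{c}}$, and since $\cf(\Gamma_K)=\omega$ every divergent c-sequence can be replaced by one of length $\omega$. Consider the embeddings $\iota'\colon K'\to M$ of valued differential fields extending $\iota$, where $K'$ ranges over valued differential subfields of $K^{\operatorname{c}}$ containing $K$, partially ordered by extension. Chains have upper bounds (take unions), so Zorn's Lemma gives a maximal such $\iota'\colon K'\to M$. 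Since $K\subseteq K'\subseteq K^{\operatorname{c}}$, the field $K'$ is dense in $K^{\operatorname{c}}$ and immediate over $K$; in particular $\Gamma_{K'}=\Gamma_K$ and $K'$ is $\upo$-free, the latter because $\upo$-freeness passes to immediate asymptotic extensions [ADH, 11.7] (this is also how one sees $K^{\operatorname{c}}$ is $\upo$-free). It then suffices to show $K'=K^{\operatorname{c}}$, for which, by density and uniqueness of limits in $K^{\operatorname{c}}$, it is enough that $K'$ be complete; so I would derive a contradiction from $K'\neq K^{\operatorname{c}}$.

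Suppose then $a\in K^{\operatorname{c}}\setminus K'$. Density together with $\cf(\Gamma_{K'})=\omega$ yields a c-sequence $(a_\rho)$ in $K'$ of length $\omega$ with $a_\rho\leadsto a$ and no pseudolimit in $K'$ (any pseudolimit of a c-sequence is a limit, and the limit $a$ lies outside $K'$); in particular $(a_\rho)$ is a divergent pc-sequence over $K'$. Applying $\iota'$, the image $(\iota'(a_\rho))$ is a pc-sequence of length $\omega$ in $M$, so by countable spherical completeness it has a pseudolimit $b\in M$. I would now split on the type of $(a_\rho)$ over $K'$. If $(a_\rho)$ is of $\d$-transcendental type over $K'$, then $a$ is $\d$-transcendental over $K'$, the extension $K'\langle a\rangle$ is immediate, and by the universality of such pc-sequences [ADH, 11.4.7, 11.4.13] the assignment $a\mapsto b$ extends $\iota'$ to a valued differential field embedding $K'\langle a\rangle\to M$ (here \emph{any} pseudolimit $b$ works). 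This contradicts the maximality of $\iota'$.

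The main obstacle is the remaining case, where $(a_\rho)$ is of $\d$-algebraic type over $K'$: now not every pseudolimit of $(\iota'(a_\rho))$ generates an extension isomorphic to $K'\langle a\rangle$, and the point is to choose $b$ correctly. Here I would take a minimal differential polynomial $P$ of $(a_\rho)$ over $K'$ and exploit that $K'$ is $\upo$-free while $M$ is newtonian with asymptotic integration: the newtonian realization machinery of [ADH, Ch.~14] (together with the embedding theorems in [ADH, Ch.~16], see also [ADH, 16.0.3]) then produces a pseudolimit $b\in M$ of $(\iota'(a_\rho))$ with the correct minimal differential polynomial, and hence a valued differential field embedding $K'\langle a\rangle\to M$ over $\iota'$ sending $a$ to $b$. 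This again contradicts maximality, so $K'=K^{\operatorname{c}}$ and $\iota'$ is the required extension. The two delicate points to pin down are the preservation of $\upo$-freeness from $K$ to $K'$ and the exact form of the newtonian realization in the $\d$-algebraic case; the rest is routine bookkeeping with immediate extensions and completions.
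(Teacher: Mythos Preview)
Your Zorn setup and the $\d$-transcendental case are correct, and you rightly isolate the $\d$-algebraic case as the crux. But that case is not actually dispatched: the references to ``the newtonian realization machinery of [ADH, Ch.~14]'' and ``[ADH, 16.0.3]'' are placeholders rather than applicable results. In particular, [ADH, 16.0.3] concerns closed $H$-fields (ordered, Liouville closed) and says nothing about the present unordered $H$-asymptotic setting, and nothing you cite produces a pseudolimit $b\in M$ of $(\iota'(a_\rho))$ with $\iota'(P)(b)=0$ together with an isomorphism $K'\langle a\rangle\cong \iota'(K')\langle b\rangle$ over $\iota'$. Making this precise forces a passage to a newtonization of $K'$, but your Zorn order constrains $K'\subseteq K^{\operatorname{c}}$, and you have not shown $K^{\operatorname{c}}$ is newtonian, so you cannot place that newtonization where you need it.

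The paper eliminates the $\d$-algebraic case by a preliminary reduction. Before iterating, it replaces $K$ first by $\operatorname{dv}(K)$ (which embeds into $M$ since $M$ is $\d$-valued by [ADH, 14.2.5]) and then by an immediate asymptotically $\d$-algebraically maximal $\d$-algebraic extension, which is $\upo$-free and newtonian and hence a newtonization of $K$ by \cite[Theorem~3.5]{Nigel19}, so embeds into $M$. With $K$ now newtonian it is asymptotically $\d$-algebraically maximal by \cite[Theorem~A]{Nigel19}, so every divergent pc-sequence in $K$ is of $\d$-transcendental type over $K$ by [ADH, 11.4.8, 11.4.13]; moreover $K^{\operatorname{c}}$ is newtonian by [ADH, 14.1.5], so at each step one can re-newtonize $K\langle f\rangle$ inside $K^{\operatorname{c}}$ (via \cite[Theorem~B]{Nigel19}) and continue. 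Thus only your easy case ever arises. This initial reduction to a newtonian base is the idea your sketch is missing.
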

\begin{proof}
Let $\iota\colon K\to M$ be an embedding; we need to extend $\iota$ to an embedding~${K^{\operatorname{c}}\to M}$.
The $\d$-valued hull  $L:=\operatorname{dv}(K)$ of $K$  is $\upo$-free by [ADH, remark after 13.6.1], and $\Gamma_L=\Gamma_K$
by [ADH, 10.3.2(i)]. By [ADH, 14.2.5], $M$ is $\d$-valued; let
$\iota_L$ be the extension of $\iota$ to an embedding $L\to M$.
Using a remark before the lemma we see that it is enough to show that 
$\iota_L$ extends to an embedding $L^{\operatorname{c}}\to M$.
Hence
replacing $K$, $\iota$ by $L$, $\iota_L$,  we arrange   $K$ is $\d$-valued. Take an immediate asymptotically $\d$-algebraically maximal $\d$-algebraic extension $L$ of $K$; by a remark following the statement of
[ADH, Theorem 14.0.1] such $L$ exists and is $\upo$-free and newtonian.  Then by~\cite[Theorem~3.5]{Nigel19}, $L$ is a newtonization of $K$ (as defined on~[ADH, p.~643]), so embeds into~$M$ over $K$. Passing to this newtonization 
we  arrange that $K$ is 
newtonian.
Then $K^{\operatorname{c}}$ is 
$\upo$-free by [ADH, 11.7.20] and 
newtonian by [ADH, 14.1.5]. 


Suppose $f\in K^{\operatorname{c}}\setminus K$. It suffices to show that then $\iota$ extends to an embedding~$\iota_f\colon K\<f\>\to M$. Here is why:  $K\<f\>$ is $\upo$-free by the remark before~[ADH, 11.7.20], so $K\<f\>$ has a newtonization $E$ in $K^{\operatorname{c}}$ by \cite[Theorem~B]{Nigel19}; by the same remark $E$ is $\upo$-free; moreover, $\iota_f$ extends to an embedding~$E\to M$.  
Hence we can transfinitely iterate this extension process to obtain an embedding~$K^{\operatorname{c}}\to M$ extending $\iota$.

To construct $\iota_f$, 
take a c-sequence~$(f_\rho)$ in~$K$ with $f_\rho\to f$ (in $K^{\operatorname{c}}$).
By [ADH, 2.2.25] the index set of $(f_\rho)$ has cofinality~$\omega$, so 
by passing to a cofinal subsequence  we arrange~$(f_\rho)$ is a divergent pc-sequence in $K$ of length $\omega$ and width $\{\infty\}$ such that~$f_\rho\leadsto f$. Take $g\in M$ such that $\iota(f_\rho)\leadsto g$.  
Now $K$ is asymptotically $\d$-algebraically maximal by \cite[Theorem~A]{Nigel19}, so 
$(f_\rho)$ is of $\d$-transcendental type over~$K$ by~[ADH, 11.4.8, 11.4.13], hence
[ADH, 11.4.7] yields  an embedding~$K\langle f\rangle\to M$ extending $\iota$ and sending
$f$ to $g$. 
\end{proof}


\noindent
Lemma~\ref{lem:Kc->M} yields a   pre-$H$-field version  of it  without the $\upo$-free hypothesis
on $K$:

\begin{prop}\label{prop:completion Hardy field}
Let $K$ be a pre-$H$-field with $\operatorname{cf}(\Gamma_K)=\omega$ and $M$ a  countably spherically complete closed $H$-field.
Then every embedding $K\to M$ extends to  an embedding~$K^{\operatorname{c}}\to M$. 
\end{prop}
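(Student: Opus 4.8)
The plan is to reduce the statement to Lemma~\ref{lem:Kc->M} by replacing $K$ with a suitable $\upo$-free extension realized inside $M$. First I would check that $M$ satisfies the hypotheses imposed on it in Lemma~\ref{lem:Kc->M}: being a closed $H$-field, $M$ is Liouville closed, $\upo$-free, and newtonian, hence a newtonian $H$-asymptotic field with asymptotic integration; and $M$ is countably spherically complete by assumption. Thus the only gap between the proposition and the lemma is that the given pre-$H$-field $K$ need not be $\upo$-free.

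The main step is therefore to produce an $\upo$-free pre-$H$-field $K^*$ with $K\subseteq K^*$ such that $\Gamma_K$ is cofinal in $\Gamma_{K^*}$ and the given embedding $\iota\colon K\to M$ extends to an embedding $K^*\to M$. I would build $K^*$ directly inside $M$: using that $M$ is $\upo$-free, hence has asymptotic integration and is $\upl$-free, I adjoin to $\iota(K)$, following the constructions of [ADH, Ch.~11] (first forcing asymptotic integration, then $\upl$-freeness, then $\upo$-freeness), the requisite witnesses, all chosen within $M$. The key point is cofinality bookkeeping: every value-group element introduced in these steps lies below $(\Gamma^>)'$ or in the bounded range swept out by the $\upl$- and $\upo$-sequences, and none of them is cofinal; consequently $\Gamma_K$ remains cofinal in $\Gamma_{K^*}$, and in particular $\operatorname{cf}(\Gamma_{K^*})=\operatorname{cf}(\Gamma_K)=\omega$. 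Since $K^*$ is a subfield of $M$, the inclusion $K^*\hookrightarrow M$ is the desired extension of $\iota$.

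With $K^*$ in hand the deduction is immediate. Because $\Gamma_K$ is cofinal in $\Gamma_{K^*}$, the remark preceding Lemma~\ref{lem:Kc->M} (via [ADH, 3.2.20]) yields a unique embedding $K^{\operatorname{c}}\to (K^*)^{\operatorname{c}}$ over $K$. Now $K^*$ is an $\upo$-free $H$-asymptotic field with $\operatorname{cf}(\Gamma_{K^*})=\omega$, so Lemma~\ref{lem:Kc->M} applied to the embedding $K^*\to M$ provides an embedding $(K^*)^{\operatorname{c}}\to M$ extending it. Composing $K^{\operatorname{c}}\to(K^*)^{\operatorname{c}}\to M$ then gives an embedding that restricts to $\iota$ on $K$, as required.

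The hard part is the middle paragraph: securing an $\upo$-free extension $K^*$ of $K$ inside $M$ extending $\iota$ while keeping $\Gamma_K$ cofinal in $\Gamma_{K^*}$. The cofinality control is the delicate issue, since forcing asymptotic integration for a grounded $K$ genuinely enlarges the value group (for instance by adjoining iterated logarithms); the saving fact is that all the newly added values sit strictly below the cofinal part of $\Gamma_K$, so cofinality is unaffected. I would cite the relevant existence results and their value-group estimates from [ADH, 11.5--11.7], together with the $\upo$-freeness of $M$, to guarantee both that the needed witnesses are available in $M$ and that the enlargement of the value group is non-cofinal.
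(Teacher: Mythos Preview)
Your outline is correct and matches the paper's strategy: reduce to Lemma~\ref{lem:Kc->M} by first passing to an $\upo$-free extension $K^*$ of $K$ with $\Gamma_K$ cofinal in $\Gamma_{K^*}$, then compose $K^{\operatorname{c}}\hookrightarrow (K^*)^{\operatorname{c}}\to M$. The paper packages the middle step as a separate result, Proposition~\ref{1641}, a ``cofinality'' refinement of [ADH,~16.4.1] stated in the $\HLO$-framework: given a pre-$\HLO$-field $\mathbf K$ with $\Gamma_{\mathbf K}\neq\{0\}$, there is an $\upo$-free $\HLO$-field extension $\mathbf K^*$ with $\mathbf K$ cofinal in $\mathbf K^*$ and a universal embedding property into any Schwarz closed $\HLO$-field. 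One expands $M$ to a $\HLO$-field $\mathbf M$, expands $K$ compatibly, and applies this with $\mathbf L=\mathbf M$.

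Your variant---building $K^*$ directly inside $M$ rather than abstractly---is a legitimate simplification: working inside $M$ resolves the $\HLO$-cut ambiguities automatically (e.g., which side of a gap to integrate to, whether to adjoin a $\upg$ or a $\upl$), so you avoid the $\HLO$-bookkeeping and need no universal property. What you do \emph{not} avoid is the cofinality bookkeeping, and here your sketch is too optimistic about what [ADH,~11.5--11.7] provides. Those sections describe the extensions and their value groups but do not state the cofinality conclusions you need; the paper establishes these case by case (grounded, gap, asymptotic integration without $\upl$-freeness, $\upl$-free without $\upo$-freeness, $\upo$-free) in the lemmas proving Proposition~\ref{1641}, invoking Lemmas~\ref{lem:cof 1}, \ref{lem:H(x)}, and~\ref{lem:10.4.5} along the way. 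Your intuition that the new valuations are ``non-cofinal'' is right, but turning it into a proof requires essentially the same case analysis.
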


\noindent
Before we begin the proof, from [ADH, 16.3.21] we recall that a {\it pre-$\HLO$-field}\/ $\mathbf K= (K, I, \Lambda, \Omega)$  is
a pre-$H$-field $K$ equipped with a {\it $\HLO$-cut}\/ $(I, \Lambda, \Omega)$ of $K$ as defined on~[ADH, p.~691]. A {\it $\HLO$-field}\/ is a pre-$\HLO$-field $\mathbf K = (K; \dots)$ where $K$ is an $H$-field. 
If $\mathbf M = (M; \dots)$ is a pre-$\HLO$-field and $K$
is a pre-$H$-subfield of $M$, then~$K$ has a unique expansion to a pre-$\HLO$-field~$\mathbf K$
such that $\mathbf K \subseteq \mathbf M$. 
Given a  pre-$\HLO$-field~$\mathbf K=(K,\dots)$, we denote the value group and residue field of $K$ by~$\Gamma_{\mathbf K}$,~$\res\mathbf K$, and
 $\mathbf K$  is said to have some given property   of pre-$H$-fields if its underlying pre-$H$-field $K$ does.
 Given pre-$\HLO$-fields $\boldsymbol K$ and $\boldsymbol L$, an {\em embedding\/}
$\boldsymbol K \to \boldsymbol L$ is an embedding in the usual model-theoretic sense.

To show Proposition~\ref{prop:completion Hardy field}, let $K$, $M$ be as in the proposition. We arrange that~$M$ extends $K$  and then have to find an 
embedding~$K^{\operatorname{c}}\to M$ over $K$.
Take any expansion $\mathbf M$ of $M$ to a $\HLO$-field
and expand $K$ to  a pre-$\HLO$-field~$\mathbf K$ such that~${\mathbf K\subseteq\mathbf M}$. Then the proposition below applied
 to $\mathbf M$ in place of $\mathbf L$ yields an $\upo$-free $H$-field extension $K^*$ of~$K$ such that~$K$ is cofinal in $K^*$ and an embedding $\iota^*\colon K^*\to M$ over $K$.
    Lemma~\ref{lem:Kc->M} gives an extension of $\iota^*$ to an embedding $(K^*)^{\operatorname{c}}\to M$,
  and by a remark before that lemma this yields an embedding $K^{\operatorname{c}}\to M$ as required.

\medskip
\noindent  
It remains to establish the following ``cofinality'' refinement of [ADH, 16.4.1]. Here we recall that a Liouville closed $H$-field $K$ is said to be {\it Schwarz closed}\/ if for all~$a\in K$ the linear differential operator $\der^2-a$ splits over the algebraic closure~$K[\imag]$ of~$K$,
and for all $a,b\in K$, if $a\leq b$, and $\der^2-a$ splits over $K$, then so does $\der^2-b$; cf.~[ADH, 5.2, 11.8].
Every closed $H$-field is Schwarz closed [ADH, 14.2.20].

\begin{prop}\label{1641}
Let $\mathbf K$ be a pre-$\HLO$-field with $\Gamma_{\mathbf K}\neq\{0\}$. Then there exists an $\upo$-free  $\HLO$-field extension $\mathbf K^*$
of $\mathbf K$ such that;
\begin{enumerate}
\item[(i)] $\res\mathbf K^*$ is algebraic over $\res\mathbf K$;
\item[(ii)] $\mathbf K$ is cofinal in $\mathbf K^*$; and
\item[(iii)] any embedding of $\mathbf K$ into a Schwarz closed $\HLO$-field $\mathbf L$ extends to
an embedding $\mathbf K^*\to\mathbf L$.
\end{enumerate}
\end{prop}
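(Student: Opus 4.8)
The plan is to follow the construction of $\mathbf K^*$ in the proof of [ADH, 16.4.1] essentially verbatim, and to superimpose on it a running verification that no extension step enlarges the value group cofinally. That construction already delivers an $\upo$-free $\HLO$-field extension $\mathbf K^*$ satisfying (i) and (iii); the only new content is (ii), so all of the extra work goes into cofinality bookkeeping.

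First I would reduce (ii) to a statement about value groups. Since $\Gamma_{\mathbf K}\ne\{0\}$ and $\mathbf K^*$ is again a pre-$H$-field, the increasing surjection $f\mapsto -vf\colon(\mathbf K^*)^{>}\to\Gamma_{\mathbf K^*}$ (as in the proof of Lemma~\ref{lem:cf(K)}, via [ADH, 2.1.4]) shows that $\mathbf K$ is cofinal in $\mathbf K^*$ if and only if $\Gamma_{\mathbf K}$ is cofinal in $\Gamma_{\mathbf K^*}$. Cofinality of value groups is transitive along a chain and is inherited at unions and limit stages, so it suffices to check it across each successor step of the tower producing $\mathbf K^*$. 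These steps are of just a few kinds, and each is covered by a result already at hand: passing to the $\d$-valued (equivalently, $H$-field) hull is Lemma~\ref{lem:cof 1}; an algebraic extension has torsion value-group extension, so $\Gamma$ stays cofinal; immediate extensions (pc-sequence completions, newtonizations, asymptotically $\d$-algebraically maximal extensions) leave $\Gamma$ unchanged; adjoining an integral $y$ with $y'=s\in K$ to achieve asymptotic integration is exactly Lemma~\ref{lem:H(x)}; and the $\upl$- and $\upo$-freeness steps, which adjoin an element $f$ with $f^\dagger=s$, are covered by Lemma~\ref{lem:10.4.5}.

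The main obstacle is the last kind of step, where one must confirm that the element adjoined to realize the $\Lambda$-cut and then the $\Omega$-cut genuinely falls under Lemma~\ref{lem:10.4.5}; that is, writing $f^\dagger=s$, one must verify that $S:=\{v(s-a^\dagger):a\in K^\times\}$ lies strictly below $(\Gamma^{>})'$ and has no largest element. The point is that the $\HLO$-cut $(I,\Lambda,\Omega)$ is, by its defining properties, positioned below the asymptotic-integration threshold: once asymptotic integration has been arranged (via the $I$-step), the $\Lambda$- and $\Omega$-data specify logarithmic-derivative cuts whose values accumulate from below toward a point $\le(\Gamma^{>})'$ with no maximum, which is precisely the hypothesis of Lemma~\ref{lem:10.4.5}. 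The degenerate configurations in which such a cut would acquire a maximum do not arise here, since they would either contradict $\upo$-freeness of the intended target or violate the $\HLO$-cut axioms. Granting this, every freeness step keeps $\Gamma$ cofinal, and assembling the chain yields $\Gamma_{\mathbf K}$ cofinal in $\Gamma_{\mathbf K^*}$, hence (ii); properties (i) and (iii) are imported unchanged from [ADH, 16.4.1].
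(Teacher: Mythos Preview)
Your overall strategy---follow the construction in [ADH, 16.4.1] step by step and verify that $\Gamma_{\mathbf K}$ stays cofinal in the value group at each stage---is exactly the paper's approach. The reduction of (ii) to value-group cofinality via Lemma~\ref{lem:cf(K)} is correct, and your identification of Lemmas~\ref{lem:cof 1}, \ref{lem:H(x)}, \ref{lem:10.4.5} as the relevant tools for the $H$-field hull, integration, and $\upl$-freeness steps matches the paper.

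There is, however, a genuine gap in your handling of the $\upo$-freeness step. When $K$ is $\upl$-free but not $\upo$-free and $\Omega=\omega(K)^\downarrow$ (Case~1 of [ADH, 16.4.6]), the extension adjoins an element $\upg$ satisfying $\sigma(\upg)=\upo$, where $\sigma$ is the second-order Schwarzian-type function from [ADH, 11.8]. This is \emph{not} of the form $f^\dagger=s$, so Lemma~\ref{lem:10.4.5} does not apply; your taxonomy of steps simply misses this kind of adjunction. The paper instead invokes the remarks before [ADH, 13.7.7], which give $[\Gamma]=\big[\Gamma_{K\langle\upg\rangle}\big]$ directly, and cofinality follows via Lemma~\ref{lem:cof oags, 1}. (In Case~2 one first adjoins $\upl$ with $\omega(\upl)=\upo$---also not $f^\dagger=s$---but this reduces to the non-$\upl$-free case already handled.)

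Two smaller points: the construction in [ADH, 16.4.1] involves no newtonizations or asymptotically $\d$-algebraically maximal extensions, so those items on your list are extraneous; and your justification that the hypotheses of Lemma~\ref{lem:10.4.5} hold in the $\upl$-step (``the degenerate configurations \dots\ do not arise'') is too vague---the paper appeals specifically to the argument in Case~2 of [ADH, 16.4.5] to verify that $S$ has no largest element and lies below $(\Gamma^{>})'$.
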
 

\noindent
We revisit the proof of [ADH, 16.4.1], which consists of several lemmas and a corollary.  
Recall: a differential field $F$ is said to be {\it closed under logarithms}\/ if for all $f\in F$ there is a~$y\in F^\times$ such that~$y^\dagger=f'$,
and $F$ is {\it closed under integration}\/ if for all~$g\in F$ there is a~$z\in F$ such that~$z'=g$.
Let $\mathbf K=(K,I,\Lambda,\Omega)$ be a pre-$\HLO$-field with~$\Gamma:=\Gamma_K\neq\{0\}$.

\begin{lemma}
Suppose $K$ is grounded, or there exists $b\asymp 1$ in $K$ such that $v(b')$ is a gap in $K$. Then $\mathbf K$
has an $\upo$-free $\HLO$-field extension $\mathbf K^*$ such that $\res\mathbf K=\res\mathbf K^*$, $\mathbf K$ is cofinal in $\mathbf K^*$, and any embedding
of $\mathbf K$ into a $\HLO$-field $\mathbf L$ closed under logarithms extends to an embedding $\mathbf K^*\to\mathbf L$.
\end{lemma}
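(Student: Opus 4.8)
The plan is to reprove the first lemma in the proof of [ADH, 16.4.1] while keeping track of the additional cofinality clause~(ii). I would construct $\mathbf K^*$ as the union of an $\omega$-tower of grounded $\HLO$-field extensions obtained by successively adjoining logarithms. The $\upo$-freeness of $\mathbf K^*$, the equality $\res\mathbf K=\res\mathbf K^*$, and the embedding property~(iii) then come out exactly as in [ADH], so the only genuinely new point is~(ii), and it is this that I would prove in detail.

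Concretely, in the grounded case set $K_0:=K$ and, given a grounded $K_n$ whose $\Psi$-set has maximum $v(a^\dagger)$ with $a\in K_n$, $a\succ 1$, adjoin a logarithm $\ell_{n+1}$ with $\ell_{n+1}'=a^\dagger\in K_n$. Since $\ell_{n+1}'$ already lies in $K_n$, the field $K_{n+1}:=K_n(\ell_{n+1})$ is closed under the derivation, is again grounded, and has $\max\Psi$ strictly larger than before (indeed $v(\ell_{n+1}^\dagger)=v(a^\dagger)-v(\ell_{n+1})>v(a^\dagger)$ because $\ell_{n+1}\succ 1$). In the gap case a single preliminary logarithmic adjunction, using the $b\asymp 1$ with $v(b')$ a gap to cross it, produces a grounded field and reduces matters to the previous situation. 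Putting $\mathbf K^*:=\bigcup_n\mathbf K_n$ with the induced $\HLO$-structure, the results of [ADH, Ch.~11] show that under exactly these hypotheses this logarithmic union is $\upl$-free, and in fact $\upo$-free; the residue field is unchanged because each $\ell_{n+1}$ is transcendental over $K_n$ with a fresh value; and any embedding of $\mathbf K$ into a log-closed $\HLO$-field $\mathbf L$ extends one step at a time, the image of $\ell_{n+1}$ being a logarithm of the image of $a$, which exists by log-closedness of $\mathbf L$ and is pinned down by the ordering.

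For the new clause~(ii) the key estimate is that a logarithm grows more slowly than its argument: for $a\succ 1$ in $K_n$ the adjoined element $\ell_{n+1}=\log a$ satisfies $\ell_{n+1}^d\prec a$ for every $d$. Hence any element of $K_{n+1}=K_n(\ell_{n+1})$, being dominated by $c\,\ell_{n+1}^d$ for some $c\in K_n$ and some $d$, is bounded in absolute value by an element of $K_n$; that is, $K_n$ is cofinal in $K_{n+1}$. Cofinality composes along the tower: given $y\in K^*$ we have $y\in K_n$ for some $n$, and descending through the finitely many cofinal inclusions $K_0\subseteq\cdots\subseteq K_n$ produces a $z\in K=K_0$ with $y\le z$. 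Thus $\mathbf K$ is cofinal in $\mathbf K^*$, which is precisely~(ii).

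The main obstacle will be organizational rather than computational: one must verify that the [ADH] construction achieving $\upo$-freeness can be arranged so that every intermediate field remains grounded with a \emph{cofinal} inclusion into the next, and in particular that the gap-crossing first step in the second case is itself cofinal. Granting the domination estimate $\ell_{n+1}^d\prec a$ for each adjoined logarithm, this is exactly the extra bookkeeping that upgrades the original lemma to the cofinality-preserving form stated here.
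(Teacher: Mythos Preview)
Your approach is essentially the same as the paper's: build $\mathbf K^*$ via the $\upo$-tower of [ADH, 11.7] and verify cofinality step by step. The paper, however, packages the cofinality argument more cleanly by invoking two lemmas proved earlier in this section: Lemma~\ref{lem:cof 1} (that $\Gamma$ is cofinal in $\Gamma_{H(K)}$) and Lemma~\ref{lem:H(x)} (that adjoining any antiderivative $y$ with $y'\in K$ preserves cofinality of value groups). Your direct domination estimate $\ell_{n+1}^d\prec a$ is exactly the special case of Lemma~\ref{lem:H(x)} with $s=a^\dagger$, so you are re-proving that lemma ad hoc; the paper's value-group route is more robust and sidesteps your informal ``any element of $K_n(\ell_{n+1})$ is dominated by $c\,\ell_{n+1}^d$'' step for rational functions.

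One point you do omit: the [ADH, 11.7] construction $F_\upo$ takes an $H$-field as input, so one must first pass from the pre-$H$-field $K$ to $F:=H(K)$. The paper checks cofinality for this preliminary step via Lemma~\ref{lem:cof 1}, and this passage also handles the ``$b\asymp 1$ with $v(b')$ a gap'' case uniformly (it is what makes $F$ grounded), rather than via your ad hoc preliminary adjunction. Your sketch of the gap case (``a single preliminary logarithmic adjunction \dots\ to cross it'') is too vague as stated and would need to be replaced by this $H$-field hull step.
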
 
\begin{proof}
By Lemma~\ref{lem:cof 1}, $K$ is cofinal in the $H$-field hull $F:=H(K)$ of $K$,
and hence by Lemma~\ref{lem:H(x)}, $K$ is also cofinal in the $H$-field extension $F_{\upo}$ of $F$ constructed in~[ADH, 11.7].
Thus the lemma follows from the proof of [ADH, 16.4.2].
\end{proof}

\begin{lemma}
Suppose $K$ has gap $\beta$ and $v(b')\neq\beta$ for all $b\asymp 1$ in $K$. Then there exists a grounded pre-$\HLO$-field
extension $\mathbf K_1$ of $\mathbf K$ such that $\res\mathbf K=\res\mathbf K^*$, $\mathbf K$ is cofinal in $\mathbf K_1$, and any embedding of $\mathbf K$ into a $\HLO$-field $\mathbf L$ closed under
integration extends to an embedding $\mathbf K_1\to\mathbf L$.
\end{lemma}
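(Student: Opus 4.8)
The plan is to ground $\mathbf K$ by adjoining an integral of an element of value $\beta$, exactly as in the construction underlying [ADH, 16.4.3], and to obtain the new cofinality statement from Lemma~\ref{lem:H(x)}. First I would fix $s\in K^\times$ with $vs=\beta$ and adjoin to $K$ an element $y$ with $y'=s$ in a pre-$\d$-valued extension of $H$-type; since $y'=s\in K$ this is the differential field $K_1:=K\langle y\rangle=K(y)$. The role of the hypothesis ``$v(b')\neq\beta$ for all $b\asymp 1$ in $K$'' is to place us in the gap case of this adjunction rather than in the situation of the preceding lemma: it guarantees that $s$ is not $\asymp$ the derivative of any unit, so that $y-a\nasymp 1$ for every $a\in K$. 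Granting this, the results on adjoining integrals [ADH, 10.2.1, 10.2.2] give that $y$ is transcendental over $K$, that $\Gamma_{K_1}=\Gamma\oplus\Z\alpha$ for a single new value $\alpha$ lying in a strictly smallest archimedean class with $\psi(\alpha)=\max\Psi_{K_1}$ a new largest $\psi$-value, and that $\res K_1=\res K$; in particular $K_1$ is grounded.

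Next I would read off the cofinality. This is immediate from Lemma~\ref{lem:H(x)} applied to $s$ and $y$: it yields that $\Gamma$ is cofinal in $\Gamma_{K_1}$, and indeed its proof treats precisely the case in which $\beta$ is a gap, producing the value $\alpha$ in a strictly smallest archimedean class. Hence $K$ is cofinal in $K_1$, just as in the passage used to prove the preceding lemma.

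It then remains to equip $K_1$ with its $\HLO$-structure and to verify the extension property, and this is where I expect the real work to lie. Because $\res K_1=\res K$ and $K$ is cofinal in $K_1$, the $\HLO$-cut $(I,\Lambda,\Omega)$ of $\mathbf K$ extends uniquely to a $\HLO$-cut on $K_1$, giving a grounded pre-$\HLO$-field $\mathbf K_1$ with $\res\mathbf K_1=\res\mathbf K$ and $\mathbf K$ cofinal in $\mathbf K_1$. For the universal property I would take an embedding $\mathbf K\to\mathbf L$ into a $\HLO$-field $\mathbf L$ closed under integration and send $y$ to some $\eta\in L$ with $\eta'$ equal to the image of $s$; the gap condition pins down the cut realized by $y$, forcing $y\mapsto\eta$ to respect the valuation, the ordering, and the $\HLO$-cut, and so to extend to an embedding $\mathbf K_1\to\mathbf L$. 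The main obstacle is exactly this last verification---choosing the integration constant in $\mathbf L$ so that $\eta$ realizes the same valuation-theoretic and $\HLO$-data as $y$, and checking that the result is a $\HLO$-field embedding---but all of it is carried out in the proof of [ADH, 16.4.3]. Thus the lemma will follow from that proof together with the cofinality supplied by Lemma~\ref{lem:H(x)}, which is the only ingredient not already present in [ADH].
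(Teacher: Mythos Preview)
Your overall strategy is exactly the paper's: follow the proof of [ADH, 16.4.3] and supply the new cofinality statement via Lemma~\ref{lem:H(x)}. But you have glossed over the one nontrivial point in that reference, and this creates a genuine gap.

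The proof of [ADH, 16.4.3] does not proceed by simply setting $K_1=K(y)$. It splits into two cases according to whether $s\in\I(K)$ or $s\notin\I(K)$, and in Case~1 ($s\notin\I(K)$) it first passes to the $H$-field hull and takes $K_1=H(K)(y)$, not $K(y)$. The $\HLO$-cut given to $K_1$, and the choice of integral $\eta$ in $\mathbf L$ realizing the correct cut, both depend on which case one is in; this is precisely why the $\I(K)$ dichotomy is there. Your assertion that the hypothesis ``$v(b')\neq\beta$ for all $b\asymp 1$'' forces $y-a\nasymp 1$ for every $a\in K$ is not correct and does not eliminate the case split: the hypothesis only rules out the situation of the \emph{preceding} lemma, not the distinction between the two cases inside [ADH, 16.4.3]. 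Likewise, your claim that the $\HLO$-cut extends uniquely ``because $\res K_1=\res K$ and $K$ is cofinal in $K_1$'' is not a valid criterion; uniqueness comes from groundedness, and compatibility with the given cut on $\mathbf K$ is exactly what the case analysis in [ADH, 16.4.3] establishes.

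The fix is straightforward: keep the case split. In Case~1 take $K_1=H(K)(y)$ and use $\Gamma_{H(K)}=\Gamma$ together with Lemma~\ref{lem:H(x)} (applied to $H(K)$) to get cofinality; in Case~2 take $K_1=K(y)$ and apply Lemma~\ref{lem:H(x)} directly. In both cases the $\HLO$-structure and the embedding property then come verbatim from [ADH, 16.4.3], as you intended. This is exactly what the paper does.
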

\begin{proof}
Take $s\in K$ such that $vs=\beta$.  
Recall from [ADH,~14.2] that $\I(K)$ denotes the
$\mathcal O$-submodule of $K$ generated by $\der\mathcal O$.
Following the proof of [ADH, 16.4.3],
suppose~${s\notin \I(K)}$, and take $K_1$ as in Case~1 of that proof, so 
$K_1=H(K)(y)$ where~${y'=s}$. Now use that $\Gamma_{H(K)}=\Gamma$, and that $\Gamma_{H(K)}$ is cofinal in $\Gamma_{H_1}$ by Lem\-ma~\ref{lem:H(x)}. 
If~${s\in \I(K)}$ and~$K_1$ is as in Case~2, then $K_1=K(y)$ where $y'=s$, so again  $K$ is cofinal in $K_1$ by
Lemma~\ref{lem:H(x)}.
\end{proof}

\noindent
These two lemmas yield a ``cofinality'' refinement of [ADH, 16.4.4]:

\begin{cor}\label{cor:16.4.4}
Suppose $K$ does not have asymptotic integration. Then $\mathbf K$ has
an $\upo$-free $\HLO$-field extension $\mathbf K^*$ such that 
$\res\mathbf K^* = \res\mathbf K$, $\mathbf K$ is cofinal in $\mathbf K^*$, and   any embedding of $\mathbf K$
into a $\HLO$-field $\mathbf L$ closed under integration extends to an embedding $\mathbf K^*\to\mathbf L$.
\end{cor}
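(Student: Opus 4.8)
The plan is to follow the case analysis in the proof of [ADH, 16.4.4], of which this corollary is the announced ``cofinality'' refinement, and to track the new cofinality clause through the construction. Since $K$ does not have asymptotic integration, it is grounded or has a gap (recall that $K$ has asymptotic integration iff it is neither grounded nor has a gap). Accordingly I split into two cases, matching the hypotheses of the two preceding lemmas. First, suppose $K$ is grounded, or there is some $b\asymp 1$ in $K$ with $v(b')$ a gap in $K$. Then the first of the two lemmas above applies directly and produces the desired $\mathbf K^*$: it is $\upo$-free, satisfies $\res\mathbf K^*=\res\mathbf K$, has $\mathbf K$ cofinal in it, and any embedding of $\mathbf K$ into a $\HLO$-field closed under logarithms extends to $\mathbf K^*$.

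In the remaining case $K$ has a gap $\beta$ with $v(b')\neq\beta$ for all $b\asymp 1$ in $K$. Here I would first invoke the second lemma to obtain a grounded pre-$\HLO$-field extension $\mathbf K_1$ of $\mathbf K$ with $\res\mathbf K_1=\res\mathbf K$, with $\mathbf K$ cofinal in $\mathbf K_1$, and into which embeddings of $\mathbf K$ into integration-closed $\HLO$-fields extend. Since $\mathbf K_1$ is grounded and $\Gamma_{\mathbf K_1}\supseteq\Gamma\neq\{0\}$, the first lemma now applies with $\mathbf K_1$ in place of $\mathbf K$, yielding an $\upo$-free $\HLO$-field extension $\mathbf K^*$ of $\mathbf K_1$ with $\res\mathbf K^*=\res\mathbf K_1=\res\mathbf K$ and $\mathbf K_1$ cofinal in $\mathbf K^*$. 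By transitivity of cofinality, $\mathbf K$ is then cofinal in $\mathbf K^*$, giving clause (ii), and composing the two extension properties handles clause (iii).

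The genuinely new content over [ADH, 16.4.4] is only the cofinality assertion (ii): it is already built into the two refined lemmas above (whose own proofs supply cofinality via Lemmas~\ref{lem:cof 1} and~\ref{lem:H(x)}) and propagates by transitivity of cofinality, while $\upo$-freeness, preservation of the residue field, and the extension of embeddings are exactly as in [ADH, 16.4.4]. I therefore expect the cofinality tracking itself to be routine. The one point to handle with care, inherited unchanged from the proof of [ADH, 16.4.4], is the bookkeeping of closure conditions: the grounding step (second lemma) needs the target closed under integration, whereas the grounded-to-$\upo$-free step (first lemma) needs it closed under logarithms. This reconciliation is precisely the subtle part, but since it is already carried out in [ADH, 16.4.4] and is unaffected by the cofinality refinement, I would simply cite it rather than redo it.
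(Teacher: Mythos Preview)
Your proposal is correct and matches the paper's approach: the paper simply states that ``these two lemmas yield a `cofinality' refinement of [ADH, 16.4.4]'' and gives no further proof, so your explicit case split with cofinality tracking is exactly the intended argument made explicit. Your acknowledgment that the closure-condition bookkeeping (integration versus logarithms) is inherited unchanged from [ADH, 16.4.4] is also appropriate.
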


\noindent
The next three lemmas are ``cofinality''  refinements of [ADH, 16.4.5, 16.4.6, 16.4.7] and take care of the case where $K$ has asymptotic integration.

\begin{lemma}\label{lem:16.4.5}
Assume $K$ has asymptotic integration and is not $\upl$-free. Then~$\mathbf K$
extends to an $\upo$-free $\HLO$-field $\mathbf K^*$ such that
$\res\mathbf K^* = (\res\mathbf K)^{\operatorname{rc}}$,  $\mathbf K$ is cofinal in~$\mathbf K^*$, and any embedding
of $\mathbf K$ into a Liouville closed $\HLO$-field $\mathbf L$ extends to an embedding~$\mathbf K^*\to\mathbf L$.
\end{lemma}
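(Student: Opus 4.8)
The plan is to follow the blueprint of [ADH, 16.4.5] while carefully tracking cofinality at each step. We are given that $K$ has asymptotic integration but is not $\upl$-free; the goal is to build an $\upo$-free $\HLO$-field extension $\mathbf K^*$ with residue field the real closure $(\res\mathbf K)^{\operatorname{rc}}$, with $\mathbf K$ cofinal in $\mathbf K^*$, and with the universal embedding property into Liouville closed $\HLO$-fields $\mathbf L$. Since $K$ is not $\upl$-free, by the $H$-field theory in [ADH, Ch.~11] there is a ``$\upl$-element'' witnessing the failure, i.e.\ an element $\upl$ in $K$ (or in a controlled extension) such that adjoining a suitable logarithmic datum removes the pseudo-Cauchy obstruction and produces an $\upo$-free extension.

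First I would carry out the basic extension exactly as in [ADH, 16.4.5]: adjoin the relevant element producing a Liouville-type extension $\mathbf K_1$ of $\mathbf K$ that is $\upl$-free with algebraic (in fact real-closed) residue field $(\res\mathbf K)^{\operatorname{rc}}$, then pass to its $\upo$-free hull to obtain $\mathbf K^*$. The embedding property (iii) is inherited from the corresponding universal properties in [ADH, 16.4.5] together with the assumption that $\mathbf L$ is Liouville closed, which is precisely what is needed to find the image of the adjoined element. So the only genuinely new content over [ADH, 16.4.5] is condition (ii), the cofinality of $\mathbf K$ in $\mathbf K^*$.

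The cofinality verification is the step I expect to be the main obstacle, and it is where Lemma~\ref{lem:H(x)} and Lemma~\ref{lem:cof 1} do the work. The extension $\mathbf K^*$ is built from $\mathbf K$ by a finite tower of moves: a real-closure on the residue level (which contributes no new value-group behavior of the kind that could spoil cofinality, since $[\Gamma]$ is unchanged), an adjunction of an integral or logarithmic element $y$ with $y'=s$ or $y^\dagger = s$ for suitable $s\in K$, and finally the passage to the $\upo$-free hull. For the integral/logarithmic adjunction I would invoke Lemma~\ref{lem:H(x)} (respectively Lemma~\ref{lem:10.4.5}) to conclude $\Gamma$ is cofinal in $\Gamma_{K(y)}$, exactly as in the proof of the preceding lemma in this section. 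For the $\upo$-free hull I would use that $K$ is cofinal in the extension $F_{\upo}$ of [ADH, 11.7] via Lemma~\ref{lem:H(x)}, as was already done above. Cofinality of value groups then transfers to cofinality of the fields by Lemma~\ref{lem:cf(K)}, since $\Gamma_{\mathbf K}\neq\{0\}$ persists through every step.

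Concretely, I would structure the proof as: reduce to $K$ $\d$-valued if needed; produce the $\upl$-free extension $\mathbf K_1$ following [ADH, 16.4.5] verbatim, recording at each adjunction that the relevant element satisfies an equation $y'=s$ or $y^\dagger=s$ with $s\in K$ so that Lemma~\ref{lem:H(x)} applies and keeps $\Gamma$ cofinal in the new value group; then take the $\upo$-free hull $\mathbf K^*$ of $\mathbf K_1$ using [ADH, 11.7], again citing Lemma~\ref{lem:H(x)} for cofinality. The embedding property (iii) and the residue-field claim (i) are then read off directly from the universal properties and residue-field computations in [ADH, 16.4.5]. The only place requiring care is ensuring that the intermediate objects remain pre-$\HLO$-fields with $\Gamma\neq\{0\}$ so that Lemma~\ref{lem:cf(K)} is applicable throughout; this is automatic since adjoining the relevant Liouville elements never collapses the value group.
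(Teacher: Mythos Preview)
Your strategy is essentially the paper's: follow the proof of [ADH, 16.4.5] and verify cofinality at each stage using Lemmas~\ref{lem:cof 1}, \ref{lem:H(x)}, and~\ref{lem:10.4.5}. Two points of precision are worth correcting. First, the intermediate $\mathbf K_1$ from [ADH, 16.4.5] is not ``$\upl$-free'': one passes to $E:=H(K)^{\operatorname{rc}}$ (where $\Gamma_E=\Q\Gamma$, so cofinality is clear) and then, if $E$ has no gap, adjoins $f$ with $f^\dagger=-\upl$ so that $K_1=E(f)$ acquires a \emph{gap}. The passage from this gapped $\mathbf K_1$ to the $\upo$-free $\mathbf K^*$ is exactly Corollary~\ref{cor:16.4.4}, which already packages the $F_\upo$ construction together with its cofinality claim; you should invoke that corollary rather than redo the $F_\upo$ argument. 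Second, for the logarithmic adjunction $E\subseteq E(f)$ you cannot just cite Lemma~\ref{lem:10.4.5}: its hypothesis that $S=\{v(-\upl-a^\dagger):a\in E^\times\}$ has no largest element and lies below $(\Gamma_E^>)'$ must be checked. The paper extracts this from the proof of Case~2 in [ADH, 16.4.5], where ``$s=-\upl$ creates a gap over $E$'' is established; you should make that citation explicit rather than leave the hypothesis unverified.
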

\begin{proof}
As in the proof of [ADH, 16.4.5], it is enough, by Corollary~\ref{cor:16.4.4},  to show that~$\mathbf K$ has a $\HLO$-field extension
$\mathbf K_1=(K_1,\dots)$ with a gap such that  $\res\mathbf K_1 = (\res\mathbf K)^{\operatorname{rc}}$,  $K$ is cofinal in $K_1$, and any embedding
of $\mathbf K$ into a Liouville closed $\HLO$-field $\mathbf L$ extends to an embedding $\mathbf K_1\to\mathbf L$.
Take $\mathbf K_1$ as in the proof of [ADH, 16.4.5].
Put $E:=H(K)^{\operatorname{rc}}$. Then $\Gamma_{E}=\Q\Gamma$, so $K$ is cofinal in $E$.
If $E$ has a gap, then~$K_1=E$, and we are done.
Suppose $E$ has no gap. Then $K_1=E(f)$ where~$f\in K_1^\times$ and $\upl:=-f^\dagger\in K$, and $s:=-\upl$ creates a gap over $E$ (as defined in [ADH, p.~503]).
By the proof of Case~2 in~[ADH, 16.4.5], the hypothesis of
Lemma~\ref{lem:10.4.5} holds for $E$ in place of~$K$, so~$E$ is cofinal in $K_1$, and hence so is $K$.
\end{proof}

\begin{lemma}
Suppose $K$ is $\upl$-free but not $\upo$-free. Then $\mathbf K$ has an $\upo$-free $\HLO$-field
extension $\mathbf K^*$ such that $\res\mathbf K^*$ is algebraic over $\res\mathbf K$, $\mathbf K$ is cofinal in $\mathbf K^*$,
 and any embedding of $\mathbf K$ into
a Schwarz closed $\HLO$-field $\mathbf L$ extends to an embedding of~$\mathbf K^*$ into $\mathbf L$.
\end{lemma}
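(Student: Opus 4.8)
The plan is to obtain $\mathbf K^*$ from the construction underlying [ADH, 16.4.7], which already produces an $\upo$-free $\HLO$-field extension of $\mathbf K$ with $\res\mathbf K^*$ algebraic over $\res\mathbf K$ and with the stated extension property for embeddings into a Schwarz closed $\HLO$-field $\mathbf L$; that last property rests on the splitting of the relevant operator $\der^2-a$ over $\mathbf L$, which is exactly Schwarz closedness, and I would inherit it verbatim from [ADH, 16.4.7]. Thus the only point needing a genuinely new argument is the cofinality assertion that $\mathbf K$ is cofinal in $\mathbf K^*$. Since $\Gamma:=\Gamma_{\mathbf K}\neq\{0\}$, Lemma~\ref{lem:cf(K)} reduces this to showing that $\Gamma$ is cofinal in $\Gamma_{\mathbf K^*}$.

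To prove the latter I would run through the steps of the [ADH, 16.4.7] construction and check that cofinality is preserved at each one, in the same spirit as the proof of Lemma~\ref{lem:16.4.5}. First one passes to the $H$-field hull $H(K)$, in which $\Gamma$ is cofinal by Lemma~\ref{lem:cof 1}, and then to its real closure $E:=H(K)^{\operatorname{rc}}$; since $K$ has asymptotic integration (being $\upl$-free), $\Gamma_E=\Q\Gamma$ is divisible (as required by Lemma~\ref{lem:10.4.5}) and remains cofinal over $\Gamma$, and this accounts for the residue field becoming algebraic over $\res\mathbf K$. The essential, value-group-affecting step adjoins a zero $\uplambda$ of the Riccati equation $\upomega(\uplambda)=\upomega$, that is, a pseudolimit of the $\uplambda$-sequence $(\uplambda_\rho)$; such a pseudolimit is available precisely because $K$ fails to be $\upo$-free, so that $\upomega_\rho\leadsto\upomega\in K$. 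This adjunction is governed by Lemma~\ref{lem:10.4.5} applied with $E$ in place of $K$: I would verify that the associated set $S:=\{v(s-a^\dagger):a\in E^\times\}$, for the relevant $s\in E$, has no largest element — this is exactly where $\upl$-freeness of $K$ enters, since it forbids a pseudolimit of $(\uplambda_\rho)$ in $K$, hence in $E$, and thereby rules out a maximum — and that $S<(\Gamma_E^>)'$. Lemma~\ref{lem:10.4.5} then yields $[\Gamma_E]=[\Gamma_{\mathbf K^*}]$, so that $\Gamma_E$, and therefore $\Gamma$, is cofinal in $\Gamma_{\mathbf K^*}$.

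Assembling the steps gives $\Gamma$ cofinal in $\Gamma_{\mathbf K^*}$, and Lemma~\ref{lem:cf(K)} then delivers that $\mathbf K$ is cofinal in $\mathbf K^*$, completing the cofinality refinement while inheriting the remaining two assertions from [ADH, 16.4.7]. I expect the main obstacle to be exactly the middle paragraph: correctly matching the $\upomega$-adjunction of [ADH, 16.4.7] to the hypotheses of Lemma~\ref{lem:10.4.5}, namely pinning down the element $s$ and checking both that its logarithmic-derivative cut $S$ has no largest element and that $S<(\Gamma_E^>)'$. Should the adjunction instead turn out to be value-group-immediate, with only algebraic residue growth, the cofinality of that step would be automatic and no appeal to Lemma~\ref{lem:10.4.5} would be needed; in either case the conclusion that $\Gamma$ is cofinal in $\Gamma_{\mathbf K^*}$ follows.
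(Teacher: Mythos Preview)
Your citation is off: the relevant construction is [ADH, 16.4.6], not [ADH, 16.4.7]. The latter handles the $\upo$-free case, where one simply takes $K^*=H(K)$ and there is nothing substantive to prove about cofinality. For $K$ $\upl$-free but not $\upo$-free, [ADH, 16.4.6] is what applies, and it has a structure you do not account for: a case split on whether $\Omega=\omega(K)^\downarrow$ or $\Omega=K\setminus\sigma\big(\Upg(K)\big){}^\uparrow$.

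Your sketch roughly matches Case~2, but even there the details are not right. In Case~2 one adjoins $\upl$ with $\omega(\upl)=\upo$; this extension $K(\upl)$ is \emph{immediate} by [ADH, 11.7.13], so cofinality is automatic and Lemma~\ref{lem:10.4.5} is not needed at that step. The resulting $K(\upl)$ is no longer $\upl$-free, and one then invokes Lemma~\ref{lem:16.4.5} (the already-established cofinality refinement of [ADH, 16.4.5]); the passage to $H(K)^{\operatorname{rc}}$ and the appeal to Lemma~\ref{lem:10.4.5} live inside \emph{that} lemma, not here. More seriously, you miss Case~1 entirely: there one adjoins $\upg\neq 0$ with $\sigma(\upg)=\upo$, so that $v\upg$ is a gap in $K_\upg=K\langle\upg\rangle$. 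Cofinality of $\Gamma$ in $\Gamma_{K_\upg}$ comes not from Lemma~\ref{lem:10.4.5} but from the remarks before [ADH, 13.7.7], which give $[\Gamma]=[\Gamma_{K_\upg}]$; one then applies Corollary~\ref{cor:16.4.4} (the cofinality refinement of [ADH, 16.4.4]) to the gapped $K_\upg$. So the overall shape ``follow the [ADH] construction and track cofinality'' is right, but the specific construction, the case distinction on $\Omega$, and the tool used for cofinality in Case~1 all need to be corrected.
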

\begin{proof}
For the definition 
of the pc-sequence $(\upo_\rho)$ in $K$ and the $\d$-rational functions~$\omega$,~$\sigma$ and their  role in $\upo$-freeness as
used in this proof, see [ADH, 11.7, 11.8].
Take $\upo\in K$ with $\upo_\rho\leadsto\upo$. Let $\mathbf K^*$ be as in the proof of [ADH, 16.4.6].
That proof shows that  $\Omega=\omega(K)^\downarrow$ or $\Omega=K\setminus\sigma\big(\Upg(K)\big){}^\uparrow$.
Suppose first that~$\Omega=\omega(K)^\downarrow$. With $\mathbf K_{\upg}=(K_{\upg},\dots)$ as in Case~1 of that proof, we have
  $K_{\upg}=K\langle\upg\rangle$ where~$\upg\neq 0$, $\sigma(\upg)=\upo$, and $v\upg$ is a gap in $K_{\upg}$.
The remarks before [ADH,  13.7.7] give $[\Gamma]=\big[\Gamma_{K_{\upg}}\big]$, so $K$ is cofinal
in $K_{\upg}$. Now follow the argument in Case~1 of loc.~cit., using Corollary~\ref{cor:16.4.4} instead of [ADH, 16.4.4].
If~$\Omega=K\setminus\sigma\big(\Upg(K)\big){}^\uparrow$, then we argue as in Case~2 of loc.~cit.,
using Lemma~\ref{lem:16.4.5} instead of [ADH, 16.4.5].
\end{proof}

\begin{lemma}
Suppose $K$ is $\upo$-free. Then $\mathbf K$ has an $\upo$-free $\HLO$-field extension $\mathbf K^*$ such that
$\res\mathbf K^* = \res\mathbf K$, $\mathbf K$ is cofinal in $\mathbf K^*$, and any embedding of $\mathbf K$ into a $\HLO$-field $\mathbf L$ extends to an embedding of $\mathbf K^*$ into $\mathbf L$.
\end{lemma}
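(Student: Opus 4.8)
The plan is to take for $\mathbf K^*$ the $H$-field hull of $K$. Concretely, set $K^*:=H(K)=\operatorname{dv}(K)$, the $\d$-valued hull equipped with the unique ordering making it an $H$-field extension of $K$ (by [ADH, 10.5.13]), and then transport the $\HLO$-cut of $\mathbf K$ to $K^*$. Since $K$ is $\upo$-free it has asymptotic integration, so by [ADH, 10.3.2(i)] the $\d$-valued hull leaves the value group unchanged, $\Gamma_{K^*}=\Gamma$, and the residue field unchanged, $\res K^*=\res K$ (the hull adjoins only constants realizing already-present residues); this gives (i). Conclusion (ii) is then immediate, since $\Gamma_{K^*}=\Gamma$ forces $\mathbf K$ to be cofinal in $\mathbf K^*$ (cf.\ Lemma~\ref{lem:cof 1}). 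Moreover $K^*$ is again $\upo$-free by [ADH, remark after 13.6.1], and its underlying field is an $H$-field, so once the cut is fixed $\mathbf K^*$ is a genuine $\HLO$-field.

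For the universal property (iii) I would first extend the plain $H$-field embedding and only afterwards worry about the cut. Given an embedding $\iota\colon\mathbf K\to\mathbf L$ into a $\HLO$-field $\mathbf L$, its underlying map is an embedding of $K$ into the $\d$-valued field $L$ of $H$-type underlying $\mathbf L$, so by the defining universal property of $\operatorname{dv}(K)$ recalled just before Lemma~\ref{lem:cof 1} it extends uniquely to an embedding $j\colon K^*\to L$ of $H$-fields. It then remains to check that $j$ respects the $\HLO$-structure. Here the key point is that, $K^*$ being $\upo$-free (hence $\upl$-free with asymptotic integration), its $\HLO$-cut is the canonical one determined by the field alone, and likewise for $K$ by hypothesis; so both the cut we fixed on $K^*$ and the cut pulled back along $j$ from the sub-$\HLO$-field $j(K^*)$ of $\mathbf L$ (unique by the expansion fact recalled before Proposition~\ref{1641}) are this canonical cut, and therefore agree. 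Thus $j$ is an embedding $\mathbf K^*\to\mathbf L$.

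The plain $H$-field content here is entirely routine: preservation of $\Gamma$ and $\res$, $\upo$-freeness of the hull, cofinality, and the existence and uniqueness of $j$ are all immediate from the cited ADH results and the universal property of the $\d$-valued hull. The only nonroutine point, and hence the main thing to get right, is the $\HLO$-cut bookkeeping: that the cut of $\mathbf K$ extends to an actual $\HLO$-cut on $K^*$ and that this extension is canonical enough to be preserved automatically by the unique $H$-field extension $j$. This is exactly where $\upo$-freeness of $K^*$ does the work, forcing the cuts $I$, $\Lambda$, $\Omega$ to be principal; the argument otherwise parallels the proof of the unrefined statement [ADH, 16.4.7], with the extra cofinality clause supplied for free by $\Gamma_{K^*}=\Gamma$.
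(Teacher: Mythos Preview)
Your proposal is correct and takes essentially the same approach as the paper: set $K^*=H(K)$, invoke the proof of [ADH, 16.4.7] for the residue field, $\upo$-freeness, and universal property (including the $\HLO$-cut bookkeeping, which works exactly as you say since an $\upo$-free field has a unique $\HLO$-cut), and supply the cofinality clause via Lemma~\ref{lem:cof 1}. The paper's proof is a two-line reference to these same ingredients; your version spells out the $\HLO$-cut argument and observes that in fact $\Gamma_{K^*}=\Gamma$ here (stronger than mere cofinality), which is correct since $K$ has asymptotic integration.
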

\begin{proof}
Take $\mathbf K^*=(K^*,\dots)$ as in the proof of [ADH, 16.4.7]. Then $K^*=H(K)$, and
by Lem\-ma~\ref{lem:cof 1}, $K$ is cofinal in $H(K)$.
\end{proof}

\noindent
 This concludes the proof of  Proposition~\ref{1641} and of Proposition~\ref{prop:completion Hardy field}. Combining the latter
  with  \cite[Corollary~3.2]{ADHfgh}  and Corollary~\ref{smanpc} yields:

\begin{cor}\label{cor:completion Hardy field}
Let $H$ be a  Hardy field of countable cofinality and $M\supseteq H$ a maximal Hardy field.
Then there is an embedding $H^{\operatorname{c}}\to M$ over~$H$. Likewise if~$M\supseteq H$ is a maximal analytic Hardy field or a maximal smooth Hardy field.
\end{cor}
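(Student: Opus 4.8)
The plan is to apply Proposition~\ref{prop:completion Hardy field} to the inclusion $H\hookrightarrow M$; the work consists in verifying its two hypotheses for $K:=H$ in each of the three cases for $M$. Since a Hardy field is in particular a pre-$H$-field, I first record the value-group condition. Countable cofinality means $\operatorname{cf}(H)=\omega$, so if $\Gamma_H\neq\{0\}$ then Lemma~\ref{lem:cf(K)} gives $\operatorname{cf}(\Gamma_H)=\operatorname{cf}(H)=\omega$, as Proposition~\ref{prop:completion Hardy field} requires. The remaining case $\Gamma_H=\{0\}$ is trivial: then the valuation on $H$ is trivial, so $H$ is already complete, $H^{\operatorname{c}}=H$, and the inclusion $H\hookrightarrow M$ is itself the desired embedding.

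The substantive point is that $M$ is a countably spherically complete closed $H$-field. That $M$ is a closed $H$-field holds in all three cases by \cite[Corollary~11.20]{ADH4} (see the remarks in Section~\ref{sec:pc}). For countable spherical completeness I would argue directly from maximality. Let $(f_\rho)$ be a pc-sequence of length $\omega$ in $M$; it has countable length. If $M$ is a maximal Hardy field, then by \cite[Corollary~3.2]{ADHfgh} the sequence $(f_\rho)$ pseudoconverges in some Hardy field extension of $M$, and since $M$ is maximal this extension equals $M$, so $(f_\rho)$ pseudoconverges in $M$. If instead $M$ is a maximal analytic or a maximal smooth Hardy field, the same reasoning applies with Corollary~\ref{smanpc} in place of \cite[Corollary~3.2]{ADHfgh}: the sequence pseudoconverges in an analytic (resp.\ smooth) Hardy field extension of $M$, which by maximality is again $M$. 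Hence in every case every pc-sequence of length $\omega$ in $M$ pseudoconverges in $M$, i.e.\ $M$ is countably spherically complete.

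With both hypotheses verified, Proposition~\ref{prop:completion Hardy field} applied to the inclusion $H\hookrightarrow M$ yields an embedding $H^{\operatorname{c}}\to M$ extending it, hence over $H$, which is the assertion. I do not expect a genuine obstacle: the only matter needing attention is that the maximality of $M$ is exactly what upgrades the ``pseudoconverges in some extension'' conclusions of \cite[Corollary~3.2]{ADHfgh} and Corollary~\ref{smanpc} to ``pseudoconverges in $M$,'' which is the precise form of countable spherical completeness demanded by Proposition~\ref{prop:completion Hardy field}.
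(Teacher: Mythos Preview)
Your proof is correct and follows essentially the same route as the paper: apply Proposition~\ref{prop:completion Hardy field} to the inclusion $H\hookrightarrow M$, using \cite[Corollary~3.2]{ADHfgh} (respectively Corollary~\ref{smanpc}) together with maximality to obtain countable spherical completeness of $M$. Your explicit handling of the trivial-valuation case $\Gamma_H=\{0\}$ is a nice touch that the paper leaves implicit.
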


\noindent
As a consequence of Corollary~\ref{cor:completion Hardy field}, with $t:=x^{-1}$,
each maximal analytic Hardy field contains a Hardy field extending $\R(t)$ and isomorphic over~$\R(t)$ to the ordered   field~$\R(\!(t)\!)$ of Laurent series over $\R$ equipped with the continuous $\R$-linear derivation given by $t'=-t^2$.
(This may be viewed as a Hardy field version of Besicovitch's strengthening \cite{Besi} of
Borel's theorem on $\Cc^\infty$-functions with prescribed Taylor series~\cite{Borel}.)
In Corollary~\ref{cor:embed T}   we show that even the ordered differential field~$\T$ of transseries, which vastly extends $\R(\!(t)\!)$, embeds into any given maximal analytic Hardy field. As a first step
we accomplish this below for
the $H$-subfield~$\T_{\log}$ of~$\T$ of logarithmic transseries (cf.~[ADH, p.~722]). For this it is useful to
have available some facts about short ordered sets, also needed in Section~\ref{eie}. 

\subsection*{Short ordered sets} 
Let $S$ be an ordered set. (As in \cite{ADHfgh}, this means ``linearly ordered set''.)
Let~$S^*$ denote
$S$ equipped with the reversed ordering.
Then the following are equivalent:
\begin{enumerate}
\item[(S1)]   all well-ordered subsets of $S$ 
and of $S^*$ are countable;
\item[(S2)]  there are no embeddings of $\omega_1$ into $S$ or $S^*$;
\item[(S3)] $\operatorname{cf}(A),\operatorname{ci}(A) \leq\omega$ for all ordered subsets $A$ of $S$. 
\end{enumerate}
Call  $S$ {\bf short} if any of the equivalent conditions (S1)--(S3) holds; cf.~\cite[1.7(i)]{DW}  and~\cite[pp.~88, 170--171]{Rosenstein}. 
If  $S$ is short, then so are~$S^*$ and every ordered subset of~$S$. If~$S$ is countable, then it is short; more generally, if
 $S$ is a union of countably many short ordered subsets, then $S$ is short.
If $S\to S'$ is a surjective increasing map between ordered sets and $S$ is short, then so is $S'$; similarly with ``decreasing'' instead of ``increasing''.  Shortness   enters our story via the following observation:

\begin{lemma}\label{lem:short}
Let $(G,S,v)$ be a valued abelian group where $S$ is short, and let $(a_\rho)$ be
a  pc-sequence in $(G,S,v)$. Then some final segment of $(a_\rho)$ has countable length.
\end{lemma}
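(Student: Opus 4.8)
The plan is to reduce the claim about the index set of $(a_\rho)$ to the shortness of $S$, by converting pseudoconvergence into a strictly increasing sequence of values. Write $\lambda$ for the length of $(a_\rho)$. By the definition of a pc-sequence there is an index $\rho_0$ such that
\[
v(a_\sigma-a_\rho)\ <\ v(a_\tau-a_\sigma)\quad\text{in }S\qquad\text{whenever }\rho_0\le\rho<\sigma<\tau<\lambda.
\]
First I would set $\gamma_\rho:=v(a_{\rho+1}-a_\rho)$ for those $\rho\ge\rho_0$ with $\rho+1<\lambda$. A routine application of the ultrametric inequality to $a_{\rho'}-a_\rho=(a_{\rho'}-a_{\rho+1})+(a_{\rho+1}-a_\rho)$ shows that $v(a_{\rho'}-a_\rho)=\gamma_\rho$ for all $\rho'$ with $\rho<\rho'<\lambda$; feeding this back into the displayed inequality then yields $\gamma_\rho<\gamma_{\rho'}$ whenever $\rho<\rho'$. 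In particular each $\gamma_\rho$ lies in $S$ (rather than at $\infty$), since it is strictly dominated by $\gamma_{\rho'}$ for some larger $\rho'$.

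Next I would note that $\rho\mapsto\gamma_\rho$ is thus a strictly increasing map, hence an isomorphism of ordered sets from its index set $I:=\{\rho:\rho_0\le\rho,\ \rho+1<\lambda\}$ onto a well-ordered subset of $S$. Since $S$ is short, condition~(S1) guarantees that every well-ordered subset of $S$ is countable; therefore this image, and with it $I$, is countable. As the index tail $\{\rho:\rho_0\le\rho<\lambda\}$ differs from $I$ by at most one element (a possible largest index), it is countable as well. Hence the final segment $(a_\rho)_{\rho_0\le\rho<\lambda}$ has countable length, as required.

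I do not expect any genuine obstacle: the argument is a direct translation of pseudoconvergence into a monotone net of values, followed by an appeal to shortness. The only steps demanding (entirely standard) care are the verification that $(\gamma_\rho)$ is well defined and strictly increasing and the bookkeeping matching its order type to that of the index tail. It is worth observing that only the shortness of $S$ itself is invoked here — the increasing sequence $(\gamma_\rho)$ yields a well-ordered subset of $S$ — so the hypothesis on $S^*$ built into the definition of shortness is not needed for this lemma.
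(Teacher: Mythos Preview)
Your argument is correct and follows the same line as the paper's proof: pass to a tail where the successive differences $s_\rho:=v(a_{\rho+1}-a_\rho)$ form a strictly increasing sequence in $S$, and then invoke shortness (via (S1)) to conclude that this well-ordered subset of $S$, hence the index tail, is countable. Your write-up is in fact more careful than the paper's about why the $\gamma_\rho$ land in $S$ and about the bookkeeping when $\lambda$ is a successor ordinal.
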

\begin{proof}
Put $s_\rho:=v(a_{\rho+1}-a_\rho)$, where~$\rho+1$ is the successor of $\rho$.
After deleting an initial segment of  $(a_\rho)$  we arrange that the sequence~$(s_\rho)$ in $S$ 
is strictly increasing. Then the  image of the index set of $(a_\rho)$ under the embedding $\rho\mapsto s_\rho$ of ordered sets is a well-ordered subset of $S$ and hence countable.
\end{proof}

\begin{lemma}
If the order topology of $S$ is second countable, then $S$ is short.
\end{lemma}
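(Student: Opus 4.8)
The plan is to verify shortness through the equivalent condition (S2): I will show that a second countable order topology forbids any order embedding of $\omega_1$ into $S$ or into $S^*$. The first thing to record is that the order topology of $S^*$ is \emph{literally the same} topology as that of $S$, since passing to the reversed order merely interchanges the two families of open rays $\{x:x<a\}$ and $\{x:x>a\}$ that generate the topology. Consequently $S^*$ is second countable whenever $S$ is, and it suffices to rule out an order embedding of $\omega_1$ into $S$ itself.

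So I would suppose toward a contradiction that $(a_\alpha)_{\alpha<\omega_1}$ is a strictly increasing family in $S$, and fix a countable base $\mathcal B$ for the order topology. The key step is a pigeonhole argument. For each $\alpha<\omega_1$ the ray $\{x\in S:x>a_\alpha\}$ is open and contains $a_{\alpha+1}$, so I can choose $B_\alpha\in\mathcal B$ with $a_{\alpha+1}\in B_\alpha\subseteq\{x:x>a_\alpha\}$. Since $\mathcal B$ is countable while $\omega_1$ is uncountable, the assignment $\alpha\mapsto B_\alpha$ cannot be injective, so there are $\alpha<\beta<\omega_1$ with $B_\alpha=B_\beta$. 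Then $a_{\alpha+1}\in B_\beta\subseteq\{x:x>a_\beta\}$ forces $a_{\alpha+1}>a_\beta$, whereas $\alpha+1\le\beta$ gives $a_{\alpha+1}\le a_\beta$ — a contradiction.

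This contradiction shows that $S$ has no well-ordered subset of order type $\omega_1$; since every uncountable well-ordered set contains a copy of $\omega_1$, it follows that all well-ordered subsets of $S$ are countable, and by the duality noted above the same holds for $S^*$. Thus condition (S1) is satisfied and $S$ is short. The only subtlety I anticipate is the twofold setup — selecting the separating basic open sets $B_\alpha$, and observing that second countability passes to $S^*$ so that both halves of (S1) are covered at once. Once these are in place, the collision of the countable base against the uncountability of $\omega_1$ does all the work, and I expect no further obstacle.
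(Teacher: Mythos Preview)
Your proof is correct and follows essentially the same approach as the paper's: both verify condition (S2) by deriving a contradiction from the countability of a base. The paper phrases it as producing uncountably many pairwise disjoint nonempty open intervals $\big(i(\lambda),i(\lambda+2)\big)$ indexed by limit ordinals $\lambda<\omega_1$, while you pigeonhole directly on basic neighborhoods of the $a_{\alpha+1}$; these are minor variations of the same idea.
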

\begin{proof}
Suppose $i\colon \omega_1\to S$ is strictly increasing.
With $\lambda$ ranging over the limit ordinals~$<\omega_1$ we then have uncountably many nonempty pairwise disjoint open intervals $\big(i(\lambda),i(\lambda+2)\big)$
in $S$, so $S$ is not second countable. An embedding $\omega_1\to S^*$ yields the same conclusion. 
\end{proof}

\noindent
In particular, the real line (the ordered set of real numbers) is short.
(In fact, 
by~\cite[Theorem~2]{HarringtonShelah},   each Borel ordered set is short.)
The following observation  is due to
Hausdorff~{\cite[p.~133]{Hau07}} and Urysohn~\cite{Urysohn24}. 

\begin{lemma}\label{lem:embed short into eta1}
Suppose $S$ is short and $T$ is an $\eta_1$-ordered set.  Then any embedding of an ordered subset of $S$ into $T$ extends to an embedding~${S\to T}$. In particular, there exists an embedding $S\to T$.
\end{lemma}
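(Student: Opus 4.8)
The plan is to extend the given embedding one point at a time by transfinite recursion, using shortness to keep the relevant ``gaps'' countable so that the $\eta_1$-property of $T$ applies at each step. Let $S_0\subseteq S$ be the ordered subset on which the embedding $f_0\colon S_0\to T$ is given (for the final assertion simply take $S_0=\emptyset$), and fix a well-ordering $(s_\xi)_{\xi<\kappa}$ of $S\setminus S_0$. I would build an increasing chain $(f_\xi)_{\xi\le\kappa}$ of order-embeddings into $T$ with $\operatorname{dom}f_\xi=S_0\cup\{s_\eta:\eta<\xi\}$ and $f_0\subseteq f_\xi$, taking unions at limit stages (a union of a chain of order-embeddings is again an order-embedding), and then output $f:=f_\kappa$, whose domain is all of $S$.

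The one nontrivial step is the successor stage, where from $f_\xi$ I must choose an image for $s_\xi$. Put $D:=\operatorname{dom}f_\xi$ and split $D$ by the cut that $s_\xi$ determines: $L:=\{d\in D:d<s_\xi\}$ and $U:=\{d\in D:d>s_\xi\}$, noting that every $d\in D$ is $\ne s_\xi$ and hence compares with it. Here is where shortness enters. By condition (S3), the ordered subsets $L$ and $U$ of $S$ satisfy $\cf(L)\le\omega$ and $\ci(U)\le\omega$, so I may choose a countable cofinal $L'\subseteq L$ and a countable coinitial $U'\subseteq U$ (either possibly empty). Since $L'<U'$ and $f_\xi$ is order-preserving, $f_\xi(L')$ and $f_\xi(U')$ are countable subsets of $T$ with $f_\xi(L')<f_\xi(U')$; the $\eta_1$-property of $T$ (which by definition covers the cases $L'=\emptyset$ or $U'=\emptyset$, and in particular forces $T$ to be nonempty with no endpoints) then yields $t\in T$ with $f_\xi(L')<t<f_\xi(U')$. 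I set $f_{\xi+1}(s_\xi):=t$ and $f_{\xi+1}\restrict D:=f_\xi$.

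It remains to verify that this $t$ is correctly placed relative to \emph{all} of $D$, not merely relative to $L'$ and $U'$. Because $L'$ is cofinal in $L$ and $f_\xi$ is increasing, for each $d\in L$ there is $d'\in L'$ with $f_\xi(d)\le f_\xi(d')<t$, whence $f_\xi(d)<t$; symmetrically, coinitiality of $U'$ in $U$ gives $t<f_\xi(d)$ for all $d\in U$. Thus $f_{\xi+1}$ is an order-embedding extending $f_\xi$, and the recursion goes through. I do not anticipate any serious obstacle beyond this bookkeeping: the entire content of the argument is the reduction, via shortness, of the possibly uncountable cut $(L,U)$ to a countable cofinal/coinitial pair, after which a single application of the defining property of $\eta_1$-sets completes each successor step.
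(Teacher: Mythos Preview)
Your proof is correct and is essentially the same as the paper's: extend one element at a time, using shortness (condition (S3)) to bound the cofinality and coinitiality of the cut so that the $\eta_1$-property of $T$ produces the required image. The only cosmetic difference is that the paper invokes Zorn's lemma in place of your explicit transfinite recursion, and leaves the passage to countable cofinal/coinitial subsets implicit.
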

\begin{proof}
Let $A$ be an ordered subset of $S$ and $i\colon A\to T$ an embedding. Suppose~$s\in S\setminus A$. 
Then $\operatorname{cf}(A^{<s}),\operatorname{ci}(A^{>s})\leq\omega$, so
we have $t\in T$ with $i(A^{<s})<t<i(A^{>s})$. Thus~$i$ extends to an
embedding $A\cup\{s\}\to T$ sending $s$ to $t$. Zorn does the rest. 
\end{proof}

\begin{cor}\label{lem:eta1 size}
Every $\eta_1$-ordered set has cardinality $\geq\frak c$. 
There is an~$\eta_1$-ordered set of cardinality $\frak c$.
\end{cor}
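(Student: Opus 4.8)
The first assertion is an immediate consequence of the preceding lemma, and that is how I would dispatch it: the real line $\R$ is short, so for any $\eta_1$-ordered set $T$ Lemma~\ref{lem:embed short into eta1} furnishes an embedding $\R\to T$ of ordered sets, whence $\abs{T}\geq\abs{\R}=\mathfrak c$.

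For the existence statement the plan is to exhibit one $\eta_1$-ordered set of cardinality at most $\mathfrak c$; the first assertion will then force its cardinality to equal $\mathfrak c$. I would take a nonprincipal ultrafilter $\mathcal U$ on $\N$ and set $T:=\Q^\N/\mathcal U$, the ultrapower of the ordered set $\Q$. Three routine observations do the work. First, $T$ is a quotient of $\Q^\N$, so $\abs{T}\leq\abs{\Q^\N}=\aleph_0^{\aleph_0}=\mathfrak c$. Second, being an ultrapower of a dense linear order without endpoints, $T$ is again a dense linear order without endpoints. Third, a nonprincipal ultrafilter on $\N$ is countably incomplete---the sets $\{n,n+1,\dots\}$ all lie in $\mathcal U$ yet have empty intersection---so $T$ is $\aleph_1$-saturated.

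The only point that requires a small argument, and the closest thing here to an obstacle, is that an $\aleph_1$-saturated dense linear order without endpoints is $\eta_1$. I would argue as follows: given countable subsets $A<B$ of $T$, the collection of conditions $\{\,a<x:a\in A\,\}\cup\{\,x<b:b\in B\,\}$ is a type over the countable parameter set $A\cup B$ that is finitely satisfiable in $T$ (use density together with the absence of endpoints), so $\aleph_1$-saturation realizes it by some $t\in T$ with $A<t<B$. Hence $T$ is $\eta_1$, and combining with the first assertion gives $\abs{T}=\mathfrak c$. (As an alternative construction avoiding ultrafilters, one could note that any maximal Hardy field is an $\eta_1$-ordered set contained in $\Cc$ and that $\abs{\Cc}=\mathfrak c$, which again yields an $\eta_1$-ordered set of cardinality $\mathfrak c$.)
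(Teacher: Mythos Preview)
Your argument is correct. The first claim is handled exactly as in the paper: apply Lemma~\ref{lem:embed short into eta1} with $S=\R$. For the second claim the paper simply invokes [ADH, B.9.6], whereas you supply a self-contained construction via the ultrapower $\Q^{\N}/\mathcal U$ over a nonprincipal ultrafilter and the standard fact that ultraproducts over countably incomplete ultrafilters in a countable language are $\aleph_1$-saturated. This buys independence from the cited reference at the cost of importing a bit of model theory; your verification that $\aleph_1$-saturation of a dense linear order without endpoints yields the $\eta_1$-property is clean. The parenthetical alternative through maximal Hardy fields is also valid here (the $\eta_1$ property of maximal Hardy fields is the main result of \cite{ADHfgh}, established prior to and independently of the present section), though it is a heavier tool than either your ultrapower or the paper's citation.
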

\begin{proof} For the first claim, apply Lemma~\ref{lem:embed short into eta1} to $S=\text{the real line}$. 
The second claim follows from the first together with
 [ADH, B.9.6]. 
\end{proof}

\noindent
Combining Lemma~\ref{lem:embed short into eta1} and Corollary~\ref{lem:eta1 size} we obtain:

\begin{cor}[Urysohn~\cite{Urysohn23,Urysohn24}]
Every short ordered set has cardinality $\leq\frak c$.
\end{cor}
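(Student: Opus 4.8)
The plan is to obtain the desired cardinality bound by embedding an arbitrary short ordered set into a fixed $\eta_1$-ordered set of the right size, and then using injectivity of order embeddings. The two cited results do all the real work, so the proof will be very short.

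First I would invoke Corollary~\ref{lem:eta1 size} to fix an $\eta_1$-ordered set $T$ with $\abs{T}=\mathfrak c$; the point of having an $\eta_1$-set of cardinality \emph{exactly} $\mathfrak c$ (rather than just some $\eta_1$-set, which could be larger) is precisely what makes the bound $\leq\mathfrak c$ come out, so this is the step where the ``correct'' target is chosen. Then, given a short ordered set $S$, I would apply Lemma~\ref{lem:embed short into eta1} with this $T$: since $S$ is short and $T$ is $\eta_1$, that lemma yields an embedding $S\to T$ of ordered sets (taking the embedding of the empty ordered subset as starting point). An embedding of ordered sets is in particular injective, so $\abs{S}\leq\abs{T}=\mathfrak c$, as desired.

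There is essentially no obstacle here beyond correctly matching hypotheses: the heavy lifting is the Hausdorff--Urysohn extension property packaged in Lemma~\ref{lem:embed short into eta1} together with the existence-and-size statement of Corollary~\ref{lem:eta1 size}. The only thing to be mindful of is that the conclusion of Lemma~\ref{lem:embed short into eta1} is an \emph{order} embedding, hence injective, which is what converts the embedding into the cardinality inequality.
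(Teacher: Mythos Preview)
Your proposal is correct and matches the paper's approach exactly: the paper simply states that the result follows by combining Lemma~\ref{lem:embed short into eta1} and Corollary~\ref{lem:eta1 size}, which is precisely what you do.
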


\noindent
For (ordered)  Hahn products, see [ADH, 2.2, 2.4]. Shortness of $\R$ is at the root of the following result due  to Esterle
 \cite[Lemme~2.2 and the remark after it]{Esterle}:

\begin{lemma}\label{lem:Esterle} 
If $S$ is short, then so is the Hahn product $H[S,\R]$. 
\end{lemma}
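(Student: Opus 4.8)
The plan is to work directly with the ordered abelian group structure of $G:=H[S,\R]$. Recall that a nonzero $a\in G$ has well-ordered support $\supp(a)$, that $v(a):=\min\supp(a)\in S$ is a valuation, and that $a>0$ iff the leading coefficient $a_{v(a)}$ is positive. Since $x\mapsto -x$ is an order-reversing bijection of $G$, we have $G\cong G^*$ as ordered sets, so by criterion (S2) it suffices to show that $\omega_1$ does not embed into $G$, i.e.\ that there is no strictly increasing family $(a_\xi)_{\xi<\omega_1}$ in $G$. I would first isolate one elementary fact, used three times: \emph{in a short ordered set every monotone $\omega_1$-indexed sequence is eventually constant}. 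Indeed, a non-increasing $f\colon\omega_1\to S$ that is not eventually constant lets one choose recursively $\gamma_0<\gamma_1<\cdots$ with $f(\gamma_{\alpha+1})<f(\gamma_\alpha)$ (taking suprema at limits, which stay below $\omega_1$ by regularity), and then $\alpha\mapsto f(\gamma_{\alpha+1})$ is strictly decreasing of length $\omega_1$, an embedding $\omega_1\hookrightarrow S^*$, contradicting shortness; the non-decreasing case is symmetric.

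Now suppose $(a_\xi)_{\xi<\omega_1}$ is strictly increasing in $G$. For $\xi<\eta$ put $s(\xi,\eta):=v(a_\eta-a_\xi)=\min\supp(a_\eta-a_\xi)\in S$, so the leading coefficient $(a_\eta-a_\xi)_{s(\xi,\eta)}$ is $>0$ because $a_\eta-a_\xi>0$. The first key step is a strong ultrametric identity: for $\xi<\eta<\zeta$,
\[
s(\xi,\zeta)\ =\ \min\{s(\xi,\eta),\,s(\eta,\zeta)\}.
\]
When the two valuations differ this is the usual no-cancellation computation for $a_\zeta-a_\xi=(a_\zeta-a_\eta)+(a_\eta-a_\xi)$; when they are equal, the two leading coefficients are both positive, so they cannot cancel and the common value is again the minimum. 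In particular, for fixed $\xi$ the map $\eta\mapsto s(\xi,\eta)$ is non-increasing on $(\xi,\omega_1)$, hence eventually constant by the fact above; denote its eventual value, equivalently $\min_{\eta>\xi} s(\xi,\eta)$, by $w(\xi)\in S$.

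The second step pushes this down to a single monomial. Applying the identity with $\eta$ large gives $w(\xi)=\min\{s(\xi,\xi'),w(\xi')\}$ for $\xi<\xi'$, so $\xi\mapsto w(\xi)$ is non-decreasing, hence eventually constant, say $w(\xi)=s_0$ for all $\xi\ge\gamma^\ast$. Fix this $s_0\in S$ and restrict to $\xi\ge\gamma^\ast$. For such $\xi<\eta$ we have $s(\xi,\eta)\ge w(\xi)=s_0$; moreover $s(\xi,\eta)>s_0$ forces $a_\xi$ and $a_\eta$ to agree at $s_0$ (and below), while $s(\xi,\eta)=s_0$ forces $(a_\eta)_{s_0}>(a_\xi)_{s_0}$. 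Hence the real sequence $\pi(\xi):=(a_\xi)_{s_0}$ is non-decreasing on $[\gamma^\ast,\omega_1)$; and since $w(\xi)=s_0$ means $s(\xi,\eta)=s_0$ for cofinally many $\eta$, it is \emph{not} eventually constant. As $\R$ is short, this contradicts the elementary fact, completing the proof.

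The main obstacle I anticipate is precisely the transfinite bookkeeping in the second step: one must check that the two successive stabilizations, first of $\eta\mapsto s(\xi,\eta)$ for each fixed $\xi$ and then of $\xi\mapsto w(\xi)$, are compatible on a common tail, so that a \emph{single} monomial $s_0\in S$ governs a genuinely increasing $\omega_1$-pattern of coefficients, which shortness of $\R$ forbids. The ordering convention for the Hahn product (least versus greatest element of the support) is immaterial here: reversing it merely interchanges the non-increasing and non-decreasing roles above, and $S$ is short iff $S^*$ is.
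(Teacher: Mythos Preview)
The paper does not give its own proof of this lemma; it simply attributes the result to Esterle~\cite[Lemme~2.2 and the remark after it]{Esterle}. So there is nothing to compare against, and the task reduces to checking your argument.

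Your proof is correct. The key ingredients are all sound: the ``strong ultrametric identity'' $s(\xi,\zeta)=\min\{s(\xi,\eta),s(\eta,\zeta)\}$ does hold with equality (not just $\geq$) because positivity of $a_\eta-a_\xi$ and $a_\zeta-a_\eta$ forces their leading coefficients to be positive, hence non-cancelling when the valuations coincide; the two successive stabilizations (of $\eta\mapsto s(\xi,\eta)$ for each $\xi$, then of $\xi\mapsto w(\xi)$) each use shortness of $S$ via the elementary fact; and the final step correctly reduces to a non-decreasing, non-eventually-constant $\omega_1$-sequence in $\R$, which shortness of $\R$ forbids. Your worry about ``compatibility on a common tail'' is a non-issue: once $w(\xi)=s_0$ on $[\gamma^*,\omega_1)$, the non-increasing map $\eta\mapsto s(\xi,\eta)$ with eventual value $s_0$ actually equals $s_0$ on a tail (not merely cofinally), so for each $\xi\geq\gamma^*$ there is $\eta>\xi$ with $\pi(\eta)>\pi(\xi)$, and this is all you need.

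One cosmetic point: you could streamline by noting that from $w(\xi)=\min\{s(\xi,\xi'),w(\xi')\}$ and $w(\xi)=w(\xi')=s_0$ one gets $s(\xi,\xi')\geq s_0$ directly, so the intermediate ``$s(\xi,\eta)\geq w(\xi)$'' remark is redundant.
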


\noindent
From Lemma~\ref{lem:Esterle} and the Hahn Embedding Theorem  [ADH, 2.4.19] we obtain a characterization of short ordered abelian groups:

\begin{lemma}\label{lem:short char}
For an ordered abelian group $\Gamma$, the following are equivalent:
\begin{enumerate}
\item[\textup{(i)}] $\Gamma$ is short;
\item[\textup{(ii)}] the ordered set $[\Gamma]$ is short;
\item[\textup{(iii)}] $\Gamma$ embeds into $H[S,\R]$ for some short $S$.
\end{enumerate}
\end{lemma}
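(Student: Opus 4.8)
The plan is to prove the cyclic chain of implications (iii)~$\Rightarrow$~(i)~$\Rightarrow$~(ii)~$\Rightarrow$~(iii), using only the closure properties of short ordered sets recorded just before this lemma, Esterle's Lemma~\ref{lem:Esterle}, and the Hahn Embedding Theorem [ADH, 2.4.19]. All three implications are short, as the substantive content has already been isolated in those two results.

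For (iii)~$\Rightarrow$~(i), assume $\Gamma$ embeds into $H[S,\R]$ for some short $S$. By Lemma~\ref{lem:Esterle} the Hahn product $H[S,\R]$ is then short, and an embedding of ordered abelian groups is in particular an embedding of ordered sets, so $\Gamma$ is order-isomorphic to an ordered subset of the short set $H[S,\R]$; since ordered subsets of short ordered sets are short, $\Gamma$ is short. For (i)~$\Rightarrow$~(ii), I would invoke the surjective increasing map $\gamma\mapsto[\gamma]\colon\Gamma^{\geq}\to[\Gamma]$ already used in the proof of Lemma~\ref{lem:cof oags, 1} (it is onto because every archimedean class has a nonnegative representative): if $\Gamma$ is short, then so is its ordered subset $\Gamma^{\geq}$, and hence so is its increasing surjective image $[\Gamma]$.

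For (ii)~$\Rightarrow$~(iii) I would simply apply the Hahn Embedding Theorem [ADH, 2.4.19], which realizes $\Gamma$ as an ordered subgroup of $H[[\Gamma],\R]$; taking $S:=[\Gamma]$, which is short by hypothesis, yields exactly the embedding required in~(iii). I expect the only point calling for a little care to be the bookkeeping here: one must note that the index set over which the Hahn Embedding Theorem represents $\Gamma$ is precisely the ordered set $[\Gamma]$ of archimedean classes, and that whatever order-reversal convention governs the support of a Hahn product is harmless, shortness being preserved under $S\mapsto S^*$. Thus there is no genuine obstacle; the lemma is an assembly of Lemma~\ref{lem:Esterle}, the map $\gamma\mapsto[\gamma]$, and the Hahn embedding.
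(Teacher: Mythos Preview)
Your proof is correct and matches the paper's approach exactly: the paper states the lemma with only the one-line preamble ``From Lemma~\ref{lem:Esterle} and the Hahn Embedding Theorem [ADH, 2.4.19] we obtain\dots'' and gives no further argument, so your cycle (iii)~$\Rightarrow$~(i)~$\Rightarrow$~(ii)~$\Rightarrow$~(iii) via Esterle's lemma, the increasing surjection $\gamma\mapsto[\gamma]$, and Hahn embedding is precisely the intended filling-in.
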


\begin{cor}\label{cor:short oag}
Let $\Delta\subseteq\Gamma$ be an extension of ordered abelian groups. Then
$$\text{$\Gamma$ is short} \quad\Longleftrightarrow\quad \text{$\Delta$ and $[\Gamma]\setminus[\Delta]$ are short.}$$
In particular,  if $\operatorname{rank}_{\Q}(\Gamma/\Delta)\le \aleph_0$, then  $\Gamma$ is short iff $\Delta$ is short,
and if $\Delta$ is convex, then $\Gamma$ is short iff $\Delta$ and $\Gamma/\Delta$ are short.
\end{cor}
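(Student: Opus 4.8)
The plan is to reduce everything to the ordered set of archimedean classes and to invoke Lemma~\ref{lem:short char} throughout. First I would establish the displayed equivalence. By Lemma~\ref{lem:short char}, $\Gamma$ is short iff $[\Gamma]$ is short, and likewise $\Delta$ is short iff $[\Delta]$ is short. Under the identification of $[\Delta]$ with an ordered subset of $[\Gamma]$ described just before Lemma~\ref{lem:cof oags, 1}, we have the partition $[\Gamma]=[\Delta]\cup\big([\Gamma]\setminus[\Delta]\big)$ into two ordered subsets. If $[\Gamma]$ is short, then both pieces are short, being ordered subsets of a short set; conversely, if both pieces are short, then $[\Gamma]$ is short, being a union of (two, hence countably many) short ordered subsets. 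Combining these, $\Gamma$ is short iff $\Delta$ and $[\Gamma]\setminus[\Delta]$ are short, which is exactly the claimed equivalence. This step requires no real work beyond the cited closure properties of shortness.

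For the first \emph{in particular} clause I would note that $\operatorname{rank}_{\Q}(\Gamma/\Delta)\le\aleph_0$ forces $[\Gamma]\setminus[\Delta]$ to be countable by [ADH, 2.3.9], hence short; so the condition on $[\Gamma]\setminus[\Delta]$ in the main equivalence is automatic, and $\Gamma$ is short iff $\Delta$ is short.

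The substantive step is the convex case, which I expect to be the main obstacle. Assuming $\Delta$ convex, I would first record that $\Gamma/\Delta$ is then an ordered abelian group and that every positive element of $\Gamma$ outside $\Delta$ dominates all of $\Delta$ by convexity, so that $[\Delta]$ is an initial segment of $[\Gamma]$. The key point is that the quotient map $\pi\colon\Gamma\to\Gamma/\Delta$ induces an order isomorphism
\[
[\Gamma]\setminus[\Delta]\ \longrightarrow\ [\Gamma/\Delta]\setminus\{[0]\},\qquad [\gamma]\mapsto[\pi\gamma].
\]
Here I expect the only genuine calculation: since $\pi$ is an order-preserving homomorphism, $\abs{\gamma}\le n\abs{\gamma'}$ yields $\abs{\pi\gamma}\le n\abs{\pi\gamma'}$, so the map is monotone and well defined; for the converse I would use that $\gamma'\notin\Delta$ makes $\abs{\gamma'}$ exceed every element of $\Delta$, turning the inequality $\abs{\pi\gamma}\le n\abs{\pi\gamma'}$ (equivalently $\abs{\gamma}\le n\abs{\gamma'}+\delta$ for some $\delta\in\Delta$) into $\abs{\gamma}<(n+1)\abs{\gamma'}$, whence $[\gamma]\le[\gamma']$. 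Surjectivity is clear, since each nonzero class of $\Gamma/\Delta$ is $[\pi\gamma]$ for some $\gamma\notin\Delta$. Granting this isomorphism, $[\Gamma]\setminus[\Delta]$ is short iff $[\Gamma/\Delta]\setminus\{[0]\}$ is short iff $[\Gamma/\Delta]$ is short (adjoining or deleting the single point $[0]$ preserves shortness) iff $\Gamma/\Delta$ is short by Lemma~\ref{lem:short char}. Feeding this into the main equivalence gives that $\Gamma$ is short iff $\Delta$ and $\Gamma/\Delta$ are short. Verifying the order isomorphism above, and seeing that convexity is precisely what is needed for it, is the heart of the argument; everything else is bookkeeping with the closure properties of short ordered sets.
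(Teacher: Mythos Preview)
Your proof is correct and follows the same approach as the paper: reduce to archimedean classes via Lemma~\ref{lem:short char}, use that $[\Gamma]$ is the union of the two pieces $[\Delta]$ and $[\Gamma]\setminus[\Delta]$, and invoke [ADH, 2.3.9] for the countable-rank clause. The only difference is that for the convex case the paper simply cites [ADH, p.~102] for the identification of $[\Gamma]\setminus[\Delta]$ with the nonzero archimedean classes of $\Gamma/\Delta$, whereas you spell out the order isomorphism $[\gamma]\mapsto[\pi\gamma]$ and verify it directly; your verification is fine.
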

\begin{proof} The direction $\Rightarrow$ is clear from Lemma~\ref{lem:short char}. For the converse, note that if
$\Delta$ and ${[\Gamma]\setminus [\Delta]}$ are short, then so is $[\Gamma]$, and hence $\Gamma$ as well by  Lemma~\ref{lem:short char}. Next, use that if~$\operatorname{rank}_{\Q}(\Gamma/\Delta)\le \aleph_0$, then $[\Gamma]\setminus [\Delta]$ is countable by [ADH, 2.3.9]. For convex~$\Delta$, see [ADH, p.~102].
\end{proof}

\begin{lemma}\label{lem:short ofield}
Let $K$ be an ordered field equipped with a convex valuation whose residue field is archimedean. 
Then $K$ is short iff its value group $\Gamma$ is short.
\end{lemma}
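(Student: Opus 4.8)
The plan is to reduce the biconditional to a comparison between $\Gamma$ and the set $[K]$ of archimedean classes of the \emph{additive} ordered group of $K$, and then to identify these classes with the value group using the archimedicity hypothesis. First I would view $K$ as an ordered abelian group under addition and apply Lemma~\ref{lem:short char} with $(K,+)$ in place of $\Gamma$: this yields that $K$ is short if and only if the ordered set $[K]$ is short. Hence it suffices to prove that $[K]$ is short if and only if $\Gamma$ is short, and for this I would exhibit an order-anti-isomorphism between $[K]$ (minus its least element) and $\Gamma$.

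The crucial step is to show that $[a]\mapsto va$ (for $a\in K^{>}$) defines an order-reversing bijection from $[K]\setminus\{[0]\}$ onto $\Gamma$. Surjectivity is immediate from surjectivity of $v\colon K^\times\to\Gamma$ (replace a chosen preimage by its absolute value), and the implication $[a]=[b]\Rightarrow va=vb$ holds in any ordered valued field (from $a\le nb$ one gets $a\preceq nb\asymp b$, so $va\ge vb$, and symmetrically), so the map is well defined. For injectivity together with the order-reversing property I would prove that for $a,b\in K^{>}$ the H\"older relation $[a]\le[b]$ (i.e.\ $a\le nb$ for some $n$, as in the ordering of archimedean classes recalled from [ADH, 2.4]) is equivalent to the valuation dominance $va\ge vb$. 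The nontrivial direction is that $va\ge vb$ forces $a\le nb$ for some $n$, and this is exactly where the hypotheses are used: if $va\ge vb$ then $u:=a/b\in\mathcal O$; either $u\prec 1$, whence $u<1$, or $u$ is a unit with residue $c\in\boldsymbol{k}^{>}$, and since $\boldsymbol{k}$ is archimedean some $n$ satisfies $c<n$, whence $u<n$ by convexity of the valuation (the residue of $n-u$ is $n-c>0$). In either case $a\le nb$. Thus $[a]\mapsto va$ is an order-reversing bijection, i.e.\ $[K]\setminus\{[0]\}$ is order-isomorphic to $\Gamma^*$.

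With this bijection in hand, $[K]\setminus\{[0]\}$ is short iff $\Gamma^*$ is short iff $\Gamma$ is short; and since $[K]$ arises from $[K]\setminus\{[0]\}$ by adjoining the single least element $[0]$, and adjoining one point preserves shortness, $[K]$ is short iff $\Gamma$ is short. Combined with the reduction of the first paragraph, this proves the lemma. The main obstacle is the equivalence $va\ge vb\Leftrightarrow a\le nb$ of the middle paragraph: only the direction invoking the archimedean residue field is nontrivial, and it is precisely here that the hypothesis cannot be dropped—without it a single value could correspond to several archimedean classes of $(K,+)$, arising from distinct archimedean classes inside $\boldsymbol{k}^\times$, and the bijection with $\Gamma$ would fail.
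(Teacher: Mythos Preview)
Your proof is correct and takes a genuinely different route from the paper's. The paper establishes the direction ``$\Gamma$ short $\Rightarrow$ $K$ short'' by first passing to the real closure of $K$ (using Corollary~\ref{cor:short oag} to keep the value group short and noting the residue field stays archimedean), then invoking several results from [ADH] to embed $K$ as an ordered field into the Hahn field $\R(\!(t^\Gamma)\!)$, and finally applying Lemma~\ref{lem:short char}(iii) via Esterle's Lemma~\ref{lem:Esterle}. For the converse it uses the decreasing surjection $f\mapsto vf\colon K^>\to\Gamma$. Your argument is more elementary and symmetric: you use Lemma~\ref{lem:short char}(i)$\Leftrightarrow$(ii) applied to the additive group of $K$, and then identify $[K]\setminus\{[0]\}$ with $\Gamma^*$ directly by showing that, under the archimedean residue field hypothesis, two positive elements lie in the same additive archimedean class iff they have the same value. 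This bypasses the real-closure reduction and the Hahn embedding entirely, and handles both directions at once. The paper's approach, on the other hand, makes the connection to Hahn products explicit and reuses machinery already in place, which fits the surrounding narrative (Lemma~\ref{lem:Esterle} is the engine behind Lemma~\ref{lem:short char}).
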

\begin{proof}
Suppose $\Gamma$ is short. Then $\Q\Gamma$ is also short, by the previous corollary,
and the real closure of $\res K$ remains archimedean;
hence to show that $K$ is short we may replace $K$ by its real closure to arrange that $K$ is real closed.
Using [ADH, 3.3.32, 3.3.42, 3.5.1, 3.5.12] we obtain an ordered  field embedding of $K$ into the ordered Hahn field $\R(\!(t^\Gamma)\!)$.
The underlying ordered additive group of $\R(\!(t^\Gamma)\!)$ is isomorphic with the ordered
Hahn product~$H[t^{\Gamma},\R]$; see~[ADH, p.~114].
Hence   $K$ is short by Lemma~\ref{lem:short char}. Conversely, if $K$ is short then so is its ordered subset $K^>$
and then also the image $\Gamma$ of $K^>$ under the decreasing map~$f\mapsto vf\colon K^>\to\Gamma$.
\end{proof}

\noindent
Hence if an ordered field is short, then so is its real closure.
If $K$  as in Lemma~\ref{lem:short ofield} is short and $L$ is an ordered field extension of $K$ with a convex valuation 
that makes it an immediate  extension of $K$, then $L$ is short.
(NB: the ordered fraction field of a short ordered integral 
domain may fail to be short \cite{Ci1,Ci2}.)
The following is from  \cite[\S{}2.10]{vdDMM}:

\begin{cor}\label{cor:T short}
$\T$ is short.
\end{cor}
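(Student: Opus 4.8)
The plan is to reduce shortness of $\T$ to shortness of its value group and then to establish the latter by a level-by-level induction mirroring the exponential construction of $\T$. First I would invoke Lemma~\ref{lem:short ofield}: the valuation of $\T$ is convex and its residue field is $\R$, which is archimedean, so $\T$ is short if and only if its value group $\Gamma_\T$ is short. It therefore suffices to prove $\Gamma_\T$ short. Here I identify $\Gamma_\T$ (written additively) with the group $\mathfrak{M}$ of transmonomials (written multiplicatively) via $v$, using that each monomial is itself a transseries and so contributes its value to $\Gamma_\T$.

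Next I would use the presentation of $\T$ as a countable increasing union $\T=\bigcup_n\T_n$, where $\T_0=\T_{\log}$ is the field of logarithmic transseries and $\T_n$ consists of the transseries of exponential depth $\le n$, with $\T_{n+1}$ the exponential extension of $\T_n$; see [ADH, Appendix~A] and \cite{vdDMM}. Each $\T_n$ carries a convex valuation with residue field $\R$ and value group $\Gamma_n$ (the group of monomials $\mathfrak M_n$ of depth $\le n$), so by Lemma~\ref{lem:short ofield} it is enough to show that each $\Gamma_n$ is short: then each $\T_n$ is short, and $\T$, being a union of countably many short ordered subsets, is short.

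I would then argue by induction on $n$. For the base case, the monomials of $\T_{\log}$ are products $\prod_k(\log_k x)^{r_k}$, so $\Gamma_0$ embeds into a Hahn product $H[S_0,\R]$ indexed by the countable (hence short) set $S_0$ of logarithmic levels; this Hahn product is short by Lemma~\ref{lem:Esterle}, and so $\Gamma_0$ is short by Lemma~\ref{lem:short char}. For the inductive step, assume $\Gamma_n$ is short. By Corollary~\ref{cor:short oag} it suffices to show $[\Gamma_{n+1}]\setminus[\Gamma_n]$ is short. The genuinely new monomials have the form $\mathfrak{m}\cdot\exp(\phi)$ with $\mathfrak{m}\in\mathfrak{M}_n$ and $\phi$ a purely infinite element of $\T_n$; the comparability (archimedean) class of such a monomial is governed by $v\phi\in\Gamma_n$, and sending each new class to the corresponding $v\phi$ is an order embedding of $[\Gamma_{n+1}]\setminus[\Gamma_n]$ into $\Gamma_n$. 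Since subsets of short ordered sets are short, $[\Gamma_{n+1}]\setminus[\Gamma_n]$ is short, and Corollary~\ref{cor:short oag} then yields $\Gamma_{n+1}$ short.

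The main obstacle is the inductive step: one must extract from the construction in \cite{vdDMM} (and [ADH, Appendix~A]) the precise form of the new monomials and verify that comparability of $\exp(\phi)$ and $\exp(\psi)$ is controlled exactly by $v\phi$ versus $v\psi$, so that the new comparability classes order-embed into the short group $\Gamma_n$. The reductions via Lemma~\ref{lem:short ofield} and the base case are routine given the cited results; the bookkeeping of comparability classes under exponentiation is where the real care is needed.
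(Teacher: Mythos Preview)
Your approach is correct and parallels the paper's argument, but uses a different filtration of $\T$. The paper works with $\T_{\exp}=\bigcup_m E_m$ where $E_m=\R[[G_m]]$, $G_0=x^{\R}$, and $G_{m+1}=G_m\exp(A_m)$ with $A_m$ a subgroup of the additive group of $E_m$ and $G_m$ convex in $G_{m+1}$; an easy induction using Corollary~\ref{cor:short oag} and Lemma~\ref{lem:short ofield} shows each $E_m$ is short, hence $\T_{\exp}$ is short, and then $\T=\bigcup_n(\T_{\exp}){\downarrow^n}$ is short as a countable union of isomorphic copies. You instead take $\T_0=\T_{\log}$ as the base and climb exponentially, reducing to value groups up front. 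Both routes rest on the same mechanism---at each exponential step the new archimedean classes are controlled by data from the previous level---but the paper's decomposition has the advantage that convexity of $G_m$ in $G_{m+1}$ is built into the construction in [ADH, Appendix~A], so the inductive step is immediate and the bookkeeping you flag as the main obstacle is absorbed. One small correction: your map $[\exp(\phi)]\mapsto v\phi$ is order-\emph{reversing} rather than order-preserving (since $[\exp(\phi)]<[\exp(\psi)]$ iff $\phi$ is archimedean-smaller than $\psi$ iff $v\phi>v\psi$), but since shortness is preserved under reversal this does not affect the argument.
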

\begin{proof}
We recall some features of the construction of $\T$ from [ADH, Appendix~A].
We have the ordered subfield 
$\T_{\exp}=\bigcup_m E_m$ of $\T$ where $E_m=\R[[G_m]]$ for certain ordered subgroups $G_m$ of $\T^>$,
with~$G_0=x^{\R}$ and $G_{m+1}=G_m\exp(A_m)$ for some
subgroup $A_m$ of the additive group of~$E_m$, with $G_m$ a convex subgroup of $G_{m+1}$.
An easy induction on  $m$   shows that each $E_m$ is short, and
thus $\T_{\exp}$ is short.
Now $\T=\bigcup_n (\T_{\exp}){\downarrow^n}$ where $f\mapsto f{\downarrow^n}$ is the $n$th compositional iterate of
the automorphism $f\mapsto f{\downarrow}=f\circ\log x$ of the ordered field $\T$, hence
$\T$ is also short.
\end{proof}

\begin{question}
Are $\d$-algebraic Hardy field extensions of short Hardy fields also short?  
\end{question}

\noindent
Next two algebraic variants of Lemma~\ref{lem:embed short into eta1}, attributed to Esterle in \cite[2.37]{DW}:

\begin{lemma}\label{shortetaab}
Let $\Delta$ be a short ordered abelian group and $\Gamma$ a divisible $\eta_1$-ordered abelian group.  
Then any  embedding of an ordered subgroup of $\Delta$ into $\Gamma$ extends to an embedding~${\Delta\to \Gamma}$. 
\end{lemma}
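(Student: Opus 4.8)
The plan is to run a Zorn's lemma (maximality) argument that extends the given embedding one generator at a time, exactly as in the proof of Lemma~\ref{lem:embed short into eta1} but now respecting the group structure. Let $i_0\colon A_0\to\Gamma$ be the given embedding of an ordered subgroup $A_0\subseteq\Delta$. I would consider the set of pairs $(B,j)$ where $B$ is an ordered subgroup of $\Delta$ with $A_0\subseteq B$ and $j\colon B\to\Gamma$ is an embedding of ordered abelian groups extending $i_0$, partially ordered by extension. Unions of chains are again such pairs, so Zorn's lemma yields a maximal $(B,j)$, and it then suffices to show $B=\Delta$.

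Towards this, suppose $\delta\in\Delta\setminus B$; I will extend $j$ to the ordered subgroup $B':=B+\Z\delta$, contradicting maximality. First I would extend $j$ to an embedding $\bar j\colon\Q B\to\Gamma$ of ordered abelian groups: since $\Gamma$ is divisible and torsion-free, each $b\in B$ and $n\ge 1$ determine a unique $\gamma\in\Gamma$ with $n\gamma=j(b)$, and setting $\bar j(b/n):=\gamma$ gives a well-defined $\Q$-linear, strictly order-preserving extension of $j$.

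Now two cases. If $\delta\in\Q B$, then $B\subsetneq B'\subseteq\Q B$, and the restriction of $\bar j$ to $B'$ is an embedding extending $j$, contradicting maximality. If $\delta\notin\Q B$, then $B'=B\oplus\Z\delta$, and extending $j$ amounts to choosing $j(\delta)=:\gamma\in\Gamma$ that realizes the cut of $\delta$ over $\Q B$, i.e.\ $\bar j(L)<\gamma<\bar j(R)$, where $L:=\{q\in\Q B: q<\delta\}$ and $R:=\{q\in\Q B: q>\delta\}$. The map $b+n\delta\mapsto j(b)+n\gamma$ is then a well-defined embedding of ordered abelian groups: checking $b+n\delta>0\Leftrightarrow j(b)+n\gamma>0$ reduces, for $n>0$, to $\delta>-b/n\Leftrightarrow\gamma>-j(b)/n$, which holds by the choice of $\gamma$, and the case $n<0$ follows by symmetry.

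It remains to produce such a $\gamma$, and this is the one step where the hypotheses are genuinely used. Since $\operatorname{rank}_\Q(\Q\Delta/\Delta)=0\le\aleph_0$, Corollary~\ref{cor:short oag} (applied to the extension $\Delta\subseteq\Q\Delta$ of ordered abelian groups) shows that $\Q\Delta$ is short; hence its ordered subset $\Q B$ is short, so $\operatorname{cf}(L)\le\omega$ and $\operatorname{ci}(R)\le\omega$. Applying the order-preserving $\bar j$, the sets $\bar j(L)<\bar j(R)$ have countable cofinality and coinitiality respectively, so --- exactly as in the proof of Lemma~\ref{lem:embed short into eta1} --- the $\eta_1$ property of $\Gamma$ furnishes $\gamma\in\Gamma$ with $\bar j(L)<\gamma<\bar j(R)$ (including the degenerate cases $L=\emptyset$ or $R=\emptyset$). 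The hard part is really just this realization of the cut: it is where shortness of $\Delta$, passed to $\Q\Delta$, guarantees the cut is ``small'' on both sides, and where $\eta_1$-ness of $\Gamma$ then fills it; everything else is the routine bookkeeping of extending an ordered-group embedding by one generator.
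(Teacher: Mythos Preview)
Your proof is correct and follows essentially the same approach as the paper's: both use Corollary~\ref{cor:short oag} to pass to the divisible hull $\Q\Delta$ (which remains short), use divisibility of $\Gamma$ to extend the embedding to $\Q B$, use shortness plus $\eta_1$ to realize the cut of a new element, and finish with Zorn. The only cosmetic difference is that the paper replaces $\Delta_0,\Delta$ by $\Q\Delta_0,\Q\Delta$ at the outset (so every extension step is a $\Q$-vector space extension $\Delta_0\oplus\Q\delta$), whereas you handle the ``$\delta\in\Q B$'' case separately inside the inductive step; the substance is the same.
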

\begin{proof} Let $\Delta_0$ be an ordered subgroup of $\Delta$ and $i\colon \Delta_0\to \Gamma$
 an embedding.
The divisible hull $\Q\Delta\subseteq \Gamma$ of $\Delta$ is   short, by Corollary~\ref{cor:short oag}. Replace~$\Delta_0$, $\Delta$ by~$\Q\Delta_0$,~$\Q\Delta$ (and $i$ accordingly) to arrange   $\Delta_0$, $\Delta$ to be divisible.
Given~${\delta\in \Delta\setminus \Delta_0}$,
Lemma~\ref{lem:embed short into eta1} yields $\gamma\in\Gamma$ 
with~$i(\Delta_0^{<\delta})<\gamma<i(\Delta_0^{>\delta})$, and
then $i$ extends to an embedding of the ordered subgroup $\Delta_0\oplus\Q\delta$ of $\Delta$ into $\Gamma$  sending $\delta$ to $\gamma$. Zorn does the rest. 
\end{proof}

\noindent
In the same way, taking real closures instead of divisible hulls in the proof:

\begin{lemma}\label{lem:embed short ordered fields} Any embedding of an ordered subfield of a short ordered field $K$ into
a real closed $\eta_1$-ordered field $L$ extends to an embedding $K\to L$.
\end{lemma}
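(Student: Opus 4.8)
The plan is to run the proof of Lemma~\ref{shortetaab} almost verbatim, replacing ``divisible hull'' by ``real closure'' throughout and using that a simple transcendental ordered field extension is pinned down by the cut its generator cuts out in the base field. Write $K_0$ for the given ordered subfield of $K$ and $i\colon K_0\to L$ for the given embedding.

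First I would reduce to the case that $K$ is real closed. Its real closure $K^{\rc}$ is short by the remark following Lemma~\ref{lem:short ofield}, and any embedding $K^{\rc}\to L$ restricts to one on $K$; so it suffices to extend $i$ to $K^{\rc}$, and I replace $K$ by $K^{\rc}$ (keeping $K_0$ as an ordered subfield of the now real closed $K$). Next I would run a Zorn's lemma argument on the poset of pairs $(F,j)$, where $K_0\subseteq F\subseteq K$ is an ordered subfield and $j\colon F\to L$ extends $i$, ordered by extension; unions of chains furnish upper bounds, so a maximal pair $(F,j)$ exists.

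The first claim is that $F$ is real closed: the relative algebraic closure $\widetilde F$ of $F$ in $K$ is real closed (as $K$ is), so $\widetilde F$ is a real closure of $F$, and since $L$ is real closed the embedding $j$ extends to an ordered embedding $\widetilde F\to L$ by uniqueness of real closures; maximality then forces $F=\widetilde F$. The second claim, which finishes the proof, is that $F=K$. If not, pick $a\in K\setminus F$; since $F$ is real closed, $a$ is transcendental over $F$. As $F$ is short (being an ordered subset of $K$), applying Lemma~\ref{lem:embed short into eta1} to the ordered set $F\cup\{a\}$, its subset $F$, the embedding $j$, and the $\eta_1$-ordered set $L$ yields $c\in L$ with $j(F^{<a})<c<j(F^{>a})$. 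Because $a\notin F$, the cut $\{x\in F:x<a\}$ is not realized in $F$, whence $c\notin j(F)$ and $c$ is transcendental over the real closed field $j(F)$; therefore $a\mapsto c$ extends $j$ to an ordered field embedding $F(a)\to L$, contradicting maximality. Hence $F=K$.

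The only point requiring genuine care, and the analogue of the step where divisibility was used in the group case, is the standard fact that an ordering on a simple transcendental extension $F(a)$ of an ordered field is determined by the cut of $a$ in $F$; this is exactly what upgrades the order-set conclusion $j(F^{<a})<c<j(F^{>a})$ into a well-defined \emph{ordered field} embedding $F(a)\to L$, and I would invoke it as a known lemma on transcendental extensions of ordered fields rather than reprove it.
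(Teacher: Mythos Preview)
Your proof is correct and follows exactly the approach the paper indicates: the paper's proof simply reads ``In the same way, taking real closures instead of divisible hulls in the proof,'' and you have spelled out precisely that adaptation of the argument for Lemma~\ref{shortetaab}.
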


\noindent
Combining Corollary~\ref{cor:T short} and the previous lemma yields:

\begin{cor}
The ordered field $\T$ embeds into each  real closed $\eta_1$-ordered field.
\end{cor}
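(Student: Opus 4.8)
The plan is to read this off as an immediate consequence of the two results it is said to combine, namely Corollary~\ref{cor:T short} (that $\T$ is short) and Lemma~\ref{lem:embed short ordered fields} (that any embedding of an ordered subfield of a short ordered field into a real closed $\eta_1$-ordered field extends to the whole field). The only thing to supply is a starting embedding to which the lemma can be applied.

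Concretely, let $L$ be a given real closed $\eta_1$-ordered field. I would take $K:=\T$, which is short by Corollary~\ref{cor:T short}. The prime subfield $\Q$ is an ordered subfield of $\T$, and it admits the unique ordered field embedding $\Q\to L$ (since $L$, being an ordered field, contains a copy of the ordered field $\Q$ as its prime field). Applying Lemma~\ref{lem:embed short ordered fields} to this embedding $\Q\to L$, with $\T$ in the role of the short ordered field $K$, yields an embedding $\T\to L$ extending it. This is exactly the assertion.

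There is essentially no obstacle here: the substance of the argument already lives in Corollary~\ref{cor:T short} (whose proof rests on Esterle's shortness result Lemma~\ref{lem:Esterle} applied along the layered construction $\T_{\exp}=\bigcup_m E_m$ and $\T=\bigcup_n(\T_{\exp}){\downarrow^n}$) and in Lemma~\ref{lem:embed short ordered fields} (a Zorn's-lemma transfinite extension of one element at a time, using that $\operatorname{cf}$ and $\operatorname{ci}$ of subsets of a short set are countable while $L$ is $\eta_1$). The corollary is the direct payoff. If I wanted to stress a single point of care, it would merely be that the starting domain must be taken to be a genuine ordered subfield of $\T$ with an embedding into $L$ already in hand, for which the prime field $\Q$ is the clean universal choice.

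\begin{proof}
By Corollary~\ref{cor:T short} the ordered field $\T$ is short. Let $L$ be a real closed $\eta_1$-ordered field. The prime field $\Q$ is an ordered subfield of $\T$, and the unique ordered field embedding $\Q\to L$ extends, by Lemma~\ref{lem:embed short ordered fields} applied with $K=\T$, to an embedding $\T\to L$.
\end{proof}
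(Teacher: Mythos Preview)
Your proposal is correct and matches the paper's intended argument exactly: the paper simply states that the corollary follows by combining Corollary~\ref{cor:T short} with Lemma~\ref{lem:embed short ordered fields}, and you have supplied the obvious missing detail of using the prime subfield $\Q$ as the starting embedding.
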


\noindent
Lemma~\ref{lem:embed short H-fields} below is an analogue of Lemma~\ref{lem:embed short ordered fields} for $H$-fields   with small derivation.



\subsection*{Realizing $\T_{\log}$ as an analytic Hardy field}
This uses the following variant of Lem\-ma~\ref{lem:Kc->M} for embedding $\upo$-free immediate extensions:

\begin{lemma}\label{lem:L->M}  
Let $K$ be an $H$-asymptotic field with short value group,
$L$ an $\upo$-free   immediate extension of~$K$, and $M$ a
newtonian  $H$-asymptotic field with asymptotic integration. Suppose $M$ is countably spherically complete.
Then any embedding~$K\to M$ extends to an embedding~$L\to M$.
\end{lemma}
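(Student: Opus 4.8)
The plan is to build the embedding by exhausting $L$, reducing as in the proof of Lemma~\ref{lem:Kc->M} to the adjunction of a single pseudolimit; the new point is that shortness of the value group plays the role that countable cofinality and passage to the completion played there. Since $L$ is an immediate extension of $K$, it has the same asymptotic couple as $K$, so $K$ has asymptotic integration and short value group, and every intermediate field $K\subseteq K'\subseteq L$ is again an $H$-asymptotic field with $\Gamma_{K'}=\Gamma$ short and $L$ immediate over $K'$. By Zorn's Lemma I would choose, among all fields $K'$ with $K\subseteq K'\subseteq L$ to which $\iota$ extends to an embedding into $M$, a maximal one, keeping the name $\iota\colon K'\to M$ for the extension; it then suffices to show $K'=L$.

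Next I would use maximality to force closure properties on $K'$. Because $M$ is $\upo$-free (a newtonian $H$-asymptotic field with asymptotic integration is $\upo$-free) and $L$ is $\upo$-free, the $\upl$-free and $\upo$-free immediate hulls of $K'$ can be realized inside $L$ by the constructions of [ADH, 11.7]; by their universal property $\iota$ extends over these hulls into the $\upo$-free field $M$, so maximality forces $K'$ itself to be $\upo$-free. Being $\upo$-free with $L$ a $\upo$-free extension, $K'$ has a newtonization inside $L$ by \cite[Theorem~B]{Nigel19}, and this newtonization embeds into the newtonian field $M$ over $K'$ by \cite[Theorem~3.5]{Nigel19}; maximality now forces $K'$ to be newtonian as well. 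Hence $K'$ is $\upo$-free and newtonian, so asymptotically $\d$-algebraically maximal by \cite[Theorem~A]{Nigel19}.

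Finally, suppose toward a contradiction that $K'\neq L$ and pick $f\in L\setminus K'$. As $L$ is immediate over $K'$, there is a divergent pc-sequence $(a_\rho)$ in $K'$ with $a_\rho\leadsto f$; viewing it as a pc-sequence in the valued additive group of $K'$, whose value set lies in the short group $\Gamma$, Lemma~\ref{lem:short} lets me pass to a final segment of length $\omega$, still with $a_\rho\leadsto f$. Since $K'$ is asymptotically $\d$-algebraically maximal, $(a_\rho)$ is of $\d$-transcendental type over $K'$ by [ADH, 11.4.8, 11.4.13]. Then $(\iota a_\rho)$ is a pc-sequence of length $\omega$ in $M$, so countable spherical completeness of $M$ yields $g\in M$ with $\iota a_\rho\leadsto g$, and [ADH, 11.4.7] extends $\iota$ to an embedding $K'\langle f\rangle\to M$ sending $f$ to $g$. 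This contradicts the maximality of $K'$, whence $K'=L$.

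The step I expect to be the main obstacle is making $K'$ $\upo$-free: unlike in Lemma~\ref{lem:Kc->M}, $\upo$-freeness is not assumed on the ground field, so one must realize the $\upl$-free and $\upo$-free hulls of $K'$ simultaneously inside $L$ and inside $M$ and verify, via the uniqueness (up to $K'$-isomorphism) of these hulls coming from the $\d$-transcendental type of the underlying $\upl$- and $\upo$-sequences, that the two realizations are compatible with $\iota$, so that the extended embedding indeed lands on the copy sitting in $L$. A secondary technical point, handled by the same universal-property bookkeeping, is the preliminary passage to $\d$-valued hulls required by the newtonization machinery of \cite{Nigel19}.
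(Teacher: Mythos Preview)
Your overall strategy---Zorn, then force the maximal $K'$ to be $\upo$-free and newtonian (hence asymptotically $\d$-algebraically maximal), then handle the transcendental step via Lemma~\ref{lem:short}---matches the paper's. But your diagnosis of where the difficulty lies is off.

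The step you flag as the main obstacle, making $K'$ $\upo$-free, is actually free: since $K'\subseteq L$ is immediate and $L$ is $\upo$-free, $K'$ is $\upo$-free by [ADH, remark preceding 11.7.20]. No hulls need to be realized and no compatibility needs to be checked.

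The genuine gap is the newtonization step. You write that ``$K'$ has a newtonization inside $L$ by \cite[Theorem~B]{Nigel19},'' but that theorem only produces a newtonization of $K'$ as some immediate $\d$-algebraic extension; it gives no reason for this extension to embed into $L$. The semiuniversal property from \cite[Theorem~3.5]{Nigel19} lets the newtonization of $K'$ embed over $K'$ into any \emph{newtonian} extension, but $L$ is only assumed $\upo$-free. So although $\iota$ extends over the newtonization into the newtonian field $M$, this does not contradict maximality of $K'$, because your Zorn poset consists of subfields of $L$. The same problem afflicts the $\d$-valued step you mention at the end: $\operatorname{dv}(K')$ sits inside $L$ only if $L$ is $\d$-valued, which $\upo$-freeness alone does not guarantee.

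The paper fixes both issues in one stroke by enlarging $L$ \emph{before} applying Zorn: first replace $L$ by $\operatorname{dv}(L)$ (still $\upo$-free, same value group), then by an immediate newtonization of $L$ (available since $L$ is $\upo$-free). The new $L$ is $\d$-valued, $\upo$-free, newtonian, and still an immediate extension of $K$ with short value group, so the lemma for this larger $L$ implies the original. Now both $\operatorname{dv}(K')$ and the newtonization of $K'$ embed into $L$ over $K'$, and your maximality argument goes through verbatim.
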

\begin{proof}
Let $\iota\colon K\to M$ be an embedding; we shall extend $\iota$ to an embedding ${L\to M}$. Now $L$ is pre-$\d$-valued by [ADH, 10.1.3], and as $\operatorname{dv}(L)$ is $\upo$-free by [ADH, remark after 13.6.1] and $\Gamma_{\operatorname{dv}(L)}=\Gamma$ by [ADH, 10.3.2(i)], we can replace $L$ by $\operatorname{dv}(L)$ to arrange that $L$ is $\d$-valued. Then $L$ has an immediate $\d$-algebraic newtonian
$\upo$-free extension by [ADH, remark after 14.0.1], which is then a newtonization of~$L$ by~\cite[Theorem~3.5]{Nigel19}. Replacing $L$ by this newtonization we also arrange that~$L$ is newtonian. 
Using Zorn we further arrange that $\iota$ does not extend to any embedding into~$M$ of any valued differential subfield of $L$
properly containing $K$. Note that~$K$ is $\upo$-free by [ADH, remark preceding 11.7.20]. 
Now $M$ is $\d$-valued by [ADH, 14.2.5], 
hence so is $K$ by the universal property of $\operatorname{dv}(K)$. Likewise, $K$ is newtonian, by the semiuniversal property
of the newtonization of $K$ (which exists by the same arguments as we used for $L$). 
Hence  $K$ is asymptotically $\d$-algebraically maximal by~\cite[Theorem~A]{Nigel19}. 
It remains to show that $K=L$. Suppose towards a contradiction that 
$f\in L\setminus K$. Take a divergent pc-sequence
$(f_\rho)$ in~$K$ with pseudolimit~$f$. By Lemma~\ref{lem:short} we arrange that $(f_\rho)$ has
length~$\omega$, and hence we can take $g\in M$ with~$\iota(f_\rho)\leadsto g$.
As in the proof of Lemma~\ref{lem:Kc->M} we then obtain an  embedding~$K\langle f\rangle\to M$ extending $\iota$
and sending $f$ to $g$, a contradiction.
\end{proof}


\noindent
Set $\ell_0:=x\in\T$ and $\ell_{n+1}:=\log\ell_n$.
Recall that~$\T_{\log}=\bigcup_n \R[[\mathfrak L_n]]$ where $\mathfrak L_n:=\ell_0^{\R}\cdots\ell_n^{\R}$
is the subgroup of the monomial group $G^{\operatorname{LE}}$ of~$\T$ generated by the real powers of
the $\ell_i$ ($i=0,\dots,n$). 
The ordered subgroup~$\mathfrak L:=\bigcup_n\mathfrak L_n$ of $G^{\operatorname{LE}}$  is divisible and short,   
$\T_{\log}$ is real closed, $\upo$-free and an immediate $H$-field extension of its $H$-subfield $\R(\mathfrak L)$.  
Identify   $\R(\mathfrak L)$  with an $H$-subfield of the analytic Hardy field~$\Li\!\big(\R(x)\big)$ 
in the obvious way.
From  \cite[Corollary~3.2]{ADHfgh},
Corollary~\ref{smanpc}, and Lemma~\ref{lem:L->M}, we obtain:

\begin{cor}\label{cor:Tlog}
The $H$-field $\T_{\log}$ embeds over $\R(\mathfrak{L})$ into any maximal Hardy field. 
Likewise with {\em maximal analytic\/} and with {\em maximal smooth\/} in place of~{\em maximal}.
\end{cor}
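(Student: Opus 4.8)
The plan is to apply Lemma~\ref{lem:L->M} with $K:=\R(\mathfrak{L})$ and $L:=\T_{\log}$, taking for $M$ the given maximal Hardy field (respectively maximal analytic or maximal smooth Hardy field). The structural facts recorded just before the corollary supply the hypotheses on $K$ and $L$: the value group of $K=\R(\mathfrak{L})$ is a copy of the short divisible group $\mathfrak{L}$, so $K$ is an $H$-asymptotic field with short value group, and $L=\T_{\log}$ is, by construction, an $\upo$-free immediate $H$-field extension of $K$.

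First I would verify the hypotheses on $M$. Each of the three kinds of maximal Hardy field is a closed $H$-field, hence newtonian, $\upo$-free, and Liouville closed, and in particular has asymptotic integration. It remains to see that $M$ is countably spherically complete as a valued abelian group, and this is where the two pseudoconvergence results enter: for a maximal Hardy field, any pc-sequence of countable length pseudoconverges in a Hardy field extension by \cite[Corollary~3.2]{ADHfgh}, which by maximality must be $M$ itself; for a maximal analytic or maximal smooth Hardy field one argues the same way using Corollary~\ref{smanpc} in place of \cite[Corollary~3.2]{ADHfgh}. (This matches the observation at the start of the section that $\eta_1$-ness gives countable spherical completeness via [ADH, 2.4.2].)

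Next I would exhibit an embedding $K\to M$ to feed into Lemma~\ref{lem:L->M}. Since $M\supseteq\R(x)$ is Liouville closed, it contains all iterated logarithms $\ell_n$ together with their real powers, hence contains $\R(\mathfrak{L})$ as an $H$-subfield---precisely the identification under which the corollary is phrased. Taking $\iota\colon\R(\mathfrak{L})\to M$ to be this inclusion, Lemma~\ref{lem:L->M} extends $\iota$ to an embedding $\T_{\log}\to M$, necessarily over $\R(\mathfrak{L})$. I expect the real work to be already packaged inside Lemma~\ref{lem:L->M}, where shortness of the value group (via Lemma~\ref{lem:short}) reduces every divergent pc-sequence to length $\omega$ so that countable spherical completeness and newtonianity of $M$ can be combined to push embeddings across the immediate extension. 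At the level of the corollary itself, then, the only genuine checks are the countable spherical completeness of $M$ and the elementary but essential observation that $\R(\mathfrak{L})$ sits canonically inside $M$, which guarantees that the extension produced really does fix $\R(\mathfrak{L})$ pointwise.
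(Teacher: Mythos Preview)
Your proposal is correct and follows essentially the same approach as the paper: the paper simply cites \cite[Corollary~3.2]{ADHfgh}, Corollary~\ref{smanpc}, and Lemma~\ref{lem:L->M}, and you have correctly unpacked how these combine, including the key observation that maximality of $M$ turns the pseudoconvergence-in-an-extension statements into countable spherical completeness of $M$ itself.
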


\noindent
By Corollary~\ref{cor:completion Hardy field}, every embedding $i\colon \T_{\log}\to M$ as in Corollary~\ref{cor:Tlog}
extends to an embedding of the completion of $\T_{\log}$ into $M$. In the next section we  
show that~$i$ even extends to an embedding of {\it every}\/  immediate $H$-field extension of $\T_{\log}$
into~$M$.

\subsection*{Implications between ``short'', ``countable cofinality'', and ``bounded''} The first two notions are defined
for ordered sets, and ``bounded'' is defined for subsets of $\Cc$. It is clear that for ordered sets, 
$$\text{short}\ \Longrightarrow\  \text{countable cofinality},$$
 and that for Hausdorff fields,
 $$\text{ countable cofinality}\ \Longrightarrow\  \text{bounded}.$$
These implications cannot be reversed for analytic Hardy fields: Let $H$ be a maximal analytic Hardy field. Corollary~\ref{anteta}  gives a sequence $(h_{\lambda})$ in $H$, indexed
by the ordinals $\lambda \leq \omega_1$, such that all $h_\lambda$ are transexponential and $*h_\lambda < *h_\mu$ for all~${\lambda < \mu\leq \omega_1}$.
It follows that $ \R\<h_\lambda:\,  \lambda< \omega_1\>$ is a bounded analytic Hardy field (bounded by $h_{\omega_1}$) with cofinality 
$\omega_1$, and so $ \R\<h_\lambda:\, \lambda\leq \omega_1\>$  is an analytic Hardy field of cofinality $\omega$ that is not short. (We thank Philip Ehrlich and Elliot Kaplan for a useful email discussion on this topic.)

\section{Embeddings of Immediate Extensions}\label{eie}

 \noindent
The goal of this section is to prove the following theorem, which partly generalizes Lemma~\ref{lem:L->M} beyond the $\upo$-free setting:

\begin{theorem}\label{thm:embedd imm}
Let $K$ be a short pre-$H$-field with archimedean residue field, and 
suppose $K$ is $\upo$-free or not $\upl$-free.
Let
$\hat{K}$ be an  immediate pre-$H$-field extension of~$K$ 
and let $M$ be a countably spherically complete closed $H$-field. Then every embedding~$K\to M$   extends to an embedding~$\hat{K}\to   M$.
\end{theorem}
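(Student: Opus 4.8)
The plan is to adapt the Zorn-and-pseudolimit method of Lemma~\ref{lem:L->M}, with the hypothesis on $K$ entering exactly to supply the differential-algebraic maximality that drives the argument. First I would record the reductions. By Lemma~\ref{lem:short ofield}, shortness of the pre-$H$-field $K$ (with archimedean residue field) is equivalent to shortness of its value group $\Gamma_K$; since $\hat{K}$ is immediate over $K$ it has the same (short) value group and the same archimedean residue field, so $\hat K$ is short too, and by Lemma~\ref{lem:short} every pc-sequence in $\hat K$ has a final segment of length~$\omega$. Next I note that \emph{order-preservation is automatic}: if $L$ is an immediate pre-$H$-field extension of a pre-$H$-field $K_0$, then for $a\in L^\times$ there is $c\in K_0^\times$ with $a\asymp c$, and convexity of the valuation ring together with the uniquely ordered archimedean residue field forces $\sgn(a)=\sgn(c)\cdot\sgn(\overline{a/c})$, already determined in $K_0$; hence any valued differential embedding $L\to M$ extending an embedding $K_0\to M$ of pre-$H$-fields preserves the ordering, and it suffices throughout to produce embeddings of \emph{valued differential} fields. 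Finally I may assume $M$ extends $K$ and seek an embedding $\hat K\to M$ over $K$; recall that, being a closed $H$-field, $M$ is $\upo$-free, newtonian, Liouville closed (so has asymptotic integration), and countably spherically complete.

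By Zorn I then take an embedding $\iota'\colon K'\to M$ maximal among extensions of $\iota$ to intermediate valued differential subfields $K\subseteq K'\subseteq\hat K$, and aim to show $K'=\hat K$. The \emph{engine} is as in Lemma~\ref{lem:L->M}: if $K'\subsetneq\hat K$, pick $f\in\hat K\setminus K'$ and a divergent pc-sequence $(f_\rho)$ in $K'$ with $f_\rho\leadsto f$; by Lemma~\ref{lem:short} and shortness I may take $(f_\rho)$ of length~$\omega$, and countable spherical completeness of $M$ yields $g\in M$ with $\iota'(f_\rho)\leadsto g$. If $(f_\rho)$ is of $\d$-transcendental type over $K'$, then [ADH, 11.4.7] extends $\iota'$ to $K'\langle f\rangle$ sending $f\mapsto g$, contradicting maximality. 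So the entire matter reduces to arranging that every (length-$\omega$) divergent pc-sequence in $K'$ is of $\d$-transcendental type, i.e.\ that $K'$ is asymptotically $\d$-algebraically maximal.

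\textbf{This is the main obstacle}, and it is precisely where the hypothesis ``$\upo$-free or not $\upl$-free'' is used. In the $\upo$-free case one argues as in Lemma~\ref{lem:L->M}: after enlarging the target $\hat K$ to an immediate asymptotically $\d$-algebraically maximal extension $L$, every $\d$-algebraic-type pc-sequence in $K'$ already pseudoconverges in $L$, so the newtonization of $K'$ is realized inside $L$, and maximality of $\iota'$ together with the semiuniversal property of that newtonization (\cite[Theorem~3.5]{Nigel19}, with $M$ newtonian) forces $K'$ to be newtonian, hence asymptotically $\d$-algebraically maximal by \cite[Theorem~A]{Nigel19}. For the not-$\upl$-free case the non-$\upl$-freeness is inherited by every intermediate field $K'$ (the pseudolimit in $K$ of the logarithmic sequence persists, the asymptotic couple being unchanged), and the same mechanism should go through once one has the not-$\upl$-free analogue of the newtonization theory — the existence and semiuniversality (equivalently, uniqueness up to isomorphism) of an immediate $\d$-algebraically maximal extension. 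Securing this analogue, verifying that the relevant maximality-forcing property transfers to the Zorn-maximal domain $K'$, and thereby forcing $K'$ asymptotically $\d$-algebraically maximal in both cases, is the crux of the proof; it is also exactly the point at which the excluded middle case ($\upl$-free but not $\upo$-free) must be barred, since there uniqueness of immediate $\d$-algebraically maximal extensions can fail and the argument breaks down.
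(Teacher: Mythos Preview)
Your reductions and the engine are fine, but the case analysis breaks down at exactly the point you flag as the crux, and it breaks in \emph{both} cases, not just one.

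In your ``$K$ $\upo$-free'' branch you enlarge $\hat K$ to an immediate asymptotically $\d$-algebraically maximal $L$ and then want the Zorn-maximal $K'$ to be newtonian. For that you need $K'$ $\upo$-free, and the only way to get this from the ambient field is ``$\upo$-freeness goes down in immediate extensions'' [ADH, remark before 11.7.20]: you need $L$ $\upo$-free. But if $\hat K$ is not $\upo$-free (which nothing in the hypotheses excludes: $\hat K$ may contain a pseudolimit of $(\upo_\rho)$ even when $K$ does not), then \emph{no} immediate extension of $\hat K$ is $\upo$-free, since that pseudolimit persists. So your argument here only covers the subcase where $\hat K$ itself is $\upo$-free---and that is exactly Lemma~\ref{lem:L->M} verbatim. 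The hard content of the theorem is precisely when $K$ is $\upo$-free but $\hat K$ is not.

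In your ``$K$ not $\upl$-free'' branch you posit a not-$\upl$-free analogue of newtonization with existence and semiuniversality, but no such package is available: the results from \cite{Nigel19} you cite live squarely in the $\upo$-free world, and there is no substitute guaranteeing that your Zorn-maximal $K'$ is asymptotically $\d$-algebraically maximal. The paper does \emph{not} try to produce one. Instead, it reduces both problematic cases to the $\upo$-free $\hat K$ case by enlarging the \emph{pair} $(K,\hat K)$ simultaneously: first (if necessary) push $\hat K$ to be not $\upl$-free; then use the $\HLO$-cut machinery (Proposition~\ref{prop:HLOimm}) to put compatible $\HLO$-cuts on $K$ and $\hat K$---this is exactly where the hypothesis ``$\upo$-free or not $\upl$-free'' on $K$ is consumed---and invoke the pair version of [ADH, 16.4.1] (Proposition~\ref{prop:upo-free imm}) to obtain an immediate $\upo$-free short $\HLO$-pair $(\mathbf K^*,\hat{\mathbf K}^*)$ extending $(\mathbf K,\hat{\mathbf K})$ together with an extension of the embedding $K\to M$ to $K^*\to M$. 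Now $\hat K^*$ is $\upo$-free and Lemma~\ref{lem:L->M} applies to $K^*\subseteq\hat K^*$. (When $K$ is $\upo$-free but $\hat K$ is not $\upl$-free, one first adjoins a pseudolimit $\upl$ of $(\upl_\rho)$ to $K$---this is $\d$-transcendental by [ADH, 13.6.3], so embeds into $M$ via countable spherical completeness---and then runs the not-$\upl$-free branch from $K\langle\upl\rangle$.) The missing idea in your proposal is this pairwise $\upo$-free hull, not an alternative maximality theory.
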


\noindent
Using also 
\cite[Corollary~3.2]{ADHfgh} and
Corollary~\ref{smanpc}, this yields:

\begin{cor}
Let $K$ be a short Hardy field which is $\upo$-free or not $\upl$-free, and let~$M$ be a 
maximal Hardy field extending $K$. Then every immediate Hardy field extension of $K$ embeds into~$M$ over $K$. Likewise with ``maximal analytic'' as well as with ``maximal smooth'' in place of ``maximal''.
\end{cor}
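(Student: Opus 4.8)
The plan is to reduce the statement directly to Theorem~\ref{thm:embedd imm}, taking the base embedding to be the inclusion $K\hookrightarrow M$ (which exists since $M$ extends $K$). All three cases (maximal, maximal analytic, maximal smooth) are handled uniformly, the only case-dependent input being the pseudoconvergence result used to certify that $M$ is countably spherically complete. Concretely, given an immediate Hardy field extension $L$ of $K$, I note that $L$ is in particular an immediate pre-$H$-field extension of $K$; so it suffices to verify the hypotheses of Theorem~\ref{thm:embedd imm} on $K$ and on $M$ and then take $\hat K:=L$, since the resulting embedding $L\to M$ extends the inclusion and is therefore over $K$.

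The hypotheses on $K$ are essentially given. By assumption $K$ is a short Hardy field, hence a pre-$H$-field, and it is $\upo$-free or not $\upl$-free. Its residue field is archimedean: since $K$ is a Hausdorff field, each $f\in\mathcal O=\{f:f\preceq 1\}$ has $f'\in K$ of eventually constant sign, so $f$ is eventually monotonic and, being bounded, has a real limit $c=\lim_{t\to\infty}f(t)$ with $f-c\prec 1$; thus $f\mapsto\lim f$ induces an embedding of $\res K$ into $\R$.

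The substantive step, which I expect to be the only real point of the argument, is to show that $M$ is a countably spherically complete closed $H$-field. That $M$ is a closed $H$-field holds in each of the three cases by the facts recalled in the introduction and at the start of Section~\ref{sec:pc}. For countable spherical completeness as a valued field I would argue by maximality together with the relevant Hardy-field pseudoconvergence machinery: let $(f_\rho)$ be a pc-sequence of length $\omega$ in $M$. If $M$ is a maximal Hardy field, then by \cite[Corollary~3.2]{ADHfgh} the sequence $(f_\rho)$ pseudoconverges in some Hardy field extension of $M$, which by maximality equals $M$, so $(f_\rho)$ has a pseudolimit in $M$. If $M$ is maximal analytic or maximal smooth, the same argument goes through using the analytic, respectively smooth, form of Corollary~\ref{smanpc} in place of \cite[Corollary~3.2]{ADHfgh}. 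Hence $M$ is countably spherically complete in all three cases.

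With both sets of hypotheses in hand, Theorem~\ref{thm:embedd imm} applied to the inclusion $K\hookrightarrow M$ and to $\hat K:=L$ yields an embedding $L\to M$ over $K$, as required. The genuine work is thus concentrated in the countable spherical completeness of $M$; once that is secured from maximality and the appropriate pseudoconvergence result, the corollary is immediate from the theorem.
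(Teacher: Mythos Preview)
Your proposal is correct and takes essentially the same approach as the paper: the corollary is deduced from Theorem~\ref{thm:embedd imm} by noting that $K$ is a short pre-$H$-field with archimedean residue field, and that $M$ is a closed $H$-field which is countably spherically complete by maximality together with \cite[Corollary~3.2]{ADHfgh} (for maximal $M$) or Corollary~\ref{smanpc} (for maximal analytic or smooth $M$). Your added detail on the archimedean residue field is fine and standard.
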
 

\noindent
The main steps towards the proof of Theorem~\ref{thm:embedd imm} are Propositions~\ref{prop:upo-free imm} and~\ref{prop:HLOimm} below. This requires us to revisit the topic of pre-$\HLO$-fields once again.

We note also that by \cite{VDF} and [ADH, 10.5.8], each pre-$H$-field has an immediate strict pre-$H$-field extension that is
spherically complete.

\subsection*{Immediate pairs of pre-$\HLO$-fields}
Here we generalize [ADH, 16.4.1] to certain pairs of pre-$\HLO$-fields. A {\bf pre-$\HLO$-pair}
is a pair~$(\mathbf K,\hat{\mathbf K})$ of pre-$\HLO$-fields
with~${\mathbf K \subseteq \hat{\mathbf K}}$. Let 
 $(\mathbf K,\hat{\mathbf K})$ be a pre-$\HLO$-pair, with $\mathbf K=(K,\dots)$ and $\hat{\mathbf K}=(\hat{K},\dots)$. We call $(\mathbf K,\hat{\mathbf K})$ a {\bf $\HLO$-pair} if both~$\mathbf K$,~$\hat{\mathbf K}$ are $\HLO$-fields,
and we say that~$(\mathbf K,\hat{\mathbf K})$ is {\bf immediate} if the valued field extension $K\subseteq \hat K$ is immediate.
 We also call~$(\mathbf K,\hat{\mathbf K})$ {\bf $\upo$-free} if both $K$, $\hat{K}$ are $\upo$-free, and similarly for
other properties of pre-$H$-fields.
A pre-$\HLO$-pair~$(\mathbf K^*,\hat{\mathbf K}^*)$
{\bf extends}~$(\mathbf K,\hat{\mathbf K})$ if~$\mathbf K\subseteq\mathbf K^*$ and
$\hat{\mathbf K}\subseteq\hat{\mathbf K}^*$.

\begin{prop}\label{prop:upo-free imm}
Suppose $(\mathbf K,\hat{\mathbf K})$ is an immediate pre-$\HLO$-pair such that if $K$ is $\upo$-free \textup{(}$\upl$-free, respectively\textup{)}, then so is~$\hat K$.
Then $(\mathbf K,\hat{\mathbf K})$ extends to an immediate $\upo$-free $\HLO$-pair $(\mathbf K^*,\hat{\mathbf K}^*)$ 
such that $\res{\mathbf K}^*$ is algebraic over~$\res{\mathbf K}$ and   any embedding of~$\mathbf K$ into a Schwarz closed $\HLO$-field $\mathbf L$ extends to
an embedding~$\mathbf K^*\to\mathbf L$. 
$$
\begin{CD}
\mathbf K^* @>\subseteq>{\textup{\tiny\ immediate\ }}>  \hat{\mathbf K}^* \\ 
@AA{\subseteq}A @A{\subseteq}AA  \\
\mathbf K   @>\subseteq>{\textup{\tiny\ immediate\ }}>  \hat{\mathbf K}  
\end{CD}$$
Moreover, if~$\mathbf K$ is short and  $\res\mathbf K$ is archimedean, then we can choose such a pair~$(\mathbf K^*$, $\hat{\mathbf K}^*)$ where $\mathbf K^*$ is also short.
\end{prop}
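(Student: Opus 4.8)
The plan is to show that the pair $(\mathbf K^*,\hat{\mathbf K}^*)$ already produced by the construction above is short in its first coordinate as soon as $\mathbf K$ is short and $\res\mathbf K$ is archimedean, so that no modification of the construction is needed. The idea is to pass from shortness of the ordered field $K^*$ to shortness of its value group via Lemma~\ref{lem:short ofield}, and then to control the value-group growth across the construction using Corollary~\ref{cor:short oag}.

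First I would check that $\res\mathbf K^*$ is archimedean. By the part of the proposition already established, $\res\mathbf K^*$ is algebraic over $\res\mathbf K$; and an ordered field algebraic over an archimedean ordered field is again archimedean, since a root $\alpha$ of a monic polynomial $Y^n+c_{n-1}Y^{n-1}+\cdots+c_0$ with $c_i\in\res\mathbf K$ satisfies $\abs{\alpha}\le 1+\max_i\abs{c_i}$, and the right-hand side is bounded by an integer because $\res\mathbf K$ is archimedean. Since a pre-$H$-field carries a convex valuation, Lemma~\ref{lem:short ofield} applies to both $\mathbf K$ and $\mathbf K^*$: shortness of $K$ is equivalent to shortness of $\Gamma_{\mathbf K}$, and shortness of $K^*$ to shortness of $\Gamma_{\mathbf K^*}$. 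As $\mathbf K$ is short, $\Gamma_{\mathbf K}$ is short, so it remains only to see that $\Gamma_{\mathbf K^*}$ is short.

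For this I would trace the construction of $\mathbf K^*$ through the lemmas underlying Proposition~\ref{1641} and observe that it adjoins only countable transcendence degree over $K$: each stage either leaves $\Gamma$ unchanged, passes to its divisible hull upon real-closing (which does not change the rational rank), or adjoins finitely many generators (an integral $y$ with $y'=s$, an element $\upl$, an element $\upg$ with $\sigma(\upg)=\upo$) or a countable logarithmic sequence in forming an $\upo$-free hull. Hence $\operatorname{trdeg}(K^*|K)\le\aleph_0$, and therefore $\operatorname{rank}_{\Q}(\Gamma_{\mathbf K^*}/\Gamma_{\mathbf K})\le\aleph_0$ by [ADH, 3.1.10]. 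The ``in particular'' clause of Corollary~\ref{cor:short oag} then yields that $\Gamma_{\mathbf K^*}$ is short, since $\Gamma_{\mathbf K}$ is. With the previous paragraph this gives that $K^*$, hence $\mathbf K^*$, is short, completing the argument. (Since $(\mathbf K^*,\hat{\mathbf K}^*)$ is immediate, the same reasoning would show $\hat{\mathbf K}^*$ short as well, though only $\mathbf K^*$ is required.)

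The step I expect to be the main obstacle is the bookkeeping in the third paragraph: confirming that \emph{every} stage of the construction contributes only countable rational rank to the value group. This is not visible from the abstract statement of Proposition~\ref{1641} and must instead be read off from the explicit extensions used in its proof, most delicately the $\upo$-free hull from [ADH, 11.7], where one must know that only a countable sequence of new monomials is adjoined rather than an uncountable family. Once that bound is secured, the reduction through Lemma~\ref{lem:short ofield} and Corollary~\ref{cor:short oag} is routine.
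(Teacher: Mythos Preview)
Your proposal is correct and follows exactly the paper's approach: after the construction via the case-by-case lemmas, the paper verifies that $\operatorname{rank}_{\Q}(\Gamma_{K^*}/\Gamma_K)\le\aleph_0$ and then invokes Corollary~\ref{cor:short oag} and Lemma~\ref{lem:short ofield} to conclude shortness of $K^*$. Your additional remark that $\res\mathbf K^*$ is archimedean (needed for Lemma~\ref{lem:short ofield}) is a detail the paper leaves implicit, and your identification of the bookkeeping on value-group growth through the stages as the only nontrivial point is accurate.
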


\noindent
As with Proposition~\ref{1641} we adapt the proof of  [ADH, 16.4.1].  We assume 
$(\mathbf K,\hat{\mathbf K})$ is an immediate pre-$\HLO$-pair and $\mathbf K=(K,I,\Lambda,\Omega)$, $\hat{\mathbf K}=(\hat K,\hat I,\hat \Lambda,\hat \Omega)$.
We identify~$H(K)$ in the usual way with an $H$-subfield of $H(\hat K)$, and for ungrounded $K$ we tacitly use
 that the sequences $(\upl_\rho)$, $(\upo_\rho)$ in $K$ also serve for~$\hat K$. (See [ADH, 11.5--11.7] for the definition and basic properties of  $(\upl_\rho)$, $(\upo_\rho)$.)

\begin{lemma}
Suppose $K$ is grounded, or there exists $b\asymp 1$ in $K$ such that $v(b')$ is a gap in $K$. Then $(\mathbf K,\hat{\mathbf K})$ extends to an immediate $\upo$-free $\HLO$-pair $(\mathbf K^*,\hat{\mathbf K}^*)$  such that $\res\mathbf K^* = \res\mathbf K$ and any embedding
of $\mathbf K$ into a $\HLO$-field $\mathbf L$ closed under logarithms extends to an embedding $\mathbf K^*\to\mathbf L$.
\end{lemma}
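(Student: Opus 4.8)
The plan is to mimic, for the pair $(\mathbf K,\hat{\mathbf K})$, the construction in [ADH, 16.4.2] that underlies the corresponding single-field lemma in the proof of Proposition~\ref{1641}; where that proof tracks cofinality (``$K$ is cofinal in $F_{\upo}$''), the pair version must instead track immediacy. First I would pass to the $H$-field hulls: set $\mathbf F:=H(\mathbf K)$ and $\hat{\mathbf F}:=H(\hat{\mathbf K})$, with their induced $\HLO$-structures, identifying $\mathbf F$ with an $\HLO$-subfield of $\hat{\mathbf F}$. Since $K\subseteq\hat K$ is immediate we have $\Gamma_K=\Gamma_{\hat K}$ and $\res K=\res\hat K$ with a common asymptotic couple; as $\Gamma_{H(K)}=\Gamma_K$ by [ADH, 10.3.2(i)] and $H(\,\cdot\,)$ leaves the residue field unchanged (the same fact that yields $\res\mathbf K^*=\res\mathbf K$ in the single-field case), the pair $(\mathbf F,\hat{\mathbf F})$ is again an immediate pre-$\HLO$-pair, and $\mathbf F$ is grounded (resp.\ has the gap $v(b')$) exactly when $\mathbf K$ is.

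Next I would run the passage to an $\upo$-free extension from the proof of [ADH, 16.4.2] simultaneously over $\mathbf F$ and $\hat{\mathbf F}$. In the grounded case one adjoins a logarithm (whence the hypothesis ``closed under logarithms'' on $\mathbf L$), in the gap case one uses the given $b$, and one then climbs successively to asymptotic integration, to $\upl$-freeness, and to $\upo$-freeness, each step adjoining an antiderivative, a $\d$-logarithm, or a pseudolimit of one of the canonical sequences $(\upl_\rho)$, $(\upo_\rho)$. The parameter governing each adjunction lies in the smaller field, and the sequences $(\upl_\rho)$, $(\upo_\rho)$ of $F$ also serve for $\hat F$; hence the very same element is adjoined on both sides, producing a commuting square with $\mathbf F_{\upo}\subseteq\hat{\mathbf F}_{\upo}$. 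Setting $\mathbf K^*:=\mathbf F_{\upo}$ and $\hat{\mathbf K}^*:=\hat{\mathbf F}_{\upo}$, the properties that $\res\mathbf K^*=\res\mathbf K$, that $\mathbf K^*$ is $\upo$-free, and that embeddings of $\mathbf K$ into $\HLO$-fields closed under logarithms extend to $\mathbf K^*$, all transfer verbatim from [ADH, 16.4.2] applied to $\mathbf K$.

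The main obstacle is to show that the resulting pair $(\mathbf K^*,\hat{\mathbf K}^*)$ is again immediate. This reduces to checking, at each adjunction, that the value group and residue field grow identically over $F$ and over $\hat F$: for an antiderivative $y$ with $y'=s$ this uses the analysis of [ADH, 10.2], namely that by immediacy the set $\{v(s-a'):a\in F\}$ realizes the same cut over both fields (and has a maximum over $F$ iff over $\hat F$), so $F\langle y\rangle\subseteq\hat F\langle y\rangle$ stays immediate; for a $\d$-logarithm one argues similarly with [ADH, 10.4]; and for a pseudolimit of $(\upl_\rho)$ or $(\upo_\rho)$ one uses that these are literally the same divergent pc-sequences on both sides, so both extensions are immediate and compatible. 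Finally, for the ``short'' addendum: if $\mathbf K$ is short with archimedean residue field, then $\Gamma_K$ is short by Lemma~\ref{lem:short ofield}, the construction adjoins only countably many value-group generators so $\operatorname{rank}_{\Q}(\Gamma_{K^*}/\Gamma_K)\le\aleph_0$, whence $\Gamma_{K^*}$ is short by Corollary~\ref{cor:short oag}; since $\res K^*=\res K$ remains archimedean, $\mathbf K^*$ is short by Lemma~\ref{lem:short ofield} once more.
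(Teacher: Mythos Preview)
Your high-level plan—pass to the $H$-field hulls $E:=H(K)$, $F:=H(\hat K)$, then run the [ADH, 16.4.2] construction simultaneously over both and verify immediacy at each stage—is exactly what the paper does. The identification of $E\subseteq F$ as immediate via [ADH, 10.3.2] is also right.

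Where your description goes astray is the content of the construction itself. Under the present hypotheses the passage to $\upo$-freeness is the $F_\upo$ tower of [ADH, 11.7]: one picks $e_0=e\in E$ with $e\succ 1$ and $v(e^\dagger)=\max\Psi_E=\max\Psi_F$, and then recursively adjoins $e_{n+1}$ with $e_{n+1}'=e_n^\dagger$, so $E_{n+1}=E_n(e_{n+1})$ and $F_{n+1}=F_n(e_{n+1})$. Each $E_n$ (and $F_n$) remains \emph{grounded}; there is no intermediate stage with asymptotic integration, no separate climb to $\upl$-freeness, no adjunction of $\d$-logarithms, and the pc-sequences $(\upl_\rho)$, $(\upo_\rho)$ simply do not exist in grounded fields and play no role whatsoever in this lemma. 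Your third paragraph is therefore checking immediacy for steps that never occur here.

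What actually needs verifying—and what the paper does—is that $E_n\subseteq F_n$ stays immediate, by induction on $n$ using [ADH, 10.2.3 and its proof]: since $v(e_n^\dagger)=\max\Psi_{E_n}=\max\Psi_{F_n}$, the value-group and residue-field increments of $E_n(e_{n+1})$ over $E_n$ and of $F_n(e_{n+1})$ over $F_n$ are described identically by that result, so $E_{n+1}\subseteq F_{n+1}$ is immediate with $v(e_{n+1}^\dagger)=\max\Psi_{E_{n+1}}=\max\Psi_{F_{n+1}}$. Taking the union gives the immediate $\upo$-free extension $E_\upo\subseteq F_\upo$, and the (unique) $\HLO$-expansions finish. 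Your ``short'' paragraph at the end is fine in spirit but belongs to Proposition~\ref{prop:upo-free imm}, not to this lemma.
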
 
\begin{proof}
Note that if $K$ is grounded, then so is $\hat K$, and any gap in $K$ remains a gap in~$\hat K$.
Put $E:=H(K)$ and $F:=H(\hat K)$, and note that the $H$-field extension $E\subseteq F$ is immediate by [ADH, 10.3.2 and remark preceding it]. Next take $e\in E$ with~$e\succ 1$ and~$v(e^\dagger)=\max\Psi_E=\max\Psi_F$.  
We now construct $K^*:=E_\upo$ and $\hat K^*:=F_\upo$ as in~[ADH, 11.7] with $E$,~$e$ and $F$,~$e$ in the role of $F$,~$f$ there, so $E_\upo=\bigcup_n E_n$, $F_{\upo}=\bigcup_n F_n$, $E_0=E$, $F_0=F$. We take care to do that in such a way that
by induction on $n$ using [ADH, 10.2.3 and its proof] we have for all $n$ an immediate extension $E_n\subseteq F_n$ of grounded $H$-fields with a distinguished element $e_n\in E_n^\times$ such that $e_0=e$,  
${e_n\succ 1}$, $v(e_n^\dagger)=\max\Psi_{E_n}=\max\Psi_{F_n}$,  and
$$E_{n+1}\ =\ E_n(e_{n+1}),\quad  F_{n+1}\ =\ F_n(e_{n+1}), \quad e'_{n+1}\ =\ e_n^\dagger.$$ 
This yields an immediate extension $E_{\upo}\subseteq F_{\upo}$ of $\upo$-free $H$-fields. Expanding $K^*, \hat{K}^*$ uniquely to  $\HLO$-fields gives a pair $(\mathbf K^*,\hat{\mathbf K}^*)$ with the required properties.
\end{proof}


\begin{lemma}
Suppose $K$ has gap $\beta$ and $v(b')\neq\beta$ for all $b\asymp 1$ in $K$. Then $(\mathbf K,\hat{\mathbf K})$ extends to
an immediate grounded $\HLO$-pair $(\mathbf K_1,\hat{\mathbf K}_1)$  
such that $\res\mathbf K_1 = \res\mathbf K$ and any embedding of $\mathbf K$ into a $\HLO$-field $\mathbf L$ closed under
integration extends to an embedding~$\mathbf K_1\to\mathbf L$.
\end{lemma}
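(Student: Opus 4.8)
The plan is to mirror the proof of [ADH, 16.4.3], carrying out its antiderivative construction simultaneously over $K$ and over $\hat K$ with a single adjoined element, and then to check that the resulting extension of the larger field over the smaller one is immediate. The embedding property requested concerns only $\mathbf K$, so it will come for free from [ADH, 16.4.3] applied to the base.

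First I would transfer the hypotheses to $\hat K$: since $(\mathbf K,\hat{\mathbf K})$ is immediate, the asymptotic couple $(\Gamma,\psi)$ of $\hat K$ coincides with that of $K$, so $\beta$ is again a gap in $\hat K$ and $v(b')\neq\beta$ for all $b\asymp 1$ in $\hat K$. Fix $s\in K$ with $vs=\beta$. The case division in [ADH, 16.4.3] depends on whether $s\in\I(K)$, and the key preliminary point is that this is synchronized between the two fields, i.e. $s\in\I(K)\Leftrightarrow s\in\I(\hat K)$. This holds because immediacy forces $v\big(\I(\hat K)^{\ne}\big)=v\big(\I(K)^{\ne}\big)$: the valuations of the generators $g'$ (for $g\in\hat{\mathcal O}$) are determined by $(\Gamma,\psi)$ alone, which is unchanged, and $\I$ is a valuation submodule. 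Hence $K$ and $\hat K$ fall under the same case.

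Next, inside a common pre-$\HLO$-field extension I would adjoin a single antiderivative $y$ of $s$, so $y'=s$ and $y$ is transcendental over $\hat K$ by [ADH, 10.2.3] (as $vs=\beta$ is a gap). In Case~1 ($s\notin\I(K)$) set $\mathbf K_1$ on $H(K)(y)$ and $\hat{\mathbf K}_1$ on $H(\hat K)(y)$; here, as in the non-pair case, $\Gamma_{H(K)}=\Gamma$, and $H(K)\subseteq H(\hat K)$ is immediate by [ADH, 10.3.2 and the remark preceding it]. In Case~2 ($s\in\I(K)$) set $\mathbf K_1$ on $K(y)$ and $\hat{\mathbf K}_1$ on $\hat K(y)$. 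Either way [ADH, 16.4.3] gives that $\mathbf K_1$ is grounded with $\res\mathbf K_1=\res\mathbf K$ and that every embedding of $\mathbf K$ into a $\HLO$-field $\mathbf L$ closed under integration extends to $\mathbf K_1\to\mathbf L$; the same construction, applied to $\hat K$ (which falls under the same case and satisfies the transferred hypotheses), makes $\hat{\mathbf K}_1$ grounded with $\res\hat{\mathbf K}_1=\res\hat{\mathbf K}$.

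It remains to see that $(\mathbf K_1,\hat{\mathbf K}_1)$ is immediate, and this I expect to be the only genuinely new step. By [ADH, 10.2.3 and its proof] the element $s\in K$ pins down the value $vy$ and yields $\Gamma_{K_1}=\Gamma+\Z vy$ with residue field unchanged, while the \emph{same} datum $vy$ over $\hat K$ gives $\Gamma_{\hat K_1}=\Gamma_{\hat K}+\Z vy$, again with residue field unchanged. Since $\Gamma=\Gamma_{\hat K}$ and $\res K=\res\hat K$, we conclude $\Gamma_{K_1}=\Gamma_{\hat K_1}$ and $\res K_1=\res\hat K_1$, so $K_1\subseteq\hat K_1$ is immediate. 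The main obstacle is precisely this coordination: one must check that the valuation of $\hat K(y)$ restricts to the intended valuation of $K(y)$, i.e. that adjoining $y$ contributes identical valuation data over both fields — which is exactly what the gap hypothesis guarantees through [ADH, 10.2.3], and is the reason the case split must be made uniformly in $K$ and $\hat K$.
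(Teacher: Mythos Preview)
Your approach is essentially the paper's: adjoin a single antiderivative $y$ of $s$ simultaneously over the two fields, invoke the structure results of [ADH, 10.2] to determine the valuation on both $K_1$ and $\hat K_1$, and read off immediacy from the fact that the same new value $vy$ is added on both sides.

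A few differences in execution are worth noting. First, the paper splits cases according to whether $s\in I$ (the first component of the $\HLO$-cut) rather than whether $s\in\I(K)$; since $I=\hat I\cap K$ is part of the meaning of $\mathbf K\subseteq\hat{\mathbf K}$, the synchronization ``$s\notin I\Rightarrow s\notin\hat I$'' is then automatic, and no separate valuation argument is needed. Your synchronization via $\I$ is correct, but more work. Second, for the immediacy step the paper cites [ADH, 10.2.2] in Case~1 and [ADH, 10.2.1] in Case~2, not 10.2.3; these are the results that actually describe the value group and residue field of the simple transcendental extension by an antiderivative of a gap element. Third, in Case~2 the paper passes to the $H$-field hulls $H(K)$ and $H(\hat K)$ before adjoining $y$ (having already noted that $H(K)\subseteq H(\hat K)$ is immediate), so that [ADH, 10.2.1] applies cleanly; your direct use of $K(y)$ and $\hat K(y)$ would require checking that the relevant parts of 10.2.1 go through over pre-$H$-fields.
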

\begin{proof}
Note that $\beta$ is a gap in $\hat K$, and  $v(b')\neq\beta$ for all $b\asymp 1$ in $\hat K$. By [ADH, 10.3.2 and remark preceding it], $H(K)$ is an immediate extension of $K$, and  $H(\hat K)$ of $\hat K$, so
$H(\hat K)$ is an immediate extension of $H(K)$.

Take~$s\in K$ with $vs=\beta$ and follow the proof of [ADH, 16.4.3].
Suppose~$s\notin I$. Then also~$s\notin\hat I$. Take $\hat K_1=H(\hat K)(y)$ as in Case~1 of that proof
applied to~$\hat{\mathbf K}$ in place of $\mathbf K$.   We have the $H$-subfield~$K_1:=H(K)(y)$ of~$\hat K_1$, and~$\hat K_1$ is an immediate extension of~$K_1$ by~[ADH, 10.2.2  and its proof]. Expanding 
~$K_1$,~$\hat K_1$ uniquely to
pre-$\HLO$-fields gives a pair~$(\mathbf K_1,\hat{\mathbf K}_1)$ with the required property. 
If~${s\in I}$, proceed as before, but following instead Case~2 of the proof of [ADH, 16.4.3] and with $H(K)$ and $H(\hat K)$ instead of 
$K$ and $\hat K$, using~[ADH, 10.2.1 and its proof].
\end{proof}

\begin{cor}\label{cor:16.4.4 imm}
Suppose $K$ does not have asymptotic integration. Then 
$(\mathbf K,\hat{\mathbf K})$ extends to an immediate $\upo$-free $\HLO$-pair 
$(\mathbf K^*,\hat{\mathbf K}^*)$ 
 such that  
$\res\mathbf K^* = \res\mathbf K$, and any embedding of~$\mathbf K$
into a $\HLO$-field $\mathbf L$ closed under integration extends to an embedding~$\mathbf K^*\to\mathbf L$.
\end{cor}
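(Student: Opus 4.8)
The plan is to mirror the proof of the non-pair Corollary~\ref{cor:16.4.4} (and of [ADH, 16.4.4]): split into cases according to why $K$ fails to have asymptotic integration, and feed the two immediate-pair lemmas just proved into one another. Recall from [ADH, 9.1, 9.2] that $K$ has asymptotic integration or is grounded iff it has no gap; so the hypothesis forces $K$ to be grounded or to have a (unique) gap~$\beta$.

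First I would treat the two cases in which the first of the two preceding lemmas applies to $(\mathbf K,\hat{\mathbf K})$ directly, namely $K$ grounded and $K$ having a gap $\beta=v(b')$ for some $b\asymp 1$ in~$K$: in either case that lemma at once produces an immediate $\upo$-free $\HLO$-pair $(\mathbf K^*,\hat{\mathbf K}^*)$ extending $(\mathbf K,\hat{\mathbf K})$ with $\res\mathbf K^*=\res\mathbf K$ and with the desired extension property for embeddings into $\HLO$-fields closed under logarithms. In the remaining case $K$ has a gap $\beta$ with $v(b')\neq\beta$ for all $b\asymp 1$ in $K$; here I would first apply the second lemma to obtain an immediate grounded $\HLO$-pair $(\mathbf K_1,\hat{\mathbf K}_1)$ extending $(\mathbf K,\hat{\mathbf K})$ with $\res\mathbf K_1=\res\mathbf K$ and the extension property for targets closed under integration, and then apply the first lemma to the grounded pair $(\mathbf K_1,\hat{\mathbf K}_1)$ in place of $(\mathbf K,\hat{\mathbf K})$, reaching an immediate $\upo$-free pair $(\mathbf K^*,\hat{\mathbf K}^*)$. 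The invariants survive composition: each lemma keeps the pair immediate and fixes the residue field, so $\res\mathbf K^*=\res\mathbf K_1=\res\mathbf K$ and $(\mathbf K^*,\hat{\mathbf K}^*)$ is immediate over $(\mathbf K,\hat{\mathbf K})$, while $\upo$-freeness is supplied by the first lemma at the final step.

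The one substantive point, handled exactly as in [ADH, 16.4.4], is reconciling the closure conditions on the target $\mathbf L$: the corollary asks for targets closed under integration, whereas the first lemma is phrased for targets closed under logarithms. Since a $\HLO$-field closed under integration is closed under logarithms, any embedding $\mathbf K\to\mathbf L$ with $\mathbf L$ closed under integration is also an embedding into a field closed under logarithms and hence extends as required; in the gap case this is used twice in sequence---once to extend $\mathbf K\to\mathbf L$ to $\mathbf K_1\to\mathbf L$, and once more to extend that to $\mathbf K^*\to\mathbf L$. The main thing to verify, rather than quote, is that applying the first lemma to $(\mathbf K_1,\hat{\mathbf K}_1)$ is legitimate: this needs $(\mathbf K_1,\hat{\mathbf K}_1)$ to be a bona fide immediate pre-$\HLO$-pair with $K_1$ grounded, which the second lemma guarantees (a gap in $K$ persists in $\hat K$ and grounding is preserved under immediate extension, so both $K_1$ and $\hat K_1$ are grounded). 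Granting this, the composition yields the required $(\mathbf K^*,\hat{\mathbf K}^*)$.
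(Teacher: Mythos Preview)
Your proposal is correct and matches the paper's approach: the paper gives no explicit proof of this corollary, leaving it as an immediate consequence of the two preceding immediate-pair lemmas in exact analogy with [ADH, 16.4.4], which is precisely the argument you spell out. Your observation that closure under integration implies closure under logarithms (so the first lemma applies to any target closed under integration), and that the grounded pair $(\mathbf K_1,\hat{\mathbf K}_1)$ produced by the second lemma meets the hypotheses of the first, are the only points needing comment, and you handle both correctly.
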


\noindent
In the next three lemmas we treat the case where $K$ has asymptotic integration. For the first we adapt the proof of  [ADH, 16.4.5] and use parts of it:
 
\begin{lemma}\label{lem:16.4.5 imm}
Assume $K$ has asymptotic integration and is not $\upl$-free. Then
$(\mathbf K,\hat{\mathbf K})$ extends to an immediate $\upo$-free $\HLO$-pair 
$(\mathbf K^*,\hat{\mathbf K}^*)$ 
 such that   
$\res\mathbf K^* = (\res\mathbf K)^{\operatorname{rc}}$, and every embedding
of $\mathbf K$ into a Liouville closed $\HLO$-field $\mathbf L$ extends to an embedding $\mathbf K^*\to\mathbf L$.
\end{lemma}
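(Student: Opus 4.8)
The plan is to mirror the proof of Lemma~\ref{lem:16.4.5}, carrying both fields of the pair along at once. First I would reduce, exactly as in the singleton case, to producing an immediate $\HLO$-pair $(\mathbf K_1,\hat{\mathbf K}_1)$ extending $(\mathbf K,\hat{\mathbf K})$ in which $\mathbf K_1$ has a gap (hence no asymptotic integration), with $\res\mathbf K_1=(\res\mathbf K)^{\operatorname{rc}}$ and such that every embedding of $\mathbf K$ into a Liouville closed $\HLO$-field $\mathbf L$ extends to $\mathbf K_1\to\mathbf L$. Granting this, Corollary~\ref{cor:16.4.4 imm} applied to $(\mathbf K_1,\hat{\mathbf K}_1)$ yields the desired $\upo$-free immediate $\HLO$-pair $(\mathbf K^*,\hat{\mathbf K}^*)$ with $\res\mathbf K^*=\res\mathbf K_1=(\res\mathbf K)^{\operatorname{rc}}$ (the freeness hypotheses there are vacuous since $K_1$ has a gap); since a Liouville closed $\HLO$-field is closed under integration, the two extension properties compose to the one required.

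Next I would pass to real closures. Put $E:=H(K)^{\operatorname{rc}}$ and $\hat E:=H(\hat K)^{\operatorname{rc}}$. As $(\mathbf K,\hat{\mathbf K})$ is immediate, $H(\hat K)$ is an immediate extension of $H(K)$ by [ADH, 10.3.2], and after real closure both fields acquire value group $\Q\Gamma$ and residue field $(\res K)^{\operatorname{rc}}$, so $\hat E$ is an immediate extension of $E$; I expand $E,\hat E$ uniquely to $\HLO$-fields $\mathbf E\subseteq\hat{\mathbf E}$. A key preliminary is that $\hat K$ is also not $\upl$-free, with the same pseudolimit: since $K\subseteq\hat K$ is immediate, the pc-sequence $(\upl_\rho)$ of $K$ is one of $\hat K$, and its pseudolimit $\upl\in K$ (which exists because $K$ is not $\upl$-free) lies in $\hat K$ and witnesses that $\hat K$ is not $\upl$-free. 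Thus $E$ and $\hat E$ share $\upl$, and I put $s:=-\upl\in K\subseteq E$.

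Now I would create the gap following [ADH, 16.4.5]. If $E$ already has a gap, set $\mathbf K_1:=\mathbf E$ and $\hat{\mathbf K}_1:=\hat{\mathbf E}$: the gap of $E$ remains a gap of $\hat E$ by immediacy, and the embedding property for $E$ is standard. Otherwise $\hat E$, being immediate over $E$, also has no gap, so the argument of [ADH, 16.4.5] applied to both $K$ and $\hat K$ shows that $s$ creates a gap over $E$ and over $\hat E$. I would then form $\hat K_1:=\hat E(\hat f)$ with $\hat f^\dagger=s$ and $v\hat f$ a gap, as in [ADH, 16.4.5] applied to $\hat{\mathbf K}$, set $f:=\hat f$, and let $K_1:=E(f)$. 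Then $\Gamma_{K_1}=\Gamma_E\oplus\Z vf$ and $\Gamma_{\hat K_1}=\Gamma_{\hat E}\oplus\Z v\hat f$ with $vf=v\hat f$; since $\Gamma_E=\Gamma_{\hat E}$ and gap-creating extensions leave the residue field unchanged, $K_1\subseteq\hat K_1$ is immediate. Expanding uniquely to $\HLO$-fields gives $(\mathbf K_1,\hat{\mathbf K}_1)$ with $\res\mathbf K_1=\res E=(\res\mathbf K)^{\operatorname{rc}}$. For the embedding property, an embedding of $\mathbf K$ into Liouville closed $\mathbf L$ extends to $H(K)$ and then to its real closure $E$, and since $\mathbf L$ is Liouville closed it realizes $f$, so the universal property of the gap-creating extension [ADH, 10.4] extends the embedding to $\mathbf K_1\to\mathbf L$.

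The main obstacle I expect is the bookkeeping in this last step: verifying that $s$ creates a gap over $\hat E$ and not merely over $E$, and that the copy $E(f)$ sits inside $\hat E(\hat f)$ as an immediate subextension under a legitimate identification $f=\hat f$. Both rest on $\hat E$ being an immediate extension of $E$ together with $\upl$ (hence $s$) already lying in $K$, so that the set $\{v(s-a^\dagger):a\in\hat E^\times\}$ governing the gap condition is controlled by the common asymptotic couple $(\Gamma,\psi)$, precisely the configuration of Lemma~\ref{lem:10.4.5}; the uniqueness of gap-creating extensions over $E$ and over $\hat E$ then makes the choice $f=\hat f$ consistent, and I would keep a careful eye on the $\HLO$-cuts to ensure $\mathbf K_1\subseteq\hat{\mathbf K}_1$ throughout.
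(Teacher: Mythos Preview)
Your outline matches the paper's proof in structure: reduce via Corollary~\ref{cor:16.4.4 imm} to producing an immediate $\HLO$-pair with a gap, pass to $E:=H(K)^{\operatorname{rc}}$ and $F:=H(\hat K)^{\operatorname{rc}}$ (your $\hat E$), and split into the two cases ``$E$ has a gap'' and ``$E$ has no gap, so create one by adjoining $f$ with $f^\dagger=-\upl$''. The paper cites [ADH,~11.5.14] for ``$-\upl$ creates a gap over $E$ and over $F$'' and the remark after [ADH,~11.5.14] together with the uniqueness in [ADH,~10.4.5] for the immediacy of $F(f)$ over $E(f)$, which is precisely your expected obstacle resolved.

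The one genuine slip is your repeated claim that $E$, $\hat E$, $E(f)$, $\hat E(f)$ expand \emph{uniquely} to $\HLO$-fields. They do not: a pre-$H$-field with a gap has exactly two $\HLO$-cuts (cf.\ [ADH,~16.3]), and the whole point is to select the correct one of these two so that (a) $\mathbf K_1$ extends $\mathbf K$ and $\hat{\mathbf K}_1$ extends $\hat{\mathbf K}$, (b) $\mathbf K_1\subseteq\hat{\mathbf K}_1$, and (c) the embedding into $\mathbf L$ extends. In Case~1 the paper picks the cut on $E$ (and the matching cut on $F$) according to whether $-s^\dagger\in\Lambda$; in Case~2 it picks the cut on $E(f)$ (and on $F(f)$) according to whether $\upl\in\Lambda$. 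Without this explicit choice tied back to the original $\HLO$-cut $(I,\Lambda,\Omega)$ of $K$, the assertion ``$\mathbf K\subseteq\mathbf K_1$'' (and hence the embedding property) is not justified. You flagged this as ``keep a careful eye on the $\HLO$-cuts,'' but it is the actual content of both cases, not a side issue.
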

\begin{proof}
By Corollary~\ref{cor:16.4.4 imm} it is enough to show that~$(\mathbf K,\hat{\mathbf K})$ extends to an immediate $\HLO$-pair 
$(\mathbf K_1,\hat{\mathbf K}_1)$ with a gap such that~$\res\mathbf K_1 = (\res\mathbf K)^{\operatorname{rc}}$ and every embedding
of~$\mathbf K$ into a Liouville closed $\HLO$-field $\mathbf L$ extends to an embedding $\mathbf K_1\to\mathbf L$.
Let
$$E\ :=\ H(K)^{\operatorname{rc}}\ \subseteq\ F:=H(\hat K)^{\operatorname{rc}}.$$
Then $\Gamma_E=\Q\Gamma$, and $F$ is an immediate   $H$-field extension of $E$.
We distinguish two cases:

\case[1]{$E$ has a gap.} Take $s\in E^\times$ and $n\geq 1$ such that $vs$ is a gap in~$E$ and~$s^n\in K$.
Then $E$ has exactly two $\HLO$-cuts $(I_1,\Lambda_1,\Omega_1)$, $(I_2,\Lambda_2,\Omega_2)$,
where~$I_1=\{{y\in E:y\prec s}\}$,
$I_2=\{y\in E:y\preceq s\}$, and 
$F$ has 
exactly two $\HLO$-cuts~$(\hat I_1,\hat \Lambda_1,\hat \Omega_1)$ and $(\hat I_2,\hat \Lambda_2,\hat \Omega_2)$,
with $\hat I_1=\{y\in F:y\prec s\}$, $\hat I_2=\{{y\in F:y\preceq s}\}$ (so $I_j=\hat I_j\cap E$ for $j=1,2$).
Take $\mathbf K_1$ as in Case~1 of the proof of~[ADH, 16.4.5]:
if~$-s^\dagger\in\Lambda$, then $\mathbf K_1:=(E,I_1,\Lambda_1,\Omega_1)$,
and if $-s^\dagger\notin\Lambda$, then $\mathbf K_1:=(E,I_2,\Lambda_2,\Omega_2)$.
Similarly, if~$-s^\dagger\in \Lambda$, then $\hat{\mathbf K}_1:=(F,\hat I_1,\hat \Lambda_1,\hat \Omega_1)$,
and if $-s^\dagger\notin \Lambda$, then  $\hat{\mathbf K}_1:=(F,\hat I_2,\hat \Lambda_2,\hat \Omega_2)$.
Then $(\mathbf K_1,\hat{\mathbf K}_1)$ is an immediate $\HLO$-pair  with the desired property.

\case[2]{$E$ has no gap.} Then $E$, $F$ have asymptotic integration, and the sequence~$(\upl_\rho)$ for $K$ also serves for $E$ and for $F$.
Take $\upl\in K$ such that $\upl_\rho\leadsto\upl$. Then~$-\upl$ creates a gap over $E$ and over $F$ by [ADH, 11.5.14].
Take an element~$f\neq 0$ in some Liouville closed $H$-field extension of $F$ such that $f^\dagger=-\upl$.
Then $F(f)$ is an $H$-field  and $E(f)$ is an $H$-subfield of $F(f)$ with~$\res E(f)=\res E=\res F=\res F(f)$.
Moreover, $vf$ is a gap in $F(f)$ and in $E(f)$, and $F(f)$ is an immediate extension of~$E(f)$, by the remark after [ADH 11.5.14]
and the uniqueness part of [ADH, 10.4.5].
Now $E(f)$ has
exactly two $\HLO$-cuts $(I_1,\Lambda_1,\Omega_1)$ and $(I_2,\Lambda_2,\Omega_2)$,
where
$$I_1=\big\{{y\in E(f):y\prec f}\big\},\qquad I_2=\big\{y\in E(f):y\preceq f\big\},$$ and $F(f)$ has 
exactly two $\HLO$-cuts~$(\hat I_1,\hat \Lambda_1,\hat \Omega_1)$ and $(\hat I_2,\hat \Lambda_2,\hat \Omega_2)$,
with 
$$\hat I_1=\big\{y\in F(f):y\prec f\big\},\qquad \hat I_2=\big\{{y\in F(f):y\preceq f}\big\}.$$   
Therefore~$I_j=\hat I_j\cap E(f)$ for $j=1,2$.
We set $\mathbf K_1:=\big(E(f),I_1,\Lambda_1,\Omega_1\big)$ and $\hat{\mathbf K}_1:=\big(F(f),\hat I_1,\hat\Lambda_1,\hat\Omega_1\big)$ 
if $\upl\in\Lambda$, and $\mathbf K_1:=\big(E(f),I_2,\Lambda_2,\Omega_2\big)$, $\hat{\mathbf K}_1:=\big(F(f),\hat I_2,\hat \Lambda_2,\hat \Omega_2\big)$
 if $\upl\notin\Lambda$.
Then $\mathbf K_1\subseteq\hat{\mathbf K}_1$, and the immediate $\HLO$-pair $(\mathbf K_1,\hat{\mathbf K}_1)$ 
  is as required.
\end{proof}

\begin{lemma}
Suppose $K$ is not $\upo$-free and $\hat K$ is $\upl$-free. Then $(\mathbf K,\hat{\mathbf K})$ extends to an immediate $\upo$-free $\HLO$-pair
$(\mathbf K^*,\hat{\mathbf K}^*)$ such that $\res\mathbf K^*$ is algebraic over~$\res\mathbf K$ 
 and any embedding   $\mathbf K\to\mathbf L$ into
a Schwarz closed $\HLO$-field $\mathbf L$ extends to an embedding~$\mathbf K^*\to\mathbf L$.
\end{lemma}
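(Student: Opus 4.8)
The plan is to adapt the proof of the corresponding single-field lemma in Proposition~\ref{1641} (itself a cofinality refinement of [ADH, 16.4.6]) to the pair setting, in the same spirit as Lemma~\ref{lem:16.4.5 imm} refined Lemma~\ref{lem:16.4.5}. First I would record the structural facts that keep $K$ and $\hat K$ in lockstep. Since $K\subseteq\hat K$ is immediate we have $\Gamma_K=\Gamma_{\hat K}$, so $K$ has asymptotic integration iff $\hat K$ does; as $\hat K$ is $\upl$-free it has asymptotic integration, hence so does $K$, and the pc-sequences $(\upl_\rho)$, $(\upo_\rho)$ for $K$ also serve for $\hat K$. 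Because $K$ is not $\upo$-free I take $\upo\in K$ with $\upo_\rho\leadsto\upo$; as $\upo\in K\subseteq\hat K$, this same $\upo$ witnesses that $\hat K$ too is not $\upo$-free. Finally $K$ is $\upl$-free, since a pseudolimit of $(\upl_\rho)$ in $K$ would be one in $\hat K$.

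Following the proof of [ADH, 16.4.6], one shows $\Omega=\omega(K)^\downarrow$ or $\Omega=K\setminus\sigma\big(\Upg(K)\big){}^\uparrow$, and I would treat these two cases separately, carrying $\hat K$ along. In the case $\Omega=\omega(K)^\downarrow$ I take $\upg\neq0$ with $\sigma(\upg)=\upo$ and $v\upg$ a gap, as in Case~1 of loc.~cit., and form $K_\upg:=K\langle\upg\rangle$ and $\hat K_\upg:=\hat K\langle\upg\rangle$ with the \emph{same} $\upg$ (legitimate since $\upo\in K$). The point to check is that, expanded suitably to a pre-$\HLO$-pair, $(\mathbf K_\upg,\hat{\mathbf K}_\upg)$ is again immediate: the remarks before [ADH, 13.7.7] give $[\Gamma]=[\Gamma_{K_\upg}]=[\Gamma_{\hat K_\upg}]$, while the uniqueness part of [ADH, 10.4.5] exhibits $\hat K_\upg$ as an immediate extension of $K_\upg$; together with $\Gamma_K=\Gamma_{\hat K}$ this forces equal value groups and residue fields on the two sides. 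Since $K_\upg$ now has a gap, I apply Corollary~\ref{cor:16.4.4 imm} to $(\mathbf K_\upg,\hat{\mathbf K}_\upg)$ to obtain the desired immediate $\upo$-free $\HLO$-pair $(\mathbf K^*,\hat{\mathbf K}^*)$ with $\res\mathbf K^*=\res\mathbf K_\upg$ algebraic over $\res\mathbf K$. The embedding property transfers by composition: a given embedding $\mathbf K\to\mathbf L$ into a Schwarz closed (hence Liouville closed) $\HLO$-field first extends to $\mathbf K_\upg\to\mathbf L$ by the universal property of the $\upg$-adjunction used in Case~1 of loc.~cit., and then extends further by Corollary~\ref{cor:16.4.4 imm}. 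In the case $\Omega=K\setminus\sigma\big(\Upg(K)\big){}^\uparrow$ I argue as in Case~2 of [ADH, 16.4.6], using Lemma~\ref{lem:16.4.5 imm} in place of Lemma~\ref{lem:16.4.5}.

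The main obstacle I expect is precisely the bookkeeping of immediacy of the pair through the adjunction of $\upg$ (and, in the second case, through the gap-creating step inside Lemma~\ref{lem:16.4.5 imm}): one must verify that adjoining the shared element to both $K$ and $\hat K$ enlarges the value group in the same way and leaves the residue field unchanged, and that the relevant $\HLO$-cuts of $\hat{\mathbf K}_\upg$ restrict to those of $\mathbf K_\upg$. This is exactly where the shared sequences $(\upl_\rho),(\upo_\rho)$, the equality $[\Gamma]=[\Gamma_{K_\upg}]=[\Gamma_{\hat K_\upg}]$, and the uniqueness clause of [ADH, 10.4.5] do the work; once immediacy is secured, the embedding assertions carry over verbatim from the single-field argument.
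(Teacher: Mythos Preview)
Your proposal follows essentially the same route as the paper's proof: a two-case split according to $\Omega$, adjoining $\upg$ in Case~1 and passing to $K_{\upl}$ in Case~2, each time constructing the pair in parallel and then invoking Corollary~\ref{cor:16.4.4 imm} or Lemma~\ref{lem:16.4.5 imm}. Two small corrections: in Case~1 the reference for immediacy of $K_{\upg}\subseteq\hat K_{\upg}$ should be [ADH, 13.7.6] (governing the $\sigma(\upg)=\upo$ adjunction), not [ADH, 10.4.5] (which is about $f^\dagger=s$ adjunctions); and you should explicitly deduce $\hat\Omega=\omega(\hat K)^\downarrow$ from $\upo\notin\Omega$ and $\upo\notin\hat\Omega$ (respectively $\hat\Omega=\hat K\setminus\sigma(\Upg(\hat K))^\uparrow$ in Case~2), since the choice of $\HLO$-expansion on $\hat K_{\upg}$ and the inclusion $\mathbf K_{\upl}\subseteq\hat{\mathbf K}_{\upl}$ depend on knowing which $\HLO$-cut $\hat K$ carries.
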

\begin{proof} We adapt and use the proof of [ADH, 16.4.6]. 
Take $\upo\in K$ with $\upo_\rho\leadsto\upo$. Then
$\omega\big(\Upl(K)\big){}^\downarrow<\upo<\sigma\big(\Upg(K)\big){}^\uparrow$ and
  either   $\Omega=\omega(K)^\downarrow$ or $\Omega=K\setminus \sigma\big(\Upg(K)\big){}^\uparrow$.
Likewise with $\hat K$ in place of $K$. Also $\upo\notin \omega(K)^\downarrow,\ \upo\notin \omega(\hat K)^\downarrow$. 
There are two cases:

\case[1]{$\Omega=\omega(K)^{\downarrow}$.}
Then $\upo\notin\hat\Omega$ and so $\hat\Omega=\omega(\hat K)^{\downarrow}$.
Take a pre-$H$-field extension~$\hat K_{\upg}$ of $\hat K$
as in Case~1 of the proof of [ADH, 16.4.6] with $\hat K$ in place of~$K$.
Then $\res \hat K_{\upg}=\res \hat K=\res K$.
Put~$K_{\upg}:=K\langle\upg\rangle$, a pre-$H$-subfield of $\hat K_{\upg}$ with~$\res K_{\upg}=\res K$. Then $v\upg$ is a gap in $K_{\upg}$ and in $\hat K_{\upg}$, so 
by [ADH, 13.7.6],  $\hat K_{\upg}$ is an immediate extension of $K_{\upg}$.
Expanding $K_{\upg}$ to a pre-$\HLO$-field $\mathbf K_{\upg}$ as in Case~1 of
the proof of [ADH, 16.4.6], and similarly expanding~$\hat K_{\upg}$ to a pre-$\HLO$-field~$\hat{\mathbf K}_{\upg}$, we 
thus obtain the immediate  pre-$\HLO$-pair~$(\mathbf K_{\upg},\hat{\mathbf K}_{\upg})$ extending~$(\mathbf K,\hat{\mathbf K})$. Take an immediate $\upo$-free $\HLO$-pair~$(\mathbf K^*,\hat{\mathbf K}^*)$ extending~$(\mathbf K_{\upg},\hat{\mathbf K}_{\upg})$ as in Corollary~\ref{cor:16.4.4 imm} applied to $(\mathbf K_{\upg},\hat{\mathbf K}_{\upg})$ in place of~$(\mathbf K,\hat{\mathbf K})$. Then $(\mathbf K^*,\hat{\mathbf K}^*)$ has the required property.  

\case[2]{$\Omega=K\setminus \sigma\big(\Upg(K)\big){}^\uparrow$.}
Then $\upo\in\Omega\subseteq \hat \Omega$, so $\hat\Omega=\hat K\setminus \sigma\big(\Upg(\hat K)\big){}^\uparrow$.
As in the proof of [ADH, 16.4.6] we obtain  an immediate pre-$H$-field extension~$\hat K_{\upl}:=\hat K(\upl)$ of $\hat K$ with $\upl_\rho\leadsto\upl$ and $\omega(\upl)=\upo$. Put $K_{\upl}:=K(\upl)$, an 
immediate pre-$H$-field extension of $K$.
Expand $K_\upl$ to a pre-$\HLO$-field $\mathbf K_{\upl}$ as in Case~2 of the proof of~[ADH, 16.4.6],
and similarly expand $\hat K_{\upl}$  to a pre-$\HLO$-field $\hat{\mathbf K}_{\upl}$.
Then~$\mathbf K_{\upl}\supseteq\mathbf K$ and~${\hat{\mathbf K}_{\upl}\supseteq\hat{\mathbf K}}$,
and from $\upl\notin\Upl(K_\lambda)^\downarrow$ and $\upl\in \big(\hat K_{\upl}\setminus\Upd(\hat K_{\upl})^\uparrow\big)\cap K_{\upl}$ we obtain~$\hat{\mathbf K}_{\upl}\supseteq\mathbf K_{\upl}$.
Thus~$(\mathbf K_{\upl},\hat{\mathbf K}_{\upl})$ is an immediate  pre-$\HLO$-pair and extends~$(\mathbf K,\hat{\mathbf K})$. Take an immediate $\upo$-free $\HLO$-pair~$(\mathbf K^*,\hat{\mathbf K}^*)$ extending~$(\mathbf K_{\upl},\hat{\mathbf K}_{\upl})$  obtained from Lemma~\ref{lem:16.4.5 imm} applied to $(\mathbf K_{\upl},\hat{\mathbf K}_{\upl})$ in place of~$(\mathbf K,\hat{\mathbf K})$. Then $(\mathbf K^*,\hat{\mathbf K}^*)$ has the required property.
\end{proof}

\begin{lemma}
Suppose $\hat K$ is $\upo$-free.  Then 
$(\mathbf K,\hat{\mathbf K})$ extends to an immediate $\upo$-free $\HLO$-pair 
$(\mathbf K^*,\hat{\mathbf K}^*)$ such that  any embedding of $\mathbf K$ into
a $\HLO$-field $\mathbf L$ extends to an embedding of~$\mathbf K^*$ into $\mathbf L$.
\end{lemma}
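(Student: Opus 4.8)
The plan is to imitate the non-pair lemma that was settled by citing [ADH, 16.4.7], carrying the second coordinate $\hat{\mathbf K}$ along and exploiting that $H$-field hulls preserve immediacy. First I would transfer the hypothesis from $\hat K$ to $K$. Since $K\subseteq\hat K$ is immediate the two share the same asymptotic couple, so $K$ inherits asymptotic integration from $\hat K$ (which has it, being $\upo$-free) and is in particular ungrounded; hence the sequences $(\upl_\rho)$, $(\upo_\rho)$ of $K$ also serve $\hat K$, and from $\hat K$ being $\upo$-free [ADH, remark preceding 11.7.20] gives that $K$ is $\upo$-free as well. Now I would set $\hat K^*:=H(\hat K)$ and $K^*:=H(K)$, the $H$-field hulls, exactly as in the proof of [ADH, 16.4.7]. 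That proof already supplies, for each coordinate, that $H(K)$ and $H(\hat K)$ are again $\upo$-free (via [ADH, remark after 13.6.1]), that they carry unique $\HLO$-expansions $\mathbf K^*$, $\hat{\mathbf K}^*$ extending $\mathbf K$, $\hat{\mathbf K}$, and that every embedding of $\mathbf K$ into an $\HLO$-field $\mathbf L$ extends to an embedding $\mathbf K^*\to\mathbf L$.

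It then remains to see that $(\mathbf K^*,\hat{\mathbf K}^*)$ is an immediate pair. By [ADH, 10.3.2 and the remark preceding it] each hull is immediate over its base, so $\Gamma_{K^*}=\Gamma_K$, $\res K^*=\res K$, and likewise for $\hat K^*$. The composite embedding $K\hookrightarrow\hat K\hookrightarrow H(\hat K)$ extends, by the universal property of $H(K)$, to an embedding $H(K)\to H(\hat K)$ over $K$, through which I view $K^*$ as an $H$-subfield of $\hat K^*$; since $\Gamma_{K^*}=\Gamma_K=\Gamma_{\hat K}=\Gamma_{\hat K^*}$ and $\res K^*=\res K=\res\hat K=\res\hat K^*$, the extension $K^*\subseteq\hat K^*$ is immediate. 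To make this an inclusion $\mathbf K^*\subseteq\hat{\mathbf K}^*$ of $\HLO$-fields I would take $\mathbf K^*$ to be the $\HLO$-expansion of $H(K)$ induced from $\hat{\mathbf K}^*$; this is legitimate because the $\HLO$-expansion of a pre-$H$-subfield inside a pre-$\HLO$-field is unique, so it coincides with the expansion furnished by [ADH, 16.4.7] (both restrict to the $\HLO$-cut of $\mathbf K$ on $K$). Thus $(\mathbf K^*,\hat{\mathbf K}^*)$ is the desired immediate $\upo$-free $\HLO$-pair, and the embedding-extension property is already guaranteed by [ADH, 16.4.7].

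The only content beyond the non-pair lemma is this commuting square of hulls together with the check that $H(\hat K)$ stays immediate over $H(K)$; everything concerning $\upo$-freeness, the $\HLO$-expansion, and the embedding-extension property is inherited verbatim from [ADH, 16.4.7]. The point needing a little care, and the only plausible obstacle, is the compatibility of the $\HLO$-cuts, namely that $\mathbf K^*\subseteq\hat{\mathbf K}^*$ holds at the level of $\HLO$-fields and not merely for the underlying $H$-fields; this is resolved by the uniqueness of induced $\HLO$-expansions noted above, which forces the cut on $H(K)$ to agree with the one produced by [ADH, 16.4.7].
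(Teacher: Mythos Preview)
Your proposal is correct and follows essentially the same route as the paper: pass to the $H$-field hulls $K^*=H(K)$, $\hat K^*=H(\hat K)$, use [ADH, 10.3.2 and remark before it] for immediacy, [ADH, 13.6.1] for $\upo$-freeness of the hulls, and the proof of [ADH, 16.4.7] for the embedding-extension property. The paper is terser about the $\HLO$-cut compatibility you worry over: since $H(K)$ and $H(\hat K)$ are $\upo$-free, each has a \emph{unique} $\HLO$-expansion by [ADH, 16.3.14], so $\mathbf K^*\subseteq\hat{\mathbf K}^*$ is automatic and needs no separate argument.
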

\begin{proof}
As $\hat K$ is $\upo$-free, so is $K$. 
  By [ADH, 13.6.1],     $H(K)$ is $\upo$-free, and by [ADH, 10.3.2 and   remark~(a) before it], $H(K)$ is an immediate extension of $K$, and likewise with
 $\hat K$ in place of~$K$. Let $\mathbf K^*$, $\hat{\mathbf K}^*$ be the unique expansions of $H(K)$, $H(\hat K)$, respectively,
 to  $\HLO$-fields. Then $(\mathbf K^*,\hat{\mathbf K}^*)$ has the required properties, by the proof of [ADH, 16.4.7].
\end{proof}

\noindent
The first claim of Proposition~\ref{prop:upo-free imm} now follows.  
As to the shortness  part, one checks that $\operatorname{rank}_{\Q}(\Gamma_{K^*}/\Gamma_K)\le \aleph_0$ for $(\mathbf K^*,\hat{\mathbf K}^*)$  as constructed above,    hence if $K$ is short and $\res K$ is archimedean, then $K^*$ is short by Corollary~\ref{cor:short oag} and Lemma~\ref{lem:short ofield}. \qed

\subsection*{Immediate extensions and $\HLO$-cuts}
Let  $K\subseteq\hat K$ be an extension of  pre-$H$-fields. Given a $\HLO$-cut 
$(\hat I,\hat\Lambda,\hat\Omega)$ in $\hat K$, we obtain the $\HLO$-cut
$$(\hat I,\hat\Lambda,\hat\Omega)\cap K:=(\hat I\cap K,  \hat\Lambda\cap K, \hat\Omega\cap K)$$ in $K$.
Recall from [ADH, remark before 16.3.19] that a pre-$H$-field has at least one and at most two $\HLO$-cuts.
In the rest of this subsection we assume that  $K\subseteq\hat K$ is immediate  and~$(I,\Lambda,\Omega)$ is a
 $\HLO$-cut in $K$, and we ask when there is a 
 $\HLO$-cut~$(\hat I,\hat\Lambda,\hat\Omega)$ in $\hat K$   such that~$(I,\Lambda,\Omega)=(\hat I,\hat\Lambda,\hat\Omega)\cap K$.
 
\begin{prop}\label{prop:HLOimm}
The following are equivalent:
\begin{enumerate}
\item[(i)]
There is a $\HLO$-cut $(\hat I,\hat\Lambda,\hat\Omega)$ in $\hat K$  with 
$(I,\Lambda,\Omega)=(\hat I,\hat\Lambda,\hat\Omega)\cap K$;
\item[(ii)] $K$ is not $\upl$-free, or $K$ is $\upo$-free, or $\hat K$ is $\upl$-free, or
$\Omega\neq\omega(K)^\downarrow$.
\end{enumerate}
\end{prop}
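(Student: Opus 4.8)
The plan is to reduce the equivalence to a statement about the $\Omega$-coordinate of $\HLO$-cuts and then run a case analysis driven by the $\upl$- and $\upo$-freeness of $K$ and $\hat K$. Throughout I use that $K\subseteq\hat K$ is immediate, so that $\Gamma$, the residue field, the submodule $\I$, and (when $K$ has asymptotic integration) the pc-sequences $(\upl_\rho)$, $(\upo_\rho)$ for $K$ also serve for $\hat K$, by [ADH, 10.3.2] and the conventions of the preceding subsection. Recall from [ADH, 16.3] that a pre-$H$-field has at least one and at most two $\HLO$-cuts, governed level by level by their $I$-, $\Lambda$-, and $\Omega$-coordinates, and that every $\HLO$-cut $(\hat I,\hat\Lambda,\hat\Omega)$ of $\hat K$ satisfies $\omega(\hat K)^\downarrow\subseteq\hat\Omega\subseteq\hat K\setminus\sigma\big(\Upg(\hat K)\big){}^\uparrow$. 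First I would record that the $I$- and $\Lambda$-coordinates take care of themselves: since the extension is immediate, $K$ has asymptotic integration (is grounded, has a gap) iff $\hat K$ does, and the $I$-coordinate of any $\HLO$-cut of $\hat K$ restricts to the matching $I$-coordinate on $K$, while $\Upl(\hat K)^\downarrow\cap K=\Upl(K)^\downarrow$. In the only delicate regime—$K$ with asymptotic integration and $\upl$-free, so that $I$ and $\Lambda$ are uniquely determined on $K$—every $\HLO$-cut of $\hat K$ restricts in its $I$- and $\Lambda$-coordinates to those of $(I,\Lambda,\Omega)$; hence (i) is equivalent to the existence of a $\HLO$-cut of $\hat K$ with $\hat\Omega\cap K=\Omega$, and only the two candidates $\hat\Omega=\omega(\hat K)^\downarrow$ and $\hat\Omega=\hat K\setminus\sigma\big(\Upg(\hat K)\big){}^\uparrow$ need be tested.

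Next I would prove (ii)$\Rightarrow$(i) disjunct by disjunct. If $K$ is not $\upl$-free, then either $K$ lacks asymptotic integration—so $\hat K$ does too and the $\Omega$-coordinate is forced and restricts correctly—or $(\upl_\rho)$ already pseudoconverges in $K$, whence $\Omega$ is slaved to the unique $\Lambda$ and extends. If $\Omega\neq\omega(K)^\downarrow$, then $\Omega=K\setminus\sigma\big(\Upg(K)\big){}^\uparrow$, and setting $\hat\Omega:=\hat K\setminus\sigma\big(\Upg(\hat K)\big){}^\uparrow$ one checks $\hat\Omega\cap K=\Omega$ from immediacy and the behaviour of $\sigma$, $\Upg$; this also disposes of the case $K$ $\upo$-free, where the two candidate cuts coincide. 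Finally, if $\hat K$ is $\upl$-free (the remaining possibility in the problematic regime, where $\Omega=\omega(K)^\downarrow$), I take $\hat\Omega:=\omega(\hat K)^\downarrow$ and verify $\omega(\hat K)^\downarrow\cap K=\omega(K)^\downarrow=\Omega$, using that $\upl$-freeness of $\hat K$ prevents any element of $\hat K$ from producing an $\omega$-value lying in $K$ strictly above $\omega(K)^\downarrow$.

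It remains to prove the contrapositive $\neg$(ii)$\Rightarrow\neg$(i): assume $K$ is $\upl$-free, $K$ is not $\upo$-free, $\hat K$ is not $\upl$-free, and $\Omega=\omega(K)^\downarrow$. Since $\hat K$ is not $\upl$-free while $K$ is, the sequence $(\upl_\rho)$ pseudoconverges to some $\upl\in\hat K\setminus K$, so $\upo:=\omega(\upl)\in\omega(\hat K)$ with $\upo_\rho\leadsto\upo$, by [ADH, 11.8] and [ADH, 11.5.14]. The dictionary underlying Cases~1 and~2 of [ADH, 16.4.6] pairs the choice $\Omega=\omega(K)^\downarrow$ with adjunction of a $\upg$ satisfying $\sigma(\upg)=\upo$, and the choice $\Omega=K\setminus\sigma\big(\Upg(K)\big){}^\uparrow$ with adjunction of $\upl$ itself; since $\hat K$ has in fact realized $\upl$, it is committed to the latter side. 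Concretely, every $\HLO$-cut $(\hat I,\hat\Lambda,\hat\Omega)$ of $\hat K$ then satisfies $\hat\Omega\cap K=K\setminus\sigma\big(\Upg(K)\big){}^\uparrow$, and as $K$ is not $\upo$-free we have $\omega(K)^\downarrow\subsetneq K\setminus\sigma\big(\Upg(K)\big){}^\uparrow$. Hence no $\HLO$-cut of $\hat K$ restricts to $\Omega=\omega(K)^\downarrow$, so (i) fails.

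The main obstacle is making this dictionary quantitative: showing that the presence of $\upl=\lim_\rho\upl_\rho$ in $\hat K$ pins $\hat\Omega\cap K$ to the large cut $K\setminus\sigma\big(\Upg(K)\big){}^\uparrow$ and never to $\omega(K)^\downarrow$, together with the companion identity $\omega(\hat K)^\downarrow\cap K=\omega(K)^\downarrow$ used in the $\upl$-free case of (ii)$\Rightarrow$(i). Both rest on the fine behaviour of $\omega$, $\sigma$, $\Upl$, $\Upg$ under the immediate extension $K\subseteq\hat K$, on the gap-creation analysis of [ADH, 11.5.14, 11.7, 11.8], and on [ADH, 13.7] for how adjoining $\upl$ displaces $\omega(\hat K)^\downarrow$ above $\omega(K)^\downarrow$; this is where the real work lies, the surrounding case bookkeeping being routine once these restriction identities are in hand.
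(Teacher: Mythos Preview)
Your approach is essentially the paper's: a case analysis on the $\upl$/$\upo$-freeness of $K$ and $\hat K$, resting on the restriction identities
\[
\Upl(\hat K)^\downarrow\cap K=\Upl(K)^\downarrow,\quad \Upd(\hat K)^\uparrow\cap K=\Upd(K)^\uparrow,\quad
\omega\big(\Upl(\hat K)\big){}^\downarrow\cap K=\omega\big(\Upl(K)\big){}^\downarrow,\quad
\big(\hat K\setminus\sigma(\Upg(\hat K)){}^\uparrow\big)\cap K=K\setminus\sigma(\Upg(K)){}^\uparrow,
\]
which are precisely the displays~\eqref{eq:HLOim} and~\eqref{eq:HLOimm} invoked in the paper, citing [ADH, 11.8.14, 11.8.21, 11.8.29]. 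The paper organizes the case split as five lemmas (one per configuration of the relevant properties) and in each one explicitly enumerates the $\HLO$-cuts of $K$ and $\hat K$ using [ADH, 16.3.11--16.3.18], which has the bonus of also counting how many $\HLO$-cuts of $\hat K$ restrict to the given one; you organize instead by the four disjuncts of (ii), but the content is the same.

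Two small points of precision. In the case where $K$ has asymptotic integration and is not $\upl$-free, your phrase ``the unique $\Lambda$'' is off: by [ADH, 16.3.17, 16.3.18] there are \emph{two} $\HLO$-cuts on $K$, distinguished by their $\Lambda$-coordinate, and one must match each to the corresponding $\Lambda$-coordinate on $\hat K$ (this is Lemma~\ref{lem:HLOimm, 2}). And in your $\neg$(ii)$\Rightarrow\neg$(i), the clean reason that every $\HLO$-cut of $\hat K$ has $\hat\Omega=\hat K\setminus\sigma(\Upg(\hat K)){}^\uparrow$ is not the ``dictionary'' of [ADH, 16.4.6] but rather the classification [ADH, 16.3.17] (applicable because $K$ $\upl$-free forces $2\Psi$ to have no supremum in $\Gamma=\Gamma_{\hat K}$), which says both $\HLO$-cuts of the non-$\upl$-free $\hat K$ share this $\hat\Omega$; then \eqref{eq:HLOimm} gives $\hat\Omega\cap K=K\setminus\sigma(\Upg(K)){}^\uparrow\neq\omega(K)^\downarrow$.
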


\noindent
This is a consequence of Lemmas~\ref{lem:HLOimm, 1}--\ref{lem:HLOimm, 6} below, which also address the uniqueness of the 
$\HLO$-cut  in $\hat K$  in part~(i) of the proposition. For the next two labeled displays, let $K$ be ungrounded. 
Then by [ADH, 11.8.14] we have
 \begin{equation}\label{eq:HLOim} \Upl(K)^\downarrow\ =\  \Upl(\hat K)^\downarrow\cap K, \qquad
\Upd(K)^\uparrow\ =\  \Upd(\hat K)^\uparrow \cap K, \quad
\Upg(K)^\uparrow\ =\  \Upg(\hat K)^\uparrow \cap K,\end{equation}
and by [ADH, 11.8.14, remark before 11.8.21, and 11.8.29]:
\begin{equation}\label{eq:HLOimm}
\omega\big(\Upl(\hat K)\big){}^\downarrow\cap K\ =\ \omega\big(\Upl(K)\big){}^\downarrow, \qquad
\big(\hat K\setminus\sigma\big(\Upg(\hat K)\big){}^\uparrow\big)\cap K\ =\ K\setminus\sigma\big(\Upg(K)\big){}^\uparrow.\end{equation}
By [ADH, 11.8.2] we also have $\I(K)=\I(\hat K)\cap K$ if $K$ has asymptotic integration. 
 
\begin{lemma}\label{lem:HLOimm, 1}
Suppose $K$ does not have asymptotic integration or $\hat K$ is $\upo$-free.  Then there is a unique  $\HLO$-cut $(\hat I,\hat\Lambda,\hat\Omega)$ in $\hat K$ with~$(I,\Lambda,\Omega)=(\hat I,\hat\Lambda,\hat\Omega)\cap K$.  
\end{lemma}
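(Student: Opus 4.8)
The plan is to reduce the statement to the observation that, under either hypothesis, the only freedom in a $\HLO$-cut of $\hat K$ is ``anchored'' by an element already lying in $K$. Recall two facts recorded earlier: first, if $\mathbf M=(M,\dots)$ is a pre-$\HLO$-field and $K$ is a pre-$H$-subfield of $M$, then $K$ carries a unique $\HLO$-cut compatible with that of $\mathbf M$; concretely, $(\hat I,\hat\Lambda,\hat\Omega)\cap K$ is a $\HLO$-cut of $K$ for every $\HLO$-cut $(\hat I,\hat\Lambda,\hat\Omega)$ of $\hat K$, so restriction along $K\subseteq\hat K$ is a well-defined map from $\HLO$-cuts of $\hat K$ to $\HLO$-cuts of $K$. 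Second, by [ADH, remark before 16.3.19] each pre-$H$-field has at least one and at most two $\HLO$-cuts; inspecting the cut structure in [ADH, 16.3] (cf.\ the $\Omega=\omega(K)^\downarrow$ versus $\Omega=K\setminus\sigma\big(\Upg(K)\big){}^\uparrow$ dichotomy and the gap-representative description used in Lemma~\ref{lem:16.4.5 imm}), the two cuts, when distinct, differ either in the $\Omega$-component through a realization of $\upo$ --- which requires asymptotic integration --- or in the $I$-component through a gap-representative $s$, an element of the field itself. Both hypotheses of the lemma are designed to suppress the first, dangerous kind of ambiguity in $\hat K$.

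First I would treat the case that $\hat K$ is $\upo$-free. Then $\hat K$ has asymptotic integration and is $\upl$-free, so it admits no $\Omega$-, $\Lambda$-, or $I$-ambiguity and hence carries a \emph{unique} $\HLO$-cut. Moreover $\upo$-freeness descends along the immediate extension $K\subseteq\hat K$ --- this is the same descent used in the final lemma of the proof of Proposition~\ref{prop:upo-free imm} (``as $\hat K$ is $\upo$-free, so is $K$'') --- so $K$ is $\upo$-free as well and therefore also carries a unique $\HLO$-cut, which must be the given $(I,\Lambda,\Omega)$. Existence then follows because the unique $\HLO$-cut of $\hat K$ restricts to a $\HLO$-cut of $K$, necessarily $(I,\Lambda,\Omega)$; and uniqueness is immediate, since any $\HLO$-cut of $\hat K$ restricting to $(I,\Lambda,\Omega)$ is the unique one.

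Next I would treat the case that $K$ lacks asymptotic integration. Since the extension is immediate the value group is unchanged, so every $\gamma\in\Gamma_K^{\ne}$ has a representative in $K^\times$ and the asymptotic couple is preserved, $(\Gamma_{\hat K},\psi_{\hat K})=(\Gamma_K,\psi)$; in particular $\hat K$ also lacks asymptotic integration, and so neither field exhibits an $\Omega$-ambiguity of the kind described above. If $K$ (hence $\hat K$) is grounded there is no residual freedom, each field has a unique $\HLO$-cut, and one concludes as in the previous case. If $K$ has a gap $\beta$, then $\beta\in\Gamma_K=\Gamma_{\hat K}$ and I would fix $s\in K^\times$ with $vs=\beta$; the only remaining freedom in a $\HLO$-cut of $\hat K$ is whether the $\asymp s$ level is placed in $\hat I$, i.e.\ a choice anchored at $s\in K$. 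Because $s\in K$, the two possible restrictions to $K$ are the \emph{distinct} cuts with $I$-components $\{y\in K:y\prec s\}$ and $\{y\in K:y\preceq s\}$, so the given $I$ singles out exactly one admissible $\hat I$; the accompanying $\hat\Lambda,\hat\Omega$ are then pinned down, and $(\hat I,\hat\Lambda,\hat\Omega)\cap K=(I,\Lambda,\Omega)$ is verified component-by-component using the identities \eqref{eq:HLOim} and \eqref{eq:HLOimm} (valid here since $K$ is ungrounded).

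The main obstacle is conceptual rather than computational: one must ensure that neither hypothesis permits an $\Omega$-type ambiguity in $\hat K$ whose distinguishing datum, a realization of $\upo$, could fail to lie in $K$, for such an element would break both existence and uniqueness. This is precisely the configuration excluded here and instead taken up in the subsequent lemmas of this subsection. Granting the descent of $\upo$-freeness and the invariance of the asymptotic couple under immediate extensions, the residual matching of the $I$-, $\Lambda$-, and $\Omega$-components is the routine bookkeeping made available by the displayed identities.
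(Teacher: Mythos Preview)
Your argument is essentially correct and follows the same route as the paper: reduce to the classification of $\HLO$-cuts in [ADH, 16.3.11--16.3.14] after checking that $K$ and $\hat K$ fall into the same case of that classification. The paper's proof is terser---it simply records that (i)~asymptotic integration, (ii)~the gap sub-case ``$v(a')\neq\beta$ for all $a\asymp 1$ in $K$'', and (iii)~$\upo$-freeness each transfer between $K$ and $\hat K$ under immediacy, then cites [ADH, 16.3.11--16.3.14] directly.

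You omit point (ii). In the gap case the classification [ADH, 16.3.11--16.3.13] splits according to whether some $b\asymp 1$ has $v(b')=\beta$, and the number of $\HLO$-cuts depends on this; your assertion that ``the two possible restrictions to $K$ are the \emph{distinct} cuts with $I$-components $\{y\in K:y\prec s\}$ and $\{y\in K:y\preceq s\}$'' presumes $K$ and $\hat K$ sit in the same sub-case. The paper explicitly verifies that if $v(a')\neq\beta$ for all $a\asymp 1$ in $K$, then the same holds in $\hat K$---this is the one nontrivial transfer, and it is exactly what makes your anchoring argument go through. The omission is minor and easily patched from immediacy, but it is the point the paper isolates. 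Your appeal to \eqref{eq:HLOim} and \eqref{eq:HLOimm} here is also superfluous: in the absence of asymptotic integration the $\HLO$-cut is already determined by its $I$-component via [ADH, 16.3.11--16.3.13], so no separate matching of $\Lambda$ and $\Omega$ is needed.
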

\begin{proof}
Note that   $K$ has asymptotic integration iff $\hat K$ has,
and if $K$ has a gap $\beta$ and~$v(a')\neq\beta$ for all $a\asymp 1$ in $K$, then $\beta$ remains a gap in $\hat K$ and
$v(b')\neq\beta$ for all~$b\asymp 1$ in $\hat K$. If $\hat K$ is $\upo$-free, then so is $K$.
Now use~[ADH, 16.3.11--16.3.14].
\end{proof}

\begin{lemma}\label{lem:HLOimm, 4}
Suppose $K$ is $\upo$-free, but $\hat K$ is not.
Then there are exactly two  $\HLO$-cuts $(\hat I,\hat\Lambda,\hat\Omega)$ in $\hat K$ with~$(I,\Lambda,\Omega)=(\hat I,\hat\Lambda,\hat\Omega)\cap K$. 
\end{lemma}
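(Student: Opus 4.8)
The plan is to reduce everything to the characterization of $\HLO$-cuts in $\upl$-free pre-$H$-fields with asymptotic integration from [ADH, 16.3.11--16.3.14], together with the intersection identities \eqref{eq:HLOim} and \eqref{eq:HLOimm}. First I would record the structure on the base. Since $K$ is $\upo$-free it is $\upl$-free, hence has asymptotic integration and is ungrounded, so the displays \eqref{eq:HLOim}, \eqref{eq:HLOimm} and the identity $\I(K)=\I(\hat K)\cap K$ (from [ADH, 11.8.2]) are all available. For such $K$ the $\upl$-free condition pins down $\Lambda=\Upl(K)^\downarrow=K\setminus\Upd(K)^\uparrow$, and the $\upo$-free condition pins down $\Omega=\omega\big(\Upl(K)\big){}^\downarrow=K\setminus\sigma\big(\Upg(K)\big){}^\uparrow$; thus $(I,\Lambda,\Omega)$ is the \emph{unique} $\HLO$-cut of $K$.

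Next I would transfer $\upl$-freeness and asymptotic integration to $\hat K$. Asymptotic integration passes to the immediate extension $\hat K$ automatically, and $\upl$-freeness of $\hat K$ follows from $\upo$-freeness of $K$ by [ADH, 11.7] (this is precisely the point at which $\upo$-freeness of the base, rather than mere $\upl$-freeness, is used). Granting this, $\hat K$ is a $\upl$-free pre-$H$-field with asymptotic integration that is not $\upo$-free, so by [ADH, 16.3.11--16.3.14] it has exactly two $\HLO$-cuts; these share the same integration cut $\hat I$ and the same $\upl$-cut $\hat\Lambda=\Upl(\hat K)^\downarrow$, and differ only in the $\upo$-component: one has $\hat\Omega=\omega\big(\Upl(\hat K)\big){}^\downarrow$ and the other $\hat\Omega=\hat K\setminus\sigma\big(\Upg(\hat K)\big){}^\uparrow$, these being distinct precisely because $\hat K$ is not $\upo$-free.

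It then remains to check that both of these $\HLO$-cuts restrict to $(I,\Lambda,\Omega)$ on $K$, and that no further $\HLO$-cut of $\hat K$ does; the latter is immediate, since $\hat K$ has only these two $\HLO$-cuts in all. For the integration component, $\hat I\cap K=I$ because $\I(\hat K)\cap K=\I(K)$. For the $\upl$-component, \eqref{eq:HLOim} gives $\Upl(\hat K)^\downarrow\cap K=\Upl(K)^\downarrow=\Lambda$, so $\hat\Lambda\cap K=\Lambda$. For the two $\upo$-components I would invoke \eqref{eq:HLOimm}: on the one hand $\omega\big(\Upl(\hat K)\big){}^\downarrow\cap K=\omega\big(\Upl(K)\big){}^\downarrow=\Omega$, and on the other $\big(\hat K\setminus\sigma(\Upg(\hat K))^\uparrow\big)\cap K=K\setminus\sigma\big(\Upg(K)\big){}^\uparrow=\Omega$, where both final equalities use that $K$ is $\upo$-free. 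Hence each of the two $\HLO$-cuts of $\hat K$ meets $K$ in $(I,\Lambda,\Omega)$, giving exactly two as claimed.

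The main obstacle is the transfer of $\upl$-freeness to $\hat K$: it is exactly here that the hypothesis must be $\upo$-freeness of $K$ and not merely $\upl$-freeness, since $\upl$-freeness alone need not survive immediate extensions, and if $\hat K$ failed to be $\upl$-free then $\hat\Lambda$ would itself become ambiguous and the clean ``two cuts differing only in $\hat\Omega$'' picture would collapse. A secondary point requiring care is that \eqref{eq:HLOim}--\eqref{eq:HLOimm} and [ADH, 11.8.2] are statements about \emph{sets}, so one must confirm that intersecting the downward and upward closures with $K$ genuinely reproduces the closures computed inside $K$ (i.e.\ that the boundary behaviour of each cut is preserved under restriction); this is exactly what the cited identities deliver.
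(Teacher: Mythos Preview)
Your argument has a genuine gap: the claim that $\upo$-freeness of $K$ forces $\upl$-freeness of the immediate extension $\hat K$ is false. In fact the proof of Theorem~\ref{thm:embedd imm} in this very paper exhibits the opposite situation: starting from an $\upo$-free $K$ one replaces $\hat K$ by $\hat K(\upl)$ with $\upl_\rho\leadsto\upl$, obtaining an immediate extension of $K$ that is \emph{not} $\upl$-free. So your reduction to the $\upl$-free case of $\hat K$ and to [ADH, 16.3.16] does not cover all cases; when $\hat K$ fails to be $\upl$-free the two $\HLO$-cuts of $\hat K$ are described instead by [ADH, 16.3.17, 16.3.18], and your explicit verification via \eqref{eq:HLOim}, \eqref{eq:HLOimm} does not apply to those forms without further work.

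The paper avoids this case split entirely by a cleaner observation: since $K$ is $\upo$-free, $(I,\Lambda,\Omega)$ is the \emph{unique} $\HLO$-cut of $K$ by [ADH, 16.3.14], and restriction of any $\HLO$-cut of $\hat K$ to $K$ is automatically a $\HLO$-cut of $K$, hence must equal $(I,\Lambda,\Omega)$. No computation with \eqref{eq:HLOim} or \eqref{eq:HLOimm} is needed. It then remains only to count the $\HLO$-cuts of $\hat K$: since $\hat K$ is not $\upo$-free there are exactly two, by [ADH, 16.3.16] if $\hat K$ is $\upl$-free and by [ADH, 16.3.17, 16.3.18] if not. Your approach can be repaired by adding this second case, but the uniqueness shortcut makes the explicit restriction checks unnecessary.
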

\begin{proof}
By [ADH, 16.3.14], $(I,\Lambda,\Omega)$ is the unique $\HLO$-cut in $K$, so 
$(I,\Lambda,\Omega)=(\hat I,\hat\Lambda,\hat\Omega)\cap K$ for every $\HLO$-cut $(\hat I,\hat\Lambda,\hat\Omega)$ in $\hat K$. 
Moreover, $\hat K$  has exactly two $\HLO$-cuts, by  [ADH, 16.3.16]
if $\hat K$ is $\upl$-free, and by [ADH, 16.3.17, 16.3.18]  if not.
\end{proof}

\noindent
In particular, if $K$ has no asymptotic integration or $K$ is $\upo$-free then we have
a $\HLO$-cut $(\hat I,\hat\Lambda,\hat\Omega)$ in $\hat K$  with 
$(I,\Lambda,\Omega)=(\hat I,\hat\Lambda,\hat\Omega)\cap K$. The next lemmas deal
with the case where $K$ has asymptotic integration and $K$ is not $\upo$-free.

\begin{lemma}\label{lem:HLOimm, 2}
Suppose $K$ has asymptotic integration and  is not $\upl$-free. 
Then there is 
exactly one  $\HLO$-cut $(\hat I,\hat\Lambda,\hat\Omega)$ in $\hat K$ with~$(I,\Lambda,\Omega)=(\hat I,\hat\Lambda,\hat\Omega)\cap K$. 
\end{lemma}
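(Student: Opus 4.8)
The plan is to reduce the statement to a counting argument via a restriction bijection between $\HLO$-cuts. First I would transfer the hypotheses to $\hat K$. Since $K$ has asymptotic integration it is ungrounded, so the distinguished pc-sequence $(\upl_\rho)$ is defined and (by the standing convention of this subsection for ungrounded $K$) also serves for $\hat K$. As $K$ is not $\upl$-free there is $\upl\in K$ with $\upl_\rho\leadsto\upl$; because $\hat K\supseteq K$, this same $\upl$ satisfies $\upl_\rho\leadsto\upl$ in $\hat K$, so $\hat K$ is not $\upl$-free. Moreover $\hat K$ has asymptotic integration, since $K\subseteq\hat K$ is immediate (as noted in the proof of Lemma~\ref{lem:HLOimm, 1}).

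Next I would invoke [ADH, 16.3.17, 16.3.18] for both $K$ and $\hat K$: each has asymptotic integration and is not $\upl$-free, so each has \emph{exactly two} $\HLO$-cuts, the two cuts being distinguished precisely by whether $\upl$ lies in the $\Lambda$-component. The crucial point is that the distinguishing element $\upl$ belongs to $K$. Hence, for the restriction map $(\hat I,\hat\Lambda,\hat\Omega)\mapsto(\hat I,\hat\Lambda,\hat\Omega)\cap K$ — which sends $\HLO$-cuts of $\hat K$ to $\HLO$-cuts of $K$ by the remark opening this subsection — we have $\upl\in\hat\Lambda\cap K$ iff $\upl\in\hat\Lambda$. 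Therefore the two $\HLO$-cuts of $\hat K$, which differ in whether $\upl\in\hat\Lambda$, restrict to two $\HLO$-cuts of $K$ that differ in whether $\upl\in\Lambda$; in particular these two restrictions are distinct.

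To finish, since $\hat K$ has exactly two $\HLO$-cuts and restriction maps them injectively into the two-element set of $\HLO$-cuts of $K$, this restriction map is a bijection. Consequently there is exactly one $\HLO$-cut $(\hat I,\hat\Lambda,\hat\Omega)$ of $\hat K$ with $(I,\Lambda,\Omega)=(\hat I,\hat\Lambda,\hat\Omega)\cap K$, which is the assertion of the lemma. (Note that injectivity already gives the uniqueness ``at most one'', while surjectivity onto the two $\HLO$-cuts of $K$ gives existence ``at least one''.)

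The hard part will be pinning down the precise form of [ADH, 16.3.17, 16.3.18] — specifically the claim that, in the asymptotic-integration and not-$\upl$-free case, the two $\HLO$-cuts differ exactly in whether $\upl\in\Lambda$, with the $I$- and $\Omega$-components then forced (the $I$-component because asymptotic integration determines it, together with $\I(K)=\I(\hat K)\cap K$ by [ADH, 11.8.2], and the $\Omega$-component because the position of $\upo=\omega(\upl)$ is tied to that of $\upl$). Once that description is in hand, matching the restrictions through $\upl\in K$ and the bijection count are routine.
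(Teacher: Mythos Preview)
Your proposal is correct and follows essentially the same strategy as the paper: both argue that $K$ and $\hat K$ each have exactly two $\HLO$-cuts (via [ADH, 16.3.17, 16.3.18]) and that the restriction map is a bijection between these two-element sets. The difference is only in how injectivity of restriction is verified. The paper splits cases on whether $2\Psi$ has a supremum in $\Gamma$, writes down the explicit $\Lambda$-components $\Upl(K)^\downarrow$ and $K\setminus\Upd(K)^\uparrow$ (and their $\hat K$-analogues), and then matches them directly using \eqref{eq:HLOim}, namely $\Upl(\hat K)^\downarrow\cap K=\Upl(K)^\downarrow$ and $(\hat K\setminus\Upd(\hat K)^\uparrow)\cap K=K\setminus\Upd(K)^\uparrow$. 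You instead route through the auxiliary claim that a single element $\upl\in K$ distinguishes the two $\Lambda$-components in $\hat K$; since $\upl\in K$, it then also distinguishes their restrictions. Your route works, but the ``hard part'' you flag---checking that $\upl$ really separates the two $\HLO$-cuts in both the 16.3.17 and 16.3.18 cases---is precisely what the paper sidesteps by appealing to \eqref{eq:HLOim} directly. So the paper's execution is shorter and avoids the extra verification you anticipate needing.
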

\begin{proof}
Suppose first that $2\Psi$ has no supremum in $\Gamma$. 
Then by [ADH, 16.3.17] there are exactly two $\HLO$-cuts 
$(I_1,\Lambda_1,\Omega_1)$, $(I_2,\Lambda_2,\Omega_2)$
in~$K$, with $\Lambda_1=\Upl(K)^\downarrow$, $\Lambda_2=K\setminus\Upd(K)^\uparrow$, and
$\Lambda_1\neq\Lambda_2$.
Similarly there are exactly two $\HLO$-cuts 
$(\hat I_1,\hat \Lambda_1,\hat \Omega_1)$, $(\hat I_2,\hat \Lambda_2,\hat \Omega_2)$
in $\hat K$, with $\hat \Lambda_1=\Upl(\hat K)^\downarrow$, $\hat \Lambda_2=\hat K\setminus\Upd(\hat K)^\uparrow$.
Now use that by \eqref{eq:HLOim} we have  $\Upl(K)^\downarrow = \Upl(\hat K)^\downarrow\cap K$
and $K\setminus\Upd(K)^\uparrow = \big(\hat K\setminus\Upd(\hat K)^\uparrow\big)\cap K$.
The case where $2\Psi$ has a supremum in $\Gamma$ is similar, using  [ADH, 16.3.18]
instead of  [ADH, 16.3.17].
\end{proof}

\begin{lemma}\label{lem:HLOimm, 5}
Suppose $K$ is not $\upo$-free and $\hat K$ is $\upl$-free. Then 
there is 
exactly one  $\HLO$-cut $(\hat I,\hat\Lambda,\hat\Omega)$ in $\hat K$ such that~$(I,\Lambda,\Omega)=(\hat I,\hat\Lambda,\hat\Omega)\cap K$.
\end{lemma}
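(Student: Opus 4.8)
The plan is to mirror the proof of Lemma~\ref{lem:HLOimm, 2}, but with the $\Omega$-component of a $\HLO$-cut playing the role that the $\Lambda$-component played there. First I would record two reductions. Since $K\subseteq\hat K$ is immediate and $\hat K$ is $\upl$-free, $K$ is $\upl$-free as well: any pseudolimit in $K$ of the sequence $(\upl_\rho)$ would also be a pseudolimit in $\hat K$, which is impossible. If moreover $\hat K$ is $\upo$-free, then Lemma~\ref{lem:HLOimm, 1} already yields a \emph{unique} $\HLO$-cut in $\hat K$ restricting to $(I,\Lambda,\Omega)$, and we are done; so I may assume $\hat K$ is not $\upo$-free. (In fact this case cannot occur, since $K$ is not $\upo$-free, but the case distinction keeps the argument self-contained.) Thus both $K$ and $\hat K$ are $\upl$-free but not $\upo$-free.

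Next I would invoke [ADH, 16.3.16] to get that $K$ has exactly two $\HLO$-cuts, and likewise $\hat K$; in each field the two cuts share their first two components and are distinguished only by the third, namely
$$\Omega\ \in\ \big\{\,\omega\big(\Upl(K)\big){}^\downarrow,\ \ K\setminus\sigma\big(\Upg(K)\big){}^\uparrow\,\big\}$$
for $K$, and correspondingly for $\hat K$. By the remark opening this subsection, restricting any $\HLO$-cut of $\hat K$ to $K$ again produces a $\HLO$-cut of $K$, so we obtain a map from the two $\HLO$-cuts of $\hat K$ to the two of $K$. The first two components restrict correctly: the common $\Lambda=\Upl(K)^\downarrow$ by \eqref{eq:HLOim}, and the $I$-component because the extension is immediate, using $\I(\hat K)\cap K=\I(K)$ from the remark after \eqref{eq:HLOimm}. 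For the distinguishing third component, \eqref{eq:HLOimm} gives $\omega(\Upl(\hat K))^\downarrow\cap K=\omega(\Upl(K))^\downarrow$ and $\big(\hat K\setminus\sigma(\Upg(\hat K))^\uparrow\big)\cap K=K\setminus\sigma(\Upg(K))^\uparrow$. Since these two $\Omega$-values are distinct, the restriction map is a bijection between the two two-element sets of $\HLO$-cuts, and therefore exactly one $\HLO$-cut $(\hat I,\hat\Lambda,\hat\Omega)$ of $\hat K$ satisfies $(\hat I,\hat\Lambda,\hat\Omega)\cap K=(I,\Lambda,\Omega)$.

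I expect no computational obstacle here; the only point to watch is the bookkeeping that legitimately places \emph{both} fields under [ADH, 16.3.16]. That rests on the downward transfer of $\upl$-freeness along the immediate extension and on disposing of the $\upo$-free case via Lemma~\ref{lem:HLOimm, 1}. Once both $K$ and $\hat K$ are in the $\upl$-free-but-not-$\upo$-free regime, the count is forced, because the distinguishing datum $\Omega$ restricts bijectively by \eqref{eq:HLOimm}, exactly as the distinguishing datum $\Lambda$ did in Lemma~\ref{lem:HLOimm, 2}.
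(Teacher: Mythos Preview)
Your proof is correct and follows essentially the same approach as the paper's: both arguments place $K$ and $\hat K$ under [ADH, 16.3.16] (so each has exactly two $\HLO$-cuts, distinguished only by their $\Omega$-component) and then use \eqref{eq:HLOim} and \eqref{eq:HLOimm} to see that restriction is a bijection between these two two-element sets. You spell out a bit more than the paper does---the downward transfer of $\upl$-freeness along the immediate extension, and the (impossible, since $K$ not $\upo$-free forces $\hat K$ not $\upo$-free) case $\hat K$ $\upo$-free---but the substance is identical.
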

\begin{proof}
By [ADH, 16.3.16],  $K$ being $\upl$-free, but not $\upo$-free, it has exactly two $\HLO$-cuts, namely
$ \big(\!\I(K), \Upl(K)^{\downarrow}, \omega\big(\Upl(K)\big){}^\downarrow\big)$ and $\big(\!\I(K), \Upl(K)^{\downarrow}, K\setminus \sigma\big(\Upg(K)\big){}^\uparrow\big)$,
and similarly with $\hat K$ in place of $K$.
Now use \eqref{eq:HLOim} and \eqref{eq:HLOimm}.
\end{proof}

\begin{lemma}\label{lem:HLOimm, 6}
Suppose $K$ is $\upl$-free, but not $\upo$-free, and $\hat K$ is not $\upl$-free.
Then there is a $\HLO$-cut $(\hat I,\hat\Lambda,\hat\Omega)$ in $\hat K$ such that~$(I,\Lambda,\Omega)=(\hat I,\hat\Lambda,\hat\Omega)\cap K$ iff $\Omega\neq\omega(K)^\downarrow$, and in this case
there are exactly two such $\HLO$-cuts in $\hat K$.
\end{lemma}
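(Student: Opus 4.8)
The plan is to determine explicitly all $\HLO$-cuts of both $K$ and $\hat K$ from the classification in [ADH, 16.3], and then to intersect each $\HLO$-cut of $\hat K$ with $K$, reading off which cut $(I,\Lambda,\Omega)$ of $K$ it produces. Since $K$ is $\upl$-free it has asymptotic integration, and as $K\subseteq\hat K$ is immediate the same holds for $\hat K$; in particular both fields are ungrounded, so the intersection formulas \eqref{eq:HLOim} and \eqref{eq:HLOimm} are available, as is $\I(K)=\I(\hat K)\cap K$ (by [ADH, 11.8.2]).

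First I would record the two $\HLO$-cuts of $K$. As $K$ is $\upl$-free but not $\upo$-free, [ADH, 16.3.16] gives exactly two, sharing $I=\I(K)$ and $\Lambda=\Upl(K)^\downarrow$ and differing only in $\Omega$, which is either $\omega(K)^\downarrow$ or $K\setminus\sigma(\Upg(K))^\uparrow$; since these are genuinely two distinct cuts, $\omega(K)^\downarrow\neq K\setminus\sigma(\Upg(K))^\uparrow$, and so the hypothesis $\Omega\neq\omega(K)^\downarrow$ is equivalent to $\Omega=K\setminus\sigma(\Upg(K))^\uparrow$. Next, since $\hat K$ is not $\upl$-free but has asymptotic integration it is a fortiori not $\upo$-free, and [ADH, 16.3.17, 16.3.18] gives that $\hat K$ again has exactly two $\HLO$-cuts; crucially, these differ only in their $\Lambda$-component ($\Upl(\hat K)^\downarrow$ versus $\hat K\setminus\Upd(\hat K)^\uparrow$) and share the common $\Omega$-component $\hat K\setminus\sigma(\Upg(\hat K))^\uparrow$.

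Then I would intersect with $K$. Both $\Lambda$-components restrict to $\Lambda$: for the first this is $\Upl(\hat K)^\downarrow\cap K=\Upl(K)^\downarrow$ by \eqref{eq:HLOim}, while for the second $(\hat K\setminus\Upd(\hat K)^\uparrow)\cap K=K\setminus\Upd(K)^\uparrow=\Upl(K)^\downarrow$, the last equality being precisely $\upl$-freeness of $K$. The common $\Omega$-component restricts, by \eqref{eq:HLOimm}, to $K\setminus\sigma(\Upg(K))^\uparrow$, and the $I$-components restrict to $\I(K)$. Thus both $\HLO$-cuts of $\hat K$ restrict to the single cut $\big(\I(K),\Upl(K)^\downarrow,K\setminus\sigma(\Upg(K))^\uparrow\big)$ of $K$, the one with $\Omega\neq\omega(K)^\downarrow$. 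Since these are all the $\HLO$-cuts of $\hat K$, a cut $(\hat I,\hat\Lambda,\hat\Omega)$ with $(I,\Lambda,\Omega)=(\hat I,\hat\Lambda,\hat\Omega)\cap K$ exists iff $\Omega=K\setminus\sigma(\Upg(K))^\uparrow$, i.e.\ iff $\Omega\neq\omega(K)^\downarrow$, and in that case both cuts of $\hat K$ qualify, giving exactly two such cuts, as claimed.

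The main obstacle is the precise bookkeeping of the $\Omega$-components in the non-$\upl$-free field $\hat K$: the whole dichotomy hinges on the fact that, unlike in the $\upl$-free-but-not-$\upo$-free situation, the two $\HLO$-cuts of $\hat K$ do not realize the ``$\omega$-type'' cut $\omega(\Upl(\hat K))^\downarrow$ at all, but both carry the ``$\sigma$-type'' component $\hat K\setminus\sigma(\Upg(\hat K))^\uparrow$. I would take care to cite [ADH, 16.3.17, 16.3.18] for exactly this point, and to flag that $\upl$-freeness of $K$ is used essentially to collapse the two a priori different $\Lambda$-restrictions onto the single value $\Lambda=\Upl(K)^\downarrow$; without it the two cuts of $\hat K$ would restrict to genuinely different cuts of $K$ and the stated equivalence would break down.
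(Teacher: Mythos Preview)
Your proof is correct and follows essentially the same approach as the paper's: classify the $\HLO$-cuts of $K$ and $\hat K$ via [ADH, 16.3.16--16.3.18], intersect using \eqref{eq:HLOim}, \eqref{eq:HLOimm}, and read off the result. One minor refinement in the paper: it observes (via [ADH, 9.2.17, 11.6.8]) that $\upl$-freeness of $K$ forces $2\Psi$ to have no supremum in $\Gamma=\Gamma_{\hat K}$, so only [ADH, 16.3.17] is needed for $\hat K$, making your appeal to 16.3.18 harmless but unnecessary.
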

\begin{proof}
By [ADH, 16.3.16]
 $K$ has exactly two $\HLO$-cuts $(I_1,\Lambda_1,\Omega_1)$, $(I_2,\Lambda_2,\Omega_2)$~
where 
$$I_1\ =\ I_2\ =\ \I(K),\quad \Lambda_1\ =\ \Lambda_2\ =\Upl(K)^{\downarrow},\quad
\Omega_1\ =\ K\setminus \sigma\big(\Upg(K)\big){}^\uparrow\ \ne\  \Omega_2\ =\  \omega(K)^{\downarrow}.$$
Now $K$ is $\upl$-free, so $2\Psi$ has no supremum in $\Gamma$ by [ADH, 9.2.17, 11.6.8], hence
by~[ADH, 16.3.17], $\hat K$ has exactly two $\HLO$-cuts $(\hat I_1,\hat\Lambda_1,\hat\Omega_1)$, 
$(\hat I_2,\hat\Lambda_2,\hat\Omega_2)$, where
$$\hat I_1= \hat I_2 = \I(\hat K),\quad \hat\Lambda_1 = \Upl(\hat K)^{\downarrow}, \quad
\hat\Lambda_2 = \hat K\setminus \Upd(\hat K)^{\uparrow}, \quad
\hat\Omega_1 = \hat\Omega_2 = \hat K\setminus \sigma\big(\Upg(\hat K)\big){}^{\uparrow}.$$
Thus $(\hat I_j,\hat\Lambda_j,\hat\Omega_j)\cap K=(I_1,\Lambda_1,\Omega_1)$ for $j=1,2$ by \eqref{eq:HLOimm}. This yields the lemma.
\end{proof}

\subsection*{Proof of Theorem~\ref{thm:embedd imm}}
Let $K$, $\hat K$, $M$ be as in the statement of the theorem,
and let $i\colon K\to M$ be an embedding.  
 If $\hat K$ is $\upo$-free, then Lemma~\ref{lem:L->M} and [ADH, 10.5.8] give an extension of $i$ to an embedding~${\hat{K}\to  M}$ as required.

In the rest of the proof we therefore assume that $\hat K$ is not $\upo$-free.
 If $\hat K$ is $\upl$-free, then, taking $\upo\in\hat K$ with~$\upo_\rho\leadsto\upo$, [ADH, 11.7.13] yields an immediate
 pre-$H$-field extension~$\hat K_{\upl}:=\hat K(\upl)$ of $\hat K$ with $\upl_\rho\leadsto\upl$ and $\omega(\upl)=\upo$, so that replacing $\hat K$ by~$\hat K_{\upl}$ we arrange  that $\hat K$ is  not even $\upl$-free.

Suppose $K$ is not $\upl$-free.
Let $\mathbf M$ be the unique expansion of $M$ to a $\HLO$-field, and expand $K$ to a pre-$\HLO$-field $\mathbf K$ such that $i$ is an embedding~$\mathbf K\to\mathbf M$ of pre-$\HLO$-fields. Proposition~\ref{prop:HLOimm} yields an expansion of $\hat K$ to a pre-$\HLO$-field $\hat{\mathbf K}$ such
that~${\mathbf K\subseteq\hat{\mathbf K}}$, and
then Proposition~\ref{prop:upo-free imm} gives an immediate $\upo$-free short $\HLO$-pair~$(\mathbf K^*,\hat{\mathbf K}^*)$ extending $(\mathbf K,\hat{\mathbf K})$ with $\res\mathbf K^*$ algebraic over $\res\mathbf K$ and an extension of $i$ to an embedding $i^*:\mathbf K^*\to\mathbf M$.  The case of $\upo$-free $\hat K$ treated earlier applied instead to
 $\hat{K}^*$ now yields an extension  of $i^*$ to an embedding $\hat{K}^*\to M$.
 
Next, suppose $K$ is $\upo$-free. 
 Then the pc-sequence $(\upl_\rho)$ in $K$ is of $\d$-tran\-scen\-den\-tal type over $K$, by [ADH, 13.6.3]. 
  Take $\upl\in\hat K$ such that~$\upl_\rho\leadsto\upl$. 
 Now $K$ is short and $M$ is countably spherically complete, so by Lemma~\ref{lem:short}  we have
$\upl^*\in M$ with~${i(\upl_\rho)\leadsto\upl^*}$.
 By [ADH, 11.4.7, 11.4.13, 10.5.8] we obtain a unique extension of~$i$
 to an embedding $j\colon K\langle\upl\rangle\to M$ such that $j(\upl)=\upl^*$.
 The case of non-$\upl$-free $K$ applied instead to $K\langle\upl\rangle$ yields an extension of $j$ to
 an embedding~$\hat K\to M$. \qed

\section{Embeddings into Analytic Hardy Fields}\label{sec:embedd}

\noindent
In this section we use Theorem~\ref{aneta} to derive results about
back-and-forth equivalence, $\infty\omega$-elementary equivalence, and isomorphism for maximal analytic Hardy fields, as was done in   \cite[Section~10]{ADHfgh} for maximal Hardy fields.
(For the relation of back-and-forth equivalence to infinitary logic,  see \cite{Barwise}.)
We also strengthen Corollary~\ref{cor:Tlog} by showing in Corollary~\ref{cor:embed T} that the ordered differential field $\T$ embeds into every maximal analytic Hardy field. 

Let~$\No$ be the ordered field of surreal numbers     equipped with
the derivation $\der_{\BM}$ of Berarducci and Mantova~\cite{BM}. Then $\No$ is a closed $H$-field, by \cite{ADH1+}. Moreover, given an  uncountable cardinal $\kappa$,  the surreal
numbers of   length~$<\kappa$ form an ordered differential subfield~$\No(\kappa)$ of~$\No$
with $\No(\kappa)\preceq\No$, by~\cite[Corollary~4.6]{ADH1+}.
As in the argument leading up to \cite[Corollary~10.4]{ADHfgh}, combining Theorem~\ref{aneta} 
and
 \cite[Corollary~10.3]{ADHfgh} yields:
 
\begin{cor}
Let $M$ be a maximal analytic or maximal smooth Hardy field. Then the ordered differential fields $M$ and  $\No(\omega_1)$ are back-and-forth equivalent. Hence~$M \equiv_{\infty\omega} \No(\omega_1)$, and
assuming~$\operatorname{CH}$,   $M\cong \No(\omega_1)$. 
\end{cor}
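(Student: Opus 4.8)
The plan is to build a back-and-forth system of partial isomorphisms between $M$ and $\No(\omega_1)$ with ``small'' domains and then read off the three assertions in turn. I first collect the structural inputs. By the facts recalled in the introduction, $M$ is a closed $H$-field with constant field $\R$, and by \cite{ADH1+} so is $\No$; since $\No(\omega_1)\preceq\No$, also $\No(\omega_1)$ is a closed $H$-field with constant field $\R$. In particular $M$ and $\No(\omega_1)$ are models of the same complete theory, so $M\equiv\No(\omega_1)$ at the first-order level; the content of the corollary is the refinement of this to back-and-forth equivalence and, under CH, isomorphism. By Theorem~\ref{aneta} the ordered set $M$ is $\eta_1$, and $\No(\omega_1)$ is $\eta_1$ as well; using Corollary~\ref{lem:eta1 size} together with $M\subseteq\Cc$ (so $\abs{M}\le\mathfrak c$) we get $\abs{M}=\mathfrak c$, and a direct count gives $\abs{\No(\omega_1)}=\mathfrak c$.

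Let $\mathcal F$ be the set of isomorphisms $\sigma\colon A\to B$ of ordered differential fields, where $A$ is an $H$-subfield of $M$ and $B$ an $H$-subfield of $\No(\omega_1)$, each generated over $\R$ by countably many elements. Since any isomorphism of the ordered differential fields here must fix $\R$, every member of $\mathcal F$ is an isomorphism over $\R$, and $\mathcal F$ is nonempty. The crux is the back-and-forth property: given $\sigma\colon A\to B$ in $\mathcal F$ and $m\in M$, I must produce an extension of $\sigma$ in $\mathcal F$ whose domain contains $m$ (and symmetrically with the roles of $M$ and $\No(\omega_1)$ exchanged). This is exactly the extension step carried out in the proof leading to \cite[Corollary~10.4]{ADHfgh}: one distinguishes whether $m$ is $\d$-algebraic over $A$, in which case model-completeness of the theory of closed $H$-fields identifies the extension $A\langle m\rangle$ with a $\d$-algebraic extension of $B$ realized inside $\No(\omega_1)$, or $\d$-transcendental over $A$, in which case the required target element of $\No(\omega_1)$ is obtained by realizing the appropriate cut. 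The only point at which that argument invokes a special feature of a \emph{maximal} Hardy field is the realization of such cuts in $M$, and this is precisely what Theorem~\ref{aneta} provides for the analytic (and, with the same proof, the smooth) case; the matching realizations on the surreal side and the organization of the recursion are furnished by \cite[Corollary~10.3]{ADHfgh}. Hence the argument of \cite[Corollary~10.4]{ADHfgh} goes through verbatim with $M$ in place of the maximal Hardy field there, and $\mathcal F$ witnesses that $M$ and $\No(\omega_1)$ are back-and-forth equivalent.

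From back-and-forth equivalence, $M\equiv_{\infty\omega}\No(\omega_1)$ follows by Karp's theorem (see \cite{Barwise}). Assuming CH we have $\mathfrak c=\aleph_1$, so $\abs{M}=\abs{\No(\omega_1)}=\aleph_1$. Enumerate $M$ and $\No(\omega_1)$ in order type $\omega_1$ and build an increasing chain $(\sigma_\alpha)_{\alpha<\omega_1}$ in $\mathcal F$ by transfinite recursion, at successor stages using the back-and-forth property to absorb the next element on each side, and at limit stages (which are countable ordinals) taking unions; since a countable union of sets each countably generated over $\R$ is again countably generated over $\R$, these unions remain in $\mathcal F$. The union $\bigcup_{\alpha<\omega_1}\sigma_\alpha$ is then an isomorphism $M\to\No(\omega_1)$.

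The step I expect to be the main obstacle is the back-and-forth extension in the $\d$-transcendental case: matching, for each cut of $A$ realized in $M$, an element of $\No(\omega_1)$ realizing the corresponding cut, and conversely. This is the sole place where the passage from a maximal Hardy field to a maximal analytic or smooth Hardy field has to be justified, and it is handled entirely by Theorem~\ref{aneta}; every other ingredient in the derivation is insensitive to analyticity or smoothness.
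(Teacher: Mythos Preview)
Your proposal is correct and takes essentially the same approach as the paper: the paper's entire proof is the sentence ``combining Theorem~\ref{aneta} and \cite[Corollary~10.3]{ADHfgh}'' in the pattern of \cite[Corollary~10.4]{ADHfgh}, and you have simply unpacked what that sentence means. The back-and-forth system you describe and the CH argument are precisely what \cite[Corollary~10.3]{ADHfgh} encapsulates, with Theorem~\ref{aneta} supplying the $\eta_1$-property of $M$ as the only new input.
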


\noindent
The ordered field $\No(\omega_1)$
is not complete:  Set $a_{\nu}:=\sum_{\mu< \nu} \omega^{-\mu}$ with $\mu$, $\nu$ ranging over countable ordinals. Then $(a_{\nu})$ is a cauchy sequence in $\No(\omega_1)$ without a limit in
$\No(\omega_1)$. Thus, assuming CH, no real closed $\eta_1$-ordered  field extension of $\R$ of cardinality $\mathfrak{c}$ is complete, 
in particular, no maximal Hardy field is complete.  
(This also follows from \cite[Theorem~3.12(ii)]{DW}: if $G$ is a complete $\eta_1$-ordered abelian group, then $\abs{G}>\aleph_1$.)

\medskip\noindent
Let $K$ be an    $H$-field  with small derivation and constant field~$\R$.
Then \cite[Theorem~3]{ADH1+} yields an embedding $K\to\No$ 
of ordered differential fields.
The argument in the proof of~\cite[Theorem~3]{ADH1+} shows that if $\kappa>\abs{K}$ is a regular  cardinal, then we can choose~$\iota$ so that~$\iota(K)\subseteq\No(\kappa)$.
If  $\operatorname{trdeg}(K|\R)$ is countable, then~$K$
actually embeds into~$\No(\omega_1)$.
This is a consequence of the next lemma, a variant of \cite[Lemma~10.1]{ADHfgh}.

\begin{lemma}\label{iso1 variant, 1}
Let $K$ be a pre-$H$-field with very small derivation, archimedean residue field, and $\operatorname{trdeg}(K|C)\le \aleph_0$. Let
 $L$ be a closed $\eta_1$-ordered $H$-field with small derivation and $C_L=\R$.
Then $K$ embeds into~$L$.
\end{lemma}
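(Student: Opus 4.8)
The plan is to build the embedding by a one-step-at-a-time (``forth'') extension argument, letting the defining properties of $L$ absorb the three basic kinds of one-element extension of a pre-$H$-field. First I would record the relevant structure. Because the derivation of $K$ is very small, the induced derivation on the residue field $\res K$ is trivial, so the residue map embeds the constant field $C$ into the archimedean field $\res K$; hence $C$ is archimedean and carries the trivial valuation, so $\Gamma_C=\{0\}$. Since $\operatorname{trdeg}(K|C)\le\aleph_0$, [ADH, 3.1.10] applied to $C\subseteq K$ gives $\operatorname{rank}_{\Q}\Gamma\le\aleph_0$ for the value group $\Gamma$ of $K$, whence $\Gamma$ is short by Corollary~\ref{cor:short oag} and $K$ itself is short by Lemma~\ref{lem:short ofield}; consequently every pre-$H$-subfield of $K$ is short. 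On the other side, $L$ is $\eta_1$, so its value group is an $\eta_1$-ordered abelian group with convex valuation and is countably spherically complete by [ADH, 2.4.2]; thus $L$ is a countably spherically complete closed $H$-field, in particular real closed, Liouville closed, and $\upo$-free. These are exactly the hypotheses under which Theorem~\ref{thm:embedd imm} applies with $L$ as target.

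With $C$ archimedean there is an embedding of pre-$H$-fields $C\to\R=C_L\subseteq L$. I would then consider the collection of all embeddings $\iota\colon E\to L$ of pre-$H$-fields with $C\subseteq E\subseteq K$ extending this chosen $C\to\R$, ordered by extension; the union of a chain of such embeddings is again one, so Zorn's Lemma produces a maximal $\iota\colon E\to L$, and the whole point is to show $E=K$.

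Suppose not, and pick $a\in K\setminus E$; the goal is to extend $\iota$ past $a$, contradicting maximality. Here $E$ is short with archimedean residue field. If $a$ is algebraic over $E$, then $\iota$ extends (uniquely) to the real closure of $E$ in $K$ mapping into the real closed field $L$, by uniqueness of the pre-$H$-field structure on real closures. If $a$ is transcendental over $E$ with $\{v(a-b):b\in E\}$ having no largest element, then $E\langle a\rangle$ is an immediate extension of $E$, realized by a divergent pc-sequence of countable length by Lemma~\ref{lem:short}, and Theorem~\ref{thm:embedd imm} extends $\iota$ over it---after the routine reduction handling the lone case where $E$ is $\upl$-free but not $\upo$-free, by first adjoining $\upl$ (equivalently, a pseudolimit of the $\upo$-sequence) as in the proof of that theorem. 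In the remaining case $a$ determines a proper cut over $E$, of countable cofinality and coinitiality by shortness; here I would transport the associated $\HLO$-cut to $L$ and realize it, using the $\eta_1$-property of $L$ together with its being Liouville closed and $\upo$-free, exactly as packaged by the $\HLO$-cut analysis of Propositions~\ref{1641} and~\ref{prop:upo-free imm}.

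The main obstacle is precisely this last step: the $\eta_1$-property alone only places an element in the correct \emph{order} cut, whereas extending $\iota$ requires matching the full valued-differential datum---the new value, or the gap, or the Liouville/logarithmic datum that $a$ creates over $E$. This is where the closedness of $L$ (asymptotic integration, $\upo$-freeness, and closure under integration and logarithms) must be invoked, in the cofinality-preserving $\HLO$-form developed in Section~\ref{eie}; and one must track, throughout the recursion, the $\upo$-/$\upl$-free status of $E$ so that Theorem~\ref{thm:embedd imm} remains applicable at the immediate steps. Once all three cases are verified one obtains $E=K$, and the embedding is complete.
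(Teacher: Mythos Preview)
Your forth-via-Zorn plan is a genuinely different route from the paper's, and the obstacle you flag in your last paragraph is real and not resolved by the tools you cite. Two concrete gaps:

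\textbf{The immediate case.} From ``$\{v(a-b):b\in E\}$ has no maximum'' you only get that $E(a)$ is immediate over $E$; the conclusion that $E\langle a\rangle$ is immediate needs the pc-sequence to be of $\d$-transcendental type over $E$ [ADH,~11.4.7], which you have not arranged. More importantly, Theorem~\ref{thm:embedd imm} requires $E$ to be $\upo$-free or not $\upl$-free, and your fix (``first adjoin $\upl$'') produces an element that need not lie in $K$, so it breaks the $E\subseteq K$ framework of your Zorn argument.

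\textbf{The cut case.} Propositions~\ref{1641} and~\ref{prop:upo-free imm} manufacture abstract $\upo$-free extensions with the right embedding property into $L$; they do not realize a prescribed cut of $E$ inside $L$. The tool that does---[ADH,~16.1.5], as used in Subcase~3 of Proposition~\ref{prop:16.2.3 short}---requires $E$ to be \emph{closed}, which your maximal $E$ has no reason to be. To make your recursion work you would have to interleave ``pass to a closed/$\upo$-free hull'' steps with ``absorb an element of $K$'' steps, and the hulls live outside $K$; this is doable but amounts to reproving Proposition~\ref{prop:16.2.3 short} together with the reduction in Lemma~\ref{lem:embed short H-fields}.

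The paper sidesteps all of this with a short countability trick. After passing to the $H$-field hull and arranging $C_K=\R$, write $K=\R\langle S\rangle$ for a countable $S$, take any closed $H$-field $\hat K\supseteq K$, and use downward L\"owenheim--Skolem to find a \emph{countable} closed $H$-subfield $K_0\supseteq S$ of $\hat K$. Now \cite[Lemma~10.1]{ADHfgh} (which already handles countable closed $H$-fields against $\eta_1$ targets) embeds $K_0$ into $L$; extend over $\R$ via [ADH,~10.5.15--16] and restrict to $K$. No $\HLO$-cut bookkeeping, no tracking of $\upo$-freeness along a recursion.
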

  \begin{proof} 
Passing to $H(K)$ we arrange that $K$ is an $H$-field. Without loss, $C_K$ is an ordered subfield of $\R$, and then adjoining new constants if necessary, we arrange~$C_K=\R$.
Take a closed $H$-field $\hat K$ extending $K$. Next, take  a countable set~$S\subseteq K$ such that~$K=\R\langle S\rangle$ and then
a countable closed $H$-subfield $K_0\supseteq S$ of~$\hat K$.   Let $E$ be a copy  of the prime model  of the theory of
  closed $H$-fields with small derivation  inside $K_0$.
  Applying  \cite[Lemma~10.1]{ADHfgh} to an $H$-field embedding~$E\to L$  with $K_0$ in place of $K$
      yields an   $H$-field embedding~$i\colon K_0\to L$. Then~$i$ is the identity on $C_{K_0}\subseteq \R$, and then   [ADH, 10.5.15, 10.5.16]  yield an extension
of $i$ to an $H$-field embedding $K_0(\R) \to L$ that is the
identity on~$\R$, and the restriction of this embedding to $K$ is a pre-$H$-field embedding $K\to L$.
\end{proof}


\noindent
Using also Theorem~\ref{aneta} and its smooth version we obtain from Lemma~\ref{iso1 variant, 1}:

\begin{cor} 
Let  $K$ be as in Lemma~\ref{iso1 variant, 1} and let $M$ be a maximal Hardy field. Then~$K$  embeds into~$M$.
Likewise if $M$ is a maximal analytic Hardy field or a maximal smooth Hardy field.
\end{cor}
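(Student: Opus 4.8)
The plan is to apply Lemma~\ref{iso1 variant, 1} directly, taking $L := M$. For this to work I must verify that a maximal Hardy field $M$ — and likewise a maximal analytic or maximal smooth Hardy field — satisfies all the hypotheses imposed on $L$ there: namely that $M$ is a closed $\eta_1$-ordered $H$-field with small derivation and constant field $\R$. The element $K$ already has exactly the properties demanded of the source in that lemma, so no work is needed on the $K$-side.

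First I would record that $M$ is a closed $H$-field with constant field $\R$ and small derivation. That $M$ is a closed $H$-field with $C_M=\R$ is among the facts recalled at the start of the introduction: by \cite[Corollary~11.20]{ADH4} (together with [ADH, 16.0.3, 16.6.3] for the smooth and analytic versions), every maximal Hardy field, maximal smooth Hardy field, and maximal analytic Hardy field has no proper $\d$-algebraic $H$-field extension with constant field $\R$, and is thus a closed $H$-field with $C_M=\R$. Smallness of the derivation is automatic for a Hardy field.

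The substantive input is the $\eta_1$ property, and this is where the case distinction enters. For a maximal Hardy field, that $M$ is $\eta_1$ is the main theorem of \cite{ADHfgh}. For a maximal analytic Hardy field it is Theorem~\ref{aneta} (equivalently, the statement that all maximal analytic Hardy fields are $\eta_1$), and for a maximal smooth Hardy field it is the smooth version of Theorem~\ref{aneta} noted immediately after its statement. In each of the three cases $M$ is then a closed $\eta_1$-ordered $H$-field with small derivation and $C_M=\R$, so Lemma~\ref{iso1 variant, 1} applied with $L:=M$ produces a pre-$H$-field embedding $K\to M$, which is in particular an embedding of ordered valued differential fields, as required.

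The main obstacle is precisely the verification that $M$ is $\eta_1$; but this is exactly the content supplied by Theorem~\ref{aneta} and its smooth version (and by the main theorem of \cite{ADHfgh} in the classical case). Once these are invoked, the corollary is an immediate specialization of Lemma~\ref{iso1 variant, 1}, and the proof requires nothing further.
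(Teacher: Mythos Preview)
Your proof is correct and follows essentially the same approach as the paper, which derives the corollary directly from Lemma~\ref{iso1 variant, 1} by invoking Theorem~\ref{aneta} and its smooth version (and the main theorem of \cite{ADHfgh} for the maximal case) to secure the $\eta_1$ property of $M$.
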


\noindent
The following immediate consequence of the last corollary is worth recording:

\begin{cor}
Every Hardy field of countable transcendence degree over its constant field is
isomorphic to an analytic Hardy field.
\end{cor}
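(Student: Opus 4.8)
The plan is to reduce the statement to the preceding corollary, which already shows that any $K$ meeting the hypotheses of Lemma~\ref{iso1 variant, 1} embeds into a maximal analytic Hardy field. Such an embedding is an embedding of ordered valued differential fields, and since a maximal analytic Hardy field $M$ satisfies $M\subseteq\Cc^\omega$, the image of the embedding is itself an analytic Hardy field, isomorphic to $K$. Thus the only real work is to check that an arbitrary Hardy field $H$ with $\operatorname{trdeg}(H|C_H)\le\aleph_0$ is a legitimate choice of $K$; all of the substance is in matching definitions.

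First I would recall that every Hardy field is a pre-$H$-field with archimedean residue field: for $f\preceq 1$ the germ $f$ is bounded and, being eventually monotonic, tends to a finite limit, and $f\mapsto\lim f$ identifies the residue field of $H$ with a subfield of $\R$. The transcendence-degree hypothesis is precisely $\operatorname{trdeg}(H|C_H)\le\aleph_0$, matching the condition $\operatorname{trdeg}(K|C)\le\aleph_0$ in Lemma~\ref{iso1 variant, 1}. The one remaining condition to verify is that $H$ has \emph{very small} derivation, i.e.\ $f\preceq 1\Rightarrow f'\prec 1$. I would argue this directly: if $f\in H$ and $f\preceq 1$, then $f$ is bounded, so as a germ it tends to a finite limit; since $f'\in H$ is again eventually monotonic, it tends to some limit in $\R\cup\{\pm\infty\}$, and this limit must be $0$, for otherwise integrating $f'$ would force $f$ to be unbounded. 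Hence $f'\prec 1$, as required.

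With these checks in place, I would take $M$ to be any maximal analytic Hardy field and apply the preceding corollary to obtain an embedding $H\to M$ of ordered valued differential fields; its image is then the desired analytic Hardy field isomorphic to $H$. The only genuine obstacle is the verification of very small derivation, which is exactly where the order-theoretic character of Hardy fields---eventual monotonicity of germs and of their derivatives---enters; everything else is routine once the hypotheses of Lemma~\ref{iso1 variant, 1} are lined up with the standard properties of Hardy fields.
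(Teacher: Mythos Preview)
Your proposal is correct and matches the paper's approach exactly: the paper records this corollary as an ``immediate consequence'' of the preceding one, and your write-up just makes explicit the routine verification that any Hardy field is a pre-$H$-field with very small derivation and archimedean residue field, so that the hypotheses of Lemma~\ref{iso1 variant, 1} are met. Your argument for very small derivation (bounded germ has finite limit, derivative in $H$ is eventually monotone, nonzero limit of $f'$ would force $f$ unbounded) is the standard one and is fine.
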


\noindent
The next corollary strengthens \cite[Corollary~12.4]{ADH4}:

\begin{cor}\label{cor:inftyomegaelemsub}
Let $M$ be a maximal analytic or maximal smooth Hardy field, and let $N$ be a maximal Hardy field with $M\subseteq N$. Then $M \preceq_{\infty\omega} N$.
\end{cor}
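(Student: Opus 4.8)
The plan is to verify the standard back-and-forth criterion for $\infty\omega$-elementarity (see \cite{Barwise}): it suffices to prove that for every finite tuple $\bar a$ from $M$ one has $(M,\bar a)\equiv_{\infty\omega}(N,\bar a)$, and by Karp's theorem this follows once we exhibit, for each such $\bar a$, a nonempty family $\mathcal F_{\bar a}$ of partial isomorphisms $M\to N$ fixing $\bar a$ and having the back-and-forth property. I would take $\mathcal F_{\bar a}$ to consist of all isomorphisms $f\colon E\to F$ of ordered differential fields over $\R$ that fix $\bar a$ pointwise, where $E$ is a closed $H$-subfield of $M$ with $\bar a\in E$ and $\operatorname{trdeg}(E|\R)\le\aleph_0$, and $F$ is a closed $H$-subfield of $N$ with $\operatorname{trdeg}(F|\R)\le\aleph_0$. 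Since the dominance relation of an $H$-field is determined by its ordering, each such $f$ is automatically an isomorphism for the full language of ordered valued differential fields.

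First I would record the structural facts used throughout. By the facts recalled in the introduction, both $M$ and $N$ are closed $H$-fields with constant field $\R$ (for $M$ this rests on \cite[Corollary~11.20]{ADH4} together with [ADH, 16.0.3, 16.6.3]); in particular $M\equiv N\equiv\T$, and as $M\subseteq N$ both have small derivation, model-completeness of $T$ even gives $M\preceq N$, which already strengthens the baseline of \cite[Corollary~12.4]{ADH4}. Next, $M$ is $\eta_1$ by Theorem~\ref{aneta} (by its smooth version when $M$ is maximal smooth), while $N$ is $\eta_1$ as a maximal Hardy field. A point worth settling at the outset is that for any $H$-subfield $E_0$ of $M$ its $\d$-algebraic closure computed inside $M$ agrees with the one computed inside $N$: the former is a closed $H$-field, and the latter is $\d$-algebraic over it with constant field $\R$, hence equals it. This guarantees both that the members of $\mathcal F_{\bar a}$ are genuine closed $H$-subfields of $M$ and of $N$, and that $\mathcal F_{\bar a}\neq\emptyset$, the identity on the $\d$-closure of $\R\langle\bar a\rangle$ being a member.

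The heart of the argument is the extension step. Given $f\colon E\to F$ in $\mathcal F_{\bar a}$ and $c\in M$, I want $g\in\mathcal F_{\bar a}$ extending $f$ with $c$ in its domain; I may assume $c\notin E$. Since $E$ is a closed $H$-subfield and $C_M=\R$, no element of $M\setminus E$ is $\d$-algebraic over $E$, so $c$ is $\d$-transcendental over $E$. Transporting the cut of $c$ over $E$ through $f$ and using that $N$ is $\eta_1$ to realize the corresponding cut, the back-and-forth construction of \cite[Section~10]{ADHfgh} (in particular \cite[Corollary~10.3]{ADHfgh}) produces $d\in N$ together with an $H$-field isomorphism $E\langle c\rangle\to F\langle d\rangle$ extending $f$; $\d$-closing inside $M$ and $N$ (which keeps the transcendence degree over $\R$ countable) yields the desired $g$. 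The back direction is entirely symmetric. With forth and back established, Karp's theorem gives $(M,\bar a)\equiv_{\infty\omega}(N,\bar a)$ for every finite $\bar a$, that is, $M\preceq_{\infty\omega}N$.

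The main obstacle is precisely the back direction: realizing the cut of a given $d\in N$ inside $M$ requires $M$ itself to be $\eta_1$, and this is the one new ingredient relative to \cite[Section~10]{ADHfgh}, supplied here by Theorem~\ref{aneta} and its smooth analogue. Everything else is the by-now-standard closed-$H$-field back-and-forth, applied verbatim once the $\eta_1$-property of $M$ is in hand. A secondary technical point I would be careful about is that the cut-realization must yield an isomorphism of $H$-fields, not merely of ordered fields; this is governed by the asymptotic machinery behind \cite[Corollary~10.3]{ADHfgh} rather than by the $\eta_1$-property alone.
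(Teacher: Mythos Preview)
Your proposal is correct and takes essentially the same approach as the paper: both arguments reduce to establishing that $M$ and $N$ are $\eta_1$ (via Theorem~\ref{aneta} for $M$ and the main result of \cite{ADHfgh} for $N$) and then invoking the back-and-forth machinery of \cite[Section~10]{ADHfgh}. The paper compresses this into a direct citation of \cite[Lemma~10.5]{ADHfgh}, whereas you unpack that lemma's proof, building the back-and-forth system of countable-transcendence-degree closed $H$-subfields and citing \cite[Corollary~10.3]{ADHfgh} for the extension step; the content is the same.
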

\begin{proof}
By Theorem~\ref{aneta} and its smooth version, $M$ is $\eta_1$, and by Theorem~A of \cite{ADHfgh}, $N$~is~$\eta_1$. It remains to use \cite[Lemma~10.5]{ADHfgh}.
\end{proof}

\noindent
At the heart of the proof of \cite[Lem\-ma~10.1]{ADHfgh} is [ADH, 16.2.3] of which we now give a version
with the cofinality hypothesis replaced by a shortness assumption:

\begin{prop}\label{prop:16.2.3 short}
Let $E$ be an $\upo$-free $H$-field and $K$ be a closed short $H$-field extending~$E$ such that~$C_{E}=C_K$.
Let $i\colon E\to L$ be an embedding where $L$ is a closed $\eta_1$-ordered $H$-field. Then $i$ extends to an embedding~$K\to L$.
\end{prop}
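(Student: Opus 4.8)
The plan is to revisit the proof of [ADH, 16.2.3], which is a maximal-extension argument, and to explain how its appeals to the cofinality hypothesis can be replaced by appeals to shortness of $K$ through Lemma~\ref{lem:short}. First I would record a few preliminaries. Since $E\subseteq K$ and $C_E=C_K$, every $H$-field $F$ with $E\subseteq F\subseteq K$ satisfies $C_F=C_E=C_K$, so no new constants arise anywhere in the construction, and $i(C_E)\subseteq C_L$. As $K$ is short, so is every ordered subset of $K$; in particular, for each such $F$ the value group $\Gamma_F$ is short, being the image of $F^>$ under the decreasing map $f\mapsto vf$ (cf.\ the argument in Lemma~\ref{lem:short ofield}). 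Finally, $L$ is $\eta_1$-ordered with convex valuation, hence countably spherically complete by [ADH, 2.4.2].

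Next I would run the maximal-extension argument from [ADH, 16.2.3] among $\upo$-free $H$-subfields: by Zorn's Lemma I would choose a pair $(F,j)$ with $E\subseteq F\subseteq K$ an $\upo$-free $H$-subfield and $j\colon F\to L$ an embedding extending $i$, with $F$ maximal among such. It then suffices to prove $F=K$, and I would argue by contradiction: assuming $F\subsetneq K$, I would produce a proper $\upo$-free $H$-subfield of $K$ containing $F$ to which $j$ extends, contradicting maximality.

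To do this I would follow the case division of [ADH, 16.2.3], extending $F$ inside the closed $H$-field $K$ by one basic step. If $F$ is not real closed, I would pass to its real closure $F^{\operatorname{rc}}\subseteq K$, which is again $\upo$-free, and extend $j$ using that $L$ is real closed. If $F$ is real closed but not Liouville closed, I would adjoin inside $K$ an integral or exponential integral of a suitable element of $F$; here the $\upo$-free extensions and their embedding properties are furnished by the $\HLO$-machinery underlying Proposition~\ref{1641} (and [ADH, 16.4]), and shortness guarantees that the new element realizes a cut over $F$ of countable cofinality and coinitiality, so that the $\eta_1$-field $L$ contains a matching element. If $F$ is Liouville closed but not newtonian, I would take a newtonization of $F$ inside $K$, an immediate $\d$-algebraic and $\upo$-free extension: this is the decisive point where shortness replaces cofinality. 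Any divergent pc-sequence in $F$ with pseudolimit in $K$ has, by Lemma~\ref{lem:short}, a final segment of length $\omega$; its image under $j$ pseudoconverges in $L$ by countable spherical completeness, and one extends $j$ exactly as in the proof of Lemma~\ref{lem:L->M} (via [ADH, 11.4.7] together with newtonianity of $L$). Finally, should $F$ happen to be closed while still a proper subfield of $K$, I would pick $a\in K\setminus F$, which is then $\d$-transcendental over $F$, and realize its cut --- again of countable character by shortness --- in the closed $\eta_1$-field $L$.

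The main obstacle is the newtonization step: it is precisely where [ADH, 16.2.3] invokes its countable-cofinality assumption to secure pc-sequences of countable length whose pseudolimits can be found in $L$, and the whole point of the present proposition is that shortness of $K$ supplies the same conclusion via Lemma~\ref{lem:short}. The secondary technical care needed throughout is to track $\upo$-freeness at every step --- including checking that directed unions of the $\upo$-free subfields occurring in the Zorn chain remain $\upo$-free --- so that the relevant ADH extension and embedding lemmas, and Proposition~\ref{prop:upo-free imm}, continue to apply.
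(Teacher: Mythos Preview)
Your outline follows the right template (maximal extension \`a la [ADH, 16.2.3], with Lemma~\ref{lem:short} supplying countable-length pc-sequences), but there is a genuine gap in your final case. When the maximal pair $(F,j)$ has $F$ closed and $F\subsetneq K$, you propose to pick $a\in K\setminus F$ and realize its cut in $L$. This only works via [ADH, 16.1.5] under the additional hypothesis that $F\langle a\rangle$ is \emph{not} an immediate extension of $F$. If $F\langle a\rangle$ is immediate (which can certainly happen: closed $H$-fields need not be spherically complete), cut-realization does not pin down an embedding. The paper accordingly splits this into two subcases: when some $F\langle y\rangle$ is immediate, it appeals to Lemma~\ref{lem:L->M} (pseudolimits via shortness and countable spherical completeness of $L$); only in the absence of any such $y$ does [ADH, 16.1.5] apply. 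Your pc-sequence argument appears only in the ``Liouville closed but not newtonian'' case, which is the wrong place for it.

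A second, structural point: the paper does not track $\upo$-freeness step by step. Instead it first disposes of the case where $\Gamma_E^{<}$ is not cofinal in $\Gamma_K^{<}$ (shortness of $E$ gives the matching element in $L$), after which \emph{every} differential subfield of $K$ containing $E$ is automatically an $\upo$-free $H$-subfield, because $\Gamma_E^{<}$ cofinal in its value group forces it. This makes the remaining subcases (``$E$ not closed'', ``$E$ closed with an immediate $E\langle y\rangle$'', ``$E$ closed with no such $y$'') clean. Your alternative of checking $\upo$-freeness at each extension and at unions is workable but you do not actually carry it out, and your invocation of Proposition~\ref{prop:upo-free imm} is misplaced (that proposition concerns immediate pairs, not the situation here).
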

\begin{proof}
Suppose $E\neq K$;
it is enough to show that $i$ extends to an embedding of some $\upo$-free $H$-subfield $F$ of $K$ into $L$, where $F$ properly contains $E$.
 
  Consider first the case~$\Gamma_E^<$ is not cofinal in $\Gamma^<$.
 Then we have $y\in K^>$ such that~$\Gamma_E^< < vy < 0$. 
 Now $E$ is short, so we have
$y^*\in L^>$ such that~$\Gamma^{<}_{iE} < vy^* < 0$.
 As in the proof of [ADH, 16.2.3] we then obtain an $\upo$-free $H$-subfield $F$ of $K$ with~$F\supseteq  E\langle y\rangle$ and 
 an extension of $i$ to  an embedding~$F\to L$.
 
 For the rest of the proof we assume $\Gamma_E^<$ is cofinal in $\Gamma^<$. Then every differential subfield of $K$ containing $E$ is
 an $\upo$-free $H$-subfield of $K$.
 
 \subcase[1]{$E$ is not closed.}
 This goes like Subcase~1 in the proof of~[ADH, 16.2.3].
 
 \subcase[2]{$E$ is closed, and $E\langle y\rangle$ is an immediate extension of $E$ for some $y\in K\setminus E$.}
 For such $y$, Lemma~\ref{lem:L->M} yields an extension of $i$ to an embedding $E\langle y\rangle\to L$.
 
\subcase[3]{$E$ is closed, and there is no $y\in K\setminus E$ such that  $E\langle y\rangle$ is an immediate extension of $E$.}
Take any $f\in K\setminus E$.
Since $E$ is short  and $L$ is $\eta_1$ we have 
$g\in L$ such that for all $a\in E$, $a<f\Leftrightarrow i(a)<g$. Now [ADH, 16.1.5] 
gives  an $H$-field embedding $E\langle f\rangle\to L$ extending $i$ which sends $f$ to $g$.
\end{proof}

\begin{cor}\label{cor:16.2.3 short}
Let $E$, $K$, $L$, $i$ be as in Proposition~\ref{prop:16.2.3 short}, with ``$C_E=C_K$'' replaced by
``$C_K$ is archimedean and $C_L=\R$".  Then $i$ extends to an embedding~$K\to L$.
\end{cor}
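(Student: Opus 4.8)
The plan is to reduce the corollary to Proposition~\ref{prop:16.2.3 short} by first enlarging the constant field of $E$ to all of $C_K$, exactly in the spirit of the last step of the proof of Lemma~\ref{iso1 variant, 1}. Since $C_K$ is archimedean it admits a unique ordered field embedding $j\colon C_K\to\R=C_L$; as $C_E\subseteq C_K$ is archimedean and $i$ restricts to an ordered field embedding $C_E\to\R$, uniqueness forces $j$ to extend $i\restriction C_E$.

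I would then form the constant field extension $E^*:=E(C_K)\subseteq K$, i.e.\ the $H$-subfield of $K$ generated over $E$ by the constants $C_K$. By [ADH, 10.5.15 and the remark preceding 4.6.16] this leaves the value group unchanged, $\Gamma_{E^*}=\Gamma_E$, and $C_{E^*}=C_K$ since $E^*\subseteq K$ and $C_K$ is the constant field of $K$. Because $E^*$ has the same asymptotic couple as the $\upo$-free $H$-field $E$, it is again an $\upo$-free $H$-field. Invoking the universal property of constant field extensions ([ADH, 10.5.15, 10.5.16], as used at the end of the proof of Lemma~\ref{iso1 variant, 1}), I would extend $i$ to an embedding $i^*\colon E^*\to L$ with $i^*\restriction C_K=j$; this lands in $L$ because $j(C_K)\subseteq\R\subseteq L$.

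With this in hand I would apply Proposition~\ref{prop:16.2.3 short} to $E^*$, $K$, $L$, $i^*$ in place of $E$, $K$, $L$, $i$: indeed $E^*$ is an $\upo$-free $H$-field, $K$ is a closed short $H$-field extending $E^*$ with $C_{E^*}=C_K$, and $i^*$ maps $E^*$ into the closed $\eta_1$-ordered $H$-field $L$. The proposition then yields an extension of $i^*$---and hence of $i$, since $i^*\restriction E=i$---to an embedding $K\to L$, as required.

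The step I expect to be the main obstacle is the claim that $E^*=E(C_K)$ is again $\upo$-free. Adjoining constants is an extension with unchanged value group $\Gamma_{E^*}=\Gamma_E$, so the canonical sequences $(\upl_\rho)$ and $(\upo_\rho)$ of $E$ continue to serve for $E^*$; what must be checked is that $(\upo_\rho)$ acquires no pseudolimit in $E^*$. I would establish this from the coincidence of asymptotic couples together with the preservation results for $\upl$- and $\upo$-freeness under constant field extensions in [ADH, Chapter~11].
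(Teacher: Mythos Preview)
Your proposal is correct and takes essentially the same route as the paper, which simply defers to the proof of [ADH, 16.2.4]: extend $i|_{C_E}$ to $j\colon C_K\to\R$, pass to $E(C_K)$ via [ADH, 10.5.15, 10.5.16], and apply Proposition~\ref{prop:16.2.3 short}. The $\upo$-freeness of $E(C_K)$ that you flag as the main obstacle is indeed the only point needing care, and it is part of the argument in [ADH, 16.2.4] that the paper invokes.
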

\begin{proof}
The ordered field embedding $i|_{C_E}\colon C_E\to C_L=\R$ extends uniquely to an ordered field embedding
$j\colon C_K\to C_L$. Now argue as in the proof of [ADH, 16.2.4], using Proposition~\ref{prop:16.2.3 short} in place of [ADH, 16.2.3].
\end{proof}

\noindent
Proposition~\ref{prop:16.2.3 short} leads to a version of   Lemma~\ref{iso1 variant, 1} for short closed $H$-fields:

\begin{lemma}\label{lem:embed short H-fields} 
Let $K$ be a  closed short $H$-field with small derivation and archimedean constant field, and $L$  a closed $\eta_1$-ordered $H$-field with small derivation and $C_L=\R$. Then $K$ embeds into $L$.
\end{lemma}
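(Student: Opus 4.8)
The plan is to obtain this as a quick consequence of Corollary~\ref{cor:16.2.3 short}, taking for the base field $E$ a copy of the prime model of $T$. Since $K$ is a closed $H$-field with small derivation, $K\models T$, and $T$ has a prime model (as already used in the proof of Lemma~\ref{iso1 variant, 1}), which we may realize as an $H$-subfield $E$ of $K$. Being a closed $H$-field, $E$ is in particular $\upo$-free, by the equivalence of ``closed'' with ``Liouville closed, $\upo$-free, and newtonian'' for $H$-fields. Likewise $L\models T$, since $L$ is a closed $H$-field with small derivation; hence the prime model of $T$ embeds into $L$, and composing with the isomorphism between $E$ and the prime model yields an embedding $i\colon E\to L$.

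With this setup the hypotheses of Corollary~\ref{cor:16.2.3 short} are all in place: $E$ is an $\upo$-free $H$-field, $K$ is a closed short $H$-field extending $E$, the constant field $C_K$ is archimedean by assumption, $L$ is a closed $\eta_1$-ordered $H$-field with $C_L=\R$, and $i\colon E\to L$ is an embedding. Applying the corollary, I would conclude that $i$ extends to an embedding $K\to L$, which is exactly the assertion of the lemma.

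Thus the genuine content of the argument already resides in Proposition~\ref{prop:16.2.3 short} and its corollary; what this lemma contributes is only the observation that a prime-model base $E$ is available inside $K$ and embeds into $L$. Note that here shortness of $K$ (rather than countable transcendence degree, as in Lemma~\ref{iso1 variant, 1}) is precisely what licenses the extension step via Proposition~\ref{prop:16.2.3 short}. I do not expect a real obstacle: the one point worth checking is the matching of constant fields, and this is already dealt with inside Corollary~\ref{cor:16.2.3 short}, where the archimedean field $C_K$ is embedded into $C_L=\R$ through the unique extension of $i\restriction C_E$.
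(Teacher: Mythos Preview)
Your argument is correct. Both your proof and the paper's reduce the lemma to Proposition~\ref{prop:16.2.3 short} (or its Corollary~\ref{cor:16.2.3 short}) by exhibiting an $\upo$-free $H$-subfield $E$ of $K$ that embeds into $L$; the only difference is the choice of $E$. You take $E$ to be a copy of the prime model of $T$ inside $K$ and invoke Corollary~\ref{cor:16.2.3 short} to handle the constant-field mismatch. The paper instead builds $E$ explicitly as a Liouville closure of $C_K(x)$ inside $K$ (where $x'=1$), so that $C_E=C_K$ already holds and Proposition~\ref{prop:16.2.3 short} applies directly. Your route is a bit slicker but leans on the existence of a prime model; the paper's route is more self-contained, constructing the base field by hand from $C_K$ and an antiderivative of $1$. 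Either way the substantive work is entirely in Proposition~\ref{prop:16.2.3 short}.
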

\begin{proof}
Take $x\in K$ with $x'=1$. Now $K$ has small derivation,  so~${x\succ 1}$,  the $H$-field~$C_K(x)$ is grounded, and we have an
embedding $i\colon C_K(x)\to L$ extending the unique ordered field embedding $C_K\to C_L$.  By  [ADH, 10.6.23]  we have a Liouville closure $E$ of~$C_K(x)$ in $K$ and $i$ extends to an embedding $E\to L$. Moreover, $E$ is $\upo$-free, by \cite[Lemma~1.3.18]{ADH6}, so we can use
Proposition~\ref{prop:16.2.3 short}.
\end{proof}

\noindent
With $K=\T$ and $L$ a maximal analytic Hardy field in Lemma~\ref{lem:embed short H-fields}  we conclude:

\begin{cor}\label{tan}
The ordered differential field $\T$ is isomorphic over $\R$ to an analytic Hardy field containing $\R$.
\end{cor}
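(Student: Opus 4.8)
The plan is to apply Lemma~\ref{lem:embed short H-fields} with $K=\T$ and $L$ a maximal analytic Hardy field, and then to observe that the resulting embedding is an isomorphism onto its image, which is an analytic Hardy field containing~$\R$. The entire substantive content is thus already packaged in Lemma~\ref{lem:embed short H-fields}; the proof reduces to checking hypotheses and making one elementary observation about rigidity of~$\R$.

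First I would verify the hypotheses of Lemma~\ref{lem:embed short H-fields} for $K=\T$. The field $\T$ is a closed $H$-field with small derivation whose constant field is $\R$, hence archimedean; and $\T$ is short by Corollary~\ref{cor:T short}. Next I would verify the hypotheses on the target. A maximal analytic Hardy field $L$ exists, since every analytic Hardy field extends to a maximal one by Zorn's lemma; by \cite[Corollary~11.20]{ADH4} (and the facts recalled in the introduction) such $L$ is a closed $H$-field with constant field $\R$; moreover $L$ has small derivation, being a Hardy field; and $L$ is $\eta_1$ by Theorem~\ref{aneta}.

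With these verifications in hand, Lemma~\ref{lem:embed short H-fields} yields an embedding $\iota\colon\T\to L$ of ordered differential fields extending the unique ordered field embedding $C_\T=\R\to\R=C_L$. Since $\R$ is rigid as an ordered field, this embedding $\R\to\R$ is the identity, so $\iota$ is over $\R$, that is, $\iota\restrict\R=\id_\R$. The image $\iota(\T)$ is a differential subfield of $L\subseteq\Cc^\omega$, hence an analytic Hardy field, and it contains $\iota(\R)=\R$. Therefore $\iota$ restricts to an isomorphism from $\T$ onto the analytic Hardy field $\iota(\T)\supseteq\R$, and this isomorphism is over $\R$, which is exactly the assertion.

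I do not expect a genuine obstacle here, as the real work is carried out in Lemma~\ref{lem:embed short H-fields} (which rests in turn on Proposition~\ref{prop:16.2.3 short}). The only points requiring care are the routine verification of the hypotheses---most notably that $\T$ is short, supplied by Corollary~\ref{cor:T short}, and that a maximal analytic Hardy field is indeed a closed $\eta_1$-ordered $H$-field with small derivation and constant field $\R$---together with the observation that, because $\R$ admits no nontrivial ordered field endomorphism, the embedding furnished by the lemma is automatically over $\R$.
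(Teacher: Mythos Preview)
Your proposal is correct and takes essentially the same approach as the paper, which simply states ``With $K=\T$ and $L$ a maximal analytic Hardy field in Lemma~\ref{lem:embed short H-fields} we conclude'' before the corollary. Your added justification that the embedding is over~$\R$ (via rigidity of~$\R$ as an ordered field) makes explicit a point the paper leaves tacit.
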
  

\noindent 
We upgrade this as follows:

\begin{cor}\label{cor:embed T}
Let $E$ be a pre-$H$-subfield of $\T$, $M$ be a maximal Hardy field, and~${i\colon E\to M}$ be an embedding.
Then $i$ extends to an embedding $\T\to M$. Likewise with ``maximal analytic'' and with ``maximal smooth'' instead of ``maximal''.
\end{cor}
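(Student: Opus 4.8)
The plan is to reduce to the case where $E$ is an $\upo$-free $H$-subfield of $\T$ and then invoke Corollary~\ref{cor:16.2.3 short}. Throughout I would use the standing facts that $\T$ is a closed $H$-field with small derivation and constant field $\R$, and is short by Corollary~\ref{cor:T short}, while $M$, in each of the three flavors, is a closed $\eta_1$-ordered $H$-field with $C_M=\R$ and small derivation that is moreover countably spherically complete (being $\eta_1$, by [ADH, 2.4.2], as noted at the start of Section~\ref{sec:ctbl cf}). First I would enlarge $E$: if $E\subseteq\R$, adjoin $x$ with $x'=1$, $x\succ 1$, extending $i$ by sending $x$ to the corresponding element of $M$; then pass to the $H$-field hull $H(E)\subseteq\T$, extending $i$ uniquely over $E$ since $M$ is an $H$-field. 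Thus we may assume $E$ is an $H$-field with $\Gamma_E\neq\{0\}$.

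The heart of the proof is to extend $i$ to an embedding $j\colon E'\to M$ of an $\upo$-free $H$-subfield $E'$ of $\T$ containing $E$, using the machinery of Section~\ref{eie}. Equip $M$ with its unique $\HLO$-cut (unique since $M$ is $\upo$-free) and pull it back along $i$ to an $\HLO$-cut on $E$, so that $i$ becomes an $\HLO$-embedding $\mathbf E\to\mathbf M$; let also $\mathbf\T$ be $\T$ with its unique $\HLO$-cut, restricting to an $\HLO$-cut on $E$. If these two $\HLO$-cuts on $E$ coincide, then the inclusion $\mathbf E\hookrightarrow\mathbf\T$ and $i\colon\mathbf E\to\mathbf M$ are $\HLO$-embeddings for the same cut, and Proposition~\ref{1641} produces a single $\upo$-free $\HLO$-extension $\mathbf E^*$ of $\mathbf E$ together with extensions of both embeddings (both $\T$ and $M$ are Schwarz closed, being closed $H$-fields). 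Writing $\alpha\colon E^*\to\T$ and $\beta\colon E^*\to M$ for these, the image $E':=\alpha(E^*)\subseteq\T$ is $\upo$-free and $\beta\circ\alpha^{-1}\colon E'\to M$ extends $i$.

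When the two $\HLO$-cuts on $E$ differ — which, by the classification of $\HLO$-cuts recorded in Lemmas~\ref{lem:HLOimm, 1}--\ref{lem:HLOimm, 6}, happens only in the $\upl$-free-but-not-$\upo$-free case — I would first pass, inside $\T$, to a suitable immediate extension of $E$ that removes the ambiguity (adjoining an element of $\T$ to make the field not $\upl$-free, hence of unique $\HLO$-cut), extending $i$ across this immediate step by matching the relevant pseudolimit in the countably spherically complete field $M$: the pc-sequence has a final segment of countable length by shortness via Lemma~\ref{lem:short}, and is of $\d$-transcendental type, so [ADH, 11.4.7] applies. This is coordinated with $\T$'s $\HLO$-structure by Proposition~\ref{prop:HLOimm}, exactly as in the proof of Theorem~\ref{thm:embedd imm}, and Proposition~\ref{prop:upo-free imm} (the immediate-pair version) then realizes the $\upo$-free extension simultaneously inside $\T$ and inside $M$.

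Finally, with $E'\subseteq\T$ an $\upo$-free $H$-field and $j\colon E'\to M$ extending $i$, Corollary~\ref{cor:16.2.3 short} applies with $E'$, $\T$, $M$ in the roles of $E$, $K$, $L$: since $\T$ is a closed short $H$-field extending $E'$ with $C_\T$ archimedean and $C_M=\R$, the embedding $j$ extends to an embedding $\T\to M$, the desired extension of $i$. I expect the main obstacle to be the step in the previous paragraph: reconciling the $\HLO$-cut that $\T$ induces on $E$ with the one $M$ induces along $i$, that is, navigating the $\upl$/$\upo$-freeness hierarchy while keeping every intermediate field inside $\T$ and preserving the given embedding. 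This is precisely the difficulty that Propositions~\ref{prop:HLOimm} and~\ref{prop:upo-free imm} of Section~\ref{eie} were built to handle.
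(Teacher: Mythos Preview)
Your overall strategy---reduce to an $\upo$-free $H$-subfield of $\T$ via the $\HLO$-machinery, then apply Corollary~\ref{cor:16.2.3 short}---is exactly the paper's. The difference is that you devote most of your effort to a difficulty that does not arise.

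The paper observes (citing \cite[Lemma~12.1, Corollary~12.9]{ADH4}) that the $\HLO$-cut on $E$ pulled back from $M$ along $i$ \emph{automatically} agrees with the one induced from $\T$: both $M$ and $\T$ are closed $H$-fields, and the $\HLO$-cut that a closed $H$-field induces on a pre-$H$-substructure is intrinsic to the substructure. So after expanding $E$, $M$, $\T$ to pre-$\HLO$-fields $\mathbf E$, $\mathbf M$, $\mathbf T$, one has $\mathbf E\subseteq\mathbf T$ outright. A single application of [ADH, 16.4.1] then gives an $\upo$-free $\HLO$-field $\mathbf E^*$ with $\mathbf E\subseteq\mathbf E^*\subseteq\mathbf T$ together with an extension of $i$ to $E^*\to M$, and Corollary~\ref{cor:16.2.3 short} finishes. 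No preliminary enlargement of $E$ is needed either.

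Your ``hard case'' is therefore vacuous, and your proposed treatment of it is also shaky on its own terms: Proposition~\ref{prop:upo-free imm} concerns \emph{immediate} pairs $(\mathbf K,\hat{\mathbf K})$, but $E\subseteq\T$ is not immediate in general, so it does not apply as you suggest; and ``adjoining an element of $\T$ to make $E$ not $\upl$-free'' asks for a pseudolimit in $\T$ of the $(\upl_\rho)$-sequence of $E$, which need not exist (when $\Psi_E$ is cofinal in $\Psi_\T$, the sequence serves for $\T$, and $\T$ is $\upl$-free). The fix is not to repair this case but to recognize, via the cited result, that it never occurs.
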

\begin{proof} 
Expand $E$, $M$ (uniquely) to pre-$\HLO$-fields $\mathbf E$, $\mathbf M$, respectively, such that $i$ is an embedding $\mathbf E\to\mathbf M$, and expand $\T$ (uniquely) to a pre-$\HLO$-field $\mathbf T$. Then~${\mathbf E\subseteq\mathbf T}$ by \cite[Lemma~12.1, Corollary~12.9]{ADH4}. Now [ADH, 16.4.1] yields an $\upo$-free $\HLO$-field~$\mathbf E^*$ with~$\mathbf E\subseteq\mathbf E^*\subseteq\mathbf T$ and an extension of $i$ to an embedding $E^*\to M$,
which in turn extends  to an embedding $\T\to M$ by
Corollary~\ref{cor:16.2.3 short}.
\end{proof}

\begin{remark} If $\hat\T$ is an immediate $H$-field extension of~$\T$, then any
embedding of $\T$ into a maximal Hardy field $M$ extends
to an embedding~$\hat\T\to M$, by Theorem~\ref{thm:embedd imm}.  Likewise for $M$ a maximal smooth or maximal analytic Hardy field. With $\No$ in place of $M$ we can also take strong additivity into account.
To see this   recall from~\cite[Proposition~5.1 and subsequent remarks]{ADH1+}  that the unique strongly additive embedding~$\iota\colon\T\to\No$ over $\R$ of exponential ordered fields which sends $x\in\T$ to~$\omega\in\No$
is also an embedding of differential fields, with~${\iota(\T)\subseteq\No(\omega_1)}$ by
\cite[Pro\-po\-si\-tion~5.2(3)]{ADH1+}.  
By \cite[Proposition~5.2(1)]{ADH1+}, $\iota(G^{\operatorname{LE}})=\fM\cap\iota(\T)$, where~$G^{\operatorname{LE}}$ is the group of LE-monomials (cf.~[ADH, p.~718]) and
$\fM$ is the class of monomials in~$\No$ (cf.~\cite[\S{}1]{ADH1+}), hence
$\iota$ extends uniquely to a strongly additive ordered field embedding~$\hat\iota\colon\R[[G^{\operatorname{LE}}]]\to\No$. The derivation of $\No$ is strongly additive, so~${\hat\iota(\R[[G^{\operatorname{LE}}]])=\R[[\iota(G^{\operatorname{LE}})]]}$ is
a differential subfield of~$\No$. The derivation on~$\R[[G^{\operatorname{LE}}]]$
that makes~$\hat\iota$ a differential field embedding is 
then the  unique strongly additive  derivation on~$\R[[G^{\operatorname{LE}}]]$ extending the derivation of~$\T$. It also makes $\R[[G^{\operatorname{LE}}]]$ a spherically complete immediate $H$-field extension of $\T$, so the result stated at the beginning of this extended remark applies to the $H$-field $\R[[G^{\operatorname{LE}}]]$ in the role of $\hat\T$. 
\end{remark}



\section{Some Set-Theoretic Issues}  

\noindent
We finish with some questions of a set-theoretic nature that others might be better prepared to answer. We assume our base theory ZFC is consistent, and these are questions about relative consistency with ZFC.

\begin{enumerate}
\item Is it consistent that there are non-isomorphic maximal Hardy fields?
\item Is it consistent that no maximal Hardy field is isomorphic to $\No(\omega_1)$?
\item Is it consistent that there is a complete maximal Hardy field? 
\end{enumerate}
Positive answers would mean (at least) that we cannot drop the assumption CH in some results we proved under this hypothesis. 
Note also that with CH we have~$\cf(H)=\ci(H^{>\R})=\omega_1$ for all maximal Hardy fields. This suggests:

\begin{enumerate}
\item[(4)]   Is it consistent that $\cf(H_1)\ne \cf(H_2)$ for some maximal Hardy fields~$H_1$,~$H_2$? 
Same with $\ci(H_i^{>\R})$ instead of $\cf(H_i)$.  
\item[(5)]  Is it consistent that $\cf(H)\ne \ci(H^{>\R})$ for some maximal Hardy field $H$?
\item[(6)] Is it consistent that there is a maximal Hardy field $H$ and a gap in $H$
of character $(\alpha,\beta^*)$ with $\alpha,\beta\geq\omega$, not equal to one of
 $(\omega,\kappa^*)$, $(\kappa,\omega^*)$,   $(\kappa,\kappa^*)$, $(\lambda,\lambda^*)$,
where $\kappa:=\ci(H^{>\R})$, $\lambda:=\cf(H)$?
\end{enumerate}
One can also ask these questions for maximal analytic Hardy fields and maximal smooth Hardy fields instead of maximal Hardy fields. We can even ask them for maximal Hausdorff fields (containing at least $\R$, say) instead of maximal Hardy fields.
As with Corollary~\ref{CHcof}, might some weaker assumption like $\mathfrak{b}=\mathfrak{d}$ be enough for some results where we assumed CH?

If $H$ is a Hardy field with $H^{>\R}$ closed under compositional inversion, then 
$$h\mapsto h^{\inv}\colon H^{>\R}\to H^{>\R}$$ is a strictly decreasing bijection, so $\operatorname{cf}(H)=\operatorname{ci}(H^{>\R})$. However, we don't know if there is a maximal Hardy field $H$ with $H^{>\R}$ closed under
compositional inversion.


\addappendix[A Proof of Whitney's Approximation Theorem]

\noindent
For the convenience of the reader, we include here a proof of Theorem~\ref{thm:WAP dim 1, general},
adapting the exposition in \cite[\S{}1.6]{Nara}. 
Throughout this appendix
  $r\in\N\cup\{\infty\}$ and~$a,b\in \R$. 

\medskip
\noindent
Recall that the support $\supp f$ of a function~$f\colon \R\to \R$ is the closure in
$\R$ of the set~${\big\{t\in \R:\, f(t)\ne0\big\}}$. 
We begin with two lemmas, 
where $f\in\Cc^m(\R)$ is such that~$\supp f$ is bounded; let also $\lambda$ range over $\R^>$.
From the Gaussian integral~$\int_{-\infty}^\infty \ex^{-s^2}\,ds=\pi^{1/2}$ 
we get~$(\lambda/\pi)^{1/2}\int_{-\infty}^\infty \ex^{-\lambda s^2}\,ds=1$.
Consider~${f_\lambda\colon\R\to\R}$ given by
\begin{equation}\label{eq:flambda}
f_\lambda(t) := (\lambda/\pi)^{1/2}\int_{-\infty}^\infty f(s)\ex^{-\lambda (s-t)^2}\,ds.
\end{equation}
Note that  we could have replaced here the bounds $-\infty$, $\infty$ in this integral by any~$a$,~$b$    such that $\supp(f)\subseteq [a,b]$. A change of variables gives
$$f_\lambda(t)\  =\  (\lambda/\pi)^{1/2}\int_{-\infty}^\infty f(t-s)\ex^{-\lambda s^2}\,ds.$$
As in \cite[(8.12), Exercise~2(b)]{D} one   obtains that 
$f_\lambda\in\Cc^\infty(\R)$   and
for $k\leq m$:
$$f_\lambda^{(k)}(t)= 
(\lambda/\pi)^{1/2}\int_{-\infty}^\infty f^{(k)}(s) \ex^{-\lambda (s-t)^2}\,ds=
(\lambda/\pi)^{1/2}\int_{-\infty}^\infty f^{(k)}(t-s) \ex^{-\lambda s^2}\,ds.$$
Moreover:

\begin{lemma}\label{lem:WAP aux 1}
$f_\lambda$ extends to an entire function; in particular,
$f_\lambda\in\Cc^\omega(\R)$.
\end{lemma}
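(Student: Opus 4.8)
The plan is to \emph{complexify} the defining integral. Since $\supp f$ is bounded, I would fix reals $a,b$ with $\supp f\subseteq[a,b]$, so that
$$f_\lambda(t)\ =\ (\lambda/\pi)^{1/2}\int_a^b f(s)\,\ex^{-\lambda(s-t)^2}\,ds,$$
and then for $z\in\Co$ define
$$F(z)\ :=\ (\lambda/\pi)^{1/2}\int_a^b f(s)\,\ex^{-\lambda(s-z)^2}\,ds,$$
where $s-z\in\Co$ and $\ex^{-\lambda(s-z)^2}$ is evaluated via the entire exponential function. This is well defined: for fixed $s$ the map $z\mapsto f(s)\ex^{-\lambda(s-z)^2}$ is entire, and writing $z=t+\i u$ one has $|\ex^{-\lambda(s-z)^2}|=\ex^{\lambda u^2}\ex^{-\lambda(s-t)^2}$, so the integrand is bounded on $[a,b]\times D$ for each closed disk $D\subseteq\Co$ and the integral over the compact interval $[a,b]$ converges. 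By construction $F\restrict\R=f_\lambda$, so it suffices to prove that $F$ is entire.

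To show $F$ is holomorphic I would invoke Morera's Theorem. First, $F$ is continuous on $\Co$, since the integrand $(s,z)\mapsto f(s)\ex^{-\lambda(s-z)^2}$ is uniformly continuous on $[a,b]\times D$ for each closed disk $D$, whence $|F(z)-F(z')|$ is controlled uniformly over $[a,b]$. Next, for any closed triangle $T\subseteq\Co$, Fubini's Theorem—justified by continuity of the integrand on the compact set $[a,b]\times\partial T$—gives
$$\oint_{\partial T}F(z)\,dz\ =\ (\lambda/\pi)^{1/2}\int_a^b f(s)\left(\oint_{\partial T}\ex^{-\lambda(s-z)^2}\,dz\right)ds,$$
and for each fixed $s$ the inner contour integral vanishes by Cauchy's Theorem, as $z\mapsto\ex^{-\lambda(s-z)^2}$ is entire. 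Hence $\oint_{\partial T}F=0$ for every triangle $T$, and Morera's Theorem yields that $F$ is holomorphic on all of $\Co$, i.e.\ entire. Therefore $f_\lambda=F\restrict\R$ extends to an entire function, and in particular $f_\lambda\in\Cc^\omega(\R)$.

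The only steps needing a word of justification are the interchange of integration order (Fubini) and the continuity of $F$; both follow at once from continuity of the integrand on the relevant compact sets, which is exactly where the boundedness of $\supp f$ is used—it reduces the defining integral to one over a \emph{compact} interval, on which all these interchanges are unproblematic. An equally short alternative avoiding Morera would be to differentiate under the integral sign: since
$$\partial_z\big(f(s)\ex^{-\lambda(s-z)^2}\big)\ =\ 2\lambda(s-z)f(s)\ex^{-\lambda(s-z)^2}$$
is continuous and bounded in $s\in[a,b]$, locally uniformly in $z$, differentiation under the integral is valid and exhibits a complex derivative $F'(z)$ at every $z\in\Co$, directly giving that $F$ is entire. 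I expect no genuine obstacle here; the content of the lemma is the structural observation that compact support turns the real Weierstrass-type transform into the restriction of a holomorphic function.
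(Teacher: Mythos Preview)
Your proposal is correct and essentially matches the paper's proof: the paper also fixes $[a,b]\supseteq\supp f$, defines the same $F(z)$, and concludes analyticity by differentiating under the integral sign (citing a result from Dieudonn\'e), which is precisely your stated alternative. Your primary route via Morera is an equally standard variant of the same idea.
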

\begin{proof}
Take $a< b$  such that $\supp f\subseteq[a,b]$ and consider $g\colon [a,b]\times\mathbb C\to \mathbb C$
given by~$g(s,z):=f(s)\ex^{-\lambda(s-z)^2}$. Then $g$ is continuous,  for each $s\in [a,b]$ the function~$g(s,{-})\colon\mathbb C\to\mathbb C$ is analytic,  and $\partial g/\partial z\colon [a,b]\times\mathbb C\to\mathbb C$ is
continuous. Hence~$z\mapsto \int_a^b g(s,z)\,ds\colon\mathbb C\to\mathbb C$ is analytic by \cite[(9.10), Exercise~3]{D}.
\end{proof}

\begin{lemma}\label{lem:WAP aux 2}
$\dabs{f_\lambda-f}_m\to 0$ as $\lambda\to\infty$.
\end{lemma}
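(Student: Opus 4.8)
The plan is to exploit the two facts recorded just before the statement: the normalization $(\lambda/\pi)^{1/2}\int_{-\infty}^\infty e^{-\lambda s^2}\,ds=1$ and the differentiation-under-the-integral formula $f_\lambda^{(k)}(t)=(\lambda/\pi)^{1/2}\int_{-\infty}^\infty f^{(k)}(t-s)\,e^{-\lambda s^2}\,ds$, valid for $k\le m$. Since $\dabs{f_\lambda-f}_m=\max_{k\le m}\dabs{(f_\lambda-f)^{(k)}}_\R$ is a maximum over finitely many $k$, it suffices to bound each $\dabs{f_\lambda^{(k)}-f^{(k)}}_\R$ and show it tends to $0$ as $\lambda\to\infty$. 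Using the normalization to write $f^{(k)}(t)=(\lambda/\pi)^{1/2}\int_{-\infty}^\infty f^{(k)}(t)\,e^{-\lambda s^2}\,ds$, I would first record the key representation
$$f_\lambda^{(k)}(t)-f^{(k)}(t)=(\lambda/\pi)^{1/2}\int_{-\infty}^\infty\big(f^{(k)}(t-s)-f^{(k)}(t)\big)e^{-\lambda s^2}\,ds,$$
which reduces everything to controlling the modulus of continuity of the derivatives $f^{(k)}$.

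Next I would use that each $f^{(k)}$ (for $k\le m$) is continuous with bounded support, hence uniformly continuous on $\R$. Given $\varepsilon>0$, I can therefore choose a single $\delta>0$, working simultaneously for all finitely many $k\le m$, such that $\abs{f^{(k)}(t-s)-f^{(k)}(t)}<\varepsilon$ whenever $\abs{s}<\delta$, uniformly in $t$. Splitting the integral at $\abs{s}=\delta$, on the region $\abs{s}<\delta$ the integrand is bounded by $\varepsilon\,e^{-\lambda s^2}$; extending integration back to all of $\R$ and applying the normalization bounds this contribution by $\varepsilon$, uniformly in $t$ and $\lambda$.

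It remains to handle the tail $\abs{s}\ge\delta$, which is the only place a genuine estimate enters. There I would bound the integrand crudely, using $\abs{f^{(k)}(t-s)}+\abs{f^{(k)}(t)}\le 2\dabs{f^{(k)}}_\R\le 2\dabs{f}_m$, so that the tail contributes at most $2\dabs{f}_m\cdot 2(\lambda/\pi)^{1/2}\int_\delta^\infty e^{-\lambda s^2}\,ds$. The substitution $u=\sqrt{\lambda}\,s$ rewrites this as $\frac{4\dabs{f}_m}{\sqrt{\pi}}\int_{\sqrt{\lambda}\,\delta}^\infty e^{-u^2}\,du$, which tends to $0$ as $\lambda\to\infty$ since it is a tail of the convergent Gaussian integral and $\delta$ is now fixed. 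Combining the two pieces gives $\dabs{f_\lambda^{(k)}-f^{(k)}}_\R\le 2\varepsilon$ for all $\lambda$ sufficiently large, uniformly in $k\le m$, and hence $\dabs{f_\lambda-f}_m\le 2\varepsilon$ eventually, which is the claim.

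I do not expect a real obstacle: this is the classical convergence of Gaussian mollifications in $\Cc^m$-norm. The only points needing care are that uniformity in $t$ is preserved throughout (guaranteed by uniform continuity, and by bounding the integrand before integrating) and that one $\delta$ serves all $k\le m$ at once, which is automatic because there are only finitely many such $k$.
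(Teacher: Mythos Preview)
Your proof is correct and follows essentially the same approach as the paper's: representing $f_\lambda^{(k)}-f^{(k)}$ via the normalized Gaussian, splitting at $|s|=\delta$ using uniform continuity, and bounding the tail. The only cosmetic difference is that the paper estimates the tail via the factorization $e^{-\lambda s^2}\le e^{-(\lambda/2)\delta^2}e^{-(\lambda/2)s^2}$ rather than your substitution $u=\sqrt{\lambda}\,s$, but both achieve the same end.
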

\begin{proof}
For $k\leq m$ we have
\begin{align*}
f_\lambda^{(k)}(t)-f^{(k)}(t)\  &=\  (\lambda/\pi)^{1/2}\int_{-\infty}^\infty \big(f^{(k)}(t-s)-f^{(k)}(t)\big)\ex^{-\lambda s^2}\,ds \\
&=\ (\lambda/\pi)^{1/2}\int_{-\infty}^\infty \big(f^{(k)}(s)-f^{(k)}(t)\big)\ex^{-\lambda (s-t)^2}\,ds.
\end{align*}
Let $\varepsilon\in\R^>$ be given, and choose $\delta>0$ such that
$$\abs{f^{(k)}(s)-f^{(k)}(t)} \leq \varepsilon/2\quad\text{whenever $\abs{s-t}\leq \delta$ and $k\leq m$.}$$
For $s\le t-\delta$ and for $s\ge t+\delta$ we have  $\ex^{-\lambda(s-t)^2}\le \ex^{-(\lambda/2)\delta^2} \ex^{-(\lambda/2)(s-t)^2}$, so
\begin{align*}
\int_{-\infty}^{t-\delta} \ex^{-\lambda(s-t)^2}\,ds + \int_{t+\delta}^{\infty} \ex^{-\lambda(s-t)^2}\,ds &\ \leq\ 
\ex^{-(\lambda/2)\delta^2}\int_{-\infty}^\infty  \ex^{-(\lambda/2)(s-t)^2}\,ds \\ &\ =\ \ex^{-(\lambda/2)\delta^2} (2\pi/\lambda)^{1/2}.
\end{align*}
Set $M:=\dabs{f}_{m}\in\R^\geq$. For $k\leq m$ we have
$$
\int_{-\infty}^\infty \big(f^{(k)}(s)-f^{(k)}(t)\big)\ex^{-\lambda (s-t)^2}\,ds = 
\int_{-\infty}^{t-\delta}(\dots)\,ds + 
\int_{t-\delta}^{t+\delta} (\dots)\,ds + 
\int^{\infty}_{t+\delta}  (\dots)\,ds,$$
hence
\begin{align*}
\abs{f_\lambda^{(k)}(t)-f^{(k)}(t)} &\ \leq\  (\lambda/\pi)^{1/2} \left( 
M \int_{-\infty}^{t-\delta} \ex^{-\lambda(s-t)^2}\,ds + {} \right. \\ 
 &\qquad\qquad\qquad\quad \left.  (\varepsilon/2)\int_{-\infty}^\infty \ex^{-\lambda(s-t)^2}\,ds + M \int^{\infty}_{t+\delta} \ex^{-\lambda(s-t)^2}\,ds\right) \\
 &\ \leq \ (\varepsilon/2)+\sqrt{2} M \ex^{-(\lambda/2)\delta^2}.
 \end{align*}
Thus if $\lambda$ is so large that $\sqrt{2} M \ex^{-(\lambda/2)\delta^2}\leq \varepsilon/2$, then $\dabs{f_\lambda-f}_m\leq\varepsilon$.
 \end{proof}

\noindent
In the next lemma we let   $U\subseteq\R$ be nonempty and open and let
$K$ range over nonempty compact subsets of $U$ and $m$ over the natural numbers~$\leq r$.

\begin{lemma}\label{lem:WAP, cauchy}
Let  $(f_n)$ be a sequence in $\Cc^r(U)$ which,
for all $K$, $m$,  is a cauchy sequence with respect to $\dabs{\,\cdot\,}_{K;\,m}$. Then there exists $f\in\Cc^r(U)$ such that for all~$K$,~$m$
we have $\dabs{f_n-f}_{K;\,m}\to 0$ as $n\to \infty$.
\end{lemma}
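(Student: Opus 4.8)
The plan is to recognize this as the completeness statement for $\Cc^r(U)$ under the seminorms $\dabs{\cdot}_{K;\,m}$: I would construct the limit $f$ as the pointwise limit of $(f_n)$ and then identify its successive derivatives with the uniform limits of the $f_n^{(k)}$. First I would exhaust $U$ by compacta: since $U\subseteq\R$ is open, every $t\in U$ lies in a compact interval contained in $U$. For each $k\le r$ the hypothesis, applied with $m=k$, says that $(f_n^{(k)})$ is Cauchy for $\dabs{\cdot}_{K}$ on each compact $K\subseteq U$, i.e.\ uniformly Cauchy on compacta. By the Cauchy criterion for uniform convergence, together with the fact that a uniform limit of continuous functions is continuous, there is a continuous $g_k\colon U\to\R$ with $f_n^{(k)}\to g_k$ uniformly on each compact $K\subseteq U$. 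These limits are unique and depend only locally on $U$, so each $g_k$ is well defined and continuous on all of $U$.

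I then set $f:=g_0$ and claim $f\in\Cc^r(U)$ with $f^{(k)}=g_k$ for all $k\le r$, proved by induction on $k$. The case $k=0$ is the definition of $f$. For the inductive step, assuming $f^{(k)}=g_k$ with $k<r$, I would invoke the classical theorem on differentiating through a limit: on any compact interval $[a,b]\subseteq U$ the functions $f_n^{(k)}$ converge (uniformly, hence pointwise) to $g_k$, while their derivatives $f_n^{(k+1)}$ converge uniformly to the continuous function $g_{k+1}$; therefore $g_k$ is differentiable on $[a,b]$ with $g_k'=g_{k+1}$ there. As differentiability is a local property and $U$ is covered by such intervals, $g_k'=g_{k+1}$ holds on all of $U$, so $f^{(k)}=g_k$ is of class $\Cc^1$ and $f$ is of class $\Cc^{k+1}$. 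The induction yields $f\in\Cc^r(U)$ with $f^{(k)}=g_k$ for every $k\le r$.

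Finally, the asserted convergence is immediate from this identification: for any compact $K\subseteq U$ and any $m\le r$,
$$\dabs{f_n-f}_{K;\,m}\ =\ \max_{k\le m}\dabs{f_n^{(k)}-g_k}_K,$$
and each of the finitely many terms $\dabs{f_n^{(k)}-g_k}_K$ tends to $0$ as $n\to\infty$, whence $\dabs{f_n-f}_{K;\,m}\to 0$.

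I expect the only genuinely non-formal step to be the inductive identification $g_k'=g_{k+1}$, which rests on the standard fact that uniform convergence of the derivatives, together with pointwise convergence of the functions themselves, permits differentiation under the limit; the surrounding bookkeeping—exhaustion by compacta, gluing of the local uniform limits into a single continuous $g_k$ on $U$, and passing from compact intervals back to all of $U$—is routine.
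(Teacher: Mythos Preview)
Your proposal is correct and follows essentially the same approach as the paper: construct the continuous limits $g_k$ of the $f_n^{(k)}$ and then identify $f^{(k)}=g_k$ by induction. The only cosmetic difference is that you invoke the classical differentiation-under-the-limit theorem as a black box, whereas the paper proves that step in situ via the Mean Value Theorem and a subsequence argument.
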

\begin{proof}
For all $K$, $m$, $(f_n^{(m)})$ is a cauchy sequence with respect to $\dabs{\,\cdot\,}_K$. Hence for each $m$ we obtain an $f^m\in\Cc(U)$ such that for all $K$, $\dabs{f_n^{(m)}-f^m}_K\to 0$ as~${n\to\infty}$; cf.~\cite[(7.2.1)]{D}.
Set $f:=f^0$. By induction on $m\leq r$ we show that~${f\in\Cc^m(U)}$ and~$f^{(m)}=f^m$. This is clear for $m=0$,
so suppose $0<m\leq r$ and $f\in\Cc^{m-1}(U)$, $f^{(m-1)}=f^{m-1}$.
Let $a\in U$, and take~$\varepsilon>0$   such that~$K:=[a-\varepsilon,a+\varepsilon]\subseteq U$. 
Let~$t\in K\setminus\{a\}$.
Then for each $n$ we have $s_n$ with $\abs{a-s_n}\leq \abs{a-t}$   such that
$$f_n^{(m-1)}(t)-f_n^{(m-1)}(a)\ =\ f_n^{(m)}(s_n)\cdot (t-a).$$
Take a subsequence $(s_{n_k})$ of $(s_n)$ and $s=s(t)$ with~$\lim\limits_{k\to\infty} s_{n_k} = s$.
Then~${\abs{a-s}}\leq\abs{a-t}\le \varepsilon$ and   
$$\lim_{k\to\infty} \big(f_{n_k}^{(m-1)}(t)-f_{n_k}^{(m-1)}(a)\big) = f^{m-1}(t)-f^{m-1}(a) = f^{(m-1)}(t)-f^{(m-1)}(a)$$
and $\lim\limits_{k\to\infty} f_{n_k}^{(m)}(s_{n_k})=f^m(s)$, since $\lim\limits_{n\to\infty}\dabs{f_n^{(m)}-f^m}_{K} = 0$. Hence
$$f^{(m-1)}(t)-f^{(m-1)}(a)=f^m(s)\cdot (t-a)$$
where $f^m\big(s(t)\big)\to f^m(a)$   as $t\to a$, since $f^m$ is continuous at $a$.
\end{proof}

\noindent
We   now prove Theorem~\ref{thm:WAP dim 1, general}.
Let $(a_n)$, $(b_n)$, $(\varepsilon_n)$ be sequences in $\R$ and $(r_n)$
in~$\N$ such that~$a_0= b_0$, $(a_n)$ is strictly decreasing, $(b_n)$ is strictly increasing, and $\varepsilon_n>0$, $r_n\le r$ for all~$n$. Set $I:=\bigcup_n K_n$, where $K_n:=[a_n, b_n]$, and let $f\in \Cc^r(I)$.
\textit{We need to show the existence of
a~$g\in\Cc^\omega(I)$ such that 
$\dabs{{f-g}}_{K_{n+1}\setminus K_n;\,r_n}<\varepsilon_n$ for each~$n$.}\/
Replacing  $\varepsilon_n$ by $\min\{\varepsilon_n, \frac{1}{n+1}\}$ and  $r_n$ by $\max\{r_0,\dots,r_n\}$ 
we first arrange that $\varepsilon_n\to 0$
as~$n\to\infty$ and  $r_n\leq r_{n+1}$ for all $n$. Set
$$L_n\ :=\ K_{n+1}\setminus K_n\ =\ [a_{n+1},a_n)\cup(b_n,b_{n+1}],$$
and take $\varphi_n\in\Cc^\infty(\R)$   such that $\varphi_n=0$ on a neighborhood of $K_{n-1}$ (satisfied automatically for $n=0$, by convention),  
$\varphi_n=1$ on a neighborhood of~$\operatorname{cl}(L_n)=[a_{n+1},a_n]\cup[b_n,b_{n+1}]$, and~$\supp\varphi_n\subseteq K_{n+2}$. For example, for $n\ge 1$, $\alpha_{a,b}\in\Cc^\infty(\R)$  as in
\cite[(3.4)]{ADHfgh},  and sufficiently small positive~$\varepsilon=\varepsilon(n)$, set
$$\alpha_n(t):=\begin{cases}
\alpha_{a_{n+2}+\varepsilon,a_{n+1}-\varepsilon}(t) &\text{if $t\leq a_n$,} \\
1-\alpha_{a_{n}+\varepsilon,a_{n-1}-\varepsilon}(t) &\text{otherwise.}
\end{cases}$$
and
$$\beta_n(t):=\begin{cases}
\alpha_{b_{n-1}+\varepsilon,b_n-\varepsilon}(t) &\text{if $t\leq b_n$,} \\
1-\alpha_{b_{n+1}+\varepsilon,b_{n+2}-\varepsilon}(t) &\text{otherwise.}
\end{cases}$$
and put $\varphi_n:=\alpha_n+\beta_n$.  (See Figure~\ref{fig:beta_n}.)

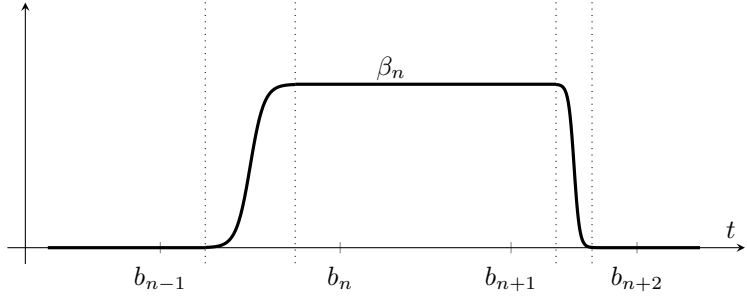
\begin{figure}[ht]
\begin{tikzpicture}
\def\anmone{1.5};\def\an{3.5};\def\anplone{5.4};\def\anpltwo{6.8};\def\epsl{0.5};
  \begin{axis} [axis lines=center, xmin=-0.2, xmax=8, ymin = -0.01, ymax = 0.15, width=0.9\textwidth, height = 0.4\textwidth, xlabel={$t$}, xtick={\anmone,\an,\anplone,\anpltwo}, ytick=\empty, xticklabels={\strut $b_{n-1}$, \strut $b_n$, \strut $b_{n+1}$, \strut $b_{n+2}$},
   ]
   \addplot [domain=0.25:\anmone+\epsl, smooth, very thick] {0}; 

    \draw[dotted] (axis cs: \anmone+\epsl,-0.25) -- (axis cs: \anmone+\epsl,2);
    \draw[dotted] (axis cs: \an-\epsl,-0.25) -- (axis cs: \an-\epsl,2);
    \draw[dotted] (axis cs: \anplone+\epsl,-0.25) -- (axis cs: \anplone+\epsl,2);
    \draw[dotted] (axis cs: \anpltwo-\epsl,-0.25) -- (axis cs: \anpltwo-\epsl,2);
      
    \addplot [domain=\an-\epsl:\anplone+\epsl, smooth, very thick] {0.1};  
   \addplot [domain=\anplone+2*\epsl:7.5, smooth, very thick] {0}; 

    \def\transanmonean{(6.5*(((2*(x-\an+\epsl)/((\an)-(\anmone)-2*\epsl))+1)))};
    \def\transanplone{(6.5*(((2*(x-\anpltwo+\epsl)/((\anpltwo)-(\anplone)-2*\epsl))+1)))};

    \addplot [domain=\anmone+\epsl:\an-\epsl, smooth, very thick] { 0.1/(1+exp(-\transanmonean)) };
    \addplot [domain=\anplone+\epsl:\anplone+2*\epsl, smooth, very thick] { 0.1/(1+exp(\transanplone)) };
        \node[right] at (axis cs:\an+0.3,0.11) {$\beta_n$};

 \end{axis}
\end{tikzpicture}
\caption{The hump function $\beta_n$}\label{fig:beta_n}
\end{figure}

\medskip
\noindent
 With $M_n:=1+2^{r_n}\dabs{\varphi_n}_{r_n}$,   choose~$\delta_n\in\R^>$ so that for all $n$,
\begin{equation}\label{eq:WAP, 0}
2\delta_{n+1} \leq \delta_n,\qquad \sum_{m = n}^\infty \delta_m M_{m+1}\leq \varepsilon_n/4
\end{equation}
Given $g\in\Cc(\R)$ with  bounded support and $\lambda\in\R^>$,  let $I_\lambda(g):=g_\lambda$
be as in \eqref{eq:flambda}, with $g$ in place of $f$. Next, let $f\in \Cc^r(I)$ be given. Then
we inductively define   sequences $(\lambda_n)$ in~$\R^>$ and~$(g_n)$ in~$\Cc^\omega(\R)$ as follows: Let $\lambda_m\in \R^{>}$ and $g_m\in \Cc^{\omega}$ for~$m<n$; then
consider the function $h_n\in\Cc^r(\R)$ given by
$$h_n(t)\ :=\ \begin{cases} \varphi_n(t)\cdot \big(f(t)-\big(g_0(t)+\cdots+g_{n-1}(t)\big)\big)
& \text{if $t\in I$,} \\
0 & \text{otherwise.}\end{cases}$$ 
Thus    $\supp h_n\subseteq\supp\varphi_n$ is bounded. Put
  $g_n:=I_{\lambda_n}(h_n)\in\Cc^\omega(\R)$ where we take~$\lambda_n\in\R^>$ such that~$\dabs{g_n-h_n}_{r_n}<\delta_n$ (any sufficiently large $\lambda_n$ will do, by Lemma~\ref{lem:WAP aux 2}).
So $\dabs{g_{n+1}-h_{n+1}}_{K_{n};\,r_{n+1}}<\delta_{n+1}$, and since $\varphi_{n+1}$ and thus also $h_{n+1}$ vanish on a neighborhood of~$K_{n}$, this yields
\begin{equation}\label{eq:WAP, 1}
\dabs{g_{n+1}}_{K_{n};\, r_{n+1}}<\delta_{n+1}.
\end{equation}
Likewise, since $\varphi_n=1$ on a neighborhood of $\operatorname{cl}(L_n)$,
\begin{equation}\label{eq:WAP, 2}
\dabs{f-(g_0+\cdots+g_n)}_{L_n;\, r_n}<\delta_n.
\end{equation}
Also $$\dabs{g_{n+1}-h_{n+1}}_{L_n;\,r_{n}}\leq\dabs{g_{n+1}-h_{n+1}}_{r_{n+1}}<\delta_{n+1},$$ and thus by \eqref{eq:prod norm} and
\eqref{eq:WAP, 2}:
\begin{align*}
\dabs{g_{n+1}}_{L_n;\,r_{n}}\ 	&\ \leq\ \dabs{g_{n+1}-h_{n+1}}_{L_n;\,r_{n}}+\dabs{\varphi_{n+1}\cdot(f-(g_0+\cdots+g_n))}_{L_n;\,r_{n}}
 \\
&\ \leq\  \delta_{n+1} + 2^{r_n}\dabs{\varphi_{n+1}}_{L_n;\,r_{n}} \cdot \dabs{f-(g_0+\cdots+g_n)}_{L_n;\,r_{n}} \\
&\ \leq\ \delta_{n+1}+ M_{n+1}\delta_n.
\end{align*}
Moreover, by \eqref{eq:WAP, 1} and $r_n\leq r_{n+1}$ we have $\dabs{g_{n+1}}_{K_n;\, r_n}<\delta_{n+1}$.
Hence by \eqref{eq:WAP, 0}:
\begin{equation}\label{eq:WAP, 3}
\dabs{g_{n+1}}_{K_{n+1};\,r_n}\ \leq\ \delta_{n+1}+ M_{n+1}\delta_n+\delta_{n+1}\ \leq\ M_{n+1}\delta_n+\delta_n\ \leq\ 2\delta_n M_{n+1}.
\end{equation}
Let $K\subseteq I$ be nonempty and compact, and let $m\leq r_n$ for some $n$.
We  claim that~$({g_0+\cdots+g_i})$ is a cauchy sequence with respect to $\dabs{\,\cdot\,}_{K;\,m}$.
To see this, 
let~$\varepsilon\in\R^>$ be given, and
take~$n$ such that $K\subseteq K_{n+1}$, $m\leq r_n$, and $\varepsilon_n\le 2\varepsilon$. 
Then by
\eqref{eq:WAP, 0} and
\eqref{eq:WAP, 3} we have  for~$j > i\geq n$:
\begin{align*}
\dabs{g_{i+1}+\cdots+g_j}_{K;\,m} &\ \leq\ 
\dabs{g_{i+1}}_{K;\,m}+\cdots+\dabs{g_j}_{K;\,m} \\ 
&\ \leq\ \dabs{g_{i+1}}_{K_{i+1};\,r_i}+\cdots+\dabs{g_j}_{K_{j};\,r_{j-1}} \\
&\ \leq\ 2\delta_iM_{i+1}+\cdots+2\delta_{j-1}M_j\ \leq\ \varepsilon_i/2\ \leq\ \varepsilon.
\end{align*}
So Lemma~\ref{lem:WAP, cauchy}  yields a function $g\colon I\to\R$ such that
$g(t)=\sum\limits_{i=0}^\infty g_i(t)$ for all~$t\in I$ and $g\in \Cc^{r_n}(I)$ for all $n$. 
In the same way, using \eqref{eq:WAP, 0} and \eqref{eq:WAP, 3} and denoting the restriction of $g_i$ to $I$ also by $g_i$, we obtain
$$\dabs{g-(g_0+\cdots+g_n)}_{L_n;\,r_n}\ =\ \left\| \sum_{i=n+1}^\infty g_i\right\|_{L_n;\,r_n}\ \leq\ \varepsilon_n/2$$
and hence by \eqref{eq:WAP, 0} and \eqref{eq:WAP, 2}:
\begin{align*}
\dabs{f-g}_{L_n;\,r_n} &\ \leq\ \dabs{f-(g_0+\cdots+g_n)}_{L_n;\,r_n} + 
\dabs{g-(g_0+\cdots+g_n)}_{L_n;\,r_n}\\
&\ \leq\ \delta_n+\textstyle\frac{1}{2}\varepsilon_n\ <\ \varepsilon_n.
\end{align*}
 To complete the proof we are going to choose sequences $(g_n)$ and~$(\lambda_n)$ as above
so that $g$ is analytic.
Now for $t\in\R$ we have
$$g_n(t)\ =\ (\lambda_n/\pi)^{1/2}\int_{-\infty}^{\infty} h_n(s)\ex^{-\lambda_n(s-t)^2}ds\ =\ (\lambda_n/\pi)^{1/2}\int_{a_{n+2}}^{b_{n+2}} h_n(s)\ex^{-\lambda_n(s-t)^2}ds$$
and $g_n$ is the restriction to $\R$ of the entire function $\hat g_n$ given by
$$\hat g_n(z)\ =\ (\lambda_n/\pi)^{1/2}\int_{a_{n+2}}^{b_{n+2}} h_n(s)\ex^{-\lambda_n(s-z)^2}ds\qquad (z\in\mathbb C).$$ 
(See the proof of Lemma~\ref{lem:WAP aux 1}.)
Put $$\rho_n\ :=\ \textstyle\frac{1}{2}\min\!\big\{ (a_n-a_{n+1})^2,(b_{n+1}-b_{n})^2\big\}\in\R^>$$ and
$$U_n\ :=\  \big\{z\in \mathbb C:\, a_{n+1}<\Re z<b_{n+1},\ 
\Re\!\big((z-a_{n+1})^2\big),\  \Re\!\big((z-b_{n+1})^2\big)>\rho_n
  \big\},$$
an open subset of $\mathbb C$ containing $K_n$ such that $\Re\!\big((s-z)^2\big)>\rho_n$
for all~$s\in \R\setminus K_{n+1}$ and~$z\in U_n$. (Cf.~Figure~\ref{fig:Un}.)

\begin{figure}[ht]
\begin{tikzpicture}
\def\anplone{1.5};\def\an{3.5};\def\bn{5.4};\def\bnplone{6.8};\def\rhon{((1.4^2)/2)};
  \begin{axis} [axis lines=center, xmin=-0.2, xmax=8, ymin = -3.15, ymax = 3.15, width=0.9\textwidth, height = 0.5\textwidth, xtick={\anplone,\an,\bn,\bnplone}, ytick=\empty, xticklabels={\strut $a_{n+1}$, \strut $a_{n}$, \strut $b_{n}$, \strut $b_{n+1}$}, xticklabel style = {xshift=0.04\textwidth, yshift=-0.125\textwidth, xlabel={$\Re$}, ylabel={$\Im$}}
   ]    
    \draw[dotted] (axis cs: \anplone,-3.15) -- (axis cs: \anplone,3.15);
    \draw[dotted] (axis cs: \an,-3.15) -- (axis cs: \an,3.15);
    \draw[dotted] (axis cs: \bn,-3.15) -- (axis cs: \bn,3.15);
    \draw[dotted] (axis cs: \bnplone,-3.15) -- (axis cs: \bnplone,3.15);

    \addplot [name path=A, domain=\anplone:((\anplone+\bnplone)/2), smooth, dashed, thick] {  ( (x-\anplone)^2 - \rhon )^(1/2) };
    \addplot [name path=B, domain=\anplone:((\anplone+\bnplone)/2), smooth, dashed, thick] { - ( (x-\anplone)^2 - \rhon )^(1/2) };

    \addplot [name path=C, domain=((\anplone+\bnplone)/2):\bnplone, smooth,  dashed, thick] {   ( (x-\bnplone)^2 - \rhon )^(1/2) };
    \addplot [name path=D, domain=((\anplone+\bnplone)/2):\bnplone, smooth, dashed, thick] { - ( (x-\bnplone)^2 - \rhon )^(1/2) };

        \addplot[pattern=north west lines] fill between[of=A and B];
        \addplot[pattern=north west lines] fill between[of=C and D];

    \node[right] at (axis cs:\bnplone+0.3,2.75) {$\mathbb C$};
    \node[right] at (axis cs:\anplone+1.5,2.11) {$U_n$};
 \end{axis}
\end{tikzpicture}
\caption{The domain $U_n$}\label{fig:Un}
\end{figure}

\medskip
\noindent
We also set
$$H_m\ :=\ 2(\lambda_m/\pi)^{1/2}\,\dabs{h_m}_{K_{m+2}} \,(b_{m+2}-a_{m+2})\in\R^\ge.$$
Recall that $h_m$ only depends on the $g_j$ with $j<m$. Fix a sequence $(c_m)$ of positive reals such that $\sum_m c_m <\infty$. Then we can and do choose the sequences $(g_m)$, $(\lambda_m)$ so that in addition
$$H_m \exp(-\lambda_m/m)\ \leq\  c_m\quad\text{for all $m\ge 1$.}$$
Then  
\begin{equation}\label{eq:cn}
\sum_m H_m\exp(-\lambda_m\rho)\ <\infty\ \qquad\text{for all~${\rho\in\R^>}$.}
\end{equation}
It is enough  that 
for each~$n$ the series~$\sum_m \hat g_m$ converges uniformly on 
compact subsets of $U_n$, because then by \cite[(9.12.1)]{D} we have a holomorphic function
 $$z\mapsto \sum_m \hat g_m(z)\ :\  U:=\bigcup_n U_n\to\mathbb C$$
whose restriction to $I$ is $g$.
To prove such convergence, fix $n$ and
let $m\geq n+2$. Then $\supp h_m\subseteq   K_{m+2}\setminus K_{m-1}\subseteq
K_{m+2}\setminus K_{n+1}$. Hence $\abs{\hat g_m(z)} \leq H_m\ex^{-\lambda_m\rho_n}$
for~${z\in U_n}$. 
Together with \eqref{eq:cn} 
this now yields that $\sum_m \hat g_m$ converges uniformly on compact subsets of $U_n$. \qed

\medskip

\noindent
In the remainder of this appendix we discuss how to control the domain of the holomorphic function~$\hat g$
 in the proof of Theorem~\ref{thm:WAP dim 1, general}; this leads to   improvements of Corollaries~\ref{cor:WAP dim 1} and~\ref{cor:WAP dim 1, 1} which might be useful elsewhere: Corollaries~\ref{cor:WAP dim 1, controlled} and~\ref{cor:WAP dim 1, 1, controlled} below. 
For the next corollary we are in the setting of that theorem and~$f\in \Cc^r(I)$. With~$\alpha\in \R\cup\{-\infty\}$ and~$\beta\in \R\cup\{+\infty\}$ such that~$I=(\alpha,\beta)$,   put
$$V\ :=\ \big\{z\in\mathbb C:\  \Re(z)\in I,\ \abs{\Im z} < \Re(z)-\alpha,\,\beta-\Re(z)\big\},$$
an open subset of $\mathbb C$ containing $I$.

\begin{cor}\label{cor:WAP dim 1, general, controlled}
Suppose   $a_n-a_{n+1}\to 0$ and $ b_{n+1}-b_n\to 0$ as $n\to \infty$. 
Then there is a holomorphic $\hat g\colon V\to\mathbb C$, real-valued on $\R$,   
such that $g:=\hat g|_I\in\C^\omega(I)$ satisfies 
$$\dabs{{f-g}}_{K_{n+1}\setminus K_n;\,r_n}<\varepsilon_n, \text{  for all~$n$}.$$
\end{cor}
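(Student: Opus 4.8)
The plan is not to build a new approximant, but to re-run the construction in the proof of Theorem~\ref{thm:WAP dim 1, general} unchanged and then check that, thanks to the extra hypothesis, the holomorphic function it produces is already defined on $V$. So first I would apply that proof verbatim to the given data, obtaining sequences $(g_m)$ in $\Cc^\omega(\R)$ and $(\lambda_m)$ in $\R^>$, the open set $U=\bigcup_n U_n\subseteq\mathbb{C}$, and the holomorphic function $\hat g=\sum_m \hat g_m\colon U\to\mathbb{C}$ whose restriction $g=\hat g|_I$ lies in $\Cc^\omega(I)$ and satisfies $\dabs{f-g}_{K_{n+1}\setminus K_n;\,r_n}<\varepsilon_n$ for all $n$. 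Each summand $\hat g_m(z)=(\lambda_m/\pi)^{1/2}\int_{a_{m+2}}^{b_{m+2}}h_m(s)\ex^{-\lambda_m(s-z)^2}\,ds$ is entire and real on $\R$ (as $h_m$ is real-valued), so $\hat g$ is real-valued on $\R$. Everything asserted in the conclusion is then inherited from the theorem, and the whole statement reduces to the single geometric claim $V\subseteq U$, after which $\hat g|_V$ is the required function.

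To prove $V\subseteq U$, I would first record that the hypotheses $a_n-a_{n+1}\to 0$ and $b_{n+1}-b_n\to 0$ force $\rho_n=\frac12\min\{(a_n-a_{n+1})^2,(b_{n+1}-b_n)^2\}\to 0$; this is precisely the input that is absent in the unrestricted theorem. Writing $z=x+iy$ and using $\Re\big((z-a_{n+1})^2\big)=(x-a_{n+1})^2-y^2$ (and similarly for $b_{n+1}$), membership $z\in U_n$ amounts to $a_{n+1}<x<b_{n+1}$, $(x-a_{n+1})^2-y^2>\rho_n$, and $(x-b_{n+1})^2-y^2>\rho_n$. Now fix $z=x+iy\in V$, so $x\in I=(\alpha,\beta)$ and $|y|<x-\alpha$, $|y|<\beta-x$ (the latter two being vacuous when $\alpha=-\infty$ or $\beta=+\infty$). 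Since $a_{n+1}\searrow\alpha$ and $b_{n+1}\nearrow\beta$, we have $a_{n+1}<x<b_{n+1}$ for all large $n$. Moreover $(x-a_{n+1})^2-y^2$ converges to $(x-\alpha)^2-y^2>0$ when $\alpha$ is finite (using $y^2<(x-\alpha)^2$) and to $+\infty$ when $\alpha=-\infty$, while $\rho_n\to 0$; hence $(x-a_{n+1})^2-y^2>\rho_n$ for all large $n$, and symmetrically $(x-b_{n+1})^2-y^2>\rho_n$ for all large $n$. Thus $z\in U_n$ for all sufficiently large $n$, so $z\in U$, which establishes $V\subseteq U$.

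I expect the only genuine work to be this inclusion, and within it the one point needing care is the bookkeeping across the finite and infinite cases for the endpoints $\alpha,\beta$. The conceptual heart, however, is simply that the shrinking-gap hypothesis makes $\rho_n\to 0$, which is exactly what lets the nested domains $U_n$ exhaust the diamond-shaped region $V$; without it, $U$ need not contain all of $V$.
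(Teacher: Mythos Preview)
Your proposal is correct and follows essentially the same approach as the paper: reduce to showing $V\subseteq U$ for the $U=\bigcup_n U_n$ from the proof of Theorem~\ref{thm:WAP dim 1, general}, use the hypothesis to get $\rho_n\to 0$, and then verify that any $z=x+iy\in V$ lies in $U_n$ for all large $n$ via the limits $(x-a_{n+1})^2-y^2\to (x-\alpha)^2-y^2>0$ and symmetrically for $b_{n+1}$. You are slightly more explicit than the paper in separating the finite and infinite endpoint cases and in noting why $\hat g$ is real on $\R$, but the argument is the same.
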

\begin{proof}
It suffices to show that the open set $U\subseteq\mathbb C$ in   the proof of Theorem~\ref{thm:WAP dim 1, general} contains $V$. Note that $\rho_n\to 0$ as $n\to \infty$.
Let $z=x+y\imag\in V$ ($x,y\in\R$). Then 
$$ (x-a_{n+1})^2-y^2-\rho_n \to (x-\alpha)^2-y^2 > 0 \text{ as }n\to \infty,$$
and thus $\Re\!\big((z-a_{n+1})^2\big)=(x-a_{n+1})^2-y^2 > \rho_n$ for all sufficiently large $n$. Likewise, ${\Re\!\big((z-b_{n+1})^2\big) > \rho_n}$  for all sufficiently large $n$. Therefore $z\in U_n$ for sufficiently large $n$.
\end{proof}

\begin{cor}\label{cor:Carleman}
Suppose $r\in\N$, $f\in\C^r(\R)$, $\varepsilon\in\C(\R)$, and $\varepsilon>0$ on $\R$. Then 
there is an entire function $g\colon\mathbb C\to\mathbb C$ such that~${\abs{(f-g)^{(k)}}\le\varepsilon}$ on $\R$ for all $k\leq r$.
\end{cor}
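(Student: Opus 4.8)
The plan is to obtain the required entire function as a direct application of Corollary~\ref{cor:WAP dim 1, general, controlled} with $I=\R$. First I would choose sequences $(a_n)$, $(b_n)$ in $\R$ with $a_0=b_0=0$ such that $(a_n)$ is strictly decreasing with $a_n\to-\infty$ and $a_n-a_{n+1}\to 0$, and $(b_n)$ is strictly increasing with $b_n\to+\infty$ and $b_{n+1}-b_n\to 0$; for instance $a_n:=-\sum_{k=1}^n 1/k$ and $b_n:=\sum_{k=1}^n 1/k$, whose consecutive gaps tend to $0$ while the partial sums diverge. Then $I:=\bigcup_n[a_n,b_n]=\R$, so in the notation of that corollary we have $\alpha=-\infty$ and $\beta=+\infty$; hence the two defining inequalities $\abs{\Im z}<\Re(z)-\alpha$ and $\abs{\Im z}<\beta-\Re(z)$ for $V$ become vacuous, and $V=\mathbb C$.

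Next I would set $r_n:=r$ for all $n$, which is legitimate since now $r\in\N$ is finite (so $r_n\le r$), and $\varepsilon_n:=\min\{\varepsilon(t):\, t\in[a_{n+1},a_n]\cup[b_n,b_{n+1}]\}$, a positive real because $\varepsilon$ is continuous and strictly positive on this compact set. Applying Corollary~\ref{cor:WAP dim 1, general, controlled} to $f\in\C^r(I)=\C^r(\R)$ with these data then produces a holomorphic $\hat g\colon V=\mathbb C\to\mathbb C$, real-valued on $\R$, whose restriction $g:=\hat g|_\R$ satisfies $\dabs{f-g}_{K_{n+1}\setminus K_n;\,r}<\varepsilon_n$ for all $n$, where $K_n=[a_n,b_n]$. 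This $\hat g$ is the desired entire function.

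It then remains to check the pointwise bound. The sets $K_{n+1}\setminus K_n=[a_{n+1},a_n)\cup(b_n,b_{n+1}]$, for $n\ge 0$, are pairwise disjoint and cover $\R\setminus\{a_0\}$, recalling that $K_0=\{a_0\}$ since $a_0=b_0$. For $t$ lying in such a set and $k\le r$ we obtain $\abs{(f-g)^{(k)}(t)}\le\dabs{f-g}_{K_{n+1}\setminus K_n;\,r}<\varepsilon_n\le\varepsilon(t)$ by the choice of $\varepsilon_n$. Thus $\abs{(f-g)^{(k)}}\le\varepsilon$ holds on $\R\setminus\{a_0\}$ for every $k\le r$, and since both sides are continuous (as $g|_\R\in\C^\omega(\R)\subseteq\C^r(\R)$), the inequality extends to the single point $t=a_0$ by passing to the limit.

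The argument is essentially bookkeeping, so I do not expect a genuine obstacle. The only points requiring care are recognizing that taking $I=\R$ is permissible precisely because one can arrange the boundary gaps $a_n-a_{n+1}$ and $b_{n+1}-b_n$ to tend to $0$ while the endpoints still run off to $\pm\infty$, and that this choice forces $V=\mathbb C$ so that the approximant is entire; and handling the lone uncovered point $a_0$ by continuity.
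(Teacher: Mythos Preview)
Your proof is correct and follows essentially the same approach as the paper: both choose sequences with $a_n\to-\infty$, $b_n\to+\infty$ and vanishing gaps (you use harmonic partial sums, the paper uses $b_n=\log(n+1)=-a_n$), set $r_n:=r$, define $\varepsilon_n$ as a minimum of $\varepsilon$ over a suitable compact set, observe that $V=\mathbb C$, and invoke Corollary~\ref{cor:WAP dim 1, general, controlled}. You are slightly more careful than the paper in explicitly handling the uncovered point $a_0$ by continuity.
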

\begin{proof}
Set $b_n:=\log(n+1)$, $a_n:=-b_n$, and $K_n:=[a_n,b_n]$. Then $\bigcup_n K_n=\R$ and~$a_n-a_{n+1}\to 0$ and $ b_{n+1}-b_n\to 0$ as $n\to \infty$. Set $\varepsilon_n:=\min\big\{\varepsilon(t): t\in K_{n+1}\big\}$ and~$r_n:=r$. Then $V=\mathbb C$ and we apply Corollary~\ref{cor:WAP dim 1, general, controlled}.
\end{proof}

\begin{remark}
Corollary~\ref{cor:Carleman} is due to Carleman~\cite{Carleman} for $r=0$, 
to Kaplan~\cite{Kaplan} for~${r=1}$, and to Hoischen~\cite[Satz~2]{Hoischen} in general; see~\cite[Chapter~VIII, pp.~273--276, 291]{Burckel}. In a similar way, Corollary~\ref{cor:WAP dim 1, general, controlled}  also yields the $\C^\infty$-version of Corollary~\ref{cor:Carleman}  in
\cite[Satz~1]{Hoischen}. For a multivariate version of these facts, see \cite{AADC}.
\end{remark}

\noindent
Given any $a$  we now consider the open sector $V_a$ in the complex plane given by
$$V_a\ :=\ \big\{z\in\mathbb C: \abs{\Im (z)} < \Re(z)-a\big\}\ =\ a+\big\{z\in\mathbb C^\times: -\textstyle\frac{\pi}{4}<\arg z<\frac{\pi}{4}  \big\}.$$

\begin{cor} \label{cor:WAP dim 1, controlled}
Let  $f$, $(b_n)$, $(\varepsilon_n)$,  $(r_n)$ be as in  Corollary~\ref{cor:WAP dim 1}. Then there are $a<~b$ and a holomorphic
function $\hat g\colon V_a\to\mathbb C$, real-valued on $\R$, 
such that~$g:=\hat g|_{\R^{\geq b}}\in\C^\omega_b$ satisfies~$\dabs{{f-g}}_{[b_n,b_{n+1}];\,r_n}<\varepsilon_n$ for all~$n$.
\end{cor}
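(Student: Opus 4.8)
The plan is to deduce the statement from Corollary~\ref{cor:WAP dim 1, general, controlled}, applied on the interval $I=(a,+\infty)$. The point is that when the right endpoint of $I$ is $+\infty$ (and the left endpoint is $\alpha=a$), the region $V$ appearing in that corollary is exactly the sector $V_a=\{z:\abs{\Im z}<\Re(z)-a\}$. The only discrepancy between our hypotheses and those of Corollary~\ref{cor:WAP dim 1, general, controlled} is that the latter requires the consecutive gaps of the right-hand sequence to tend to $0$, whereas the given $(b_n)$ satisfies only $b_n\to\infty$. So the main (and essentially only) step is to refine $(b_n)$ into a sequence with vanishing gaps, while carrying along the data $(\varepsilon_n)$, $(r_n)$.

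First I would fix $a<b$ and extend $f$ to a function in $\Cc^r\big((a,+\infty)\big)$, also denoted $f$, exactly as in the proof of Corollary~\ref{cor:WAP dim 1}. Next I would refine $(b_n)$: subdivide each $[b_n,b_{n+1}]$ into $N_n:=\lceil (n+1)(b_{n+1}-b_n)\rceil$ equal pieces, producing a strictly increasing sequence $(\tilde b_m)$ with $\tilde b_0=b$ and $\tilde b_m\to\infty$ whose gaps on the portion inside $[b_n,b_{n+1}]$ are $\le 1/(n+1)$; hence $\tilde b_{m+1}-\tilde b_m\to0$. For each $m$ let $n(m)$ be the index with $[\tilde b_m,\tilde b_{m+1}]\subseteq[b_{n(m)},b_{n(m)+1}]$, and set $\tilde\varepsilon_m:=\varepsilon_{n(m)}\in\R^>$ and $\tilde r_m:=r_{n(m)}\le r$. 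Finally I would take a strictly decreasing $(\tilde a_m)$ with $\tilde a_0=\tilde b_0=b$ and $\tilde a_m\to a$; since $(\tilde a_m)$ converges to the finite limit $a$, the telescoping series $\sum_m(\tilde a_m-\tilde a_{m+1})$ converges, so $\tilde a_m-\tilde a_{m+1}\to0$ automatically.

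I would then apply Corollary~\ref{cor:WAP dim 1, general, controlled} to the data $(\tilde a_m)$, $(\tilde b_m)$, $(\tilde\varepsilon_m)$, $(\tilde r_m)$, with $\bigcup_m[\tilde a_m,\tilde b_m]=(a,+\infty)$ and $V=V_a$. This gives a holomorphic $\hat g\colon V_a\to\mathbb C$, real-valued on $\R$, such that $g:=\hat g|_{(a,+\infty)}$ satisfies $\dabs{f-g}_{\tilde K_{m+1}\setminus\tilde K_m;\,\tilde r_m}<\tilde\varepsilon_m$ for all $m$, where $\tilde K_m:=[\tilde a_m,\tilde b_m]$; in particular $\dabs{f-g}_{[\tilde b_m,\tilde b_{m+1}];\,\tilde r_m}<\tilde\varepsilon_m$, using that the seminorm is unchanged upon passing to the closure. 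To recover the stated bounds I fix $n$: the interval $[b_n,b_{n+1}]$ is the union of the finitely many $[\tilde b_m,\tilde b_{m+1}]$ with $n(m)=n$, the seminorm $\dabs{\,\cdot\,}_{S;\,r_n}$ over a finite union of sets is the maximum of the seminorms over the pieces, and for these $m$ we have $\tilde r_m=r_n$ and $\tilde\varepsilon_m=\varepsilon_n$; hence $\dabs{f-g}_{[b_n,b_{n+1}];\,r_n}<\varepsilon_n$. Restricting $\hat g$ to $\R^{\ge b}$ then gives $g\in\Cc^\omega_b$ with the required properties. The only genuine content is the refinement of $(b_n)$ to force $b_{n+1}-b_n\to0$, together with the bookkeeping verifying that the finer, higher-order approximation bounds imply the original coarser ones; everything else is a direct invocation of Corollary~\ref{cor:WAP dim 1, general, controlled}.
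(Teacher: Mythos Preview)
Your proof is correct and follows essentially the same approach as the paper: refine $(b_n)$ so that consecutive gaps tend to~$0$, take a decreasing sequence $(\tilde a_m)\to a$ (whose gaps tend to~$0$ automatically), and apply Corollary~\ref{cor:WAP dim 1, general, controlled} on $(a,+\infty)$ where $V=V_a$. The paper's refinement differs cosmetically---it inserts the points $b+\log m$ rather than subdividing each $[b_n,b_{n+1}]$ equally---but the idea and the bookkeeping are the same.
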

\begin{proof}
We first arrange that  $b_{n+1}-b_n\to 0$ as $n\to\infty$.  For this, let  $(b_m^*)$ be the strictly increasing sequence in $\R$ such that
$$\{b_0^*,b_1^*,\dots\}\ =\ \{b_0,b_1,\dots\}\cup\big\{b+\log 1,b+\log 2,\dots\big\},$$
for each $m$, set $\varepsilon_m^*:=\varepsilon_n$, $r_m^*:=r_n$ with $n$ such that $[b_{m}^*,b_{m+1}^*]\subseteq [b_n,b_{n+1}]$, and replace $(b_n)$, $(\varepsilon_n)$,  $(r_n)$  by $(b_m^*)$, $(\varepsilon_m^*)$,  $(r_m^*)$.
Now argue as in the proof of Corollary~\ref{cor:WAP dim 1}, using
Corollary~\ref{cor:WAP dim 1, general, controlled} instead of Theorem~\ref{thm:WAP dim 1, general}.
\end{proof}

\noindent
Now the proof of  Corollary~\ref{cor:WAP dim 1, 1}, using 
Corollary~\ref{cor:WAP dim 1, controlled} instead of
 Corollary~\ref{cor:WAP dim 1}, gives:
 
\begin{cor} \label{cor:WAP dim 1, 1, controlled}
Let     $f$, $\varepsilon$ be as in Corollary~\ref{cor:WAP dim 1, 1}.
Then there are $a<b$ and a holomorphic~$\hat g\colon V_a\to\mathbb C$, real-valued on $\R$,
such that~$g:=\hat g|_{\R^{\geq b}}\in\C^\omega_b$ 
satisfies~${\abs{(f-g)^{(k)}(t)}} < \varepsilon(t)$ for all $t\ge b$ and $k\leq \min\!\big\{r,1/\varepsilon(t)\big\}$.
\end{cor}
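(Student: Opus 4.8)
The plan is to repeat the proof of Corollary~\ref{cor:WAP dim 1, 1} essentially verbatim, replacing its single appeal to Corollary~\ref{cor:WAP dim 1} by an appeal to the holomorphic refinement Corollary~\ref{cor:WAP dim 1, controlled}. First I would take a strictly increasing sequence $(b_n)$ in $\R$ with $b_0=b$ and $b_n\to\infty$ as $n\to\infty$, and for each $n$ set $\varepsilon_n:=\min\{\varepsilon(t):t\in[b_n,b_{n+1}]\}\in\R^>$ and $r_n:=\min\{r,\lfloor\dabs{1/\varepsilon}_{[b_n,b_{n+1}]}\rfloor\}\in\N$, exactly as in that proof. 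By construction the data $(b_n)$, $(\varepsilon_n)$, $(r_n)$ satisfy the standing hypotheses of Corollary~\ref{cor:WAP dim 1} (and hence of Corollary~\ref{cor:WAP dim 1, controlled}); in particular $r_n\leq r$ for all $n$.

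Next I would invoke Corollary~\ref{cor:WAP dim 1, controlled} to obtain $a<b$ and a holomorphic function $\hat g\colon V_a\to\mathbb C$, real-valued on $\R$, whose restriction $g:=\hat g|_{\R^{\geq b}}\in\Cc^\omega_b$ satisfies $\dabs{f-g}_{[b_n,b_{n+1}];\,r_n}<\varepsilon_n$ for all $n$. This is the only place the sector-controlled version enters, and it is precisely what upgrades the conclusion from an arbitrary analytic germ (as in Corollary~\ref{cor:WAP dim 1, 1}) to one admitting a holomorphic extension to the sector $V_a$.

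Finally I would record the pointwise estimate as in Corollary~\ref{cor:WAP dim 1, 1}: given $t\geq b$, choose $n$ with $t\in[b_n,b_{n+1}]$; then for $k\leq\min\{r,1/\varepsilon(t)\}$ the choice of $r_n$ forces $k\leq r_n$, whence $\abs{(f-g)^{(k)}(t)}\leq\dabs{f-g}_{[b_n,b_{n+1}];\,r_n}<\varepsilon_n\leq\varepsilon(t)$, which is the desired conclusion.

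I do not anticipate any real obstacle here: the argument is a mechanical transcription of the proof of Corollary~\ref{cor:WAP dim 1, 1}, and the substantive analytic input—the holomorphic extension to $V_a$—is already packaged inside Corollary~\ref{cor:WAP dim 1, controlled}. The only point genuinely worth verifying is that the sequences $(b_n)$, $(\varepsilon_n)$, $(r_n)$ chosen above really meet the hypotheses demanded by Corollary~\ref{cor:WAP dim 1, controlled}, which they do by construction, and that $\hat g$ inherits being real-valued on $\R$ and holomorphic on the full sector $V_a$ directly from that corollary.
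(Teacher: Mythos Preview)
Your proposal is correct and matches the paper's own approach exactly: the paper states that the proof of Corollary~\ref{cor:WAP dim 1, 1}, using Corollary~\ref{cor:WAP dim 1, controlled} in place of Corollary~\ref{cor:WAP dim 1}, gives the result.
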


\newlength\templinewidth
\setlength{\templinewidth}{\textwidth}
\addtolength{\templinewidth}{-2.25em}

\patchcmd{\thebibliography}{\list}{\printremarkbeforebib\list}{}{}

\let\oldaddcontentsline\addcontentsline
\renewcommand{\addcontentsline}[3]{\oldaddcontentsline{toc}{section}{References}}

\def\printremarkbeforebib{\bigskip\hskip1em The citation [ADH] refers to our book \\

\hskip1em\parbox{\templinewidth}{
M. Aschenbrenner, L. van den Dries, J. van der Hoeven,
\textit{Asymptotic Differential Algebra and Model Theory of Transseries,} Annals of Mathematics Studies, vol.~195, Princeton University Press, Princeton, NJ, 2017.
}

\bigskip

}

\end{document}